\newcommand{\kh}{\mathfrak{h}}
\newcommand{\kk}{\mathfrak{k}}
\newcommand{\kg}{\mathfrak{g}}
\newcommand{\kt}{\mathfrak{t}}
\newcommand{\kq}{\mathfrak{q}}
\newcommand{\rss}{\mathrm{ss}}
\newcommand{\rs}{\mathrm{s}}
\newcommand{\Ad}[2]{\operatorname{Ad}_{#1}(#2)}
\newcommand{\Ads}[2]{\operatorname{Ad}^*_{#1}(#2)}
\DeclareMathOperator{\diff}{d}
\DeclareMathOperator{\diffc}{\diff^{c}}
\newcommand{\bfa}{\mathbf{a}}
\newcommand{\bfb}{\mathbf{b}}
\newcommand{\bfZ}{\mathbf{Z}}
\newcommand{\bR}{\mathbb{R}}
\newcommand{\bC}{\mathbb{C}}
\newcommand{\sJ}{\mathsf{J}}
\newcommand{\T}{\operatorname{T}}
\newcommand{\grad}{\operatorname{grad}}
\newcommand{\id}{\operatorname{id}}
\newcommand{\pr}{\operatorname{pr}}
\newcommand{\Lie}{\operatorname{Lie}}
\newcommand{\Hess}{\operatorname{Hess}}
\newcommand{\ind}{\operatorname{Ind}}
\newcommand{\kn}{\mathsf{f}}
\newcommand{\cat}{\mathsf{CAT}(0)}
\newcommand{\tits}{\angle_{\mathrm{T}}}
\newcommand{\minp}{\mathbf{a}_{\mathrm{min}}}
\newcommand{\wm}{w_{\mathrm{min}}}
\newcommand{\wh}{w_{\mathrm{H}}}
\newcommand{\CE}{\mathsf{M}_{n,m,\tau}}
\newfont{\Bfmit}{eufm10 scaled\magstep1}
\def\qed {\nobreak$\quad$\lower 1pt\vbox{
    \hrule
    \hbox to 8pt{\vrule height 8pt\hfil\vrule height 8pt}
      \hrule}\ifmmode\relax\else\par\medbreak \fi}
\newtheorem{thm}{Theorem}[section]
\newtheorem{lem}[thm]{Lemma}
\newtheorem{cor}[thm]{Corollary}
\newtheorem{prop}[thm]{Proposition}
\theoremstyle{definition}
\newtheorem{defn}[thm]{Definition}
\newtheorem{exmp}[thm]{Example}
\newtheorem{defnprop}[thm]{Definition-Proposition}
\theoremstyle{remark}
\newtheorem{rem}[thm]{Remark}
\newcommand{\norm}[1]{\left\Vert#1\right\Vert}
\numberwithin{equation}{section}
\definecolor{ta}{HTML}{002FA7}
\def\l@subsection{\@tocline{2}{0pt}{2.5pc}{5pc}{}}
\begin{document}
\title[Reduction and Complex Quotients]
{Generalized Moment Maps, Reduction\\ and Complex Quotients}
\author{Yi Hu and Xiangsheng Wang}
\address{Department of Mathematics, The University of Arizona, Tucson, AZ 85721, USA}
\email{yhu@arizona.edu}
\address{School of Mathematics, Shandong University, Jinan, Shandong 250100, P.R. China}
\email{xiangsheng@sdu.edu.cn}

\maketitle

\begin{abstract} 
  In this note, we introduce the concept of momentumly closed forms.
  A non-degenerate momentumly closed two-form and its generalized moment map are the generalization of two well-known notions, symplectic forms and moment maps, in the almost Hermitian setting.
  We then generalize the classical theory of moment maps to this broader framework.
  As a first step, we prove a variant of the Darboux--Weinstein theorem for non-degenerate momentumly closed two-forms.
  Based on this, we further establish the convexity property of the generalized moment map, construct the corresponding reduction space and investigate the properties of the Kirwan--Ness stratification.
\end{abstract}

\tableofcontents

\section{Introduction}
\label{sec:introduction}
The concept of a moment map originates from mathematical physics, where it plays a central role in the definition of Hamiltonian manifolds.
At its core, the moment map is a notion defined on real manifolds.
However, in K\"ahler geometry, it serves as a bridge between symplectic and complex geometry, giving rise to many deep and beautiful results.
Notably, it features prominently in a series of celebrated works, see, for example, ~\cite{Atiyah, Atiyah_1983ya, Guillemin_1982aa, Kempf_1979aa, Kirwan84}.

In this note, we discuss a generalization of the moment map for certain almost Hermitian manifolds.
In particular, for Hermitian manifolds, we extend the notions of symplectic moment map, symplectic reduction, and K\"ahler quotient to their Hermitian counterparts: the generalized moment map, reduction space, and Hermitian quotient.

Let $M$ be a almost complex manifold acted upon by a compact group $K$.
Let $g$ be a $K$-invariant almost Hermitian metric on $M$ and $\omega$ be its fundamental two-form.
Let $\xi_M$ be the fundamental vector field generated by $\xi \in {\frak k}=\Lie K$.
The two-form $\omega$ is called \emph{momentumly closed} if the contraction $\iota_{\xi_M} \omega$ is closed for all $\xi \in {\frak k}$.
Adopting a term in the equivariant de Rham cohomology theory, $\omega$ is momentumly closed if and only if $\diff \omega$ is a {\it basic} form.
Solving the equation $\diff \langle\Psi, \xi\rangle = \iota_{\xi_M} \omega,$ one is to obtain a map $\Psi: M \rightarrow {\frak k}^*$ which we call a {\it generalized moment map}, see Definition~\ref{def:gmp}.

The main purpose of this note is to demonstrate that, aside from those statements which inherently require the closedness of a symplectic form $\omega$, many results concerning the symplectic moment map continue to hold for the generalized moment map.
Along the way, we also establish several new results.

For example, we will show that many fundamental theorems in symplectic geometry, such as the Atiyah--Guillemin--Sternberg abelian convexity theorem, Kirwan’s non-abelian convexity theorem, results concerning the norm-square of the moment map, the correspondence between complex and symplectic quotients, and the Duistermaat--Heckman theorem, continue to hold in this generalized setting.
In many cases, these results are straightforward extensions of their symplectic counterparts, though not always.
Notably, the generalization of the Duistermaat--Heckman theorem requires more careful consideration: additional conditions must be introduced to compensate for the lack of closedness; see Theorem~\ref{DH}.
In Theorem \ref{thm:darboux}, we prove
a variant of the Darboux--Weinstein theorem for the momentumly closed two-form.
Another new contribution is presented in Section~\ref{dynastable}, where we prove that complex quotients always arise as reduction spaces; see Theorem~\ref{thm:quotient}.

As for the proofs of these results, we find that, in many cases, the arguments used in the symplectic setting still apply here, if one checks the details carefully. However, verifying these proofs in the new setting line by line can be tedious, and we aim to avoid such repetition where possible. Instead, we adopt an approach that highlights a common structural pattern underlying these arguments,  an approach motivated by new insights we discovered while preparing this note. We hope this perspective not only streamlines the exposition, but also helps the reader better understand why the closedness condition is often nonessential, and, in some cases, sheds light on potential issues in the classical setting.

To elaborate this approach, we recall that a typical proof of a result concerning moment maps is often divided into two parts:
\begin{enumerate}
\item establishing certain local properties of the moment map, which usually rely on the closedness of $\omega$;
\item evoking additional techniques that are independent of the closedness condition.
\end{enumerate}

To understand why many results about moment maps continue to hold in the generalized setting, a key observation is that there exists a unified answer to (1) for generalized moment maps. Indeed, in Section~\ref{sec:gmm}, we show that 
\emph{the local properties of the generalized moment map are just as good as those of the classical one}, by establishing new variants of classical results, 
namely, the Darboux--Weinstein theorem and the isotropic embedding theorem (Theorems~\ref{thm:darboux} and~\ref{thm:iso-emb}).

As for (2), the techniques involved naturally vary depending on the specific problem.
In some cases, such as the convexity problem, well-developed tools are readily available and can be applied directly.
However, in other cases, such as the Kirwan--Ness stratification, the relevant techniques are less developed.
To address this, we include in Section~\ref{sec:kn-fct} a detailed discussion of the properties of the Kempf--Ness function, with the aim of also clarifying certain aspects in the classical setting.
As noted earlier, this part is independent of whether $\omega$ is closed or not.
Given the important role of the Kempf--Ness function in geometric invariant theory (GIT), we hope the material in this section will serve as a useful reference even in the K\"ahler setting.

There are many more extensions of symplectic theorems than can be covered in this note. For example, we omit discussion of the Hermitian cut, the analogue of Lerman's symplectic cut.

By definition, momentumly closed forms are far more abundant than symplectic forms.
A paradox, however, is the apparent scarcity of significant examples.
This may be attributed to the fact that, on the one hand, the space of K\"ahler manifolds forms a ``measure zero'' subset within the space of all complex manifolds; yet, on the other hand, most complex manifolds we commonly encounter happen to be K\"ahler.

Despite this, it is worth emphasizing that the new theory remains meaningful even when $M$ is K\"ahler.
A key point here is that certain quotients of a K\"ahler manifold may fail to be K\"ahler themselves, yet naturally carry a Hermitian structure (see Theorem~\ref{thm:quotient}).
In such cases, these new quotients are nevertheless very close to being K\"ahler, for instance, they are Moishezon when $M$ is projective.
See also~\cite{Kollar_2006no}.

Thus, it remains an interesting task to compare the results of this note with the two foundational papers by Kollár~\cite{kollar97} and Keel--Mori~\cite{KM}, where quotients are constructed as algebraic spaces.

Finally, as mentioned earlier, when proving new theorems in the Hermitian setting, we try to avoid repeating the proofs from the symplectic or Kähler cases line by line. 
Nonetheless, in many instances, some repetition is inevitable. When this occurs, to keep the note concise, we usually present a brief proof here and provide only an outline along with precise references for longer arguments.
In a sense, our main contribution lies in initiating a broader framework for the moment map and in identifying and formulating analogous results within this new context.

\medskip
\noindent {\sl Acknowledgments.}
We are very grateful to the anonymous referee for reading the paper carefully and the inspiring comments.
While this paper was being prepared, YH was visiting Great Bay University, whose hospitality and support are gratefully acknowledged.
XW was partially supported by NSFC grant 12471049 and 12101361, the Project of Young Scholars of Shandong University.

\bigskip
\section{Momentumly closed forms}

Let $K$ be a connected compact Lie group acting smoothly on a differentiable manifold $M$.
Throughout the note, when $M$ is also a complex manifold, we will always assume that the  $K$-action is holomorphic. Furthermore,
 denoting the complexification of $K$ by $G$, we assume that the $K$-action on $M$ extends to a holomorphic $G$-action.  This extension always holds if $M$ is compact.

Given any $\xi \in {\frak k}= \Lie K$, it generates a vector field $\xi_M \in \T M$ defined by
\begin{equation*}
  \xi_{M,m} = \odv{}{t} (\exp (t \xi) \cdot m)|_{t=0}.
\end{equation*}
Let  $\Omega(M)$ denote the algebra of smooth exterior
differential forms on $M$.

\begin{defn}
  \label{def:mc}
A differential form $\omega \in \Omega(M)$ is said to be
momentumly closed if
\begin{equation*}
  \diff \iota_{\xi_M} \omega = 0\;\; \text{for all} \;\; \xi \in {\frak k},
\end{equation*}
where $\diff$ is the exterior differential and $\iota_{\xi_M}$ is the
contraction along the direction $\xi_M$.
\end{defn}

\begin{rem}
  By averaging over $K$, we can always assume $\omega$ to be $K$-invariant.
  As a result, although Definition~\ref{def:mc} works for any differential form $\omega$, in this paper, we will always assume that a momentumly closed form is $K$-invariant.
\end{rem}

Let $L_X = \diff \iota_X + \iota_X \diff$ be the Lie derivative along the vector field $X$ and {$\omega$ be a momentumly closed form}.
Since $\omega$ is $K$-invariant, $L_{\xi_M} \omega =0$, for $\xi\in \kk$.
Therefore $\diff \iota_{\xi_M} \omega = 0$ is equivalent to $ \iota_{\xi_M} \diff \omega = 0$.
In other words, the form $\diff \omega $ is horizontal.
Since $\diff \omega $ is also {$K$-invariant}, it is {\it basic}.
In this term, an invariant form $\omega$ is momentumly closed if and only if $\diff \omega $ is a basic form.

Our momentumly closed forms are related to Henri Cartan's model of the equivariant cohomology as follows \footnote{We thank Hans Duistermaat for pointing out and providing the following.}.

Consider the algebra $A$ of polynomial mappings $\alpha:{\kk}\to \Omega(M)$.
An element $\alpha$ of $A$ is called equivariant if for any $X\in \kk$ and $k\in K$,
\begin{equation*}
  \alpha(X) = k \cdot (\alpha(\Ad{k^{-1}}{X})).
\end{equation*}
Denote the subalgebra of $A$ consisting of equivariant elements by $A_K$.
Define $D:A\to A$ by
\begin{equation*}
  (D\alpha)(X) := \diff(\alpha(X)) - \iota_{X_M}(\alpha(X)), X \in {\kk}, \alpha\in A.
\end{equation*}
On $A_K$, one verifies that $D \circ D = 0$.
Therefore, $(A_K,D)$ is a chain complex and the equivariant cohomology ring $H^*_K(M)$ is defined to be $\ker D/ {\rm Im} D$ using this complex.
The elements of $\ker D$ and ${\rm Im} D$ of $D$ are called the equivariantly closed and exact forms, respectively.
If $\alpha$ is homogeneous of degree $p$ as a differential form and of degree $q$ as a polynomial on ${\frak k}$, then the (total) degree of $\omega$ is defined as $p + 2q$.
With this definition, $D$ raises the degree by one.

Now, if $\omega$ is a $K$-invariant form on $M$ (which does not depend on $X\in {\frak k}$), then as an element in $A_K$, $\omega$ is equivariantly closed if and only if $0 = \diff \omega - \iota_{X_M}\omega$ for all $X\in{\frak k}$, which is equivalent to the condition that $\diff \omega = 0$ and $\iota_{X_M}\omega = 0$ for all $X\in{\frak k}$.
That is, $\omega$ is closed in the ordinary sense and basic.

Therefore, our condition for momentumly closed forms, $\diff( \iota_{X_M}\omega) = 0$ for all $X\in{\frak k}$, is obviously much weaker than the condition for equivariantly closed forms, which is also justified by the following result.

\begin{prop}[Hans Duistermaat]
  Assume that $\omega$ is a $K$-invariant form.
  Define $\nu_{\omega}\in A_K$ by $\nu_{\omega}(X) = \iota_{X_M}\omega$, $X\in \kk$.
  Then $\omega$ is momentumly closed if and only $\nu_\omega$ is equivariantly closed.
\end{prop}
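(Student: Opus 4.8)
The plan is to read the proposition directly off the definition of Cartan's differential $D$: I will show that $(D\nu_\omega)(X)$ equals $\diff(\iota_{X_M}\omega)$ for every $X\in\kk$, so that $D\nu_\omega = 0$ is literally the momentum-closedness of $\omega$. First, though, I need $\nu_\omega$ to be a legitimate element of $A_K$, i.e.\ an equivariant polynomial map $\kk\to\Omega(M)$. Polynomiality (in fact homogeneity of degree one) is clear, since $X\mapsto\iota_{X_M}\omega$ is linear in $X$; only the equivariance $\nu_\omega(X) = k\cdot\nu_\omega(\Ad{k^{-1}}{X})$ has to be checked. For this I would invoke the transformation law for fundamental vector fields: writing $\phi_k\colon M\to M$ for the diffeomorphism $m\mapsto k\cdot m$, one has $(\phi_k)_*\,Y_M = (\Ad{k}{Y})_M$, together with the naturality of the interior product under diffeomorphisms, $(\phi_k)_*(\iota_Y\beta) = \iota_{(\phi_k)_*Y}\bigl((\phi_k)_*\beta\bigr)$. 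Adopting the convention $k\cdot\beta = (\phi_k)_*\beta$ and applying these with $Y = (\Ad{k^{-1}}{X})_M$ and $\beta = \omega$, the adjoint twists cancel, $(\phi_k)_*\bigl((\Ad{k^{-1}}{X})_M\bigr) = (\Ad{k}{\Ad{k^{-1}}{X}})_M = X_M$, and, using the $K$-invariance of $\omega$, one gets $k\cdot\bigl(\iota_{(\Ad{k^{-1}}{X})_M}\omega\bigr) = \iota_{X_M}\omega$. Hence $\nu_\omega\in A_K$.

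With this in hand, I would compute straight from the definition of $D$:
\begin{equation*}
  (D\nu_\omega)(X) \;=\; \diff\bigl(\nu_\omega(X)\bigr)-\iota_{X_M}\bigl(\nu_\omega(X)\bigr) \;=\; \diff\bigl(\iota_{X_M}\omega\bigr)-\iota_{X_M}\iota_{X_M}\omega \;=\; \diff\bigl(\iota_{X_M}\omega\bigr),
\end{equation*}
where the last equality uses $\iota_{X_M}\iota_{X_M} = 0$ (interior multiplication by a vector field squares to zero, since $\iota_v\iota_w+\iota_w\iota_v = 0$). Therefore $\nu_\omega$ is equivariantly closed, i.e.\ $D\nu_\omega = 0$, if and only if $\diff(\iota_{X_M}\omega) = 0$ for every $X\in\kk$, which is precisely the condition in Definition~\ref{def:mc} for $\omega$ to be momentumly closed. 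Both implications follow at once.

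I expect no real obstacle: the heart of the matter is the one-line computation above, which works without any hypothesis on $\diff\omega$. The only point that requires genuine care is the verification that $\nu_\omega\in A_K$, where one must fix the convention for the $K$-action on $\Omega(M)$ and correctly pair the transformation rule for $X_M$ against the invariance of $\omega$ so that the $\Ad$-factors cancel; choosing the opposite sign convention merely swaps the roles of $k$ and $k^{-1}$ and changes nothing essential.
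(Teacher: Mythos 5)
Your proof is correct and is essentially the paper's own argument: the one-line computation $(D\nu_\omega)(X)=\diff(\iota_{X_M}\omega)-\iota_{X_M}\iota_{X_M}\omega=\diff(\iota_{X_M}\omega)$ is exactly what the paper writes. Your additional verification that $\nu_\omega$ really lies in $A_K$ (the $\Ad$-equivariance check via $(\phi_k)_*Y_M=(\Ad{k}{Y})_M$ and the $K$-invariance of $\omega$) is a point the paper passes over silently, and it is a worthwhile supplement rather than a deviation.
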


\begin{proof}
  By the definition of the operator $D$,
  \begin{equation*}
    (D\nu_\omega)(X) = \diff(\nu_{\omega}(X)) - \iota_{X_m}(\nu_\omega (X)) = \diff(\iota_{X_M}\omega) - \iota_{X_M}(\iota_{X_M}\omega) = \diff(\iota_{X_M}\omega).
  \end{equation*}
\end{proof}

\bigskip
\section{Hamiltonian $K$-action}

We generalize the concept of the moment map to the momentumly closed case as follows.
The same definition also appears in~\cite[Definition~2.1]{DiTerlizzi_2007re}.

\begin{defn}
  \label{def:gmp}
  Let $\omega$ be any momentumly closed two-form on $M$.
  A map
\begin{equation*}
  \Psi: M \rightarrow \kk^*
\end{equation*}
is called a generalized moment map with respect to $\omega$ if
\begin{equation}
  \label{eq:def-mp}
  \diff \Psi^\xi = \iota_{\xi_M} \omega
\end{equation}
for all $\xi \in {\frak k}$, where $\Psi^\xi = \langle\Psi , \xi\rangle$ using the pairing $\langle \cdot, \cdot \rangle$ between ${\kk}^*$ and ${\kk}$.
By averaging over $K$, we may and will always assume that $\Psi$ is $K$-equivariant with respect to the given action on $M$ and the coadjoint action on $\kk^*$.
\end{defn}

Note that in the above definition, the assumption that $\omega$ is momentumly closed is not necessary because (\ref{eq:def-mp}) implies that $\omega$ is momentumly closed automatically.

Since Karshon and Tolman~\cite{Karshon_1993mo}
has proved that the moment maps of pre-symplectic forms enjoy no convexity in general, \emph{we will only consider the non-degenerate momentumly closed two-form in the rest of the note}.

The non-degenerate two-form has the following well-known characterization.
For a proof, see \cite[p.~114,~Proposition 2]{Fre-Uh}.

\begin{prop}
  \label{prop:ac}
  A manifold $M$ carries a non-degenerate smooth two-form $\omega$ if and only if it has an almost complex structure $J$.
  In fact, we can and will choose the two-form $\omega$ so that $\omega$ is $J$-invariant and $g(-,-) = \omega(-,J-)$ is a Riemannian metric on $M$.
  In other words, $\omega$ is the Hermitian form of the almost Hermitian manifold $(M,J,g)$.
\end{prop}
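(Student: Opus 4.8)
The plan is to prove both directions of the equivalence and then verify the compatibility clause. The forward direction (existence of a nondegenerate two-form implies existence of an almost complex structure) and the backward direction are essentially a polar-decomposition argument, so I would treat them together once an auxiliary metric is in place. First I would fix an arbitrary Riemannian metric $h$ on $M$ (which exists by a partition-of-unity argument) and, given a nondegenerate two-form $\omega$, define the bundle endomorphism $A \in \operatorname{End}(\T M)$ by $\omega(u,v) = h(Au,v)$. Nondegeneracy of $\omega$ is equivalent to $A$ being invertible, and skew-symmetry of $\omega$ says $A^* = -A$ with respect to $h$, so $A^*A = -A^2$ is symmetric and positive-definite. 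I would then set $P = \sqrt{A^*A} = (-A^2)^{1/2}$, using the functional calculus on the positive-definite symmetric operator $-A^2$ (the square root is smooth because it can be written as a contour integral, or because $t \mapsto \sqrt t$ is smooth on the positive reals), and define $J = A P^{-1}$. Since $A$ commutes with $A^*A$ and hence with $P$, one checks $J^2 = A^2 P^{-2} = A^2(-A^2)^{-1} = -\id$, so $J$ is an almost complex structure.

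Next I would verify the compatibility assertions, which is where the real content of the ``we can and will choose'' phrasing lives. Replacing $\omega$ by a $J$-compatible form amounts to checking that $g(u,v) := \omega(u, Jv)$ is symmetric and positive-definite, so that $\omega$ is the fundamental form of the almost Hermitian structure $(M,J,g)$. With $J = AP^{-1}$ one computes $g(u,v) = \omega(u, Jv) = h(Au, AP^{-1}v) = h(A^*Au, P^{-1}v) = h(P^2 u, P^{-1}v) = h(Pu, v)$, and since $P$ is symmetric positive-definite, $g$ is indeed a Riemannian metric; symmetry of $g$ and $J$-invariance of $\omega$ (i.e. $\omega(Ju,Jv) = \omega(u,v)$) then follow from $JP = PJ$ and $J^* = -J$ by routine manipulation. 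For the converse direction, if one starts from an almost complex structure $J$, I would average any Riemannian metric over $J$ to produce a $J$-invariant metric $g$ and set $\omega(u,v) = g(Ju,v)$; this is skew because $g$ is $J$-invariant, and nondegenerate because $g$ is, giving the nondegenerate two-form.

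The main obstacle, such as it is, is the smoothness of the square root $P = (-A^2)^{1/2}$: one must argue that taking the positive square root of a smoothly varying positive-definite symmetric endomorphism yields a smooth endomorphism. I would handle this either by the holomorphic functional calculus, writing $P^{-1} = \frac{1}{2\pi i}\oint \lambda^{-1/2}(\lambda - (-A^2))^{-1}\,d\lambda$ over a contour in the right half-plane enclosing the spectrum of $-A^2$ (which depends smoothly on the base point since the spectrum stays in a compact subset of $(0,\infty)$ locally), or simply by invoking that $t \mapsto t^{1/2}$ is a smooth function on $(0,\infty)$ and that applying a smooth function to a smooth family of self-adjoint operators with spectrum in the domain gives a smooth family. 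Everything else is linear algebra performed fiberwise and patched together, and since the paper already cites \cite[p.~114, Proposition 2]{Fre-Uh} for the statement, I would present the polar-decomposition construction concisely and refer to that source for the detailed verifications.
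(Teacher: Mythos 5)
Your polar-decomposition argument is correct: the verifications of $J^2=-\id$, the symmetry and positivity of $g(u,v)=h(Pu,v)$, and the $J$-invariance of $\omega$ all go through, and the smoothness of $P=(-A^2)^{1/2}$ is properly addressed. The paper itself gives no proof and simply cites \cite[p.~114, Proposition~2]{Fre-Uh}, where essentially this same construction is carried out, so your write-up fills in exactly the intended argument.
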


Based on Definition~\ref{def:gmp}, we can generalize the concept of the Hamiltonian action to our settings.

\begin{defn}
  \label{def:hh}
Assume that $M$ carries a non-degenerate momentumly closed two-form $\omega$.
  The $K$-action on $M$ is said to be Hamiltonian if there exists a generalized moment map $\Psi$ for $(M,\omega)$.
\end{defn}

\begin{rem}

If $(M,J,g)$ is an almost Hermitian $K$-manifold, by taking $\omega$ to be the Hermitian form of $(M,J,g)$, we say the $K$-action is Hamiltonian if only if the $K$-action is Hamiltonian with respect to $(M,\omega)$ according to Definition~\ref{def:hh}.
  Due to Proposition~\ref{prop:ac}, when discussing the properties of the generalized moment map in the following sections, we can always assume that $M$ is an almost Hermitian manifold.

\end{rem}

It follows immediately from the definition that when an almost Hermitian manifold $(M,J,g)$ carries a Hamiltonian $K$-action and let $\Psi$ be the generalized moment map, we have
\begin{equation}
  \label{eq:grad-psixi}
  \grad \Psi^\xi = J \xi_M,\quad \xi \in \kk.
\end{equation}

\begin{rem}
Let $(M,J)$ be an almost complex $K$-manifold.
  For each $\xi \in \kk$, we can consider the dynamical system $(M, J \xi_M)$.
In his celebrated paper~\cite{Smale}, Smale proved that every dynamical system on a manifold can be represented by the gradient vector field of a function with respect to some Riemannian metric provided it satisfies a few generic condition (called the Smale conditions).
From this viewpoint, by (\ref{eq:grad-psixi}), we can say that when the $K$-action on an almost Hermitian manifold is Hamiltonian, the generalized moment map $\Psi$ provides such a function for each element in a family of dynamical systems $(M, J \xi_M)$, $\xi\in \kk$, in a simultaneous way.
\end{rem}

As one can see, one reason to introduce momentumly closed symplectic forms and their associated generalized moment maps is that they are quite abundant compared to their symplectic counterparts.

In the following examples, $\pr_1$ (resp.\ $\pr_2$) denotes the projection map of a product manifold onto the first (resp.\ second) component.

\begin{exmp}
  To start off, we consider a symplectic manifold $(M, \omega)$ with a Hamiltonian $K$-action and an {\it arbitrary} almost Hermitian manifold $(N,J_N, \sigma)$.
  Let $K$ act on the first factor of $M \times N$ only.
  Clearly, $\pr_1^*\omega+ \pr_2^*\sigma$ is a momentumly closed form, and it is not closed if and only if $\sigma$ is not.
  Moreover, denoting the moment map on $M$ by $\Phi$, one checks that $\Psi = \Phi \circ \pr_1: M \times N \rightarrow \kk^*$ is a generalized moment map with respect to $\pr_1^*\omega+ \pr_2^*\sigma$.
\end{exmp}

This example is straightforward.
For the examples that are more interesting and involved, we now present a basic model for momentumly closed two-forms and the associated generalized moment maps.
The model in the symplectic case is due to Sternberg and Weinstein.
We adopt the approach from Sjamaar and Lerman \cite{SL}.

Assume that $K$ acts on $M$ {quasi-freely} (i.e.\ all isotropy subgroups are connected).
Then $P= \{ x \in M | K_x = \id \}$ is an open subset of $M$.
We assume that $P\neq \emptyset$.
Since slices for a compact group action always exist, $P$ is a principal $K$-bundle over the base $B= P/K$.
Having this construction in mind, we now turn to the general situation of principal $K$-bundles.
So, let $\pi: P \rightarrow B$ be an arbitrary principal $K$-bundle.
Define a $K$-action on $P \times \kk$ by
\begin{equation*}
  k \cdot (p, a) = (p k^{-1}, \Ads{k}{a}),\quad k\in K, (p,a)\in P\times \kk^*.
\end{equation*}

Take a connection one-form $\theta$ on $P$.
As before, let $\langle-, -\rangle$ be the pairing between ${\frak k}^*$ and ${\frak k}$.
Take $X_1,\cdots, X_n$ to be a basis of $\kk$ and write $\theta = \sum_{i} \theta^i X_i$ with respect to such a basis, where $\theta^i \in \Omega^1(P)$.
We define a real-valued one-form $\langle\pr_2, \theta \rangle$ on $P\times \kk^*$ as follows.
For $(p, a)\in P\times \kk^*$, the value of $\langle \pr_2, \theta \rangle$ at $(p,a)$ is
\begin{equation}
  \label{eq:pr2-theta}
  \langle \pr_2, \theta \rangle(p,a) = \sum_{i=1}^n \pr_1^*(\theta^i(p)) \langle X_i, a \rangle \in \T^*_{(p,a)}(P \times \kk).
\end{equation}
$\langle\pr_2, \theta \rangle$ is invariant with respect to the $K$-action on $P\times \kk^*$.

\begin{prop}
  \label{prop:min-cp}
Let $\sigma$ be any two-form on $B$ and
\begin{equation*}
  \omega = (\pi\circ \pr_1)^* \sigma - \diff \langle \pr_2, \theta \rangle.
\end{equation*}
Then $\omega$ is a momentumly closed two-form on $P \times {\frak k}^*$ with respect to the $K$-action.
Furthermore, if $\sigma$ is non-degenerate, then $\omega$ is non-degenerate near $P \times \{0\}$.
Moreover, the $K$-action on $P\times \kk^*$ is Hamiltonian and the projection
\begin{equation*}
  -\pr_2: P \times {\frak k}^* \rightarrow {\frak k}^*
\end{equation*}
is a generalized moment map for the $K$-action.
\end{prop}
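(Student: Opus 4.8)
The plan is to establish the single identity $\iota_{\widehat\xi}\,\omega=\diff\langle-\pr_2,\xi\rangle$ for every $\xi\in\kk$, where $\widehat\xi$ denotes the fundamental vector field of the $K$-action on $P\times\kk^*$. Granting it, $-\pr_2$ satisfies~(\ref{eq:def-mp}), hence is a generalized moment map and the $K$-action is Hamiltonian; and since a generalized moment map exists, $\omega$ is momentumly closed by the remark following Definition~\ref{def:gmp}. Differentiating $\exp(t\xi)\cdot(p,a)=(p\exp(-t\xi),\Ads{\exp(t\xi)}{a})$ at $t=0$ gives $\widehat\xi|_{(p,a)}=(-\xi_P|_p,\ \operatorname{ad}^{*}_{\xi}a)$, where $\xi_P$ is the fundamental vector field of the principal $K$-action on $P$ and $\operatorname{ad}^{*}$ is the infinitesimal coadjoint action; recall that $\xi_P$ is $\pi$-vertical and $\theta(\xi_P)=\xi$.

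To prove the identity I would contract $\omega=(\pi\circ\pr_1)^*\sigma-\diff\langle\pr_2,\theta\rangle$ term by term. Since $\xi_P$ is $\pi$-vertical, $\diff(\pi\circ\pr_1)(\widehat\xi)=0$, so $\iota_{\widehat\xi}(\pi\circ\pr_1)^*\sigma=0$ and therefore $\iota_{\widehat\xi}\,\omega=-\iota_{\widehat\xi}\,\diff\langle\pr_2,\theta\rangle$. As $\langle\pr_2,\theta\rangle$ is $K$-invariant (noted before the statement), $L_{\widehat\xi}\langle\pr_2,\theta\rangle=0$, so Cartan's formula gives $\iota_{\widehat\xi}\,\diff\langle\pr_2,\theta\rangle=-\diff\,\iota_{\widehat\xi}\langle\pr_2,\theta\rangle$. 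Finally, $\langle\pr_2,\theta\rangle=\sum_i a_i\,\pr_1^*\theta^i$ (with $a_i$ the coordinate $a\mapsto\langle X_i,a\rangle$ on $\kk^*$) pairs the $\kk^*$-coordinate against $\theta$, and $\theta$ only reads the $P$-component of a tangent vector, so contracting against $\widehat\xi$ and using $\theta(\xi_P)=\xi$ identifies $\iota_{\widehat\xi}\langle\pr_2,\theta\rangle$ with the function $(p,a)\mapsto-\langle a,\xi\rangle=\langle-\pr_2,\xi\rangle$. Assembling the three steps yields $\iota_{\widehat\xi}\,\omega=\diff\langle-\pr_2,\xi\rangle$; $K$-equivariance of $-\pr_2$ is immediate from the formula for the action. (Alternatively, momentum-closedness follows at once from $\diff\omega=(\pi\circ\pr_1)^*\diff\sigma$ being pulled back from $B$, hence $K$-invariant and horizontal, i.e.\ basic, which is equivalent to $\omega$ being momentumly closed.)

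For non-degeneracy near $P\times\{0\}$, since the non-degeneracy locus of a smooth two-form is open it suffices to check it at each point $(p,0)$. Using the connection, split $\T_pP=H_p\oplus V_p$ with $H_p=\ker\theta_p$ and $V_p=\ker\diff\pi_p$, identify $V_p\cong\kk$ via $\xi\mapsto\xi_P|_p$, and write $\T_{(p,0)}(P\times\kk^*)=H_p\oplus V_p\oplus\kk^*$. From $\diff\langle\pr_2,\theta\rangle=\sum_i\diff a_i\wedge\pr_1^*\theta^i+\sum_i a_i\,\pr_1^*\diff\theta^i$, the second sum vanishes along $a=0$, so $\omega_{(p,0)}=(\pi\circ\pr_1)^*\sigma-\sum_i\diff a_i\wedge\pr_1^*\theta^i$. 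Evaluating this on the splitting, one finds that $\omega_{(p,0)}$ restricts to $\sigma$ on $H_p$ (through the isomorphism $\diff\pi_p\colon H_p\to\T_{\pi(p)}B$), pairs $V_p\cong\kk$ with $\kk^*$ via the canonical perfect pairing, and annihilates every other pair of blocks; non-degeneracy of $\sigma$ then forces the kernel of $\omega_{(p,0)}$ to be zero. The only point calling for real care is precisely this last computation — checking that the curvature term $\sum_i a_i\,\pr_1^*\diff\theta^i$ drops out at $a=0$, leaving $\omega_{(p,0)}$ in the stated block form — along with the bookkeeping of signs in $\widehat\xi=(-\xi_P,\operatorname{ad}^{*}_{\xi}a)$ and the normalization $\theta(\xi_P)=\xi$.
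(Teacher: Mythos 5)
Your proof is correct and is essentially the standard minimal-coupling computation that the paper simply delegates to Sjamaar--Lerman \cite[Theorem~8.1]{SL}: the contraction identity $\iota_{\widehat\xi}\,\omega=\diff\langle-\pr_2,\xi\rangle$ via Cartan's formula and the $K$-invariance of $\langle\pr_2,\theta\rangle$ (which the paper asserts just before the statement), plus the block decomposition $H_p\oplus V_p\oplus\kk^*$ at $a=0$ for non-degeneracy. Your writeup supplies the details the paper omits; all the sign conventions ($\widehat\xi=(-\xi_P,\operatorname{ad}^*_\xi a)$, $\theta(\xi_P)=\xi$) and the vanishing of the curvature term $\sum_i a_i\,\pr_1^*\diff\theta^i$ along $P\times\{0\}$ check out.
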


\begin{proof}
When $\sigma$ is symplectic, this is the minimal coupling form of Sternberg~\cite{Sternberg_1977mi} and Weinstein~\cite{Weinstein_1978un}.
For a proof, one can copy the one given by Sjamaar and Lerman in~\cite[Theorem~8.1]{SL}.
\end{proof}

\begin{rem}
In \S \ref{dynastable}, we will see that such a coupling frequently arises in the context
of complex geometric quotients.
\end{rem}

In view of Proposition~\ref{prop:min-cp}, a momentumly closed two-form is partially closed along the group action orbit in some sense, which is also justified by the following result.
\begin{thm}
  \label{thm:ob-close}
Let $\omega$ be any momentumly closed two-form on $M$.
  Then $\diff \omega$ vanishes along any $K$-orbit.
  Moreover, if $(M,J)$ is a complex manifold and $\omega$ is $J$-invariant, then $\diff \omega$ also vanishes along any $G$-orbit.
  
\end{thm}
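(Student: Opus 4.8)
The plan is to prove the two assertions separately, handling the $K$-orbit statement first and then bootstrapping to the $G$-orbit statement in the complex case.

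For the first assertion, the key point is that $\diff\omega$ is a basic form, as established in Section~1: since $\omega$ is $K$-invariant and momentumly closed, $\iota_{\xi_M}\diff\omega = 0$ for all $\xi\in\kk$, and $\diff\omega$ is $K$-invariant. ``Basic'' means $\diff\omega$ is both horizontal (every contraction $\iota_{\xi_M}\diff\omega$ vanishes) and invariant. To say $\diff\omega$ ``vanishes along a $K$-orbit'' I would interpret as: the pullback of $\diff\omega$ to any orbit $K\cdot m$ is zero. But the tangent space to $K\cdot m$ at a point is spanned precisely by the vectors $\xi_M$, $\xi\in\kk$; since $\diff\omega$ is horizontal, plugging any three such vectors into the three-form $\diff\omega$ gives zero. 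Hence the restriction of $\diff\omega$ to $K\cdot m$ vanishes identically. (More pedantically: for tangent vectors $v_1,v_2,v_3$ to the orbit, write $v_i=(\xi_i)_{M,m}$ and use $\iota_{(\xi_1)_M}\diff\omega=0$.) This is essentially immediate from the definitions.

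For the second assertion, I would use the complex structure to extend the argument from the $\kk$-directions to the $\kg=\kk\oplus J\kk$ directions. The tangent space to a $G$-orbit at $m$ is spanned by the fundamental vector fields $\xi_M$ for $\xi\in\kg=\Lie G$, equivalently by $\eta_M$ and $(J\eta)_M = J\eta_M$ for $\eta\in\kk$ (using that the $K$-action is holomorphic, so the infinitesimal $G$-action in the imaginary directions is $J$ applied to the real ones). So it suffices to show $\iota_{J\eta_M}\diff\omega = 0$ for all $\eta\in\kk$, given that $\iota_{\eta_M}\diff\omega=0$. Here is where $J$-invariance of $\omega$ enters: $J$-invariance of $\omega$ forces a corresponding symmetry on $\diff\omega$. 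Concretely, $\diff\omega$ is a real $(2,1)+(1,2)$ form, and one can relate $\iota_{JX}\alpha$ to $\iota_X\alpha$ for such forms via the action of $J$ on $\diff\omega$. Alternatively, and perhaps more cleanly, I would argue as follows: by Cartan's formula and $K$-invariance, $\iota_{\eta_M}\diff\omega = L_{\eta_M}\omega - \diff\iota_{\eta_M}\omega = -\diff\iota_{\eta_M}\omega$, which is momentum-closedness; I want the analogue with $J\eta_M$. Since $J\eta_M = \grad\Psi^\eta$ when a moment map exists — but we should not assume that in general — the robust route is the pointwise linear-algebra identity: for a $J$-invariant two-form $\omega$ on a complex manifold, $\diff\omega$ restricted to a complex subspace (such as the tangent space to a $G$-orbit, which is $J$-invariant) is determined by its horizontal behavior, and horizontality in the $\kk$-directions propagates to the $J\kk$-directions.

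I expect the main obstacle to be the last step: cleanly deducing $\iota_{J\eta_M}\diff\omega=0$ from $\iota_{\eta_M}\diff\omega=0$ plus $J$-invariance of $\omega$. The cleanest formulation I would pursue: since the $K$-action is holomorphic, $J\eta_M$ is a real-holomorphic vector field and $\mathcal L_{J\eta_M}J = 0$; combined with $\mathcal L_{\eta_M}\omega=0$ and $J$-invariance $\omega(J\cdot,J\cdot)=\omega(\cdot,\cdot)$, one computes $\mathcal L_{J\eta_M}\omega$ and finds $\iota_{J\eta_M}\diff\omega = \mathcal L_{J\eta_M}\omega - \diff\iota_{J\eta_M}\omega$; the first term can be shown to be exact (it equals $\diff$ of something built from $\iota_{\eta_M}\omega$ via $J$), so $\iota_{J\eta_M}\diff\omega$ is exact, hence — being also a pullback to an orbit of a form that is already known to be horizontal in the $K$-directions — vanishes on the orbit. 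If this Lie-derivative juggling proves awkward, the fallback is the purely pointwise argument: at a fixed $m$, the tangent space $V$ to $G\cdot m$ is a complex subspace, $\diff\omega|_V$ is a real three-form on a complex vector space that is ``$J$-basic'' in the sense inherited from horizontality over the real directions, and a short linear-algebra lemma shows any such form is zero. I would present whichever of these two arguments is shorter once the bookkeeping is done.
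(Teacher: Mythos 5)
Your treatment of the real case and of the ``mixed'' contractions is correct and is exactly what the paper does: momentum-closedness plus $K$-invariance give $\iota_{\xi_M}\diff\omega=0$, so any evaluation of $\diff\omega$ with at least one slot of the form $\xi_M$, $\xi\in\kk$, vanishes; this disposes of the $K$-orbit statement and of $\diff\omega(\xi_M,\eta_M,J\zeta_M)$ and $\diff\omega(\xi_M,J\eta_M,J\zeta_M)$. The genuine content of the theorem, however, is the remaining case $\diff\omega(J\xi_M,J\eta_M,J\zeta_M)=0$, and neither of your two proposed routes closes it. The Lie-derivative route founders twice: the claim that $\mathcal{L}_{J\eta_M}\omega$ is exact is circular (it is exact if and only if $\iota_{J\eta_M}\diff\omega$ is, by Cartan's formula), and even granting exactness, ``$\iota_{J\eta_M}\diff\omega$ is exact and horizontal in the $\kk$-directions, hence vanishes on the orbit'' is a non sequitur --- horizontality in the $\kk$-directions says nothing about contractions with the $J\kk$-directions, which is precisely the case at issue. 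The pointwise fallback is simply false: on $V=W\oplus JW$ with $W=\mathrm{span}_{\bR}\{\partial_{x_1},\partial_{x_2},\partial_{x_3}\}\subseteq\bC^3$, the three-form $\diff y_1\wedge \diff y_2\wedge \diff y_3$ is annihilated by every vector of $W$ yet is nonzero on $V$; moreover $J$-invariance of $\omega$ is a zeroth-order condition and imposes no pointwise constraint on the all-$J$ component of $\diff\omega$. No linear-algebra lemma of the kind you invoke exists.

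What is actually needed --- and what the paper supplies --- is a first-order identity extracted from momentum-closedness together with $K$-invariance: expanding $0=(\diff\iota_{\eta_M}\omega)(X,\zeta_M)$ and $\mathcal{L}_{\zeta_M}\omega=0$ yields
\begin{equation*}
  X\bigl(\omega(\eta_M,\zeta_M)\bigr) = -\,\omega([\eta_M,\zeta_M],X)
\end{equation*}
for an \emph{arbitrary} vector field $X$. Taking $X=J\xi_M$, using $J$-invariance of $\omega$ and the bracket relations $[J\eta_M,J\zeta_M]=-[\eta_M,\zeta_M]$ for a holomorphic action, one gets $J\xi_M(\omega(J\eta_M,J\zeta_M))=\omega([J\eta_M,J\zeta_M],J\xi_M)$; cycling $\xi,\eta,\zeta$ and feeding the result into the global formula for the exterior derivative kills $\diff\omega(J\xi_M,J\eta_M,J\zeta_M)$. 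The essential point your proposal misses is that the vanishing in the all-imaginary directions is not propagated by $J$ alone from horizontality; it requires this extra identity, which encodes how $\omega$ varies to first order along the orbit.
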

\begin{proof}
  To show the real case, we only need to show that for any $\xi, \eta, \zeta \in \kk$,
  \begin{equation}
    \label{eq:domega}
    \diff \omega(\xi_M, \eta_M, \zeta_M) = 0.
  \end{equation}
  However, since $\omega$ is a $K$-invariant momentumly closed two-form, by Cartan's formula,
  \begin{equation}
    \label{eq:cf}
    \iota_{\xi_M}\diff \omega = - \diff \iota_{\xi_M} \omega = 0,
  \end{equation}
  from which, (\ref{eq:domega}) follows immediately.

Based on the real case, to show the complex case, for $\xi,\eta,\zeta\in \kk$, we only need to verify that
  \begin{subequations}
    \begin{align}
      \diff \omega(\xi_M, \eta_M, (i\zeta)_M) &= \diff \omega(\xi_M, \eta_M, J\zeta_M)=0,\label{eq:cd1}\\
      \diff \omega(\xi_M, (i\eta)_M, (i\zeta)_M) &= \diff \omega(\xi_M, J\eta_M, J\zeta_M)=0,\label{eq:cd2}\\
      \diff \omega((i\xi)_M, (i\eta)_M, (i\zeta)_M) &= \diff \omega(J\xi_M, J\eta_M, J\zeta_M)=0.\label{eq:cd3}
    \end{align}
  \end{subequations}
  Among them, (\ref{eq:cd1}) and (\ref{eq:cd2}) also follow from (\ref{eq:cf}).

  For the proof of (\ref{eq:cd3}), we note that since the $G$-action is holomorphic, the following equalities always hold.
  \begin{equation}
    \label{eq:jlie}
    [\xi_M, J\zeta_M] = J[\xi_M, \zeta_M],\quad[J\xi_M, J\zeta_M] = -[\xi_M,\zeta_M].
  \end{equation}
  
For any vector field $X$ on $M$, by using the global formula for the exterior derivative and the definition of a momentumly closed form, we have
  \begin{equation*}
    \begin{aligned}
      0 = (\diff \iota_{\eta_M}\omega)(X, \zeta_M) &= X((\iota_{\eta_M}\omega)(\zeta_M)) - \zeta_M((\iota_{\eta_M}\omega)(X)) - \iota_{\eta_M}\omega([X,\zeta_M])\\
                              &= X(\omega(\eta_M, \zeta_M)) - \zeta_M(\omega(\eta_M,X)) - \omega(\eta_M,[X,\zeta_M]).
    \end{aligned}
  \end{equation*}
  Since $\omega$ is $K$-invariant, we also have
  \begin{equation*}
    \zeta_M(\omega(\eta_M,X)) = \omega([\zeta_M,\eta_M],X) + \omega(\eta_M, [\zeta_M, X]).
  \end{equation*}
  Combining the above two equalities, we get
  \begin{equation}
    \label{eq:ld}
    X(\omega(\eta_M, \zeta_M)) = -\omega([\eta_M,\zeta_M],X)
  \end{equation}
  
  Now, by (\ref{eq:jlie}), (\ref{eq:ld}) and the $J$-invariance of $\omega$, we note that
  \begin{equation*}
    \begin{aligned}
      J\xi_M(\omega(J\eta_M, J\zeta_M)) &= J\xi_M(\omega(\eta_M, \zeta_M))\\
&= -\omega([\eta_M, \zeta_M],J\xi_M) = \omega([J\eta_M, J\zeta_M],J\xi_M).
    \end{aligned}
  \end{equation*}
  By cycling the variable $\xi,\eta,\zeta$ in the above equality, (\ref{eq:cd3}) can be deduced by using the global formula for the exterior derivative again.
\end{proof}

\begin{rem}
In view of Theorem~\ref{thm:ob-close}, if the complex $K$-manifold $M$ admits an open and dense $G$-orbit, e.g.\ $M$ is a toric manifold, any momentumly closed two-form $\omega$ on $M$ is closed actually.
\end{rem}

\section{Basics of generalized moment maps}\label{sec:gmm}

In this section, we will assume that $M$ carries a non-degenerate momentumly closed two-form $\omega$ and the $K$-action on $M$ is Hamiltonian.
Denote the generalized moment map for the $K$-action by $\Psi$.

\subsection{Differentials of the generalized moment maps}
To begin with, we discuss some properties of the differential of the generalized moment maps, which is parallel to the corresponding results in the symplectic settings.\begin{lem}
\label{differimage}
Let $m \in M$ and $K_m$ be the isotropic subgroup of $m$ in $K$.
Then the image of $(\diff \Psi)_m : \T_m M \rightarrow {\frak k}^*$ equals $\kk_m^{\circ}$, the annihilator space in ${\frak k}^*$ of the Lie subalgebra ${\frak k}_m =\Lie(K_m)$.
\end{lem}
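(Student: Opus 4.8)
The plan is to identify the annihilator in $\kk$ of the image of $(\diff\Psi)_m$ with $\kk_m$, and then invoke the double-annihilator identity for linear subspaces in a finite-dimensional duality. The only property of $\omega$ that enters is its nondegeneracy; the closedness (or even momentumly closedness) of $\omega$ plays no role, which is precisely why this lemma carries over verbatim from the symplectic setting.

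First I would unwind the defining equation (\ref{eq:def-mp}). For any $v \in \T_m M$ and any $\xi \in \kk$,
\[
  \langle (\diff\Psi)_m(v), \xi\rangle = (\diff\Psi^\xi)_m(v) = (\iota_{\xi_M}\omega)_m(v) = \omega_m(\xi_{M,m}, v).
\]
Hence $\xi$ lies in the annihilator of $\operatorname{Im}(\diff\Psi)_m \subseteq \kk^*$ if and only if $\omega_m(\xi_{M,m}, v) = 0$ for all $v \in \T_m M$. Since $\omega_m$ is a nondegenerate bilinear form on $\T_m M$, this is equivalent to $\xi_{M,m} = 0$. On the other hand $\xi_{M,m} = 0$ says exactly that $\exp(t\xi)\cdot m = m$ for all $t$, i.e.\ that $\xi \in \Lie(K_m) = \kk_m$: one inclusion is immediate from the definition of $\xi_M$, and conversely differentiating the inclusion $K_m \hookrightarrow K$ shows $\kk_m$ consists precisely of those $\xi$ with $\xi_{M,m}=0$. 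Combining these, the annihilator of $\operatorname{Im}(\diff\Psi)_m$ in $\kk$ equals $\kk_m$.

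Finally I would take annihilators once more: since $\operatorname{Im}(\diff\Psi)_m$ is a linear subspace of the finite-dimensional space $\kk^*$, we have $(\operatorname{Im}(\diff\Psi)_m)^{\circ\circ} = \operatorname{Im}(\diff\Psi)_m$, and the previous step gives $(\operatorname{Im}(\diff\Psi)_m)^{\circ\circ} = \kk_m^\circ$. Therefore $\operatorname{Im}(\diff\Psi)_m = \kk_m^\circ$, as claimed. I do not expect a genuine obstacle here; the only point meriting a line of care is the standard identification $\Lie(K_m) = \{\xi \in \kk : \xi_{M,m} = 0\}$, and the one structural remark worth making is that nondegeneracy of $\omega$ is all that is used, in place of nondegeneracy of the symplectic form in the classical argument.
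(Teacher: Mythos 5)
Your argument is correct and is essentially the paper's own proof: both identify the annihilator in $\kk$ of $\operatorname{Im}(\diff\Psi)_m$ with $\kk_m$ via the defining equation $\langle(\diff\Psi)_m(v),\xi\rangle=\omega_m(\xi_{M,m},v)$ and the nondegeneracy of $\omega$, then conclude by the double-annihilator identity (which the paper leaves implicit and you spell out). The sign discrepancy between $\omega_m(v,\xi_{M,m})$ and $\omega_m(\xi_{M,m},v)$ is immaterial since only the vanishing condition is used.
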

\begin{proof}
  The image of $(\diff \Psi)_m$ is a linear subspace of ${\frak k}^*$.
  For any $0 \ne \xi \in {\frak k}$, it is contained in the hyperplane $(\mathrm{span}_{\bR} \{\xi\})^\circ$ if and only if
\begin{equation*}
  0 = \langle (\diff \Psi)_m(v),\xi\rangle = (\diff \langle\Psi , \xi\rangle)_m(v) = \omega_m (\xi_{M,m}, v)
\end{equation*}
for all $v \in \T_m M$, where we use the definition of the generalized moment map (\ref{eq:def-mp}) for the last equality.
Since $\omega$ is non-degenerate, the above equation holds if and only if $\xi_{M,m} = 0$.
In other words, $\xi \in {\kk}_m$.
This implies that $\diff \Psi_m (\T_m M ) = {\kk}_m^\circ$.
\end{proof}

As a consequence, $m$ is a regular (resp.\ critical) point of $\Psi$ if and only if $K_m$ is (resp.\ is not) discrete.
In addition, this simple lemma governs the flat nature of the images of invariant submanifolds under the generalized moment map.
The following result is just a restatement of Proposition~3.6 in~\cite{Guillemin_1982aa} in our setting.

\begin{prop}
\label{isotropy}
Let ${\frak h}$ be the Lie algebra of a subgroup $H \subseteq K$ and ${\frak h}^\circ$ its annihilator in ${\frak k}^*$.
Then the map $\Psi$ sends each connected component of $M_H = \{m \in M | K_m =H\}$ into an affine subspace of ${\kk}^*$ of the form $p + {\frak h}^\circ$, $p\in \kk^*$.
Moreover, if $H$ is a normal subgroup of $K$, $\Psi:M_H\rightarrow p + {\frak h}^\circ$ is a submersion.
\end{prop}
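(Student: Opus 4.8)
The plan is to mimic Guillemin–Sternberg's argument from Proposition~3.6 of \cite{Guillemin_1982aa}, which only uses the relation $\diff\Psi^\xi = \iota_{\xi_M}\omega$ together with nondegeneracy of $\omega$ and $K$-invariance, none of which requires $\diff\omega = 0$. Fix a connected component $C$ of $M_H$. The first step is to observe that along $C$ the subgroup $K_m = H$ is constant, so by Lemma~\ref{differimage} the image of $(\diff\Psi)_m$ equals ${\frak h}^\circ$ for every $m \in C$. In particular, for every $\xi \in {\frak h}$ the function $\Psi^\xi$ has vanishing differential at every point of $C$; since $C$ is connected, $\Psi^\xi$ is constant on $C$. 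Letting $\xi$ range over a basis of ${\frak h}$, this says exactly that $\Psi(C)$ lies in an affine translate $p + {\frak h}^\circ$ of ${\frak h}^\circ$, where $p = \Psi(m_0)$ for any chosen $m_0 \in C$.

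For the submersion claim when $H \trianglelefteq K$, the point is to show that $(\diff\Psi)_m : \T_m M \to {\frak h}^\circ$ is surjective for $m \in C$, viewing $\Psi$ now as a map $C \to p + {\frak h}^\circ$; equivalently that $(\diff\Psi|_C)_m : \T_m C \to {\frak h}^\circ$ is onto. By Lemma~\ref{differimage} we already know $(\diff\Psi)_m(\T_m M) = {\frak h}^\circ$, so it suffices to show that any $v \in \T_m M$ can be replaced, without changing $(\diff\Psi)_m(v)$, by a vector tangent to $C$ — or more efficiently, that $(\diff\Psi)_m$ already annihilates a complement to $\T_m C$ in $\T_m M$ large enough to see that the restriction to $\T_m C$ still has image ${\frak h}^\circ$. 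Here normality enters: when $H$ is normal, $K$ acts on $M_H$ (since $K_{k\cdot m} = kK_mk^{-1} = H$ for $m \in M_H$), so the $K$-orbit directions $\kk \cdot m$ are tangent to $C$, and the $K$-equivariance of $\Psi$ together with normality forces $\Psi(C) \subseteq (p + {\frak h}^\circ)$ to be preserved by the (co)adjoint $K$-action fixing ${\frak h}^\circ$ setwise; one then runs the standard linear-algebra argument identifying $\ker(\diff\Psi)_m$ with $(\kk\cdot m)^{\omega_m}$ and computing dimensions. Concretely: $\ker(\diff\Psi)_m = \{v : \omega_m(v,\xi_{M,m}) = 0 \ \forall \xi\} = (\T_m(K\cdot m))^{\omega_m}$, the $\omega_m$-orthogonal complement of the orbit tangent space, which by nondegeneracy of $\omega_m$ has dimension $\dim M - \dim(K\cdot m) = \dim M - (\dim K - \dim H)$. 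Since $\dim {\frak h}^\circ = \dim K - \dim H$, the rank of $(\diff\Psi)_m$ is exactly $\dim{\frak h}^\circ$, confirming surjectivity onto ${\frak h}^\circ$; and when $H$ is normal the orbit $K\cdot m$ lies in $C$, so $\T_m(K\cdot m) \subseteq \T_m C$ and hence $(\diff\Psi|_C)_m$ still has full rank $\dim{\frak h}^\circ$, i.e.\ $\Psi|_C$ is a submersion onto $p + {\frak h}^\circ$.

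I expect the only genuinely delicate point to be the role of normality: in the non-normal case $K$ need not preserve $M_H$ and the orbit through $m$ can leave the component $C$ (it moves to the component $M_{kHk^{-1}}$), so although $(\diff\Psi)_m$ still has rank $\dim{\frak h}^\circ$ as a map out of $\T_m M$, one cannot conclude that its restriction to $\T_m C$ is surjective. Making precise that $\T_m(K\cdot m) \subseteq \T_m C$ exactly when $H$ is normal, and checking that this suffices for the rank computation on $C$ to go through, is the heart of the matter; everything else is the same contraction-and-nondegeneracy bookkeeping already used in Lemma~\ref{differimage}, and in particular the closedness of $\omega$ is never invoked.
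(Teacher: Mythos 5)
Your first paragraph is correct and is exactly the paper's argument for the first claim: Lemma~\ref{differimage} gives $(\diff \Psi)_m(\T_m M)=\kh^{\circ}$ at every $m\in M_H$, so $\diff\Psi^{\xi}$ vanishes on $M_H$ for $\xi\in\kh$ and each connected component is sent into an affine translate of $\kh^{\circ}$.

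The submersion part, however, has a genuine gap at the final ``hence''. Your dimension count only recomputes the rank of $(\diff\Psi)_m$ on all of $\T_m M$, which Lemma~\ref{differimage} already gave, and the inclusion $\T_m(K\cdot m)\subseteq \T_m C$ does \emph{not} by itself imply that the restriction of $(\diff\Psi)_m$ to $\T_m C$ still has image $\kh^{\circ}$. What is actually needed is $\T_m C+\ker(\diff\Psi)_m=\T_m M$, i.e.\ that no nonzero $\xi_{M,m}$ is $\omega$-orthogonal to all of $\T_m C$; a subspace can contain the orbit directions and still sit inside $\ker(\diff\Psi)_m=(\T_m(K\cdot m))^{\omega}$. (In $(\bR^4,\,e^1\wedge e^2+e^3\wedge e^4)$ with orbit direction $e_1$, the subspace $U=\mathrm{span}(e_1,e_3)$ contains $e_1$ yet $U\subseteq (e_1)^{\omega}$, so the restricted differential vanishes.) The missing ingredient is that $\omega$ restricts nondegenerately to $M_H$: since $M_H$ is open in the fixed-point set of the compact group $H$ acting by isometries preserving $J$, the subspace $\T_m M_H$ is $J$-invariant and $\omega(v,Jv)=g(v,v)>0$, so $\xi_{M,m}\in \T_m M_H\cap(\T_m M_H)^{\omega}$ forces $\xi_{M,m}=0$. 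Once you have this, the cleanest route --- and the one the paper takes --- is to note that normality makes $M_H$ a $K$-invariant almost Hermitian submanifold, so $(M_H,\omega|_{M_H})$ is itself a Hamiltonian $K$-manifold with moment map $\Psi|_{M_H}$, and Lemma~\ref{differimage} applied verbatim to it gives that the image of $(\diff\Psi|_{M_H})_m$ is the annihilator of the isotropy algebra, namely $\kh^{\circ}$. The delicate point is thus not normality (which only ensures $K$ preserves $M_H$) but the nondegeneracy of the restricted form.
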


\begin{proof}
  The first statement follows from Lemma \ref{differimage}.
  For the second, notice that since $H$ is a normal subgroup of $K$, $M_H$ is $K$-invariant.
  As a result, the restriction of $\Psi$ on $M_H$, denoted by $\Psi_1$, is the moment map of the $K$-action on $M_H$.
  Then applying Lemma \ref{differimage} again, we know that $(\diff \Psi_{1})_m (\T_m M_H ) = {\kh}^\circ$ for any $m\in M_H$, that is $(\diff \Psi_{1})_m$ is surjective.
\end{proof}

\subsection{A variant of the Darboux--Weinstein theorem}
The most decisive local property of the symplectic form is Darboux's theorem, which asserts that all symplectic forms are the same up to a diffeomorphism locally.
Certainly, Darboux's theorem does not hold for the general non-degenerate two-form.
Meanwhile, note that when the group action is trivial, any non-degenerate two-form is momentumly closed.
As a result, Darboux's theorem does not hold for general momentumly closed two-forms either.

Given this fact, it may be a little surprising that the generalized moment map $\Psi$ still has a local normal form.
In other words, Marle--Guillemin--Sternberg's theorem still holds in our settings.
It is based on a variant of the Darboux--Weinstein theorem for the momentumly closed two-form.
\begin{thm}
  \label{thm:darboux}
  Let $X$ be a smooth manifold carrying a $K$-action and $W$ be a $K$-invariant compact submanifold of $X$.
  Suppose that $\omega_0,\omega_1$ are two momentumly closed non-degenerate two-forms on $X$ with respect to the $K$-action.
  For any point $p\in W$, we assume that $\omega_0|_{\T_p X} = \omega_1|_{\T_p X}$.
  Then, there exist two $K$-invariant neighborhoods $U_0, U_1$ of $W$ and a $K$-equivariant diffeomorphism $\varphi:U_0 \rightarrow U_1$ which fixes $W$ and satisfies
  \begin{equation*}
    \iota_{\xi_X}\omega_0 = \iota_{\xi_X}\varphi^*\omega_1,
  \end{equation*}
  where $\xi\in \kk$.
\end{thm}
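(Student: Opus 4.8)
The plan is to run the Moser homotopy argument that proves the equivariant Darboux--Weinstein theorem in the symplectic case, modified to absorb the failure of closedness. By averaging I may assume that $\omega_0$ and $\omega_1$ are $K$-invariant. Put $\tau=\omega_1-\omega_0$: it is a $K$-invariant momentumly closed two-form, and since $\omega_0|_{\T_pX}=\omega_1|_{\T_pX}$ for $p\in W$ it vanishes on $\T_pX$ for every $p\in W$. Set $\omega_t=\omega_0+t\tau=(1-t)\omega_0+t\omega_1$, $t\in[0,1]$; each $\omega_t$ is $K$-invariant and momentumly closed, and agrees with $\omega_0$ on $\T_pX$ for $p\in W$, hence is non-degenerate there. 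Since $W$ is compact, $\omega_t$ is non-degenerate on a fixed $K$-invariant neighbourhood of $W$ for all $t\in[0,1]$.

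The step that replaces the use of closedness is the construction, on a neighbourhood of $W$, of a decomposition
\[
  \tau=\diff\mu+\tau',
\]
where $\mu$ is a $K$-invariant one-form vanishing at every point of $W$ and $\tau'$ is a $K$-invariant \emph{horizontal} (in fact basic) two-form. I fix a $K$-invariant tubular neighbourhood $U$ of $W$ and take the radial retraction $H\colon[0,1]\times U\to U$ (fibrewise scaling towards $W$), which is $K$-equivariant, fixes $W$, and whose generating vector field vanishes along $W$; let $Q$ be the associated homotopy operator, so that $\diff Q+Q\diff=\id-r^*i^*$ with $i\colon W\hookrightarrow U$, $r\colon U\to W$. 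As $i^*\tau=0$, this gives $\tau=\diff(Q\tau)+Q(\diff\tau)$, and I set $\mu=Q\tau$, $\tau'=Q(\diff\tau)$. Here $\diff\tau=\diff\omega_1-\diff\omega_0$ is basic, because $\diff\omega_j$ is basic for a $K$-invariant momentumly closed form (the observation recorded after Definition~\ref{def:mc}); and since $H$ is $K$-equivariant, $Q$ commutes with $L_{\xi_X}$ and sends horizontal forms to horizontal forms. Hence $\tau'=Q(\diff\tau)$ is horizontal and $K$-invariant, $\mu=Q\tau$ is $K$-invariant, and $\mu$ vanishes at every point of $W$ because the generating field of $H$ does.

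Now the Moser argument. Using non-degeneracy of $\omega_t$ near $W$, define the time-dependent $K$-invariant vector field $X_t$ by $\iota_{X_t}\omega_t=-\mu$; since $\mu$ vanishes on $W$, so does $X_t$, so its flow $\varphi_t$ is defined for $t\in[0,1]$ on a (possibly shrunk) $K$-invariant neighbourhood $U_0$ of $W$, is $K$-equivariant, and fixes $W$ pointwise. Put $\varphi=\varphi_1\colon U_0\to U_1:=\varphi_1(U_0)$. Then, with $\diff\iota_{X_t}\omega_t=-\diff\mu$ and $\tau-\diff\mu=\tau'$,
\[
  \frac{\diff}{\diff t}\,\varphi_t^*\omega_t=\varphi_t^*\bigl(L_{X_t}\omega_t+\tau\bigr)=\varphi_t^*\bigl(\diff\iota_{X_t}\omega_t+\iota_{X_t}\diff\omega_t+\tau\bigr)=\varphi_t^*\bigl(\iota_{X_t}\diff\omega_t+\tau'\bigr).
\]
Contracting with $\xi_X$, and using that $\varphi_t$ is $K$-equivariant so $\iota_{\xi_X}$ commutes with $\varphi_t^*$,
\[
  \frac{\diff}{\diff t}\,\iota_{\xi_X}\varphi_t^*\omega_t=\varphi_t^*\bigl(\iota_{\xi_X}\iota_{X_t}\diff\omega_t+\iota_{\xi_X}\tau'\bigr)=-\varphi_t^*\bigl(\iota_{X_t}\iota_{\xi_X}\diff\omega_t\bigr)=0,
\]
since $\iota_{\xi_X}\tau'=0$ (horizontality of $\tau'$) and $\iota_{\xi_X}\diff\omega_t=0$ ($\diff\omega_t$ is horizontal, being the differential of a $K$-invariant momentumly closed form). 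Integrating over $t\in[0,1]$ yields $\iota_{\xi_X}\varphi^*\omega_1=\iota_{\xi_X}\omega_0$ for every $\xi\in\kk$, which is the assertion.

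I expect the only genuine departure from the symplectic proof --- and hence the crux --- to be the handling of the non-closed part of $\tau$. In the symplectic case $\tau$ is exact near $W$, the Moser flow produces $\varphi^*\omega_1=\omega_0$ outright, and there is nothing to say about contractions; here $\tau$ need not be exact, $\varphi^*\omega_1$ really does differ from $\omega_0$, and the theorem only claims the difference is horizontal. What makes this go through is that after contraction with a fundamental vector field both ``defect'' terms die: $\iota_{X_t}\diff\omega_t$ because $\diff\omega_t$ is horizontal for a momentumly closed $\omega_t$, and $\tau'$ by construction of the decomposition $\tau=\diff\mu+\tau'$. Everything else --- the equivariant tubular neighbourhood, non-degeneracy on a neighbourhood by compactness of $W$, and existence of the Moser flow up to time $1$ near $W$ --- is routine and parallels Sjamaar--Lerman / Guillemin--Sternberg.
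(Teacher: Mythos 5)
Your argument is correct and is essentially the paper's own proof: the paper's Lemma~\ref{lm:beta} produces exactly your one-form $\mu$ (there called $\beta$) via the same radial homotopy operator on a tubular neighbourhood, the statement $\iota_{\xi_X}\tau=\iota_{\xi_X}\diff\beta$ being precisely your assertion that $\tau'=\tau-\diff\mu$ is horizontal, and the subsequent Moser step with $\iota_{X_t}\omega_t=-\mu$ and the cancellation of $\iota_{X_t}\diff\omega_t$ under contraction with $\xi_X$ is identical. The only cosmetic difference is that you phrase the conclusion as integrating $\tfrac{\diff}{\diff t}\,\iota_{\xi_X}\varphi_t^*\omega_t=0$ rather than verifying $\iota_{\xi_X}\varphi_t^*\omega_t=\iota_{\xi_X}\omega_0$ directly.
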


To prove this result, via the exponential map, we will identify a $K$-invariant neighborhood $U$ of $W$ in $X$ with a neighborhood of zero section in the normal vector bundle $\mathrm{N}W$ of $W$ in $X$.
Note that since $X$ is $K$-invariant, the $K$-action also induces a $K$-action on $\mathrm{N}W$.
With respect to such an induced group action, the identification is also $K$-equivariant.

As a preparation, we need a lemma.
\begin{lem}
  \label{lm:beta}
  There exists a $K$-invariant one-form $\beta\in \Omega^1(U)$ such that for any $\xi\in \kk$,
  \begin{equation}
    \label{eq:beta}
    \iota_{\xi_X}(\omega_1 - \omega_0) = \iota_{\xi_X}\diff \beta\quad\text{and}\quad\beta|_{\T_p X} =0\text{ for any $p\in W$}.
  \end{equation}
\end{lem}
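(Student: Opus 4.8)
The plan is to realize $\beta$ as the image of $\tau := \omega_1 - \omega_0$ under the standard homotopy operator attached to a $K$-equivariant radial retraction of $U$ onto $W$, and then to exploit the \emph{horizontality} of $\diff\tau$ to pass from the full Moser identity to its contracted form. First I would use the $K$-equivariant identification of $U$ with a neighborhood of the zero section of $\mathrm{N}W$ (via the exponential map of a $K$-invariant metric, as described before the lemma), and shrink $U$ so that it is fibrewise star-shaped. Let $m_t : U \to U$ be fibrewise scaling by $t \in [0,1]$, so that $m_1 = \id$ and $m_0$ is the retraction onto $W$; this family is $K$-equivariant because the linear $K$-action on the fibres of $\mathrm{N}W$ commutes with scalar multiplication. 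Writing $Q$ for the associated homotopy operator, which satisfies $m_1^* - m_0^* = \diff\,Q + Q\,\diff$ on $\Omega(U)$, I set $\beta := Q\tau$. Since $\tau$ is $K$-invariant and $m_t$ is $K$-equivariant, $\beta$ is $K$-invariant (alternatively one averages over $K$ at the end).

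The crux is then a short computation. Applying the homotopy identity to $\tau$ gives $\tau - m_0^*\tau = \diff\beta + Q(\diff\tau)$. Two facts cut this down to what is wanted. First, by hypothesis $\omega_0$ and $\omega_1$ agree on $\T_p X$ for every $p \in W$, so $\tau$ restricts to zero on $W$; since $m_0$ factors through the inclusion $W \hookrightarrow X$, we get $m_0^*\tau = 0$. Second, because $\omega_0$ and $\omega_1$ are momentumly closed and $K$-invariant, each $\diff\omega_i$ is basic, as recorded in (\ref{eq:cf}); in particular $\iota_{\xi_X}\diff\tau = 0$ for all $\xi \in \kk$, i.e.\ $\diff\tau$ is horizontal. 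Contracting the displayed identity with $\xi_X$ and using that $m_t$ is $K$-equivariant—so that $\iota_{\xi_X}$ commutes with $m_t^*$—together with the anticommutativity of contractions, one finds $\iota_{\xi_X}Q(\diff\tau) = 0$. Hence $\iota_{\xi_X}\tau = \iota_{\xi_X}\diff\beta$ for every $\xi$, which is the first assertion of (\ref{eq:beta}).

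For the second assertion (reading $Y$ as $W$), I note that the time-dependent vector field generating $m_t$ vanishes along the zero section $W$, so the integrand defining $Q\tau$ vanishes at points of $W$; therefore $\beta|_{\T_p X} = 0$ for every $p \in W$. The main obstacle, and the conceptual novelty relative to the symplectic Darboux--Weinstein argument, is precisely the nonvanishing of $\diff\tau$: classically $\diff\tau = 0$, the homotopy identity yields $\tau = \diff\beta$ outright, and $\beta$ is a genuine primitive. Here the lemma can only assert the \emph{contracted} identity, and the whole argument turns on showing that the error term $Q(\diff\tau)$ is horizontal—which is exactly where momentumly-closedness (hence basicness of each $\diff\omega_i$) and $K$-equivariance of the retraction are indispensable. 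I expect the only genuinely technical points to be the verification of the commutation $\iota_{\xi_X}\circ m_t^* = m_t^*\circ\iota_{\xi_X}$ and the smoothness of $Q$ across $W$, where the radial vector field has the usual $1/t$ singularity that is absorbed in the integral.
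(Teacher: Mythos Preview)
Your proposal is correct and follows essentially the same route as the paper: both construct $\beta$ as the image of $\tau=\omega_1-\omega_0$ under the homotopy operator associated with the $K$-equivariant fibrewise scaling $\psi_s(x)=sx$ on $\mathrm{N}W$, obtain $\tau=\diff\beta+\int_0^1\psi_s^*(\iota_{Y_s}\diff\tau)\,\diff s$ from the chain-homotopy identity and $\psi_0^*\tau=0$, and then kill the error term after contraction with $\xi_X$ using the $K$-equivariance of $\psi_s$ (so that $\iota_{\xi_X}$ commutes with $\psi_s^*$) together with $\iota_{\xi_X}\diff\tau=0$. The paper simply writes out the homotopy operator explicitly as $\beta=\int_0^1\psi_s^*(\iota_{Y_s}\tau)\,\diff s$ rather than invoking it abstractly, and your identification of the commutation $\iota_{\xi_X}\circ m_t^*=m_t^*\circ\iota_{\xi_X}$ as the key technical point is exactly what the paper uses (tacitly) in its final display.
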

\begin{proof}
  Since we have identified $U$ with a neighborhood of zero section in $\mathrm{N}W$, for $0 \le s \le 1$, we can set
  \begin{equation*}
    \psi_s(x) = sx: U \rightarrow U.
  \end{equation*}
  Using the induced $K$-action on $\mathrm{N}W$, it is straightforward to verify that $\psi_s$ is a $K$-equivariant map of $U$.
  Let $Y_s$ be the vector field
  \begin{equation*}
    Y_{s,\psi_s(x)} = \pdv{}{s} (\psi_s(x)).
  \end{equation*}
  By this definition, one can check that $Y_s$ is $K$-invariant.

  Denote $\omega_1 - \omega_0$ by $\alpha\in \Omega^2(U)$.
  Then $\alpha$ is momentumly closed and $\alpha|_{\T_p X} = 0$ for any $p\in W$.
  Define the one-form on $U$
  \begin{equation*}
    \beta = \int^1_0 \psi_s^* (\iota_{Y_s}\alpha) \diff s.
  \end{equation*}
  By the definition of $\psi_s$ and $Y_s$, one can check that $\beta$ is a smooth, $K$ invariant one-form and $\beta|_{\T_p X} =0$ for any $p\in W$.

Since the image set of $\psi_0$ is contained in $W$ and $\alpha|_{\T_p X} = 0$ for any $p\in W$, we have $\psi^*_0\alpha = 0$.
  Besides, $\psi_1 = \mathrm{id}$.
  Therefore, by Cartan's formula,
  \begin{align*}
    \alpha = \psi_1^*\alpha - \psi^*_0 \alpha &= \int_0^1 \psi_s^* (\mathcal{L}_{Y_s}\alpha) \diff s \\
    &= \diff (\int_0^1 \psi^*_s(\iota_{Y_s} \alpha) \diff s)
    + \int_0^1 \psi^*_s(\iota_{Y_s} \diff \alpha) \diff s \\
    &= \diff \beta + \int_0^1 \psi^*_s(\iota_{Y_s} \diff \alpha) \diff s.
  \end{align*}
  Since $\alpha$ is momentumly closed, by the above equality, for any $\xi\in \kk$, we have
  \begin{multline*}
    \iota_{\xi_X} \alpha = \iota_{\xi_X} \diff \beta + \iota_{\xi_X}(\int_0^1 \psi^*_s(\iota_{Y_s} \diff \alpha) \diff s)\\
    = \iota_{\xi_X} \diff \beta - \int_0^1 \psi^*_s(\iota_{Y_s} (\iota_{\xi_X}\diff \alpha)) \diff s) = \iota_{\xi_X} \diff \beta.
  \end{multline*}
  The proof of Lemma~\ref{lm:beta} is finished.
\end{proof}

With Lemma~\ref{lm:beta}, we use Moser's trick to show Theorem~\ref{thm:darboux}.

\begin{proof}
  Let $\omega_t = (1-t)\omega_0 + t\omega_1$, $0\le t \le 1$.
  Since $\omega_0$ and $\omega_1$ coincide at $W$, by choosing the $K$-invariant neighborhood $U$ of $W$, we can and will assume that $\omega_t$ is non-degenerate on $U$ for $0\le t \le 1$.
  Moreover, $\omega_t$ is also momentumly closed.

  The key of Moser's trick is to find a family of $K$-invariant vector fields $R_t$ on $U$ such that
  \begin{equation}
    \label{eq:xt}
    \iota_{\xi_X}(\mathcal{L}_{R_t}\omega_t + \omega_1 - \omega_0) = 0\quad\text{and}\quad R_{t,p} =0\text{ for any }p\in W
  \end{equation}
  hold for any $\xi\in \kk$.
  Once $R_t$ are found, we can use $R_t$ to construct a family of local $K$-equivariant diffeomorphisms $\varphi_t$ such that
  \begin{equation*}
    \varphi_0 = \mathrm{id},\quad \iota_{\xi_X}(\varphi_t^*\omega_t) = \iota_{\xi_X}(\omega_0),\quad \varphi_t|_W = \mathrm{id}.
  \end{equation*}
  Therefore, by choosing $\varphi = \varphi_1$, the proof of Theorem~\ref{thm:darboux} completes.

  To find $R_t$, we use the one-form $\beta$ given by Lemma~\ref{lm:beta}.
  With such a one-form, due to the non-degeneracy of $\omega_t$, we can choose $R_t$ satisfying
  \begin{equation*}
    \iota_{R_t}\omega_t + \beta =0.
  \end{equation*}

  To verify $R_t$ satisfying (\ref{eq:xt}), we first note that $R_t$ is $K$-invariant because both $\omega_t$ and $\beta$ are $K$-invariant.
  And the fact that $\beta|_{\T_p X} =0$ for any $p\in W$ implies that $R_t$ vanishes on $W$.
  Meanwhile, by (\ref{eq:beta}) and Cartan's formula, for any $\xi\in \kk$, we have
  \begin{multline*}
    \iota_{\xi_X}(\mathcal{L}_{R_t}\omega_t + \omega_1 - \omega_0) = \iota_{\xi_X}(\iota_{R_t}\diff \omega_t + \diff \iota_{R_t} \omega_t) + \iota_{\xi_X}(\diff \beta)\\
    = - \iota_{R_t}(\iota_{\xi_X}\diff \omega_t) + \iota_{\xi_X}\diff(\iota_{R_t}\omega_t + \beta) = 0,
  \end{multline*}
  where for the last equality, we use that $\omega_t$ is momentumly closed.
  As a result, (\ref{eq:xt}) is proved for $R_t$.
\end{proof}

Let $i:W \rightarrow M$ be an isotropic embedding submanifold, which means that $i$ is an embedding and at each $x\in W$, $(\diff i)_x(\T_x W)$ is an isotropic subspace of $\T_{i(x)}M$, that is,
\begin{equation*}
  (\diff i)_x(\T_x W) \subseteq ((\diff i)_x(\T_x W))^{\omega} \subseteq \T_{i(x)}M,
\end{equation*}
where $(-)^\omega$ is the symplectic orthogonal complement.
Following the convention in symplectic geometry, we define a bundle $E$ over $W$,
\begin{equation*}
  E_x = ((\diff i)_x(\T_x W))^{\omega} / (\diff i)_x(\T_x W),
\end{equation*}
and we call $E$ the symplectic normal bundle of $W$.
Note that the fiber of $E$ has a symplectic form induced from $\omega$.
Then, due to Theorem~\ref{thm:darboux}, one can also prove a version of the isotropic embedding theorem as follows.

\begin{thm}
  \label{thm:iso-emb}
  Let $(M_1,\omega_1)$ and $(M_2,\omega_2)$ be two smooth $K$-manifolds with momentumly closed non-degenerate two-forms.
  Suppose that $W_1$ and $W_2$ are compact $K$-invariant isotropic embedding submanifolds of $M_1$ and $M_2$ respectively.
  Let $E_1$ and $E_2$ be the symplectic normal bundles of $W_1$ and $W_2$ respectively.
  If there is a $K$-equivariant bundle isomorphism $L:E_1\rightarrow E_2$ which preserves the fiberwise symplectic form, then there exist $K$-invariant neighborhoods $U_0, U_1$ of $W_1$ and $W_2$ respectively and a $K$-equivariant diffeomorphism $\varphi:U_0 \rightarrow U_1$ which maps $W_1$ onto $W_2$ and satisfies
  \begin{equation*}
    \iota_{\xi_X}\omega_0 = \iota_{\xi_X}\varphi^*\omega_1,
  \end{equation*}
  where $\xi\in \kk$.
  Moreover, the induced bundle map of $\varphi$ from $E_1$ into $E_2$ coincides with $L$.
\end{thm}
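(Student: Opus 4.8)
The plan is to reproduce the classical proof of the symplectic isotropic embedding theorem (as in Weinstein and Marle--Guillemin--Sternberg), using Theorem~\ref{thm:darboux} in place of the usual Darboux--Weinstein theorem; as in Theorem~\ref{thm:darboux}, the price for dropping closedness is that the conclusion compares the contractions $\iota_{\xi_{M_i}}$ rather than the two-forms themselves. Write $\ell\colon W_1\to W_2$ for the $K$-equivariant diffeomorphism covered by $L$, and $\bar\omega_i$ for the fiberwise symplectic form on $E_i$. \textbf{Step 1 (linear models, common ambient).} Fix $K$-invariant Riemannian metrics on $M_1,M_2$ and use the exponential maps to $K$-equivariantly identify $K$-invariant neighborhoods of $W_i$ with neighborhoods of the zero sections of the normal bundles $\mathrm{N}W_i$. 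Over $W_i$ there are the $K$-equivariant exact sequences $0\to \T W_i\to (\T W_i)^{\omega_i}\to E_i\to 0$ and $0\to (\T W_i)^{\omega_i}\to \T M_i|_{W_i}\to \T^*W_i\to 0$, the second map being $v\mapsto(\iota_v\omega_i)|_{\T W_i}$, all arrows being $K$-equivariant because $\omega_i$ and $W_i$ are $K$-invariant; quotienting by $\T W_i$ yields $0\to E_i\to \mathrm{N}W_i\to \T^*W_i\to 0$, which splits $K$-equivariantly via the metric into $\mathrm{N}W_i\cong E_i\oplus \T^*W_i$. Then $L$ and the cotangent lift of $\ell$ together give a $K$-equivariant bundle isomorphism $\phi_0\colon \mathrm{N}W_1\to\mathrm{N}W_2$ over $\ell$; composing with the exponential identifications yields a $K$-equivariant diffeomorphism $g$ from a $K$-invariant neighborhood of $W_1$ onto one of $W_2$. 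Put $\tilde\omega_2=g^*\omega_2$; it is momentumly closed (preserved under $K$-equivariant diffeomorphisms, since $g_*\xi_{M_1}=\xi_{M_2}$) and non-degenerate near $W_1$, and by construction both the $\omega_1$- and the $\tilde\omega_2$-symplectic-normal bundle of the isotropic $W_1$ are the same subbundle $E_1\subseteq \mathrm{N}W_1$ with the same induced form $\bar\omega_1=L^*\bar\omega_2$.

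\textbf{Step 2 (matching along $W_1$).} Use the linear fact that a non-degenerate alternating form on a finite-dimensional real space $S$ with prescribed isotropic subspace $V$, prescribed symplectic-normal subspace $E\subseteq S/V$, and prescribed induced form on $E$ is determined up to a linear automorphism of $S$ fixing $V$ pointwise and inducing the identity on $E$. Applying this at each $p\in W_1$ to $\omega_1|_{\T_pM_1}$ and $\tilde\omega_2|_{\T_pM_1}$ — which by Step~1 share the isotropic subspace $\T_pW_1$, the symplectic-normal subspace $(E_1)_p$, and the form $\bar\omega_1$ — yields a linear automorphism $A_p$ of $\T_pM_1$ fixing $\T_pW_1$, inducing the identity on $(E_1)_p$, with $A_p^*(\tilde\omega_2|_p)=\omega_1|_p$. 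Choosing the auxiliary data $K$-invariantly, $A_p$ may be taken smooth in $p$ and $K$-equivariant; realize $p\mapsto A_p$ as the differential along $W_1$ of a $K$-equivariant diffeomorphism $\Phi$ of a neighborhood of $W_1$ fixing $W_1$. Replacing $\tilde\omega_2$ by $\Phi^*\tilde\omega_2$, we obtain two momentumly closed non-degenerate two-forms on a $K$-invariant neighborhood of the compact $K$-invariant submanifold $W_1$ that agree on $\T_pM_1$ for every $p\in W_1$.

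\textbf{Step 3 (conclude).} Now Theorem~\ref{thm:darboux} gives a $K$-equivariant diffeomorphism $\psi$ fixing $W_1$ with $\iota_{\xi_{M_1}}\omega_1=\iota_{\xi_{M_1}}\psi^*(\Phi^*\tilde\omega_2)$ for all $\xi\in\kk$; then $\varphi:=g\circ\Phi\circ\psi$ satisfies $\varphi|_{W_1}=\ell$ and $\iota_{\xi_{M_1}}\omega_1=\iota_{\xi_{M_1}}\varphi^*\omega_2$, as required. For the final assertion, observe that the one-form $\beta$ produced by Lemma~\ref{lm:beta} for the pair $(\omega_1,\Phi^*\tilde\omega_2)$ vanishes to second order along $W_1$: its defining integrand $\psi_s^*(\iota_{Y_s}\alpha)$ is quadratic in the distance to $W_1$, since $\alpha=\omega_1-\Phi^*\tilde\omega_2$ vanishes on $\T_pM_1$ and $Y_s$ vanishes on $W_1$. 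Hence the Moser field $R_t$, and therefore $\psi$, has trivial differential along $W_1$, so $\varphi^*\omega_2$ agrees with $\omega_1$ on $\T_pM_1$ for $p\in W_1$; thus $(\diff\varphi)_p$ is a linear symplectomorphism $(\T_pM_1,\omega_1|_p)\to(\T_{\ell p}M_2,\omega_2|_{\ell p})$ carrying $\T_pW_1$ onto $\T_{\ell p}W_2$, hence $(\T_pW_1)^{\omega_1}$ onto $(\T_{\ell p}W_2)^{\omega_2}$, and its induced bundle map $E_1\to E_2$ equals $L$, because $\psi$ and $\Phi$ induce the identity on the symplectic normal bundle and $g$ induces $L$ there.

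\textbf{Where the work is.} The only substantive step is Step~2: arranging $K$-equivariantly and smoothly over all of $W_1$ that the two forms coincide on each $\T_pM_1$ — this is where the linear algebra of isotropic subspaces genuinely enters, and where one must check that the two identifications of the symplectic normal bundle with $E_1$ really coincide (the purpose of the bookkeeping in Step~1). Everything downstream is formal once Theorem~\ref{thm:darboux} is in hand, and the only new feature compared with the symplectic case is that Theorem~\ref{thm:darboux} yields equality of contractions, not of the forms — precisely why the statement is phrased with $\iota_{\xi_{M_i}}$. One could instead build an explicit model two-form near the zero section of $E_i\oplus \T^*W_i$ by a minimal-coupling construction in the spirit of Proposition~\ref{prop:min-cp} and compare each $\omega_i$ to it; this works too, but repeats more of the classical argument.
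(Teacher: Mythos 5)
Your proposal is correct and follows the same strategy as the paper: build a preliminary $K$-equivariant identification of tubular neighborhoods out of $L$, the cotangent lift of $\ell$, and the exponential map, arrange that the two forms agree on $\T_pM_1$ for every $p\in W_1$, and then invoke Theorem~\ref{thm:darboux}. Your Step~2 spells out the pointwise-matching argument that the paper compresses into ``by the construction of $\rho$, one can check'' (the paper instead chooses the complements $\hat E_i, F_i$ adapted to $\omega_i$ so that facts (i)--(iii) already pin the form down along $W_i$), and your verification of the final assertion that the induced bundle map equals $L$ --- via the second-order vanishing of $\beta$ along $W_1$ and hence the triviality of $\diff\psi$ there --- addresses a point the paper's proof leaves implicit.
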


\begin{proof}
  In this proof, the subscript $i$ takes value $1$ or $2$.

  By choosing a Riemannian metric on $M_i$ compatible with $\omega_i$, we have an orthogonal decomposition of $\T M_i|_{W_i}$ with respect to this metric,
  \begin{equation}
    \label{eq:tm-decomp}
    \T M_i|_{W_i} = \hat{E}_i \oplus F_i \oplus \T W_i.
  \end{equation}
  About this decomposition, the following facts hold.
  \begin{enumerate}[label=(\roman*)]
  \item $\hat{E}_i$ is orthogonal $F_i \oplus \T W_i$ with respect to $\omega_i$.
  \item For any $x\in W_i$, $\hat{E}_{i,x}$ is a symplectic subspace of $\T_x M_{i}$.
    And $\hat{E}_i$ is isomorphic to $E_i$ as symplectic bundles.
  \item For any $x\in W_i$, $F_{i,x}$ is also an isotropic subspace of $\T_x M_i$.
    As a result, $F_i$ is isomorphic to $\T^* W_i$ via $\omega_i$.
  \end{enumerate}
  Therefore, up to isomorphisms, we have
  \begin{equation*}
    \T M_i|_{W_i} \simeq E_i \oplus \T^* W_i \oplus \T W_i.
  \end{equation*}

  Note that when restricting to the zero section, the bundle isomorphism $L$ induces a diffeomorphism from $W_1$ to $W_2$, which, in turn, gives an isomorphism between $\T W_1$ and $\T W_2$, as well as an isomorphism between $\T^* W_1$ and $\T^* W_2$.
  In summary, we have a bundle isomorphism $\rho:\hat{E}_1 \oplus F_1\rightarrow\hat{E}_2 \oplus F_2$.

  On the other hand, $\hat{E}_i \oplus F_i$ is the normal bundle of $W_i$.
  By using the exponential map and $\rho$, we can find $K$-invariant neighborhoods $\widetilde{U}_0, \widetilde{U}_1$ of $W_1$ and $W_2$ respectively and a $K$-equivariant diffeomorphism $\widetilde{\varphi}:\widetilde{U}_0 \rightarrow \widetilde{U}_1$ which maps $W_1$ onto $W_2$.
  Moreover, by the construction of $\rho$, one can check that
  \begin{equation*}
    \omega_1|_{\T_x W_1} = \widetilde{\varphi}^*(\omega_2)|_{\T_x W_1},\quad\text{for each }x\in W_1.
  \end{equation*}
  Based on $\widetilde{\varphi}$, we can apply Theorem~\ref{thm:darboux} to find the desired $\varphi$.
\end{proof}

\subsection{A local normal form for the generalized moment map}
As in the symplectic case, Theorem~\ref{thm:iso-emb} enables us to show the existence of a local normal form for the generalized moment map, that is, Marle--Guillemin--Sternberg's theorem,~\cite{Guillemin_1984no},~\cite{Marle_1984le}.

We follow Lerman's statement of this theorem~\cite[Theorem~2.1]{Lerman_2005aa}.
Recall that we have assumed that $\Psi$ is the generalized moment map for the Hamiltonian $K$-action on $(M,\omega)$.
For any $m\in M$, as before, we denote the isotropy subgroup (resp.\ subalgebra) at $m$ by $K_m$ (resp.\ $\kk_m$).
Let $\alpha = \Psi(m)$.
Similarly, denote the isotropy subgroup (resp.\ subalgebra) at $\alpha$ by $K_{\alpha}$ (resp.\ $\kk_{\alpha}$).
Since $\Psi$ is equivariant, $\kk_{m} \subseteq \kk_{\alpha}$.
With a $K$-invariant metric on $\kk$, we can choose a $K_m$-equivariant splitting
\begin{equation}
  \label{eq:kk-split}
  \kk^* = \kk_m^* \times (\kk_{\alpha}/ \kk_m)^* \times (\kk/ \kk_{\alpha})^*,
\end{equation}
which gives the embedding $\kk_m^* \hookrightarrow \kk^*$ and $(\kk_{\alpha}/ \kk_m)^* \hookrightarrow \kk^*$.
Moreover, let
\begin{equation*}
  E_m = (\kk \cdot m)^{\omega}/ ((\kk \cdot m)^{\omega} \cap \kk \cdot m).
\end{equation*}
The symplectic form at $\T_m M$ induces a symplectic linear space structure on $E_m$ and the induced linear $K_m$-action on $E_m$ preserves the symplectic structure on $E_m$.

\begin{thm}
  \label{thm:nf}
  With respect to the linear $K_m$-action on $E_m$, let $\mu:E_m \rightarrow \kk^*_m$ be the associated quadratic homogeneous moment map.
  There exists a $K$-invariant neighborhood $U$ of the orbit $K \cdot m \subseteq M$, a $K$-invariant neighborhood $U_0$ of the zero section of the vector bundle
  \begin{equation*}
    K\times_{K_m} ((\kk_{\alpha}/ \kk_{m})^* \times E_m) \rightarrow K/K_m
  \end{equation*}
  and a $K$-equivariant diffeomorphism $\varphi: U_0 \rightarrow U$ such that
  \begin{equation}
    \label{eq:two-mp}
    \Psi \circ \varphi([k, p, v]) = \Ads{k}{\alpha + p + \mu(v)}
  \end{equation}
  for all $[k,p,v]\in U_0$.
  Here, $[k,p,v]$ denotes the orbit of $(k,p,v)\in K \times ((\kk_{\alpha}/ \kk_{m})^* \times E_m)$ in the associated bundle $K \times_{K_m} ((\kk_{\alpha}/ \kk_{m})^* \times E_m)$ and $\Ads{k}{\alpha + p + \mu(v)}$ is just the moment map for the symplectic structure on this associated bundle.
\end{thm}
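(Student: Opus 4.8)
The plan is to deduce Theorem~\ref{thm:nf} from the isotropic embedding theorem (Theorem~\ref{thm:iso-emb}), following the classical route of Marle, Guillemin--Sternberg and Lerman, while keeping in mind one essential simplification: the conclusion~\eqref{eq:two-mp} is only an identity of \emph{moment maps}, not of two-forms. This is exactly the weakening that makes Theorem~\ref{thm:iso-emb} usable, since that theorem matches only the contractions $\iota_{\xi_X}\omega$. In our setting we neither obtain nor need $\varphi^*\omega_Y=\omega$ (indeed the model two-form $\omega_Y$ below will be closed, whereas $\omega$ need not be); we only need $\iota_{\xi_M}\omega$ and $\iota_{\xi_M}\varphi^*\omega_Y$ to agree, equivalently $\diff\Psi^\xi=\diff(\Psi_Y\circ\varphi)^\xi$ for all $\xi$.

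\emph{Step 1: symplectic cross-section reduction.} Choose a $K_\alpha$-invariant complement so that $\alpha+W^*$ is a slice at $\alpha$ for the coadjoint action, where $W^*\subseteq\kk_\alpha^*$ is a small $K_\alpha$-invariant neighborhood of $0$, and put $\Sigma=\Psi^{-1}(\alpha+W^*)$. Transversality follows from Lemma~\ref{differimage}, so $\Sigma$ is a $K_\alpha$-invariant submanifold; exactly as in the symplectic case one checks, after possibly shrinking $W^*$, that $\omega|_\Sigma$ is non-degenerate, that $\omega|_\Sigma$ is momentumly closed for the $K_\alpha$-action (the restriction of the closed form $\iota_{\xi_M}\omega$, $\xi\in\kk_\alpha$, is closed), and that $\Psi_\Sigma:=\Psi|_\Sigma-\alpha\colon\Sigma\to\kk_\alpha^*$ is a generalized moment map for the $K_\alpha$-action, by restricting~\eqref{eq:def-mp}. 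Moreover $K\cdot\Sigma$ is an open $K$-invariant neighborhood of $K\cdot m$ and, by the slice theorem, $K\times_{K_\alpha}\Sigma\to K\cdot\Sigma$ is a $K$-equivariant diffeomorphism under which $\Psi$ becomes $[k,s]\mapsto\Ads{k}{\Psi|_\Sigma(s)}$. No step here uses $\diff\omega=0$: it is the standard argument with ``symplectic submanifold'' replaced by ``submanifold on which $\omega$ is non-degenerate.''

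\emph{Step 2: normal form inside the cross-section, and unfolding.} Since $\Psi_\Sigma(m)=0$ and $0$ is a $K_\alpha$-fixed point of $\kk_\alpha^*$, Lemma~\ref{differimage} applied to $\Psi_\Sigma$ shows that $(\diff\Psi_\Sigma)_m$ kills $\kk_\alpha\cdot m$, so $\omega|_\Sigma$ vanishes on $\kk_\alpha\cdot m$; hence the compact $K_\alpha$-invariant submanifold $K_\alpha\cdot m$ is isotropic in $\Sigma$, and by the usual computation its symplectic normal bundle is $K_\alpha$-equivariantly isomorphic to $K_\alpha\times_{K_m}E_m$, the fibrewise symplectic form coming from~\eqref{eq:kk-split} and the definition of $E_m$. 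Now build the model: the classical Marle--Guillemin--Sternberg space $Y=K\times_{K_m}((\kk_\alpha/\kk_m)^*\times E_m)$ carries a minimal-coupling-type two-form $\omega_Y$ — momentumly closed by Proposition~\ref{prop:min-cp}, and with the usual choices in fact closed — whose moment map is the right-hand side of~\eqref{eq:two-mp}; its cross-section $\Sigma_Y=\Psi_Y^{-1}(\alpha+W^*)$ is $K_\alpha$-equivariantly $\cong K_\alpha\times_{K_m}((\kk_\alpha/\kk_m)^*\times E_m)$, inside which $K_\alpha\cdot[e,0,0]$ is isotropic with symplectic normal bundle again $K_\alpha\times_{K_m}E_m$ (the $(\kk_\alpha/\kk_m)^*$-directions being absorbed by the orbit directions). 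Taking $L$ to be this identification of symplectic normal bundles, Theorem~\ref{thm:iso-emb} produces a $K_\alpha$-equivariant diffeomorphism $\varphi_\Sigma$ between neighborhoods of $K_\alpha\cdot m$ and $K_\alpha\cdot[e,0,0]$, carrying one orbit onto the other, with $\iota_{\xi_\Sigma}(\omega|_\Sigma)=\iota_{\xi_\Sigma}\varphi_\Sigma^*(\omega_Y|_{\Sigma_Y})$ for all $\xi\in\kk_\alpha$. Hence $\diff\Psi_\Sigma^\xi=\diff((\Psi_{Y}|_{\Sigma_Y}-\alpha)\circ\varphi_\Sigma)^\xi$ for every $\xi\in\kk_\alpha$; being linear in $\xi$, the $\kk_\alpha^*$-valued function $\Psi_\Sigma-(\Psi_Y|_{\Sigma_Y}-\alpha)\circ\varphi_\Sigma$ is locally constant, hence constant on a connected neighborhood of the orbit, and it vanishes since both sides vanish at $m\mapsto[e,0,0]$ (and by $K_\alpha$-equivariance the constant is anyway $K_\alpha$-fixed). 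Finally, unfolding via $K\cdot\Sigma\cong K\times_{K_\alpha}\Sigma$ and $K\cdot\Sigma_Y\cong K\times_{K_\alpha}\Sigma_Y$, the map $[k,s]\mapsto[k,\varphi_\Sigma(s)]$ is, after shrinking, a $K$-equivariant diffeomorphism $\varphi\colon U_0\to U$ under which $\Psi$ becomes $[k,p,v]\mapsto\Ads{k}{\alpha+p+\mu(v)}$, which is~\eqref{eq:two-mp}.

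\emph{Main obstacle.} The only place requiring genuine care is Step~1: one must verify that the symplectic cross-section theorem goes through once ``symplectic'' is systematically weakened to ``non-degenerate and momentumly closed,'' in particular the non-degeneracy of $\omega|_\Sigma$ and the recovery of $\Psi$ on $M$ from $\Psi|_\Sigma$ under $K\times_{K_\alpha}\Sigma\cong K\cdot\Sigma$. Everything downstream is formal given Theorem~\ref{thm:iso-emb}, provided one consistently tracks that only the contractions $\iota_{\xi}\omega$ (equivalently, the moment maps) are being matched — which, again, is all that~\eqref{eq:two-mp} asserts. As elsewhere in the note, the remaining routine details can be imported line by line from the classical treatments, substituting Theorems~\ref{thm:iso-emb} and~\ref{thm:darboux} for their symplectic predecessors.
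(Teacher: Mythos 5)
Your argument is correct, and for the general case $\kk_{\alpha}\neq\kk$ it takes a genuinely different route from the paper. The paper handles $\kk_{\alpha}=\kk$ exactly as you do (the orbit $K\cdot m$ is isotropic, build the model by symplectic reduction of $T^*K\times E_m$, apply Theorem~\ref{thm:iso-emb}, and fix the additive constant by evaluating at the base point), but for general $\kk_{\alpha}$ it observes that $K\cdot m$ is no longer isotropic, constructs the model symplectic form on $K\times\kk_{\alpha}^*\times E_m$ by hand (adding a correction term $\omega_2$ built from $\langle\Psi(m),[\xi,\eta]\rangle$ and checking closedness and non-degeneracy), and then appeals to the $G$-relative Darboux theorem of Ortega--Ratiu, whose extension to momentumly closed forms is left to the reader. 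You instead pass to the cross-section $\Sigma=\Psi^{-1}(\alpha+W^*)$, where $K_{\alpha}\cdot m$ \emph{is} isotropic, so that Theorem~\ref{thm:iso-emb} applies directly, and then unfold via $K\times_{K_{\alpha}}\Sigma\cong K\cdot\Sigma$ and equivariance of $\Psi$. What your route buys is that it uses only tools already proved in the paper, avoiding the unproved momentumly-closed version of the relative Darboux theorem; what it costs is the verification of the cross-section theorem in the momentumly closed setting, which you correctly isolate as the one nontrivial point and which indeed goes through because the standard proof of non-degeneracy of $\omega|_{\Sigma}$ (via $T_m\Sigma=(\kq\cdot m)^{\omega}$ and $(\kk\cdot m)\cap(\kk\cdot m)^{\omega}=\kk_{\alpha}\cdot m$) and the identification of the symplectic normal of $K_{\alpha}\cdot m$ in $\Sigma$ with $E_m$ use only the moment map equation~\eqref{eq:def-mp} and non-degeneracy, never $\diff\omega=0$. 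Your emphasis that~\eqref{eq:two-mp} is an identity of moment maps rather than of two-forms, so that matching the contractions $\iota_{\xi}\omega$ and one base-point value suffices, is exactly the mechanism the paper also relies on. The two proofs are comparable in the amount of routine verification deferred to the reader.
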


Although the statement of Theorem~\ref{thm:nf} resembles the classical Marle--Guillemin--Sternberg's theorem closely, there is a crucial difference.
Namely, we start from a generalized moment map but obtain a non-generalized one (i.e.\ a moment map associated with a symplectic form) as a local model in the end.
Nevertheless, in view of the minimal coupling construction given in Proposition~\ref{prop:min-cp}, such a phenomenon is forseeable because the generalized moment map $\Psi$ is independent on the two-form on the base $B$ there.

\begin{proof}
  The method to prove the symplectic counterpart of Theorem~\ref{thm:nf} can also be applied here.
  Certainly, at some point, we need to replace the symplectic isotropic embedding theorem with Theorem~\ref{thm:iso-emb}.
  More precisely, the proof consists of two steps.
    The first step is to construct a suitable symplectic form on the vector bundle $K\times_{K_m} ((\kk_{\alpha}/ \kk_{m})^* \times E_m)$.
    The second step is to use Theorem~\ref{thm:iso-emb} to construct the diffeomorphism $\varphi$.
    For an important special case: $\kk_{\alpha} = \kk$, the whole proof is particularly clear.
    We will work it out in details and comment on how to adapt the proof for the general case.

  In the case of $\kk_{\alpha} = \kk$, by the equivariance of $\Psi$, one knows that the orbit $K \cdot m$ is an isotropic embedding submanifold of $M$.
  Since $K\cdot m$ is a homogeneous manifold, the symplectic normal of $K \cdot m$ is also homogeneous, whose fiber at $m$ is $E_m$ exactly.
  On the other hand, by identify $K \times \kk^*$ with $\T^* K$ via the left translation, $K \times \kk^* \times E_m$ is a Hamiltonian $K \times K_m$-manifold.
  More specifically, for $(k,l)\in K \times K_m$, $(g, q, v)\in K \times \kk^* \times E_m$, the $K\times K_m$-action is defined by
  \begin{equation*}
    (k, l) \cdot (g, q, v) = (kgl^{-1}, \Ads{l}{q}, l\cdot v).
  \end{equation*}
  The moment map associated with the $K \times K_m$-action is
  \begin{equation*}
    \Phi(g,q,v) = (\Phi_K, \Phi_{K_m}) = (\Ads{g}{q}, -q|_{\kk_m} + \mu(v)) \in \kk^* \times \kk_m^*,
  \end{equation*}
  where $q|_{\kk_m}\in \kk^*_m$ is the restriction on $\kk_m$.

  Note that $K\times ((\kk/ \kk_{m})^* \times E_m)$ is isomorphic to $\Phi^{-1}_{K_m}(0)$ via the map
  \begin{equation*}
    \begin{aligned}
      K\times ((\kk/ \kk_{m})^* \times E_m) & \rightarrow && \Phi^{-1}_{K_m}(0) \\
      (k, p, v) & \mapsto && (k, p + \mu(v), v),
    \end{aligned}
  \end{equation*}
  where we have used the splitting (\ref{eq:kk-split}).
  As a result, the symplectic reduction gives a symplectic structure on $K\times_{K_m} ((\kk/ \kk_{m})^* \times E_m) \simeq \Phi^{-1}_{K_m}(0)/K_m$ and the $K$-moment map for such a symplectic structure is
  \begin{equation}
    \label{eq:phik}
    \overline{\Phi}_{K}([k,p,v]) = \Ads{k}{p+ \mu(v)}.
  \end{equation}
  Moreover, at $[e,0,0]$, the fiber of symplectic normal bundle of $K\cdot[e,0,0]$ is $E_m$.
  Then, by the homogeneity of $K\cdot m$ and $K\cdot[e,0,0]$, we can apply Theorem~\ref{thm:iso-emb} to these two isotropic embedding submanifolds.
  Therefore, we can find a $K$-invariant neighborhood $U$ of $K\cdot m$ and a $K$-invariant neighborhood $U_0$ of $K\cdot [e,0,0]$, as well as a $K$-equivariant diffeomorphism $\varphi: U_0 \rightarrow U$, such that $\Psi \circ \varphi$ and $\overline{\Phi}_K$ coincide up to a constant.
  Due to
  \begin{equation*}
    \Psi\circ \varphi([e,0,0]) = \Psi(m) = \alpha,
  \end{equation*}
  the equality (\ref{eq:two-mp}) follows from (\ref{eq:phik}).

  For the general case without the assumption $\kk_{\alpha} = \kk$, both the steps in the proof need some modifications.
  We first dicuss the second step.
  Unlike the $\kk_{\alpha} = \kk$ case, in general, $K \cdot m$ is no longer an isotropic embedding submanifold.
  In fact, such a problem also appears in the proof of the symplectic version of Theorem~\ref{thm:nf}.
  As in the symplectic case, we can resolve this problem by substituting Theorem~\ref{thm:iso-emb} with another Darboux type theorem, i.e.\ the $G$-relative Darboux theorem given in~\cite[Theorem~7.3.1]{Ortega_2004aa}.
  Certainly, one should verify that the $G$-relative Darboux theorem also holds for the non-degenerate momentumly closed two-forms.
  But it is an almost routine task and we leave it to readers.

  Back to the first step.
  If we can construct a symplectic form on $K \times (\kk_{\alpha}^* \times E_m)$, a similar symplectic reduction construction on $K \times (\kk_{\alpha}^* \times E_m)$, as we have used before, leads to the normal form on $K\times_{K_m} ((\kk_{\alpha}/ \kk_{m})^* \times E_m)$ (near the zero section).
  However, comparing to the $\kk_{\alpha} = \kk$ case, it is a little more complex to construct the symplectic form on $K \times (\kk_{\alpha}^* \times E_m)$, because the construction involves the two-form $\omega$ on $M$ and as a result, why the two-form obtained on $K \times (\kk_{\alpha}^* \times E_m)$ is symplectic is not obvious a priori.
  We will check it directly in the below.

  Clearly, to define a symplectic form on $K \times (\kk_{\alpha}^* \times E_m)$ near $K \times \{0\} \times E_m$, we only need to show how to define the symplectic form on $K \times \kk_{\alpha}^*$ near $K \times \{0\}$.
Fix a $K$-invariant inner product on $\kk$.
  Then, we have an embedding $\kk_{\alpha}^* \hookrightarrow \kk^*$, which induces an embedding $i: K \times \kk_{\alpha}^* \hookrightarrow K \times \kk^*$.
  Via the pull-back by $i$, the standard symplectic form on $K \times \kk^*$ gives a closed two-form $\omega_1$ on $K\times \kk_{\alpha}^*$.
  On the other hand, one can define another two-form $\omega_2$ on $K\times \kk_{\alpha}^*$ as follows.
  At $(k,q)\in K\times \kk_{\alpha}^*$,
  \begin{equation}
    \label{eq:omega2}
    \omega_{2,(k, q)}((k\cdot \xi, v), (k\cdot \eta, w)) = \omega_m (\xi_{M,m}, \eta_{M,m}) = -\langle \Psi(m),[\xi,\eta] \rangle,
  \end{equation}
  where $\xi,\eta\in \kk$, $v,w\in \kk_{\alpha}^*$ and $k\cdot \xi, k\cdot \eta \in \T_{k}K$ via the left translation on $K$.
  The two-form $\Omega$ on $K\times \kk_{\alpha}^*$ is defined to be
  \begin{equation*}
    \Omega = \omega_1 + \omega_2.
  \end{equation*}
  Clearly, to show $\Omega$ is closed, we only need to check the closedness of $\omega_2$.

  Note that one can calculate the Lie bracket on the smooth vector fields on $K\times \kk_{\alpha}^*$ in following way,
  \begin{equation}
    \label{eq:klb}
    [(k\cdot \xi, v), (k\cdot \eta, w)] = -[k\cdot[\xi,\eta], 0].
  \end{equation}
  By the global formula for the exterior differential, for $\zeta,\xi,\eta \in \kk$ and $u,v,w\in \kk_{\alpha}^*$,
  \begin{multline}
    \label{eq:domega2}
    \begin{aligned}
      &(\diff \omega_2)((k\cdot \zeta, u), (k\cdot \xi, v), (k\cdot \eta, w)) \\
      = &(k\cdot \zeta, u) \omega_2((k\cdot \xi, v), (k\cdot \eta, w)) - (k\cdot \xi, v) \omega_2((k\cdot \zeta, u), (k\cdot \eta, w))\\
      &
        \begin{multlined}
          +  (k\cdot \eta, w) \omega_2((k\cdot \xi, v), (k\cdot \zeta, u)) 
          - \omega_2([(k\cdot \zeta, u), (k\cdot \xi, v)], (k\cdot \eta, w))
        \end{multlined}\\
      &
        \begin{multlined}
          + \omega_2([(k\cdot \zeta, u), (k\cdot \eta, w)], (k\cdot \xi, v)) 
          - \omega_2([(k\cdot \xi, v), (k\cdot \eta, w)], (k\cdot \zeta, u)).
        \end{multlined}
    \end{aligned}
  \end{multline}
  By the definition of $\omega_2$,
  \begin{equation*}
    (k\cdot \zeta, u) \omega_2((k\cdot \xi, v), (k\cdot \eta, w)) = - (k\cdot \zeta, u) \langle \Psi(m), [\xi,\eta]\rangle = 0,
  \end{equation*}
  together with (\ref{eq:klb}),
  \begin{multline*}
    \omega_2([(k\cdot \zeta, u), (k\cdot \xi, v)], (k\cdot \eta, w)) \\
    = -\omega_2((k\cdot[\zeta,\xi], 0), (k\cdot \eta, w)) = \langle \Psi(m), [[\zeta,\xi], \eta]\rangle.
  \end{multline*}
  Plugging the above two equalities into (\ref{eq:domega2}), we have
  \begin{multline*}
    (\diff \omega_2)((k\cdot \zeta, u), (k\cdot \xi, v), (k\cdot \eta, w)) = \\
    \langle \Psi(m), -[[\zeta,\xi], \eta] + [[\zeta,\eta], \xi] - [[\xi, \eta], \zeta] \rangle = 0.
  \end{multline*}
  As a result, $\Omega$ is closed.

  Next, we show that $\Omega$ is non-degenerate on $K\times \{0\}$.
  Let $\kk = \kk_{\alpha} \oplus \kq$ be an orthogonal decomposition.
  Choose $(\xi, v)\in \kk \times \kk^*_{\alpha}$.
  Suppose that
  \begin{equation}
    \label{eq:oxe}
    \Omega_{(k,0)}((k\cdot \xi, v), (k\cdot \eta, w)) = 0
  \end{equation}
  holds for any $(\eta, w) \in \kk \times \kk^*_{\alpha}$.
  Let $\xi = \xi_1 + \xi_2$ and $\eta = \eta_1 + \eta_2$, where $\xi_1, \eta_1 \in \kk_{\alpha}$ and $\xi_2, \eta_2 \in \kq$.
  Then, by (\ref{eq:oxe}) and the definition of $\Omega$, $\omega_1$ and $\omega_2$, we have
  \begin{equation*}
    0 = \langle w, \xi \rangle - \langle v, \eta \rangle - \langle\Psi(m), [\xi, \eta]\rangle = \langle w, \xi_1 \rangle - \langle v, \eta_1 \rangle - \langle\Psi(m), [\xi_2, \eta_2]\rangle.
  \end{equation*}
  By taking $\eta = 0$, the above equality implies that $\xi_1 = 0$.
  By taking $\eta_2 = 0$ and $w = 0$, the above equality implies $v = 0$.
  Now, we have
  \begin{equation*}
0 = - \langle\Psi(m), [\xi_2, \eta]\rangle = -\langle\alpha, [\xi_2, \eta]\rangle. \end{equation*}
  holds for any $\eta \in \kk$, which implies that $\xi_2 \in \kk_{\alpha}$.
  But by definition $\xi_2 \in \kq$ and $\kk_{\alpha} \perp \kq$.
  Thus, $\xi_2 = 0$.

  Let $U$ be a sufficient small $K_m$-invariant neighborhood of the origin in $\kk_{\alpha}^*$.
  Since $\Omega$ is non-degenerate on $K \times \{0\}$, $\Omega$ is also non-degenerate on $K \times U$.
  As we have said, similar to the $\kk_{\alpha} = \kk$ case, now we can construct a symplectic form near the zero section of $K\times_{K_m} ((\kk_{\alpha}/ \kk_{m})^* \times E_m)$ by invoking the symplectic reduction on $K \times U \times E_m$.

\end{proof}

\begin{rem}
  Recently, in~\cite{Diez_2024sy}, the authors found another approach to prove the existence of a different local normal form for the moment map.
  As their method depends more on the moment map itself rather the symplectic form, probably, their method may also work in our settings.
\end{rem}

\section{Convexity for image of generalized moment maps}

In this section, we discuss the generalization of two classical convexity results for the image of moment maps: the Atiyah--Guillemin--Sternberg--Kirwan convexity theorem and Atiyah's convexity theorem for the orbit-closure.

Throughout this section, we always assume that the compact group $K$ is connected.
Let $V$ be a finite dimensional real vector space.
Several kinds of convex subsets of $V$ will be used in this section.
A closed affine halfspace is a subset of $V$ defined by an inequality $\lambda(v) \ge c$ with $\lambda\in V^*$, $c \in \mathbb{R}$.
A convex polyhedral set is the intersection of a locally finite collection of closed affine halfspaces in $V$.
A convex polyhedron is the intersection of finitely many closed affine halfspaces.
A convex polytope is a bounded convex polyhedron.

\subsection{Convexity for the moment body}

Choose $T$ to be a maximal torus of $K$.
Let ${\kt}^*_+\subseteq \kt^*$ be a fixed positive closed Weyl chamber in ${\kk}^*$.
Recall that each coadjoint orbit intersects the chamber $\kt^*_+$ in exactly one point and the composition $\kt^*_+ \hookrightarrow \kk^* \rightarrow \kk^*/\Ads{}{K}$ induces a homeomorphism,~\cite[p.~294,~Corollary]{Bourbaki_2005li}.
By identifying $\kt^*_+$ with $\kk^*/\Ads{}{K}$, we denote the quotient map from $\kk^*$ to $\kt^*_+$ by $q$.

\begin{thm}[Local Convexity]
  \label{thm:lconv}
  Let $M$ be a manifold carrying a non-degenerate momentumly closed two-form $\omega$ and assume that the $K$-action on $M$ is Hamiltonian.
  Denote the generalized moment map for the $K$-action by $\Psi$.
For any point $m\in M$, there exists a closed convex cone $C_m$ in $\kt^*$ with apex at $q(\Psi(m))$ and a basis of $K$-invariant neighborhoods\footnote{It means that for any open subset $A$ containing $K\cdot m$, there exists an element $U_0$ in this basis such that $U_0\subseteq A$.}
  $U$ of $m$ such that
  \begin{enumerate}[label=\normalfont(\arabic*)]
  \item the fibers of $q \circ \Psi|_U$ are connected;
  \item $q \circ \Psi: U \rightarrow C_m$ is an open map;
  \item for any point $y\in K\cdot m$, $C_y = C_m$.
  \end{enumerate}
\end{thm}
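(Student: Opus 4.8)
The plan is to reduce the statement to the classical local convexity theorem in the symplectic setting (as in Lerman--Meinrenken--Tolman--Woodward, or Sjamaar) by invoking the local normal form already established in Theorem~\ref{thm:nf}. The crucial point is that the generalized moment map $\Psi$, once passed through the Marle--Guillemin--Sternberg normal form, is \emph{identical} to an honest symplectic moment map in a $K$-invariant neighborhood of the orbit $K\cdot m$; hence every statement about $q\circ\Psi$ near $K\cdot m$ — connectedness of fibers, openness onto a cone, constancy of the cone along the orbit — is literally a statement about a genuine symplectic moment map, for which the classical local convexity theorem applies verbatim.

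First I would fix $m\in M$, set $\alpha=\Psi(m)$, and apply Theorem~\ref{thm:nf} to obtain a $K$-invariant neighborhood $U$ of $K\cdot m$, a $K$-invariant neighborhood $U_0$ of the zero section of the associated bundle $Y := K\times_{K_m}\bigl((\kk_\alpha/\kk_m)^*\times E_m\bigr)$, and a $K$-equivariant diffeomorphism $\varphi:U_0\to U$ intertwining $\Psi$ with the symplectic moment map $\overline\Phi_K([k,p,v])=\Ads{k}{\alpha+p+\mu(v)}$ on $Y$. Here $\mu:E_m\to\kk_m^*$ is the homogeneous quadratic moment map for the linear symplectic $K_m$-action on $E_m$. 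Thus $q\circ\Psi|_U$ is conjugate to $q\circ\overline\Phi_K$ on a neighborhood of $K\cdot[e,0,0]$ in the symplectic manifold $(Y,\omega_Y)$. Second, I would invoke the classical local convexity theorem for symplectic moment maps at the point $[e,0,0]\in Y$: this yields a closed convex cone $C$ in $\kt^*$ with apex at $q(\alpha)$ and a basis of $K$-invariant neighborhoods of $[e,0,0]$ on which (1) the fibers of $q\circ\overline\Phi_K$ are connected and (2) $q\circ\overline\Phi_K$ is an open map onto $C$. Transporting back through $\varphi$ gives the cone $C_m:=C$ and the basis of $K$-invariant neighborhoods of $m$ with properties (1) and (2); a basis of $K$-invariant neighborhoods of $m$ pulls back to a basis of $K$-invariant neighborhoods of $[e,0,0]$ because $\varphi$ is a $K$-equivariant diffeomorphism carrying $K\cdot m$ to $K\cdot[e,0,0]$, and the footnote's notion of ``basis'' is preserved. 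For property (3), I would note that for $y\in K\cdot m$, say $y=k_0\cdot m$, the $K$-equivariance of $\Psi$ gives $\Psi(y)=\Ads{k_0}{\alpha}$, so $q(\Psi(y))=q(\alpha)=q(\Psi(m))$, and applying $k_0$ to the normal form at $m$ produces the normal form at $y$ with the \emph{same} symplectic model $(Y,\overline\Phi_K)$ (the data $K_m,\kk_\alpha,E_m,\mu$ only change by an inner automorphism); since the classical cone is built from the isotropy data and the weights of the $K_m$-representation $E_m$, which are conjugation-invariant, one gets $C_y=C_m$.

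The main obstacle I anticipate is \emph{not} any genuinely new geometry — once Theorem~\ref{thm:nf} is in hand the generalized case collapses to the symplectic one — but rather making the reduction airtight: specifically, checking that the ``basis of $K$-invariant neighborhoods'' in the sense of the footnote genuinely transfers across $\varphi$ (one must ensure the neighborhoods can be taken $K$-invariant and cofinal among all opens containing the orbit, which follows since $\varphi$ is $K$-equivariant and a diffeomorphism), and confirming that the classical local convexity statement one cites is phrased for the moment map of a symplectic manifold \emph{without} any compactness or properness hypothesis on $M$ (it is purely local near $K\cdot m$, so the minimal-coupling-type model $Y$ suffices). A secondary point requiring care is that in Theorem~\ref{thm:nf} the diffeomorphism $\varphi$ was only shown to satisfy $\iota_{\xi_X}\Psi=\iota_{\xi_X}\varphi^*\overline\Phi_K$ type identities at the level of moment maps up to the equivariance normalization; but equation~\eqref{eq:two-mp} gives the moment maps on the nose, so $q\circ\Psi\circ\varphi=q\circ\overline\Phi_K$ exactly, and no adjustment is needed. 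Hence the proof is essentially a citation of the symplectic local convexity theorem combined with Theorem~\ref{thm:nf}, and the write-up should emphasize exactly that structural point rather than reproving convexity.
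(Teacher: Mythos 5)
Your proposal is correct and follows essentially the same route as the paper: apply the Marle--Guillemin--Sternberg normal form (Theorem~\ref{thm:nf}) to replace $\Psi$ near $K\cdot m$ by the honest symplectic moment map on $K\times_{K_m}((\kk_{\alpha}/\kk_m)^*\times E_m)$, and then cite the classical local convexity results (the paper uses Knop for fiber connectedness and Sjamaar for openness onto the cone). Your additional remarks on transferring the $K$-invariant neighborhood basis through $\varphi$ and on deducing (3) from equivariance are consistent with, and slightly more explicit than, the paper's argument.
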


\begin{proof}
  We use Theorem~\ref{thm:nf} to reduce Theorem~\ref{thm:lconv} to corresponding symplectic case.

  By Theorem~\ref{thm:nf}, we can find a $K$-invariant neighborhood $U$ of the orbit $K \cdot m \subseteq M$, a $K$-invariant neighborhood $U_0$ of a $K$-orbit in $K\times_{K_m} ((\kk_{\alpha}/ \kk_{m})^* \times E_m)$ and a $K$-equivariant diffeomorphism $\varphi: U_0 \rightarrow U$ so that
  \begin{equation*}
    \Psi \circ \varphi = \Phi,
  \end{equation*}
  where $\Phi$ is the moment map on $K\times_{K_m} ((\kk_{\alpha}/ \kk_{m})^* \times E_m)$.
  Therefore, to show Theorem~\ref{thm:lconv}, we only need to show that (1) and (2) hold for $\Phi$.
  Compared to $M$, $K\times_{K_m} ((\kk_{\alpha}/ \kk_{m})^* \times E_m)$ is a symplectic manifold.
  Now, by~\cite[Theorem~5.1]{Knop_2002co}, (1) holds for $\Phi$ and by~\cite[Theorem~6.5]{Sjamaar_1998aa}, (2) also holds for $\Phi$.
  The proof of Theorem~\ref{thm:lconv} completes.
\end{proof}

To enhance the local convexity of the generalized moment map to a global property, we need to investigate the topological information of the generalized moment map.
A traditional way is to use the Morse theory.
In~\cite{Condevaux_1988ge}, the authors found another method via the ``Local-to-Global Principle'', which can be applied to many convexity problems.
Here, we use a version of the principle following~\cite{Hilgert_1994aa}.

\begin{defn}
  \label{def:loc-conv-data}
  Let $X$ be a connected Hausdorff topological space and $V$ be a finite dimensional real vector space.
  A continuous map $f:X \rightarrow V$ is said to be locally fiber connected, if every point $x$ in $X$ admits a basis of neighborhoods $U_x$ of $x$ such that $f^{-1}(f(u))\cap U_x$ is connected for all $u \in U_x$.
  We say that a map $x \rightarrow C_x$ assigning to each point $x \in X$ a closed convex cone $C_x \subseteq V$ with apex $f(x)$ is a system of local convexity data if for each $x\in X$ there exists a basis of neighborhood $U_x$ of $x$ such that the following conditions hold:
  \begin{enumerate}
  \item the fibers of $f|_{U_x}$ are connected, that is, $f^{-1}(f(u)) \cap U_x$ is connected for all $u \in U_x$;
  \item $f|_{U_x}: U_x \rightarrow C_x$ is an open map.
  \end{enumerate}
\end{defn}

\begin{thm}[Local-to-Global Principle, {\cite[Theorem~3.10]{Hilgert_1994aa}}]
  \label{thm:l2g}
  Suppose that $f:X \rightarrow V$ is a proper, locally fiber connected map with the local convexity data $(C_x)_{x\in X}$.
  Then the fibers of $f$ are connected and $f:X \rightarrow f(X)$ is an open map.
  Moreover, $f(X)$ is a closed convex polyhedral subset of $V$.
\end{thm}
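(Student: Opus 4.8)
The plan is to run the classical Stein-type factorization argument behind local-to-global principles (in the spirit of Condevaux--Dazord--Molino and Hilgert--Neeb--Plank), using the local convexity data of Definition~\ref{def:loc-conv-data} as the exact input that makes the local model a convex cone, and then to upgrade local convexity of the image to global convexity by the Tietze--Nakajima lemma (a closed, connected, locally convex subset of a finite-dimensional real vector space is convex). Concretely, I would first form the \emph{leaf space} $Y = X/{\sim}$, where $x\sim x'$ iff $x,x'$ lie in the same connected component of the fibre $f^{-1}(f(x))$, equipped with the quotient topology and projection $p\colon X\to Y$; then $f$ factors as $f=\bar f\circ p$ with $\bar f\colon Y\to V$ continuous. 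Local fibre-connectedness together with the openness of $f|_{U_x}\colon U_x\to C_x$ shows that $p(U_x)$ is an open neighbourhood $V_x$ of $p(x)$ in $Y$ and that $\bar f|_{V_x}$ is a homeomorphism onto a relatively open neighbourhood of $\bar f(p(x))$ in the convex cone $C_x$. Thus $Y$ is locally modelled on relatively open subsets of convex cones, and $\bar f$, read in these charts, is an inclusion.

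Next I would collect the soft consequences of properness. Since $p$ is a continuous surjection, $\bar f$ is proper, because $\bar f^{-1}(L)=p(f^{-1}(L))$ is compact for compact $L$; a standard argument using properness shows that $Y$ is Hausdorff; and, being locally a relatively open subset of a convex cone, $Y$ is locally path-connected, hence path-connected since $X$ is connected. Likewise $f(X)$ is closed in $V$ (a convergent sequence in $f(X)$ has preimages in a compact set) and connected.

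The core of the argument is the simultaneous proof that (i) $f$ is open onto $f(X)$, (ii) $f(X)$ is locally convex, i.e.\ near every image point it coincides with the local cone $C_y$, and (iii) the fibres of $f$ are connected. One establishes (i) and (ii) by using properness to reduce, near a given point $\bar f(y)$, to the finitely many charts $V_z$ whose images meet a small ball around $\bar f(y)$, and then combining the local convexity data with the compatibility the cones are forced to satisfy on such overlaps. Granting this, $f(X)$ is closed, connected and locally convex, hence convex by Tietze--Nakajima; and since it is locally a finite intersection of half-spaces, being convex and closed it is in fact a convex polyhedral set, i.e.\ a locally finite intersection of half-spaces. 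For (iii), once $f(X)$ is known to be convex, the straight segment from $\bar f(y)$ to any $v\in f(X)$ lies in $f(X)$, stays inside the local cone $C_z$ near each of its points, and therefore lifts uniquely through the charts of $Y$ to a path $\gamma_y$ starting at $y$ (uniqueness from the local-homeomorphism structure, extendability to the whole segment from properness); sending $y\mapsto\gamma_y(1)$ then defines a continuous retraction of $Y$ onto the fibre $\bar f^{-1}(v)$, so $\bar f^{-1}(v)$ is connected, while it is also discrete by the construction of $Y$, hence a single point, i.e.\ $f^{-1}(v)$ is connected.

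I expect the genuinely hard step to be the intertwined statement (i)--(iii), and in particular the verification in (ii) that the finitely many local cones attached to leaves with nearby images fit together so that $f(X)$ is locally convex — equivalently, that segments pointing toward any image point can always be lifted through $Y$. This is the one place where the local convexity data is used in an essential, non-formal way, and it is exactly where the classical references carry out their careful bookkeeping along the segment; once properness and the local cone charts are in hand, the remaining assertions of the theorem follow formally.
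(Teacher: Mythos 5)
The paper does not prove this statement at all: it is quoted verbatim from Hilgert--Neeb--Plank \cite[Theorem~3.10]{Hilgert_1994aa} and used as a black box, so there is no in-paper proof to compare against. Your outline is the standard argument from that reference and from Condevaux--Dazord--Molino: pass to the leaf space $Y$ of fibre components, observe that the local convexity data turns $Y$ into a space locally homeomorphic (via $\bar f$) to relatively open subsets of the cones $C_x$, use properness for Hausdorffness and closedness of the image, invoke Tietze--Nakajima for global convexity, and lift segments to identify fibres of $\bar f$ with points. So the strategy is the right one and matches the cited source.

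That said, as a proof this is a roadmap rather than an argument: the intertwined step (i)--(iii) that you explicitly defer --- showing that the cones attached to the finitely many leaves over a small ball are mutually compatible, that segments in $f(X)$ can be lifted through the charts of $Y$, and that openness and local convexity of the image follow --- \emph{is} the theorem; everything else is soft. Two specific points deserve care. First, the polyhedrality conclusion does not follow from ``$f(X)$ is convex and locally agrees with closed convex cones'': in the paper's Definition~\ref{def:loc-conv-data} the cones $C_x$ are merely closed convex cones, so to land on a locally finite intersection of closed affine halfspaces one must run the local-finiteness bookkeeping (via properness, only finitely many leaves meet a compact set) exactly as in \cite{Hilgert_1994aa}; your one-line ``hence polyhedral'' hides this. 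Second, your fibre-connectedness argument needs the uniqueness \emph{and} existence of segment lifts in $Y$ simultaneously with the openness statement --- these cannot be established sequentially, which is why the reference proves them by a joint induction along the segment. If you intend this as a citation-level justification it is fine; if you intend it as a self-contained proof, the central step still has to be written out.
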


We now formulate a convexity theorem, which is due to Atiyah~\cite{Atiyah}, Guillemin and Sternberg~\cite{Guillemin_1982aa} and Kirwan~\cite{Kirwan84b} in the symplectic case.

\begin{thm}[Convexity]
  \label{thm:convex}
Take the same assumption for $M$, $\omega$ and $\Psi$ as in Theorem~\ref{thm:lconv}.
  Besides, suppose that $M$ is compact.
  Then the following properties of $\Psi$ hold.
  \begin{enumerate}[label=\normalfont(\arabic*)]
  \item The fibers of $\Psi$ are connected and $q\circ \Psi:M \rightarrow q\circ \Psi(M)$ is an open map.
  \item The moment body of $M$ defined by
    \begin{equation*}
      \Delta(M) = q \circ \Psi(M) = \Psi(M) \cap \kt^*_+,
    \end{equation*}
    is a closed convex polytope in $\kt^*$.
  \end{enumerate}
\end{thm}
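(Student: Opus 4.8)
The plan is to deduce Theorem~\ref{thm:convex} from the Local Convexity Theorem~\ref{thm:lconv} and the Local-to-Global Principle (Theorem~\ref{thm:l2g}), exactly as in the symplectic case, since both ingredients have already been set up in full generality. First I would verify that the hypotheses of Theorem~\ref{thm:l2g} hold for the map $f = q\circ\Psi : M \to \kt^*_+ \hookrightarrow \kt^*$ (or, depending on which formulation one prefers, for $\Psi : M \to \kk^*$ together with the quotient by the coadjoint action). Properness is immediate from the compactness of $M$: a continuous map from a compact space to a Hausdorff space is proper (preimages of compact sets are closed subsets of the compact $M$, hence compact). Local fiber connectedness of $q\circ\Psi$ is precisely part (1) of Theorem~\ref{thm:lconv}: the $K$-invariant neighborhoods $U$ furnished there form a basis of neighborhoods of each orbit $K\cdot m$, and since the statement is about orbits one passes to a basis of neighborhoods of each point $m$ by intersecting with small balls; connectedness of the fibers of $q\circ\Psi|_U$ is exactly what is asserted. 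The assignment $m \mapsto C_m$ from Theorem~\ref{thm:lconv}, with apex $q(\Psi(m))$, together with properties (1) and (2) of that theorem, is by definition a system of local convexity data in the sense of Definition~\ref{def:loc-conv-data}.

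Granting this, Theorem~\ref{thm:l2g} yields immediately that the fibers of $q\circ\Psi$ are connected, that $q\circ\Psi : M \to q\circ\Psi(M)$ is an open map, and that $q\circ\Psi(M)$ is a closed convex polyhedral subset of $\kt^*$. Since $M$ is compact, $q\circ\Psi(M)$ is also compact, hence bounded, so a closed convex polyhedral set that is bounded is a convex polytope; this gives part (2), once we identify $q\circ\Psi(M)$ with $\Psi(M)\cap\kt^*_+$, which follows from the description of $q$ as the quotient map $\kk^* \to \kk^*/\Ad^*(K) \cong \kt^*_+$ recalled just before Theorem~\ref{thm:lconv}. For the connectedness of the fibers of $\Psi$ itself (not just of $q\circ\Psi$) in part (1), I would argue as in the classical case: a fiber $\Psi^{-1}(\alpha)$ with $\alpha\in\kt^*_+$ sits inside the fiber $(q\circ\Psi)^{-1}(\alpha)$, which is connected, and $\Psi^{-1}(\alpha)$ is obtained from it by intersecting with the $K$-equivariant condition pinning the $\kk^*$-value to $\alpha$; more precisely one uses that $(q\circ\Psi)^{-1}(q(\alpha)) = K\cdot\Psi^{-1}(\alpha)$ together with the fact that $\Psi^{-1}(\alpha)$ is preserved by the (connected) stabilizer $K_\alpha$ and that the orbit map from $K\times_{K_\alpha}\Psi^{-1}(\alpha)$ onto $(q\circ\Psi)^{-1}(q(\alpha))$ has connected fibers, so connectedness of the total space forces connectedness of $\Psi^{-1}(\alpha)$.

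The main obstacle — and really the only substantive point — is making sure that all the inputs of the Local-to-Global machine are legitimately available in the momentumly closed setting, i.e.\ that nothing in Theorem~\ref{thm:l2g} or Definition~\ref{def:loc-conv-data} secretly used closedness of $\omega$. But this is exactly the structural philosophy of the paper: Theorem~\ref{thm:l2g} is a purely topological statement about proper maps with local convexity data and is quoted verbatim from \cite{Hilgert_1994aa}, while Theorem~\ref{thm:lconv} has already been proved above by reducing to the symplectic local normal form of Theorem~\ref{thm:nf}. So the proof is genuinely a two-line assembly, and I would write it as such: check properness, cite Theorem~\ref{thm:lconv} for the local convexity data, invoke Theorem~\ref{thm:l2g}, observe boundedness from compactness, and conclude; then add the short orbit-fibration argument to upgrade connectedness of the fibers of $q\circ\Psi$ to connectedness of the fibers of $\Psi$.

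\begin{proof}
  We apply the Local-to-Global Principle, Theorem~\ref{thm:l2g}, to the map $q\circ \Psi : M \to \kt^*$.

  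Since $M$ is compact and $\kt^*$ is Hausdorff, $q\circ\Psi$ is proper. By Theorem~\ref{thm:lconv}, for each $m\in M$ there is a closed convex cone $C_m\subseteq\kt^*$ with apex $q(\Psi(m))$ and a basis of $K$-invariant neighborhoods of $m$ for which conditions (1) and (2) of Definition~\ref{def:loc-conv-data} hold; in particular $q\circ\Psi$ is locally fiber connected and $(C_m)_{m\in M}$ is a system of local convexity data. Hence Theorem~\ref{thm:l2g} applies and gives that the fibers of $q\circ\Psi$ are connected, that $q\circ\Psi : M \to q\circ\Psi(M)$ is an open map, and that $q\circ\Psi(M)$ is a closed convex polyhedral subset of $\kt^*$. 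As $M$ is compact, $q\circ\Psi(M)$ is compact, hence a bounded closed convex polyhedral set, i.e.\ a convex polytope. Recalling that $q:\kk^*\to\kt^*_+$ is the quotient map for the coadjoint action, $q\circ\Psi(M)=\Psi(M)\cap\kt^*_+=\Delta(M)$, which proves (2).

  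It remains to show that the fibers of $\Psi$ itself are connected. Let $\alpha\in\Psi(M)$; after applying an element of $K$ we may assume $\alpha\in\kt^*_+$, so $q(\alpha)=\alpha$. The fiber $F=(q\circ\Psi)^{-1}(\alpha)$ is connected by the above, and by equivariance of $\Psi$ one has $F=K\cdot\Psi^{-1}(\alpha)$, with $\Psi^{-1}(\alpha)$ invariant under the stabilizer $K_\alpha$. The natural map $K\times_{K_\alpha}\Psi^{-1}(\alpha)\to F$ is a continuous surjection whose fibers are the (connected) orbits of $K_\alpha$; since $K$ and $K_\alpha$ are connected and $F$ is connected, it follows that $\Psi^{-1}(\alpha)$ is connected. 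Together with the openness of $q\circ\Psi$ onto its image, this proves (1).
\end{proof}
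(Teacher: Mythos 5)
Your overall strategy is the same as the paper's: verify the hypotheses of the Local-to-Global Principle (Theorem~\ref{thm:l2g}) using Theorem~\ref{thm:lconv}, and then read off the conclusions. But there is a gap at exactly the point the paper singles out as delicate. Theorem~\ref{thm:lconv} does \emph{not} produce a basis of neighborhoods of the point $m$; it produces a basis of $K$-invariant neighborhoods of the \emph{orbit} $K\cdot m$ (see the footnote in its statement). Since every such $U$ contains the whole orbit, these sets cannot shrink to $m$ and are not a neighborhood basis of $m$ in the sense required by Definition~\ref{def:loc-conv-data}, so the hypotheses of Theorem~\ref{thm:l2g} are not literally satisfied. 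Your proposed repair --- ``intersecting with small balls'' --- does not work: while openness onto $C_m$ survives restriction to an open subset, fiber connectedness does not; the fibers of $q\circ\Psi|_{U\cap B}$ may well be disconnected even though those of $q\circ\Psi|_U$ are connected. The paper resolves this by passing to the quotient $X=M/K$ (where orbits become points, the projection is open, and the local convexity data descends to a genuine system of local convexity data for the induced map $\Phi:X\to\kt^*$), and then pulling the conclusions back to $M$; alternatively, as the paper notes in a footnote, one can inspect the proof of \cite[Theorem~3.10]{Hilgert_1994aa} and check that orbit neighborhoods suffice. Either way, some argument is needed here, and your write-up simply asserts what Theorem~\ref{thm:lconv} does not provide.

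A secondary point: your upgrade from connectedness of the fibers of $q\circ\Psi$ to connectedness of the fibers of $\Psi$ is garbled as written. The map $K\times_{K_\alpha}\Psi^{-1}(\alpha)\to(q\circ\Psi)^{-1}(\alpha)$ is a bijection, so its fibers are points, not $K_\alpha$-orbits; and connectedness of a bundle's total space and base does not by itself force connectedness of the fiber. The argument can be fixed (e.g.\ use the proper surjection $K\times\Psi^{-1}(\alpha)\to(q\circ\Psi)^{-1}(\alpha)$, whose fibers are orbits of the connected group $K_\alpha$, and the standard lemma that a closed surjection with connected fibers onto a connected space has connected domain), which is essentially what the cited argument of \cite{Birtea_2009op} does, but as stated your justification does not go through.
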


\begin{rem}
  In~\cite[p.~626]{Birtea_2009op}, if $q\circ \Psi:M \rightarrow q\circ \Psi(M)$ is an open map, the authors call that $\Psi$ is $K$-open.
  As pointed out in~\cite{Birtea_2009op}, $\Psi$ itself is not open in general.
\end{rem}

\begin{proof}
  We will check that the local convexity of $q\circ \Psi$, namely Theorem~\ref{thm:lconv}, provides a system of local convexity data for $q\circ \Psi$ essentially.

  For any $m\in M$, by Theorem~\ref{thm:lconv}, we can find the $K$-invariant neighborhood $U$ and the closed convex cone $C_m$ satisfying the requirement in Definition~\ref{def:loc-conv-data}.
  However, a subtle point is that the collection of all such neighborhoods $U$ is only a neighborhood basis of $K\cdot m$ rather than a neighborhood basis of $m$ as needed in Theorem~\ref{thm:l2g}.\footnote{In fact, if we check the proof of~\cite[Thereom~3.10]{Hilgert_1994aa}, it turns out that the neighborhoods given by Theorem~\ref{thm:lconv} are also sufficient for the proof.
    Considering this fact, the argument given below may be not so necessary.}
  We can bypass this question as follows.
  
  Let $X = M/K$ be the quotient space of $K$-action.
  Since $K$ is a compact group, $X$ is a Hausdorff space and the projection map $\pi: M \rightarrow X$ is an open map.
  At the same time, because the $q \circ \Psi$ is $K$-invariant, $q \circ \Psi$ induces a continuous map $\Phi: X \rightarrow \kt^*$ such that the following diagram commutes.
  \begin{equation}
    \label{eq:def-xf}
    \begin{tikzcd}
      M \arrow[r, "\Psi"] \arrow[d, "\pi"'] &  \kk^* \arrow[d, "q"]\\
      X \arrow[r, "\Phi"] & \kt^*
    \end{tikzcd}.
  \end{equation}
  For any $m\in M$, choose $U$ and $C_m$ as given by Theorem~\ref{thm:lconv}.
  Since $U$ is a $K$-invariant neighborhood of $m$ and $\pi$ is open, $\pi(U)$ is a neighborhood of $\pi(m)$.
  Moreover, the openness $q\circ \Psi|_U:U \rightarrow C_m$ is implies that $\Phi|_{\pi(U)} \rightarrow C_m$ is also open.
  For any $u\in U$, due to (\ref{eq:def-xf}), we have
  \begin{equation*}
    \Phi^{-1}(\Phi(\pi(u))) \cap \pi(U) = \pi\Big((q\circ \Psi)^{-1}\big(q\circ\Psi(u)\big) \cap U\Big).
  \end{equation*}
  Since the fibers of $q\circ \Psi|_U$ are connected, the above equality implies that the fibers of $\Phi|_{\pi(U)}$ are also connected.
  At last, the collection of all $\pi(U)$ is a basis of neighborhood of $\pi(m)$ because all such $U$ forms a basis of $K$-invariant neighborhood of $m$.
  In summary, we have checked that the system $\pi(m)\rightarrow C_m$ (which is well defined due to Theorem~\ref{thm:lconv} (3)) is a system of local convexity data for the map $\Phi: X \rightarrow \kt^*$.

  Now, by Theorem~\ref{thm:l2g} and the compactness of $M$, we conclude that
  \begin{enumerate}[label=(\roman*)]
  \item the fibers of $\Phi$ is connected;
  \item $\Phi: X \rightarrow \Phi(X)$ is an open map;
  \item $\Phi(X)$ is a closed convex polytope in $\kt^*$.
  \end{enumerate}
  Then, since $K$ is connected, (i) implies that the fibers of $q \circ \Psi$ is also connected by an argument given in~\cite[Proof of Theorem~3.19]{Birtea_2009op}.
  Due to the openness of $\pi$, (ii) implies that $q\circ \Psi: M \rightarrow q\circ \Psi(M)$ is also open.
  Finally, since $\Phi(X) = q\circ \Psi(M) = \Delta(M)$, (iii) implies the $\Delta(M)$ is a closed convex polytope in $\kt^*$.
\end{proof}

\subsection{Convexity for complex orbit-closures}
In this subsection, we generalize Atiyah's convexity theorem for complex orbit-closures from the K\"ahler manifolds to the Hermitian manifolds.

As a preparation, we state a Morse theoretic property of the generalized moment map.

\begin{lem}
  \label{morse-bott}
  Let $(M,J,g)$ be an almost Hermitian $K$-manifold.
  Suppose that the $K$-action is Hamiltonian and denote the associated generalized moment map by $\Psi$.
  Then, for any $\xi\in \kk$, $\Psi^\xi$ is a Morse--Bott function and has only critical manifolds of even index.
\end{lem}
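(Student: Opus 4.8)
The plan is to follow the classical argument of Atiyah and of Frankel, adapted to the almost-Hermitian setting, since the statement is purely local and the closedness of $\omega$ is never used. Fix $\xi \in \kk$ and write $f = \Psi^\xi$. By \eqref{eq:grad-psixi} we have $\grad f = J\xi_M$, so the critical set of $f$ is exactly $\{\xi_M = 0\} = M^{\exp(\bR\xi)}$, the fixed-point set of the closure of the one-parameter subgroup generated by $\xi$. The first step is to recall that the fixed-point set of a compact group (here the torus $T_\xi = \overline{\exp(\bR\xi)}$) acting on an almost Hermitian manifold is a smooth submanifold; one obtains this by averaging $g$ and $J$ to be $T_\xi$-invariant (which we may do, as $\omega$ is already $K$-invariant and $J$ was chosen compatibly in Proposition~\ref{prop:ac}), and then the fixed set is the image of the exponential map on the fixed subbundle of $TM|_{M^{T_\xi}}$ — in particular it is a $J$-invariant submanifold, and so are its connected components, which we take to be the critical manifolds.

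The heart of the proof is the Morse--Bott nondegeneracy together with the parity of the index. Let $Z$ be a connected component of the critical set and $m \in Z$. The key computation is to identify the Hessian of $f$ at $m$ on the normal space $N_m Z = (T_m Z)^{\perp}$ with the infinitesimal generator of the linearized $T_\xi$-action: differentiating $\grad f = J\xi_M$ at $m$ and using that $\xi_M$ vanishes on $Z$, one gets, for $v \in N_m Z$,
\begin{equation*}
  \Hess_m f (v, v) = g_m\big(\nabla_v (J\xi_M), v\big) = g_m\big(J \cdot \mathcal{A}_\xi v, v\big),
\end{equation*}
where $\mathcal{A}_\xi := (\nabla \xi_M)_m = (L_{\xi}|_{T_m M})$ is the linearization of the flow of $\xi_M$, a skew-symmetric endomorphism of $T_m M$ commuting with $J$ (skew because $\xi_M$ is Killing for the averaged metric on a neighborhood after averaging, or more robustly because $\exp(t\xi)$ acts by isometries of the $T_\xi$-averaged metric; commuting with $J$ because the action is holomorphic). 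I will need to check carefully that the term involving $\nabla_v J$ drops out — this is the one place where almost-complexity (rather than integrability) must be handled, and the point is that $\nabla_v J$ applied to $\xi_{M,m} = 0$ vanishes. Then $\mathcal{A}_\xi$ acting on the complex vector space $(N_m Z, J)$ decomposes into two-dimensional real eigenspaces (complex eigenspaces) on which $\mathcal{A}_\xi$ acts as multiplication by nonzero purely imaginary scalars $i\lambda_j$; on such a block $g(J\mathcal{A}_\xi v, v) = \lambda_j \|v\|^2$ in suitable coordinates, so $\Hess_m f$ is nondegenerate on $N_m Z$ (proving Morse--Bott) and its negative eigenspace is a $J$-invariant, hence even-real-dimensional, subspace — giving even index.

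The main obstacle I anticipate is the honest verification that $\mathcal{A}_\xi$ is skew-symmetric and $J$-linear \emph{at the critical point} without invoking an integrable complex structure or a genuine Kähler metric: the averaged metric $g$ need not be preserved by the full flow, only the eigenvalue/parity structure of the linearization matters, and one must argue that the relevant symmetric part of $\nabla(J\xi_M)$ is what computes the Hessian and that its spectrum is conjugation-symmetric because it commutes with $J$. Once this linear-algebra core is in place, constancy of the index along $Z$ is automatic (the eigenvalues $\lambda_j(m)$ are continuous and nonvanishing on the connected set $Z$), and the Morse--Bott condition follows. I would present the computation at a single point $m \in Z$, citing the almost-Hermitian fixed-point-set smoothness as the only external input, and remark that the argument is verbatim the symplectic one since $\diff\omega$ plays no role.
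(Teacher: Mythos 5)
Your proposal is correct and follows essentially the same route as the paper: identify the critical set with the fixed-point set of the torus $\overline{\exp(\bR\xi)}$, then compute the Hessian at a critical point via the linearized action and read off nondegeneracy and even index from the $J$-invariant block structure. The paper sidesteps the technical worries you flag (skew-symmetry of $\mathcal{A}_\xi$, the $\nabla_v J$ term) by decomposing $T_mM$ directly into weight spaces of the compact torus representation, on each of which $\xi$ acts as $\lambda_j J$, which delivers the same conclusion with less bookkeeping.
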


\begin{proof}
  Let $\omega$ be the Hermitian form for $M$.
  By Lemma \ref{differimage}, the critical set $Z$ of $\Psi^\xi$ is identical to the zero set of the vector field $\xi_M$, or the fixed point set of $T=\overline{\exp ({\Bbb R} \xi)}$.
This implies that each connected component of the critical set is a manifold,~\cite{Kobayashi_1958aa}.
It remains to check that they are non-degenerate and have even indexes.
We follow Atiyah's arguments.
If $V$ is the tangent space to $M$ at $z \in Z$, it has an almost complex structure $J$ and decomposes under the action of torus $T$ as
\begin{equation*}
  V= V_0 \oplus V_1 \oplus \ldots \oplus V_p,
\end{equation*}
where $V_0$ is fixed by $T$ and is the tangent space to $Z$ at $z$, while each $V_j$, for $j >0$, corresponds to a non-trivial character of $T$.
As a result, for $j> 0$, there exist real $\lambda_j \neq 0$, $\lambda_i \ne \lambda_j$ if $i\ne j$, such that for $v_j \in V_j$, the induced action of $\xi$ on $V_j$ is
\begin{equation}
  \label{eq:xi-act}
  \xi\cdot v_j = \lambda_j Jv_j.
\end{equation}

For any $v\in V$, write $v = \sum_i v_i$, $v_i\in V_i$ and extend $v$ to be a vector field $X$ near $z$.
Then the Hessian of $\Psi^\xi$ at $z$ is
\begin{multline*}
  \Hess(\Psi^{\xi})(v,v) = (X(X(\Psi^{\xi})))_z = (X(\omega(\xi_M, X)))_z \\
  = -\omega([\xi_M,X]_z, v) = \omega(\xi\cdot v, v).
\end{multline*}
By (\ref{eq:xi-act}), the above equality implies
\begin{equation*}
  \Hess(\Psi^{\xi})(v,v) = \omega(\sum_{j> 0} \lambda_j Jv_j, v) = -\sum_{j>0} \lambda_jg(v_j,v_j).
\end{equation*}
which is non-degenerate and necessarily of even index.
\end{proof}

\begin{rem}
Under the same assumption of Lemma~\ref{morse-bott}, the critical submanifold $Z$ of $\Psi^{\xi}$ is almost complex and $\omega|_Z$ is non-degenerate consequently.
\end{rem}

In the rest of this subsection, we will assume that $(M,J,g)$ is a complex Hermitian $K$-manifold and denote the associated generalized moment map by $\Psi$.

\begin{thm}[Convexity of complex orbit-closures]
  \label{thm:orbit}
Suppose that $M$ is compact and the $K$-action on $M$ is abelian.
Let $Y$ be a $G$-orbit and $\overline{Y}$ be its closure.
  Set $C_j = \Psi(Z_j \cap \overline{Y})$ if $Z_j \cap \overline{Y} \ne \emptyset$, where $Z_j$ is a connected component of the fixed points set of the $K$-action.
  Then following assertions about $\overline{Y}$ hold.
\begin{enumerate}[label=\normalfont(\arabic*)]
\item $\Psi(\overline{Y})$ is the convex polytope with vertices $\{C_j\}$.
\item For each open face $\sigma$ of $\Psi(\overline{Y})$, $\Psi^{-1}(\sigma) \cap \overline{Y}$ consists of a single $G$-orbit.
\item $\Psi$ induces a homeomorphism of $\overline{Y}/K$ onto $\Psi(\overline{Y})$.
\item For any $y\in Y$, $\xi\in \kk$, the limit $y_{\infty} = \lim_{t\rightarrow +\infty} \exp(-it\xi)\cdot y \in \overline{Y}$ exists.
  And if $z\in \overline{Y}$ satisfying $\Psi^{\xi}(z) = \Psi^{\xi}(y_{\infty})$, then $z\in \overline{G \cdot y_{\infty}}$.
\end{enumerate}
\end{thm}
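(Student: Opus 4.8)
The plan is to mimic Atiyah's original argument for Kähler manifolds (see \cite{Atiyah}), but to supply closedness-free substitutes at the two or three points where Atiyah uses that $\omega$ is a symplectic form. The key structural facts I would rely on are: Theorem~\ref{thm:ob-close} (which tells us $\diff\omega$ vanishes along $G$-orbits, so in particular $\omega$ behaves like a symplectic form when restricted to $\overline Y$ insofar as orbit-tangent directions are concerned); Lemma~\ref{morse-bott} (each $\Psi^\xi$ is Morse--Bott with even-index critical manifolds); and Lemma~\ref{differimage}, which for the abelian $K$ here identifies critical points of $\Psi$ with fixed points. I would proceed by induction on $\dim_{\mathbb C} Y$. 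The base case $\dim Y = 0$ is trivial ($Y$ is a fixed point, $\Psi(\overline Y)$ a point). For the inductive step, pick a generic $\xi\in\kk$ so that the fixed-point set of $\overline{\exp(\mathbb R\xi)}$ equals the common $K$-fixed locus; then $\Psi^\xi$ restricted to $\overline Y$ is Morse--Bott, and its maximum and minimum are attained on $K$-fixed components $Z_j$.

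Step one: analyze the limits $y_\infty = \lim_{t\to+\infty}\exp(-it\xi)\cdot y$. Because the gradient flow of $\Psi^\xi$ with respect to $g$ is the flow of $J\xi_M = \grad\Psi^\xi$ (equation~\eqref{eq:grad-psixi}), the real one-parameter group $\exp(-it\xi)$ acts as the (downward, or upward, depending on sign) gradient flow, so $y_\infty$ lies on a critical manifold $Z_j$, and more precisely on the unstable/stable manifold decomposition. Here the even-index statement of Lemma~\ref{morse-bott} is exactly what is needed to run the usual Białynicki-Birula / Morse-stratification argument and conclude that the flow decomposes $\overline Y$ into cells, each fibering over a critical component with fibers that are complex affine cells. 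This gives assertion (4) directly: if $z\in\overline Y$ and $\Psi^\xi(z)=\Psi^\xi(y_\infty)$ with $y_\infty$ extremal for $\Psi^\xi$, then $z$ lies on the same critical manifold; and the $G$-orbit structure on the critical manifold (an abelian Hamiltonian complex manifold of smaller dimension, to which induction applies) forces $z\in\overline{G\cdot y_\infty}$.

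Step two: assertions (1), (2), (3). For (1), I would first show $\Psi(\overline Y)$ is contained in the convex hull of the $C_j$: using $\grad\Psi^\xi=J\xi_M$ and the fact that $\exp(\mathbb C\xi)\cdot y$ has closure meeting two fixed components, the function $\Psi^\xi$ on $\overline{\exp(\mathbb C\xi)\cdot y}$ takes values in an interval with endpoints in $\{\langle C_j,\xi\rangle\}$; varying $\xi$ gives the half-space bound. The reverse inclusion (that the full hull is covered) and the identification of faces comes from the stratification of step one together with the inductive hypothesis applied to each $Z_j\cap\overline Y$: the open face $\sigma$ corresponds to the open stratum, on which $\Psi^{-1}(\sigma)\cap\overline Y$ is a single $G$-orbit (this is (2)), and the boundary faces are handled by induction. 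Assertion (3), that $\Psi$ descends to a homeomorphism $\overline Y/K\to\Psi(\overline Y)$, follows from (2) (injectivity on $K$-orbits, stratum by stratum) together with properness/compactness of $\overline Y$ and the convexity theorem machinery already available from Theorem~\ref{thm:convex}.

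The main obstacle I anticipate is \emph{not} the global convexity bookkeeping — that is essentially combinatorial once the local picture is in place — but rather making the Morse/Białynicki-Birula stratification argument genuinely work without closedness of $\omega$. Atiyah uses the Kähler form both to get the gradient flow and, crucially, to control the limit sets and the local product structure near critical manifolds; here I must replace that by the observations that (a) along $G$-orbits and along the stable/unstable manifolds inside $\overline Y$, $\diff\omega$ vanishes (Theorem~\ref{thm:ob-close}), so $\omega$ restricted to these submanifolds is honestly symplectic, and (b) $J$-invariance of $\omega$ plus Lemma~\ref{morse-bott} gives the even-index / holomorphic-cell structure. I would need to check carefully that the stable and unstable manifolds of $\Psi^\xi|_{\overline Y}$ are complex submanifolds and that $\omega$ is non-degenerate on them — the remark after Lemma~\ref{morse-bott} gives exactly non-degeneracy of $\omega|_Z$ on critical manifolds, and an analogous argument along the flow should extend it to the (un)stable manifolds. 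Once that is secured, the rest is a line-by-line transcription of the Kähler proof.
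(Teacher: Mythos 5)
There is a genuine gap in your treatment of assertion (1), and it sits exactly where you declare the problem to be ``essentially combinatorial'': the convexity of $\Psi(\overline{Y})$ itself. Your sketch gives (at best) the containment of $\Psi(\overline{Y})$ in a polytope cut out by half-spaces $\Psi^{\xi}\le \text{const}$ --- and even there the endpoints of $\Psi^{\xi}$ on $\overline{\exp(\mathbb{C}\xi)\cdot y}$ are values at fixed points of $\exp(\mathbb{R}\xi)$, not of all of $K$, so they are not literally the $C_j$ --- but for the reverse inclusion you only appeal to ``the stratification of step one together with the inductive hypothesis applied to each $Z_j\cap\overline{Y}$.'' Knowing the boundary faces are covered and that $\Psi(Y)$ is open in the affine subspace $\Psi(y)+\kk_y^{\circ}$ does not make $\Psi(Y)$ convex. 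The missing idea, which is the actual content of part (1) both in Atiyah's Theorem~2 and in the paper, is that the map $\xi\mapsto \Psi(\exp(-i\xi)\cdot y)$ on $\kk_y^{\perp}$ is (up to sign) the gradient of a \emph{strictly convex} function --- the Kempf--Ness potential $\phi_y(\exp(i\xi))$, whose second derivative along geodesics is $\omega(\xi_M,J\xi_M)=g(\xi_M,\xi_M)\ge 0$ (Lemma~\ref{lm:kn}) --- and that the gradient image of a strictly convex function is an open convex set. This uses only non-degeneracy and $J$-compatibility, never closedness, so it survives in the momentumly closed setting; but it is not a consequence of your Morse/Bia{\l}ynicki-Birula stratification, and without it (1) does not close. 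The characterization of the vertices also comes out of this same argument (an extreme point forces $\kk_z=\kk$ via openness of $\Psi(G\cdot z)$ in $\Psi(z)+\kk_z^{\circ}$), which you do not address.

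Two smaller points. For (4) your argument assumes $y_{\infty}$ is \emph{extremal} for $\Psi^{\xi}$ on $\overline{Y}$, which is not part of the hypothesis; the correct argument (and the paper's) only uses that $\exp(-it\xi)$ is the negative gradient flow of $\Psi^{\xi}$, that $\Psi^{\xi}$ is Morse--Bott, and the standard facts about closures of stable manifolds ($\overline{Y}$ misses every critical manifold $N_1\ne N$ with $\Psi^{\xi}(N_1)\le\Psi^{\xi}(N)$, and $\overline{Y}\cap N=\overline{G\cdot y_{\infty}}$); these are the two properties the paper isolates, and your inputs (Lemma~\ref{morse-bott} and equation~(\ref{eq:grad-psixi})) are indeed the right closedness-free substitutes there. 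Conversely, your worry about whether $\omega$ restricted to stable/unstable manifolds is symplectic is a non-issue for this theorem: nothing in Atiyah's Theorem~2 argument for (2)--(4) needs $\diff\omega=0$ on those submanifolds, only the Morse--Bott structure and the $G$-equivariance of the flow.
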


\begin{rem}
  In the setting of Theorem~\ref{thm:orbit}, the moment body of $M$, $\Psi(M)$, is also the convex hull generated by $\{\Psi(Z_i)\}$.
  However, there is a significant difference between the two polytopes: $\Psi(M)$ and $\Psi(\overline{Y})$.
  That is, the set of vertices (or equivalently extreme points) of $\Psi(M)$ is only a proper subset of $\{\Psi(Z_i)\}$ in general.
  In other words, $\Psi(Z_i)$ can be an interior point of a face of $\Psi(M)$, which can never happen for $\Psi(Z_j \cap \overline{Y}), Z_j \cap \overline{Y} \ne \emptyset$ and $\Psi(\overline{Y})$.\footnote{However, if $Z_j \cap \overline{Y} = \emptyset$, it is possible that $\Psi(Z_i)$ lies on the interior of a face of $\Psi(\overline{Y})$.}
\end{rem}

To show Theorem~\ref{thm:orbit}, we use the Kempf--Ness function associated with the generalized moment map.
Although not fully necessary for the proof of Theorem~\ref{thm:orbit}, we think it may be appropriate to introduce the Kempf--Ness function here, since it also plays a key role in the later part of this note.

Recall that since $\kg = \kk \oplus i \kk$, for an element $\xi\in \kg$, we can define its real and imaginary part using such a splitting.
The following Definition-Proposition holds for any compact group $K$ and its complexification $G$.

\begin{defnprop}
  Fix an element $m \in M$, there exists a unique function $\phi_m:G \rightarrow \mathbb{R}$ such that
  \begin{equation}
    \label{eq:def-kn}
    (\diff \phi_m)_g (g \cdot \xi) = - \langle \Psi(g^{-1}\cdot m), \Im(\xi)\rangle,\quad \phi_m|_K \equiv 0,
  \end{equation}
  where $g\in G$, $\xi\in \kg = \T_{e} G$, $g\cdot \xi\in \T_{g}G$ and $\Im(\xi)$ is the imaginary part of $\xi$.
  We call such a function \emph{the lifted Kempf--Ness function}.
\end{defnprop}

\begin{proof}
  Let $\omega$ be the Hermitian form of $M$.
  The proof of the existence of $\phi_m$ for the K\"ahler case,~\cite[Theorem~4.1]{Georgoulas_2021mo}, also works for our setting without any change.
  In fact, as pointed by~\cite[Remark~5.2.7]{Woodward_2010aa}, such a proof only uses the anti-symmetry of $\omega$.

  Nevertheless, due to Theorem~\ref{thm:ob-close}, a more straightforward proof is also possible.
  Let
  \begin{equation}
    \label{eq:def-lg}
    \Lambda(g) = g^{-1}\cdot m: G \rightarrow M.
  \end{equation}
  Then $\Lambda$ is a holomorphic map.
  By Theorem~\ref{thm:ob-close}, $\Lambda^*\omega$ is a closed $(1,1)$-form on $G$.
  For $k\in K$, denote $R_k$ to be the right translation defined by $k$ on $G$ and $F_k$ to be the diffeomorphism defined by $k$ on $M$.
  Since $\omega$ is $K$-invariant, we can check that
  \begin{equation*}
    R_k^*(\Lambda^*\omega) = (\Lambda\circ R_k)^* \omega = \Lambda^*\circ F^*_{k^{-1}}(\omega) = \Lambda^*(\omega),
  \end{equation*}
  that is, $\Lambda^*\omega$ is invariant under the right translation.
  Moreover, by (\ref{eq:def-lg}), for any $\zeta\in \kk$, we have
  \begin{equation*}
    (\diff \Lambda)_g(g\cdot \zeta) = - \zeta_{M,g^{-1}\cdot m}.
  \end{equation*}
  As a result,
  \begin{equation*}
\iota_{g \cdot \zeta} (\Lambda^*\omega) = - \Lambda^*(\iota_{\zeta_M}\omega)_{g^{-1}\cdot m} = - \Lambda^* (\langle (\diff \Psi)_{g^{-1}\cdot m}, \zeta \rangle) \\
    = - \langle (\diff \Lambda^*\Psi)_g, \zeta\rangle.
  \end{equation*}
  Therefore, $- \Lambda^*\Psi$ is a moment map for $\Lambda^*\omega$ with respect to the right $K$-action on $G$.
  
  Now, by~\cite[Lemma~4.2.1, Theorem~4.2.2 and Theorem~4.2.4]{Guillemin_2005ab}, there is a unique right $K$-invariant function $\phi_m:G \rightarrow \mathbb{R}$ such that
  \begin{gather}
\diff \diffc \phi_m = \Lambda^*\omega,\quad\phi_m|_K \equiv 0,\label{eq:ddc}\\
    \iota_{g \cdot \zeta} \diffc \phi_m = \langle(\Lambda^* \Psi)(g), \zeta\rangle.\label{eq:dc-phi}
  \end{gather}
  where $\diffc = i(\bar{\partial}-\partial)$.
  By the general property of $\diffc$ operator,~\cite[p. 64,~(4.4)]{Guillemin_2005ab},
  \begin{equation*}
    \iota_{g \cdot \zeta} \diffc \phi_m = - (\diff \phi_m)_g (g\cdot (i \zeta)).
  \end{equation*}
  Then (\ref{eq:ddc}), (\ref{eq:dc-phi}) and the above equality implies that (\ref{eq:def-kn}) holds for $\xi\in i\kk$.
  On the other hand, if $\xi\in \kk$, both sides of (\ref{eq:def-kn}) vanish due to the right $K$-invariance of $\phi_m$.

\end{proof}

Note that the above proof implies $\phi_m$ is actually a right $K$-invariant function on $G$.
In other words, $\phi_m$ can descend to a function defined on $G/K$.

\begin{defn}
  \label{def:kn-fct}
  The descended function of $\phi_m$ on $G/K$ is called \emph{the Kempf--Ness function}, denoted by $\kn_m$.
\end{defn}

\begin{lem}
  \label{lm:kn}
  Fix $m\in M$.
  The lifted Kempf--Ness function $\phi_m$ has the following properties.
  \begin{enumerate}[label=\normalfont(\arabic*)]
  \item For any $\xi\in \kk$ and $t\in \mathbb{R}$,
    \begin{equation*}
      \odv[order={2}]{}{t}\phi_m(\exp(it\xi)) \ge 0;
    \end{equation*}
    besides,
    \begin{equation}
      \label{eq:kmeq}
      \odv[order={2}]{}{t}\Big|_{t=0}\phi_m(\exp(it\xi)) = 0,
    \end{equation}
    if and only if $\xi \in \kk_m$.
  \item For any $g, h\in G$,
    \begin{equation}
      \label{eq:equi-kn}
      \phi_m(g) + \phi_{g^{-1}\cdot m}(h) = \phi_m(gh).
    \end{equation}
  \end{enumerate}
  The Kempf--Ness function $\kn_m$ has the same properties.
\end{lem}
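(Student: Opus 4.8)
The plan is to reduce everything to the differential equation \eqref{eq:def-kn} defining $\phi_m$, and to exploit the holomorphicity of the map $\Lambda(g) = g^{-1}\cdot m$ together with Theorem~\ref{thm:ob-close}, exactly as in the ``conceptual'' proof of the Definition-Proposition above. First I would set up notation: for fixed $\xi\in\kk$, write $\gamma(t) = \exp(it\xi)$ and compute $\odv{}{t}\phi_m(\gamma(t))$ using \eqref{eq:def-kn} with $g = \gamma(t)$ and the tangent vector $g\cdot(i\xi)$; this gives $\odv{}{t}\phi_m(\gamma(t)) = -\langle \Psi(\gamma(t)^{-1}\cdot m), \xi\rangle = -\Psi^\xi(\gamma(t)^{-1}\cdot m)$. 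Differentiating once more and using \eqref{eq:grad-psixi} (i.e.\ $\grad\Psi^\xi = J\xi_M$, so $\odv{}{t}\Psi^\xi = g(\grad\Psi^\xi, -i\xi_M \cdot) = g(J\xi_M, -J\xi_M)\cdot(\text{sign})$) yields $\odv[order={2}]{}{t}\phi_m(\gamma(t)) = \norm{\xi_{M,\gamma(t)^{-1}\cdot m}}_g^2 \ge 0$, where the norm is taken in the $K$-invariant metric $g$; here one must be careful with the sign of the generator of the flow $t\mapsto \exp(it\xi)^{-1}$ acting on $M$, but the upshot is a manifestly nonnegative expression. This simultaneously proves the convexity in (1) and identifies the equality case in \eqref{eq:kmeq}: the second derivative at $t=0$ vanishes iff $\xi_{M,m} = 0$, i.e.\ iff $\xi\in\kk_m$ by the same nondegeneracy argument used in Lemma~\ref{differimage}.

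For part (2), the cocycle identity \eqref{eq:equi-kn}, I would argue that both sides, as functions of $h\in G$ for fixed $g$, satisfy the same defining ODE \eqref{eq:def-kn} (with $m$ replaced by $g^{-1}\cdot m$ on the right-hand side of the equation for $\phi_{g^{-1}\cdot m}$) and agree on $K$; by the uniqueness clause of the Definition-Proposition they must coincide. Concretely: fix $g$, and for $h\in G$ set $F(h) = \phi_m(gh) - \phi_m(g)$. Then $F|_K$: when $h\in K$ we have $gh$ and $g$ in the same right $K$-coset, and since $\phi_m$ descends to $G/K$ (noted right after the Definition-Proposition's proof), $F|_K\equiv 0$. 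For the differential, using the chain rule and left-invariance of the vector fields $g\cdot\xi$, $(\diff F)_h(h\cdot\xi) = (\diff\phi_m)_{gh}((gh)\cdot\xi) = -\langle\Psi((gh)^{-1}\cdot m), \Im\xi\rangle = -\langle\Psi(h^{-1}\cdot(g^{-1}\cdot m)), \Im\xi\rangle$, which is exactly the defining equation \eqref{eq:def-kn} for $\phi_{g^{-1}\cdot m}$. Hence $F = \phi_{g^{-1}\cdot m}$, which is \eqref{eq:equi-kn}.

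Finally, the statement that $\kn_m$ on $G/K$ has the same properties is immediate once one observes that $t\mapsto\exp(it\xi)$ projects to a geodesic-like path in $G/K$ transverse to the $K$-orbits, so $\phi_m(\exp(it\xi)) = \kn_m(\overline{\exp(it\xi)})$, and that \eqref{eq:equi-kn} is compatible with right $K$-invariance in all three terms (this uses $\phi_{g^{-1}\cdot m}$ being right $K$-invariant too). The main obstacle I anticipate is purely bookkeeping: getting the signs right in the second-derivative computation, since the generator of $t\mapsto\exp(it\xi)^{-1}\cdot m$ is $-(i\xi)_M = -J\xi_M$, and one has to track how this interacts with $\grad\Psi^\xi = J\xi_M$ and with the convention $g(-,-) = \omega(-,J-)$; but once the conventions are fixed, the expression collapses to $\pm\norm{\xi_M}_g^2$ and the sign is forced to be $+$ by, say, testing on a linear model or by the known K\"ahler case. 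No genuinely new idea beyond Theorem~\ref{thm:ob-close} and \eqref{eq:grad-psixi} is needed; the closedness of $\omega$ plays no role, consistent with the remark of Woodward cited above.
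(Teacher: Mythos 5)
Your proposal is correct and follows essentially the same route as the paper: part (1) is the direct computation $\odv{}{t}\phi_m(\exp(it\xi)) = -\langle\Psi(\exp(-it\xi)\cdot m),\xi\rangle$ followed by a second differentiation yielding $\omega(\xi_M,J\xi_M)=\norm{\xi_M}_g^2\ge 0$ (the sign you worry about is unambiguous once the velocity $-J\xi_M$ of $t\mapsto\exp(-it\xi)\cdot m$ is paired with $\grad\Psi^\xi=J\xi_M$ inside the outer minus, so no appeal to a linear model is needed), and part (2) is exactly the paper's uniqueness argument applied to $F(h)=\phi_m(gh)-\phi_m(g)$.
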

\begin{proof}
  \begin{inparaenum}
  \item By (\ref{eq:def-kn}), the derivative of $\phi_m(\exp(it\xi))$ with respect to $t$ is
    \begin{equation*}
      \odv{}{t}\phi_m(\exp(it\xi)) = (\diff \phi_m)_{\exp{it\xi}}(\exp(it\xi)\cdot(i\xi)) 
      = -\langle\Psi(\exp(-it\xi)\cdot m), \xi\rangle.
    \end{equation*}
    Then,
    \begin{multline}
      \label{eq:km-dd}
      \odv[order={2}]{}{t}\phi_m(\exp(it\xi)) = -\odv{}{t}\langle\Psi(\exp(-it\xi)\cdot m), \xi\rangle\\
      = \langle (\diff \Psi)_{\exp(it\xi)\cdot m}(J\xi_M), \xi \rangle = \omega_{\exp(it\xi)\cdot m}(\xi_M, J\xi_M) \ge 0.
    \end{multline}
    Moreover, (\ref{eq:km-dd}) implies that (\ref{eq:kmeq}) holds if and only if $\xi_M(m) =0$, which is equivalent to $\xi\in \kk_m$.

  \item Fix $g$, let $\widetilde{\phi}_{g^{-1}\cdot m}(h) = \phi_m(gh) - \phi_m(g)$.
    For any $k\in K$, by the right $K$-invariance of $\phi_m$,
    \begin{equation*}
      \widetilde{\phi}_{g^{-1}\cdot m}(k) = \phi_m(gk) - \phi_m(g) =0.
    \end{equation*}
    Moreover, by (\ref{eq:def-kn}), we also have
    \begin{align*}
      (\diff \widetilde{\phi}_{g^{-1}\cdot m})_h (h \cdot \xi)
      &= \odv{}{t}\Big|_{t=0} \widetilde{\phi}_{g^{-1}\cdot m}(h\exp(t\xi))\\
      &= \odv{}{t}\Big|_{t=0} \phi_m(gh\exp(t\xi))\\
      &= - \langle \Psi(h^{-1}g^{-1}\cdot m), \Im(\xi)\rangle.
    \end{align*}
    As a result, both $\widetilde{\phi}_{g^{-1}\cdot m}$ and ${\phi}_{g^{-1}\cdot m}$ satisfy (\ref{eq:def-kn}).
    By the uniqueness of the lifted Kempf--Ness function, we have $\widetilde{\phi}_{g^{-1}\cdot m} = {\phi}_{g^{-1}\cdot m}$ and (\ref{eq:equi-kn}) follows.
  \end{inparaenum}
\end{proof}

Lemma~\ref{lm:kn} says that the (lifted) Kempf--Ness function is convex in a suitable sense.
It is a key fact behind many results in the complex geometry, for example, in the following proof of Theorem~\ref{thm:orbit}.
One can find more application of the Kempf--Ness function in Section~\ref{sec:kn-fct}.

\begin{proof}[Proof of Theorem~\ref{thm:orbit}]
  \begin{inparaenum}
  \item We use an argument in~\cite{Biliotti_2018re} to show the convexity.
    Fix an invariant metric on $\kk$ and choose $y\in Y$.
    By using the lifted Kempf--Ness function associated to $y$, we define a function
    \begin{equation*}
      f(\xi) = \phi_y(\exp(i\xi)):\kk^{\perp}_y \rightarrow \mathbb{R}.
    \end{equation*}
    For any $\eta\in \kk^{\perp}_y$, by Lemma~\ref{lm:kn},
    \begin{equation}
      \label{eq:fxe}
      \phi_{y}(\exp(i\xi)\exp(it\eta)) = \phi_{\exp(-i\xi)\cdot y}(\exp(it\eta)) - \phi_y(\exp(i\xi)),
    \end{equation}
    and
    \begin{align*}
      \odv[order={2}]{}{t}\Big|_{t=0} f(\xi + t\eta)
      &= \odv[order={2}]{}{t}\Big|_{t=0} \phi_{y}(\exp(i\xi)\exp(it\eta)) \\
&= \odv[order={2}]{}{t}\Big|_{t=0}\phi_{\exp(-i\xi)\cdot y}(\exp(it\eta)) \ge 0.
    \end{align*}
    And the equality holds if and only if $\eta\in \kk_{\exp(-i\xi)\cdot y} = \kk_y$.
    But $\eta\in \kk^{\perp}_y$, which means that $f(\xi)$ is a strictly convex function on $\kk^{\perp}_y$.

    Then, by the property of strictly convex function,~\cite[p.~122, Theorem~3.5]{Guillemin_1994aa}, the image for the differential of $f$, as a function
    \begin{equation*}
      \diff f: \kk_y^{\perp} \rightarrow \kk_y^{\perp,*}\simeq \kk_y^{\circ},
    \end{equation*}
    is an open convex subset.
    At the same time, by (\ref{eq:def-kn}) and (\ref{eq:fxe}), for $\xi,\eta\in \kk_y^{\perp}$, we have
    \begin{multline*}
      (\diff f)_{\xi}(\eta) = \odv{}{t}\Big|_{t=0} f(\xi + t\eta) = \odv{}{t}\Big|_{t=0} \phi_{\exp(-i\xi)\cdot y}(\exp(it\eta))\\
      = - \langle \Psi(\exp(-i\xi)\cdot y), \eta\rangle.
    \end{multline*}
    Due to Proposition~\ref{isotropy}, $-\Psi(\exp(-i\xi)\cdot y) + \Psi(y) \in \kk_y^{\circ}$.
    Hence, the above equality implies
    \begin{equation*}
      (\diff f)_{\xi} = -\Psi(\exp(-i\xi)\cdot y) + \Psi_1(y),
    \end{equation*}
    where $\Psi_1(y)$ is the projection of $\Psi(y)$ onto $(\kk^{\perp}_y)^{\circ}$.
    As a result, $\Psi(G\cdot y)$ is an open convex subset of $\Psi(y) + \kk_y^{\circ}$.
    Therefore, by the compactness of $M$,
    \begin{equation*}
      \Psi(\overline{Y}) = \overline{\Psi(G\cdot y)}
    \end{equation*}
    is a convex set.

    Now, let $\alpha\in \Psi(\overline{Y})$ be an extreme point of $\Psi(\overline{Y})$ and choose $z\in \overline{Y}$ such that $\Psi(z) = \alpha$.
    By what we have proved, $\Psi(G\cdot z)$ must be an open convex subset of $\alpha + \kk_z^{\circ}$.
    Since $\alpha$ is an extreme point, $\kk_z$ must be $K$, that is, $z \in \cup_j Z_j$.
    Therefore, $\Psi(\overline{Y})$ is the convex polytope generated by $\{C_j\}$.
    
  \item\hspace{-0.7ex}, \item and the characterization of the vertices in (1).
    One can show these results by the exactly same argument used in~\cite[Theorem~2]{Atiyah}.
    Moreover, the technical result (4), which will be used later, follows from (1) and (2).
    However, to give more details about Atiyah's arguments, we prove (4) here.

  \item In Atiyah's proof, only two properties of the moment map are needed.
    \begin{inparaenum}[(a)]
    \item For any $\xi\in \kk$, $\Psi^{\xi}$ is a Morse--Bott function.
    \item The gradient flow associated with $\Psi^{\xi}$ is $G$-equivariant.
    \end{inparaenum}
    Both of them also hold for the generalized moment map.
    (a) follows from Lemma~\ref{morse-bott}.
    For (b), note that due to (\ref{eq:grad-psixi}), for any $m\in M$, $\exp(-it\xi)\cdot m$ is the trajectory of the gradient flow of $-\Psi^{\xi}$, from which (b) follows.

    To see the existence of the limit $\lim_{t\rightarrow +\infty} \exp(-it\xi)\cdot y$, we note that, by (a) and the proof of (b), $\exp(-it\xi)\cdot y$ is a trajectory of the gradient flow of a Morse-Bott function on a compact manifold, which implies that the limit exists by the general property of Morse-Bott functions.

    For the remaining part of (4), we argue as follows.
    By (a), one can define the stable manifold of the gradient flow of $-\Psi^{\xi}$.
    Let $N^s$ be one of such stable manifold satisfying $y\in N^s$ and $N$ be the critical manifold of $N^s$.
By (b) and the $G$-invariance of $N$, $N^s$ is also $G$-invariant.
    Hence, $Y\subseteq N^s$.
    Then, by the general property of the gradient flow,~\cite[(3.7)]{Atiyah} and~\cite[Lemma~10.7]{Kirwan84}, we have
    \begin{gather}
      \overline{Y} \cap N_1 = \emptyset, \label{eq:prop-gf}\\
      \overline{Y} \cap N = \overline{G \cdot y_{\infty}}.\label{eq:prop-gf2}
    \end{gather}
    where $N_1$ is any other critical manifold of $\Psi^{\xi}$ with $\Psi^{\xi}(N_1) \le \Psi^{\xi}(N) = \Psi^{\xi}(y_{\infty})$.

    Now, if $z\notin N^s$, then $z' = \lim_{t\rightarrow + \infty}\exp(-it\xi) \cdot z$ must lie in some critical manifold of $\Psi^{\xi}$ and
    \begin{equation*}
      \Psi^{\xi}(z') \le \Psi^{\xi}(z) = \Psi^{\xi}(y_{\infty}).
    \end{equation*}
    However, since $z'\in \overline{Y}$, this contradicts with (\ref{eq:prop-gf}).
    Therefore, we have $z\in N^s$.
    Together with $\Psi^{\xi}(z) = \Psi^{\xi}(y_{\infty})$, we further know that $z\in N$.
    By (\ref{eq:prop-gf2}), $z\in \overline{G \cdot y_{\infty}}$.
  \end{inparaenum}

\end{proof}

\begin{rem}
In general, the vertices of polytope $P=\Psi({\overline{Y}})$ may not be rational.
However, by Lemma \ref{differimage} and Proposition
\ref{isotropy}, the normal fan $N_P$ of $P$ is rational.  $N_P$
may carry piecewise linear convex (real) function, making the
toric variety defined by $N_P$ a K\"ahler toric variety, therefore
a projective toric variety because it is Moishezon. From this
perspective, $\overline{Y}$, after normalization, may still be a
projective toric variety despite the fact that its ambient space
may not. Here, it is interesting to propose the following: an
arbitrary polytope $P$ whose normal fun is rational is normally
equivalent to a rational polytope. We believe that a geometric
argument along the above line will prove this combinatorial
statement. The notes in this remark grew out of a question of
Allen Knutson and subsequent correspondences with the first author.
\end{rem}

\section{Reduction construction}

Let $(M,J,g)$ be an almost Hermitian $K$-manifold carrying a Hamiltonian $K$-action, whose generalized moment map is $\Psi$ and Hermitian form is $\omega$.
In this section, we discuss the reduction at $p\in \kk^*$ for $M$.
For simplicity, we assume that $p$ is a regular value of $\Psi$.
To avoid the issue with orbifolds, we also assume that $K_p$ acts freely on $\Psi^{-1}(p)$, which in fact implies that $p$ is a regular value of $\Psi$ automatically by Lemma~\ref{differimage}.

\begin{prop}
  \label{prop:red}
  The Hermitian form $\omega$ of $M$ descends to a non-degenerate two-form $\omega_p$ on $M_p= \Psi^{-1}(p)/K_p$ such that
  \begin{equation}
    \label{eq:red-sf}
    i^* \omega = \pi^* \omega_p
  \end{equation}
  where $i: \Psi^{-1}(p) \hookrightarrow M$ is the inclusion and $\pi: \Psi^{-1}(p) \rightarrow M_p$ is the quotient map.
  The almost complex structure $J$ of $M$ also descends to an almost complex structure $J_p$ on $M_p$ and $\omega_p$ is an Hermitian form with respect to $J_p$.
  Furthermore, $J_p$ is integrable if $J$ is so.
\end{prop}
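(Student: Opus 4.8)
The plan is to handle the statement in two independent parts: the descent of $\omega$ to $\omega_p$ — elementary, and using neither momentum-closedness nor non-degeneracy — and the descent of $J$ together with the compatibility and integrability assertions, which follows the K\"ahler-reduction template. For the first part, since $K_p$ is compact and acts freely on the submanifold $\Psi^{-1}(p)$, the quotient $M_p=\Psi^{-1}(p)/K_p$ is a manifold and $\pi$ a principal $K_p$-bundle, so it suffices to show that $i^*\omega$ is basic for the $K_p$-action, whence it descends uniquely to $\omega_p$ with $i^*\omega=\pi^*\omega_p$. Invariance is clear from the $K$-invariance of $\omega$. For horizontality, note that the hypothesis forces $K_m=\{e\}$ (since $K_m\subseteq K_p$ by equivariance of $\Psi$), so $\T_m\Psi^{-1}(p)=\ker(\diff\Psi)_m$ by Lemma~\ref{differimage}; then for $\xi\in\kk_p$ and $v\in\T_m\Psi^{-1}(p)$ the defining relation (\ref{eq:def-mp}) gives $(\iota_{\xi_M}\omega)(v)=(\diff\Psi^\xi)_m(v)=\langle(\diff\Psi)_m(v),\xi\rangle=0$, i.e.\ $\iota_{\xi_M}(i^*\omega)=0$. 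No closedness of $\omega$ is used.

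For the reduced almost complex structure, fix a $K$-invariant metric and take the $g$-orthogonal splitting $\T_m\Psi^{-1}(p)=H_m\oplus(\kk_p\cdot m)$ along $\Psi^{-1}(p)$, so that $H_m\cong\T_{[m]}M_p$ and $\pi$ becomes a Riemannian submersion for the descended metric $g_p$. When $p$ is a \emph{central} value, $K_p=K$ and, using $\ker(\diff\Psi)_m=(\kk\cdot m)^\omega=(J(\kk\cdot m))^\perp$, one gets $H_m=(\kk\cdot m\oplus J(\kk\cdot m))^\perp=(\kg\cdot m)^\perp$, which is $J$-invariant because $\kg\cdot m=\kk\cdot m\oplus J(\kk\cdot m)$ is. Hence $J|_{H_m}$ descends to $J_p$, and the pointwise identities $J_p^2=-\id$ and $g_p(-,-)=\omega_p(-,J_p-)$ are inherited from $J^2=-\id$ and $g(-,-)=\omega(-,J-)$ on $H_m$; so $\omega_p$ is non-degenerate and is the Hermitian form of $(M_p,J_p,g_p)$. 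For a general regular value $p$, I would reduce to the central case by the shifting trick: the coadjoint orbit $\cO_p$ carries a $K$-invariant K\"ahler structure $(\sigma,J_{\cO})$, the product $\widetilde M=M\times\cO_p$, with $\widetilde\omega=\pr_1^*\omega-\pr_2^*\sigma$ and $\widetilde J=J\times(-J_{\cO})$, is almost Hermitian (again non-degenerate and momentumly closed, as $\diff\widetilde\omega=\pr_1^*\diff\omega$) with moment map $(m,\beta)\mapsto\Psi(m)-\beta$; the value $0$ is central and regular with $K$ acting freely on $\widetilde\Psi^{-1}(0)$, and $m\mapsto(m,\Psi(m))$ induces a diffeomorphism $M_p\cong\widetilde\Psi^{-1}(0)/K$, along which the central construction transports $J_p,g_p,\omega_p$.

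For integrability, if $J$ is integrable then so is $\widetilde J$, so it suffices to treat the central case: there $\Psi^{-1}(0)/K$ is identified — via the Kempf--Ness function of Lemma~\ref{lm:kn}, together with Theorem~\ref{thm:ob-close}, which makes $\omega$ pull back to a closed form along $G$-orbits — with a complex geometric quotient by $G$ for which $\pi$ is a holomorphic submersion, and this forces the Nijenhuis tensor of $J_0$ to vanish; cf.\ Section~\ref{dynastable}. Alternatively one may transcribe the classical computation of the Nijenhuis tensor of the reduced structure, which, again, never uses $\diff\omega=0$, only the $J$-invariance of $H_m$.

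The main obstacle is precisely the construction of $J_p$ for \emph{non-central} $p$: the $g$-orthogonal horizontal distribution of $\Psi^{-1}(p)\to M_p$ is genuinely \emph{not} $J$-invariant once $K_p\subsetneq K$ — already for $\SU(2)$ acting diagonally on $\bC^2\oplus\bC^2$ at a regular value whose stabilizer is a maximal torus — so the shifting trick (equivalently, a local complex slice transverse to the $G_p$-orbits) is essential, not merely a convenience. Everything else is a line-by-line transcription of symplectic and K\"ahler reduction, and the one point worth flagging in the write-up is that at no stage does the argument invoke the closedness of $\omega$.
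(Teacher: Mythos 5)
Your proposal is correct, and for the descent of $\omega$ (the basic-form argument) and for central $p$ (the $g$-orthogonal splitting with horizontal space $H_m=(\kg\cdot m)^{\perp}$, which is $J$-invariant) it coincides with the paper's proof. The genuine divergence is at non-central $p$, and here your route is not merely different but arguably necessary. The paper runs the direct argument for every regular value: from $\omega(u,Jw)=g(u,w)=0$ for $u\in\T_m(K_p\cdot m)$, $w\in H_m$, it concludes $Jw\in(\T_m(K_p\cdot m))^{\omega}=\T_m\Psi^{-1}(p)$; but $(\T_m(K_p\cdot m))^{\omega}$ has dimension $\dim M-\dim K_p$, while $\T_m\Psi^{-1}(p)=(\T_m(K\cdot m))^{\omega}$ has dimension $\dim M-\dim K$, so that equality forces $\kk_p\cdot m=\kk\cdot m$, i.e.\ essentially $K_p=K$. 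Your diagnosis of the obstruction is the right one: every $J$-invariant subspace of $\T_m\Psi^{-1}(p)$ lies in $\T_m\Psi^{-1}(p)\cap J\,\T_m\Psi^{-1}(p)=(\kg\cdot m)^{\perp}$, which (when $\kg_m=0$, as in your $\SU(2)$ example) has dimension $\dim M-2\dim K$ rather than the required $\dim M-\dim K-\dim K_p$; hence no $J$-invariant complement of $\kk_p\cdot m$ inside $\T_m\Psi^{-1}(p)$ exists once $K_p\subsetneq K$. Passing through the shifting trick — which the paper only records as a remark after the proposition — therefore repairs the argument rather than just restyling it. The one step you should make explicit is that the structure transported from the reduction of $M\times\mathcal{O}_p^-$ at $0$ induces the same two-form as (\ref{eq:red-sf}): this holds because $\widehat{\Psi}^{-1}(0)$ is the graph of $\Psi$ over $\Psi^{-1}(\mathcal{O}_p)$ and the correction term $\Psi^*\sigma_p$ vanishes identically on the slice $\Psi^{-1}(p)$.

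Two smaller points. For integrability, prefer your ``alternative'' (the direct Nijenhuis computation using the $J$-invariance of $H_m$ in the central model): the route through Section~\ref{dynastable} and Corollary~\ref{cor:two-cpx-str} presupposes the reduced complex structure and assumes $M$ compact, neither of which is available at this stage. Also, your first part only shows that $\kk_p\cdot m$ is contained in the null space of $i^*\omega$; the reverse inclusion (hence non-degeneracy of $\omega_p$) does follow from your pointwise identity $g_p(-,-)=\omega_p(-,J_p-)$ on $H_m$, but it is worth stating, since the paper obtains it separately from the equality $(\T_m\Psi^{-1}(p))^{\omega}=\T_m(K\cdot m)$.
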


\begin{proof}
  Let $m \in \Psi^{-1}(p)$ be a regular point.
  For any $\xi \in \kk$ and $v\in \T_m \Psi^{-1}(p)$, since
  \begin{equation*}
    \omega_m(v,\xi_{M,m}) = \langle(\diff  \Psi)_m(v) , \xi\rangle = 0,
  \end{equation*}
  we have
  \begin{equation}
    \label{eq:ts-inc}
    \T_m(K\cdot m) \subseteq (\T_m \Psi^{-1}(p))^{\omega}.
  \end{equation}
  On the other hand, due to $K_m \subseteq K_p$ and $K_p$ acting on $\Psi^{-1}(p)$ freely, $K_m$ must be trivial, that is, $\dim \T_m(K\cdot m) = \dim K$.
  Since $p$ is a regular point, we also know that $\dim \T_m \Psi^{-1}(p) = \dim M - \dim K$.
  Now the non-degeneracy of $\omega$ implies that $\dim (\T_m \Psi^{-1}(p))^{\omega} = \dim K$.
  Combining with (\ref{eq:ts-inc}), we have
  \begin{equation}
    \label{eq:ts-eq}
    \T_m(K\cdot m) = (\T_m \Psi^{-1}(p))^{\omega}.
  \end{equation}

  By (\ref{eq:ts-eq}), the null space of $i^*  \omega$ at $m$ is precisely
  \begin{equation*}
    \T_m \Psi^{-1}(p) \cap (\T_m \Psi^{-1}(p))^{\omega} = \T_m \Psi^{-1}(p) \cap \T_m (K \cdot m) = \T_m (K_p \cdot m).
  \end{equation*}
  where the last equality is due to the equivariance of $\Psi$.
  Recall that $\omega$ is $K$-invariant.
  The inclusion (\ref{eq:ts-inc}) already implies that there exists a unique ``push down'' two-form $\omega_p$ on
  \begin{equation}
    \label{eq:tmp}
    \T_{\pi(m)} M_p \simeq  \T_m \Psi^{-1}(p)/\T_m (K_p \cdot m)
  \end{equation}
  as described in the proposition.
  And the equality (\ref{eq:ts-eq}) further implies that $\omega_p$ is non-degenerate.

  It remains to show that the almost complex structure $J$ descends and $\omega_p$ is an Hermitian form with respect to $J_p$.
  In fact, there is a well-defined orthogonal splitting with respect to the metric $g$,
  \begin{equation}
    \label{eq:os}
    \T_m \Psi^{-1}(p) = \T_m (K_p \cdot m) \oplus H_m.
  \end{equation}
  Let $w\in H_m$.
  For any $u\in \T_m (K_p \cdot m)$, the orthogonal splitting (\ref{eq:os}) gives
  \begin{equation*}
    \omega(u, Jw) = g(u, w) = 0.
  \end{equation*}
  Therefore, (\ref{eq:ts-eq}) implies that $Jw \in (\T_m (K_p \cdot m))^{\omega} = \T_m \Psi^{-1}(p)$.
  Meanwhile, since $H_m \in \T_m \Psi^{-1}(p)$, by (\ref{eq:ts-inc}), for any $u\in \T_m (K_p \cdot m)$, the following equality holds,
  \begin{equation*}
    g(u,Jw) = -\omega(u,w) = 0.
  \end{equation*}
  In other words, $Jw$ is orthogonal to $\T_m (K_p \cdot m)$.
By the definition of $H_m$, these mean exactly that $Jw \in H_m$.
  That is, $H_m$ is $J$-invariant.
  Since $H_m$ is isomorphic to $\T_{\pi(m)} M_p$ due to (\ref{eq:tmp}) and (\ref{eq:os}), this shows that $J$ descends to almost complex structure $J_p$ on $\T M_p$ and $\omega_p$ is an Hermitian form with respect to $J_p$.
  
  That $J$ is integrable implies that $J_p$ is also integrable can be proved by using the Newlander--Nirenberg theorem in the same way as in the K\"ahler reduction case.
  The details are left to readers.
\end{proof}

\begin{defn}
  The orbit space $M_p = \Psi^{-1}(p)/K_p$ together with the induced Hermitian two-form and almost complex structure is called the reduction space of $M$ at the point $p$ with respect to the generalized moment map $\Psi$.\end{defn}

The symplectic ``shifting trick'' works equally well in our context.
Let $\mathcal {O}_p^- \subseteq {\frak k}^*$ be the coadjoint orbit passing $p$ and carrying the minus of the canonical symplectic form $\sigma_p$.
For the product space
\begin{equation*}
  M \times \mathcal{O}_p^-
\end{equation*}
the two-form $\omega - \sigma_p$ is a momentumly closed fundamental two-form.
The generalized moment map for the diagonal $K$-action is $\widehat{\Psi}(m, \eta) = \Psi(m) - \eta$, where $(m,\eta) \in M \times \mathcal{O}_p^{-}$.
Then one checks that $\widehat{\Psi}^{-1}(0) /K = \Psi^{-1}(p)/K_p$.
Hence, one can ``shift'' the reduction of $\Psi$ at $p$ to the reduction of $\widehat{\Psi}$ at $0$.

\begin{rem} We ask whether there exists a $K$-action on a symplectic manifold $X$ such that reductions $\Psi_\omega^{-1}(0)/K$ are singular for all Hamiltonian symplectic forms $\omega$ but there exists a momentumly closed Hermitian form $\omega_0$ (necessarily non-symplectic) such that the corresponding reduction $\Psi_{\omega_0}^{-1}(0)/K$ is smooth.
\end{rem}

For the reduction space of a compact symplectic manifold $(N, \omega_N)$, if the symplectic structure comes from a prequantum line bundle $(L,\nabla^L)$ over $N$, that is, $\frac{i}{2\pi}(\nabla^L)^2 = \omega_N$, certain characteristic number defined by the reduction space of $N$ and $N$ itself respectively satisfies an important property: ``quantization commutes with reduction'', often denoted by $[\mathrm{Q},\mathrm{R}] = 0$.
Such a property cannot be generalized to the general almost Hermitian manifold $(M,\omega)$ discussed here because such a line bundle $(L,\nabla^L)$ never exists if $\omega$ is not closed.
However, a variant of the $[\mathrm{Q},\mathrm{R}] = 0$ property proved by~\cite{Meinrenken_1999ab} and~\cite{Tian_1998aa} still holds in our settings.
\begin{thm}
  \label{thm:qr}
Suppose that $M$ is compact and $\Psi^{-1}(0)$ is not empty.
  Let $M_K = \Psi^{-1}(0)/K$ be the reduction space of $M$ at $0$.
  Then, an equality of Todd genus holds

\begin{equation}
    \label{eq:todd-eq}
    \int_M \mathrm{Td}(\T M) = \int_{M_K} \mathrm{Td}(\T M_K).
  \end{equation}
\end{thm}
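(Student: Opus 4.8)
The plan is to reduce (\ref{eq:todd-eq}) to the ``quantization commutes with reduction'' theorem of Meinrenken~\cite{Meinrenken_1999ab} and Tian--Zhang~\cite{Tian_1998aa}, and then to re-examine its analytic proof, observing that the closedness of $\omega$ enters only through properties already available in our setting. By the Atiyah--Singer index theorem, $\int_{M}\mathrm{Td}(\T M)$ is the index of the Spin$^{c}$-Dirac operator $D_{M}$ of the canonical Spin$^{c}$ structure of the almost complex manifold $(M,J)$, and, by Proposition~\ref{prop:red}, $\int_{M_{K}}\mathrm{Td}(\T M_{K})$ is the index of the analogous operator $D_{M_{K}}$ on the reduced almost complex manifold $(M_{K},J_{K})$. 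Since $K$ is connected, it acts trivially on the equivariant index $\operatorname{ind}_{K}(D_{M})$, so $\int_{M}\mathrm{Td}(\T M)$ equals the dimension of its $K$-invariant part; hence it is enough to prove the equivariant identity
\begin{equation*}
  \bigl[\operatorname{ind}_{K}(D_{M})\bigr]^{K}=\operatorname{ind}(D_{M_{K}}).
\end{equation*}
The key observation is that no symplectic form is needed for this: the proof takes as input only an almost complex $K$-manifold together with a moment map for its canonical Spin$^{c}$ structure, and by Definition~\ref{def:gmp} the relation $\diff\Psi^{\xi}=\iota_{\xi_{M}}\omega$ is precisely that datum, while $g(-,-)=\omega(-,J-)$ of Proposition~\ref{prop:ac} supplies the required $\omega$-adapted metric.

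Following the analytic method, one deforms $D_{M}$ to $D_{M}+\tfrac{\sqrt{-1}}{2}\,t\,c(v)$, where $v$ is the Kirwan vector field $v_{m}=\xi_{M,m}$ with $\xi=\Psi(m)\in\kk$ under a fixed invariant identification $\kk\cong\kk^{*}$. By (\ref{eq:grad-psixi}) one computes $\grad\|\Psi\|^{2}=2Jv$, so the zero set $\{v=0\}$ is exactly the critical set of the proper function $\|\Psi\|^{2}$, a $K$-invariant union of submanifolds, one component (or union of components) of which is $\Psi^{-1}(0)$. The index of the deformed operator is independent of $t$, and the standard estimates of~\cite{Tian_1998aa} show that, as $t\to\infty$, it is computed by local contributions supported in an arbitrarily small $K$-invariant neighborhood of $\{v=0\}$; this localization step is insensitive to whether $\omega$ is closed, as it only involves the vector field $v$ and the compact geometry of $M$.

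It remains to evaluate the local contributions. Near each component of $\{v=0\}$ --- in particular near $\Psi^{-1}(0)$ --- Theorem~\ref{thm:nf} identifies $\Psi$, up to a $K$-equivariant diffeomorphism $\varphi$, with a genuine \emph{symplectic} moment map on a model associated bundle; moreover the interpolation $\omega_{s}=(1-s)\varphi^{*}\omega+s\,\omega_{\mathrm{mod}}$, $0\le s\le 1$, between $\varphi^{*}\omega$ and the model symplectic form $\omega_{\mathrm{mod}}$ consists of non-degenerate momentumly closed two-forms all having the same contractions $\iota_{\xi_{X}}\omega_{s}$, hence the same moment map and the same Kirwan field. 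Consequently the local contribution does not change along this interpolation and may be computed in the symplectic model, where the classical argument --- Spin$^{c}$-reduction at the regular value $0$ --- produces exactly $\operatorname{ind}(D_{M_{K}})=\int_{M_{K}}\mathrm{Td}(\T M_{K})$ as the contribution of $\Psi^{-1}(0)$, while the contributions of the components of $\{v=0\}$ lying over nonzero values of $\Psi$ cancel in the $K$-invariant part, exactly as in~\cite{Meinrenken_1999ab,Tian_1998aa}. Summing the local contributions yields the displayed equivariant identity, and hence (\ref{eq:todd-eq}).

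The genuinely new point, and the anticipated main obstacle, is to verify that the a priori estimates and the Bochner--Lichnerowicz identity for the deformed operator survive the loss of $\diff\omega=0$. Here Theorem~\ref{thm:ob-close} is decisive: $\diff\omega$ vanishes along every $K$-orbit, hence at points of $\{v=0\}$ its restriction to the orbit directions vanishes, which is precisely where the model of Theorem~\ref{thm:nf} and the interpolation $\omega_{s}$ are applied and where the symplectic estimates can therefore be transplanted without change; away from $\{v=0\}$ the quadratic term $t^{2}\|v\|^{2}$ renders the bounded $\diff\omega$-dependent curvature terms harmless. Carrying out this verification in detail completes the proof.
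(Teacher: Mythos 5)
Your overall strategy is the same as the paper's: rewrite $\int_M\mathrm{Td}(\T M)$ as the index of the canonical Spin$^c$ Dirac operator, pass to its $K$-invariant part, and run the Tian--Zhang deformation $D+T\,c(v)$ with $v$ the Kirwan vector field to localize to $\{v=0\}=\mathrm{crit}(\norm{\Psi}^2)$. However, the decisive step is not carried out, and the ingredient you point to is not the right one. In the Tian--Zhang argument the components of $\mathrm{crit}(\norm{\Psi}^2)$ outside $\Psi^{-1}(0)$ do not contribute terms that ``cancel in the $K$-invariant part''; they contribute \emph{zero}, because the deformed operator restricted to invariant sections is invertible near them for $T$ large. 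Establishing that invertibility is precisely the statement the paper labels $(\ast)$, and the geometric input that makes the classical estimate go through is that at every critical point $m$ of $\norm{\Psi}^2$ with $m\notin\Psi^{-1}(0)$ the Hessian $\Hess_m\norm{\Psi}^2$ has a strictly negative eigenvalue --- equivalently, every local minimum of $\norm{\Psi}^2$ is a global minimum. In the present setting this is supplied by Proposition~\ref{prop:ind-f}\,(4), which rests on the Kirwan--Ness stratification of Section~\ref{sec:kn} and uses only the non-degeneracy of $\omega$. Your proposal never invokes this; instead you appeal to Theorem~\ref{thm:ob-close} (vanishing of $\diff\omega$ along orbits) and a normal-form interpolation $\omega_s$, neither of which yields the required a priori estimate, and you explicitly defer ``carrying out this verification in detail''. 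That deferred verification is exactly the content of the theorem in the non-K\"ahler setting, so as written the proof has a genuine gap.

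Two smaller points. First, the reduction to the invariant part is not a consequence of $K$ being connected: the equivariant index of a Dirac-type operator on a connected $K$-manifold is in general a nontrivial virtual representation. What is needed is the rigidity of the (untwisted) Spin$^c$ Dirac operator of the canonical Spin$^c$ structure, i.e.\ $\ind D_+=(\ind D_+)^K$, for which the paper cites~\cite{Hirzebruch_1988el}. Second, in your interpolation argument the claim that the index-theoretic local contribution depends only on the contractions $\iota_{\xi_M}\omega_s$ (and not on the full metric and operator) would itself require an invertibility statement on the boundary of the model neighborhood uniformly in $s$, which again brings you back to the missing estimate.
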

Recall that by our assumption in this section, the reduction space $M_K$ is smooth.
Therefore, the right hand side of (\ref{eq:todd-eq}) is well defined.

\begin{proof}
  Before the proof, we would like to give a brief explanation about the relation between above result and the classical $[\mathrm{Q},\mathrm{R}] = 0$ principle.

  In $[\mathrm{Q},\mathrm{R}] = 0$, $\mathrm{Q}$ often refers to the index of an elliptic operator.
  In our case, since each almost complex manifold has a canonical spin$^c$ structure, the elliptic operator that we used is the Dirac operator\footnote{Note that $D$, in general, is different from $\sqrt{2}(\bar{\partial} + \bar{\partial}^*)$ by a zeroth order term.}
  \begin{equation*}
    D=
    \begin{bmatrix}
      0 & D_{-} \\
      D_{+} & 0
    \end{bmatrix}
    : \wedge^{0,\mathrm{even}/\mathrm{odd}}(\T^* M) \rightarrow \wedge^{0,\mathrm{odd}/\mathrm{even}}(\T^* M)
  \end{equation*}
  associated with this spin$^c$ structure, where $\wedge^{0,p}(\T^* M)$ is the $(0,p)$-form bundle defined by the almost complex structure,~\cite[\S~3.4]{Friedrich_2000di}.
  By the Atiyah--Singer index theorem,
  \begin{equation*}
    \ind{D_+} = \int_M \mathrm{Td}(\T M).
  \end{equation*}
  Let $(\ker{D_+})^K$ (resp.\ $(\ker{D_-})^K$) be the $K$-invariant subspace of $\ker{D_+}$ (resp.\ $\ker{D_-}$).
  Define
  \begin{equation*}
    (\ind{D_+})^K = \dim (\ker{D_+})^K - \dim (\ker{D_-})^K.
  \end{equation*}
  Then, due to the rigidity of spin$^c$ Dirac operator,~\cite{Hirzebruch_1988el}, we have
  \begin{equation*}
    \ind{D_+} = (\ind{D_+})^K.
  \end{equation*}
  As a result, the equality (\ref{eq:todd-eq}) is equivalent to
  \begin{equation}
    \label{eq:qr-comm}
    (\ind{D_+})^K = \ind{D_{K,+}},
  \end{equation}
  where $D_{K}$ is the Dirac operator defined on $M_K$.
  An equality like (\ref{eq:qr-comm}) is the usual form in which various versions of $[\mathrm{Q},\mathrm{R}] = 0$ principle are stated in the literature.

  We can use the same method in~\cite{Tian_1998aa} to show (\ref{eq:qr-comm}) in our almost Hermitian settings.
  In~\cite{Tian_1998aa}, to calculate $(\ind{D_+})^K$, the authors use a deformed Dirac operator $D + T V$, where $T\ge 0$ is a parameter and $V$ is a zeroth order term defined by $J\diff \norm{\Psi}^2$.
  Let $U$ be an open neighborhood of $\Psi^{-1}(0)$ and $\Omega^{0,*}_{K,c}(M-U)$ be the space consisting of smooth $K$-invariant $(0,*)$-forms with support lying in $M - U$.
  The key point of the whole argument is to show that if $T$ is sufficiently large, $D + TV$ is invertible when restricted to $\Omega^{0,*}_{K,c}(M-U)$,~\cite[Theorem~2.1]{Tian_1998aa}.
  Moreover, since $V$ is invertible outside the critical points sets of $\norm{\Psi}^2$, the problem is further reduced to show that
  \begin{equation*}
    \parbox{25em}{if $T$ large enough, $D+ TV$ is invertible near the critical point $m$ of $\norm{\Psi}^2$ satisfying $m\notin \Psi^{-1}(0)$.}\tag{$\ast$}
  \end{equation*}

  To prove this analytic result, in the symplectic case, a crucical geometric input is provided by~\cite{Kirwan84}, which asserts that for the such a point, the symmetric matrix $\Hess_m{\norm{\Psi}^2}$ always has a strictly negative eigenvalue.
  In our almost Hermitian settings, due to Proposition~\ref{prop:ind-f} proved later, such a property about $\Psi$ still holds.
  Therefore, the fact ($\ast$) is also true without the closedness of $\omega$.
  Once ($\ast$) is proved, (\ref{eq:qr-comm}) can be proved in the same way as in~\cite{Tian_1998aa} without any additional changes.
\end{proof}

\begin{rem}
  If the $K$-action on the regular level set $\Psi^{-1}(0)$ is not free, $M_K$ is a symplectic orbifold in general.
  By using the index theorem for orbifolds~\cite{Kawasaki_1981in}, we believe that an equality similar to (\ref{eq:todd-eq}) also holds in this case.
  Compared to the free action case, since $M_K$ is not smooth in general, the integration on the right hand side of (\ref{eq:todd-eq}) only defines on the regular part.
  Moreover, there will be extra terms on the right hand side of (\ref{eq:todd-eq}) coming from the contribution of orbifold points.
\end{rem}

\section{Variation of reduced two-forms}

We use the same symbol convention as in the previous section.
As before, we assume that $K$ acts on the level set of $\Psi$ freely.
Besides, we also assume that $K$ is an abelian group in this section.
Let $p$ and $q$ be two regular values of $\Psi$ in the same connected component of the set of regular values.
We are going to compare $\omega_p$ and $\omega_q$.

In the symplectic case, the Duistermaat--Heckman formula\footnote{Compared to~\cite{Duistermaat_1982th}, the formula given here has an extra minus sign because we use a different sign convention for the moment map.} states that the de Rham class of $\omega_p$ and $\omega_q$ satisfies
\begin{equation*}
  [\omega_p] - [\omega_q] = - \langle c_1(P), p-q\rangle \in \mathrm{H}^2(M_p, \bR),
\end{equation*}
where $P$ is the principal bundle
\begin{equation*}
  \Psi^{-1}(p) \rightarrow M_p.
\end{equation*}
This formula, as it stands, does not even make sense in our case because $\omega_p$ is not closed in general.
More assumptions on $\omega$ are necessary.

Let $\Delta$ be an open convex subset of $\kk^*$ containing $p$ and $q$.
Although $M_p$ and $M_q$ are diffeomorphic to each other, there is no canonical diffeomorphism between them in general.
For the symplectic case, this problem makes no trouble.
But for our settings, we have to stipulate the diffeomorphism explicitly.

Recall that $\Psi: \Psi^{-1}(\Delta) \rightarrow \Delta$ is a fibration over $\Delta$.
Fix a horizontal distribution $H$ for this fibration.
In other words, $H$ is a subbundle of $\T \Psi^{-1}(\Delta)$ and $\T \Psi^{-1}(\Delta) = H \oplus \T^{V} \Psi^{-1}(\Delta)$, where $\T^{V} \Psi^{-1}(\Delta)$ is the vertical subbundle for the fibration.
As usual, we will assume that $H$ is $K$-invariant.

With $H$, for each straight line passing through $p$ and $\Psi(m) = p$, there is a unique horizontal curve passing through $m$.
Therefore, there exists a $K$-equivariant projection $\eta: \Psi^{-1}(\Delta) \rightarrow \Psi^{-1}(p)$.
And $\Psi^{-1}(\Delta)$ has a $K$-equivariant trivialization
\begin{equation*}
  \Psi \times \eta: \Psi^{-1}(\Delta) \rightarrow \Delta \times \Psi^{-1}(p),
\end{equation*}
which induces identifications
\begin{equation*}
  \Psi^{-1}(p) \cong \Psi^{-1}(q)\text{ and } M_p \cong M_q
\end{equation*}
for any $q \in \Delta$.
We will compare $\omega_p$ and $\omega_q$ based on the above identifications.

\begin{rem}
  Since we assume that $M$ is an almost Hermitian manifold, there is a natural candidate for $H$.
  Namely, for any $m\in M$, $H$ at $m$ is $J\cdot \kk_m$.
  By (\ref{eq:grad-psixi}), $H$ is orthogonal to $\T^{V} \Psi^{-1}(\Delta)$, which implies that $H$ is a horizontal distribution.
  It seems to be a natural question to ask when the trivialization $\Psi \times \eta$ defined this way satisfies the following definition.
\end{rem}

\begin{defn}
The trivialization $\Psi \times \eta$ is called good for $\omega$ if for any vector field $A$ on $\Delta$, $\iota_{\widetilde{A}}\diff \omega$ is a horizontal form, where $\widetilde{A} = (A, 0)$ is the vector field\footnote{Caution! $\widetilde{A}$ is not the horizontal lift-up of $A$ with respect to $H$ in general.} on $\Psi^{-1}(\Delta)$ induced by $A$ using the product structure on $\Delta \times \Psi^{-1}(p)$.
\end{defn}

Recall that a differential form $\alpha$ on $\Psi^{-1}(\Delta)$ is called horizontal if and only if $\iota_X\alpha = 0$ for any $X \in \Gamma(\T^{V} \Psi^{-1}(\Delta))$.

\begin{thm}[Generalized Duistermaat--Heckman's Theorem]
  \label{DH} Assume that $\Psi \times \eta$ is a good trivialization for $\omega$.
  Then
  \begin{equation*}
    [\omega_p - \omega_q] = -\langle c_1(P), p-q \rangle.
  \end{equation*}
\end{thm}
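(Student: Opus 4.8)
The plan is to run the classical Duistermaat--Heckman argument essentially verbatim, the only new point being that the ``good trivialization'' hypothesis is exactly what replaces the closedness of $\omega$. First I would use the convexity of $\Delta$ to reduce to comparing the reduced forms along the straight segment $r(t) = (1-t)p + tq$, $t \in [0,1]$, which lies in $\Delta$. Via the given $K$-equivariant trivialization $\Psi^{-1}(\Delta) \cong \Delta \times \Psi^{-1}(p)$ one has: $\Psi$ is the projection to $\Delta$; $K$ acts only on the second factor (since $K$ is a torus, its coadjoint action is trivial); and each quotient map $\Psi^{-1}(r(t)) \to M_{r(t)}$ is identified with the single map $\pi \colon \Psi^{-1}(p) \to M_p$. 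Writing $j_t \colon \Psi^{-1}(p) \hookrightarrow \Psi^{-1}(\Delta)$, $x \mapsto (r(t), x)$, Proposition~\ref{prop:red} applied at $r(t)$ and transported through the trivialization gives $\pi^* \omega_{r(t)} = j_t^* \omega$, so it suffices to control $j_t^* \omega$.

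Next I would differentiate in $t$. Since $j_t = \Phi_t \circ j_0$, where $\Phi_t$ is the flow of the constant, $K$-invariant vector field $\widetilde{A} = (q-p, 0)$ in the $\Delta$-direction, Cartan's formula gives $\odv{}{t} j_t^* \omega = j_t^*(\iota_{\widetilde{A}} \diff \omega) + \diff(j_t^* \iota_{\widetilde{A}} \omega)$. The first term vanishes: $\iota_{\widetilde{A}} \diff \omega$ is horizontal because $\Psi \times \eta$ is good for $\omega$, while the tangent spaces of the fibres $\Psi^{-1}(r(t))$ are precisely $\T^{V} \Psi^{-1}(\Delta)$, so the restriction of this form to a fibre is zero. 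Hence $\odv{}{t} j_t^* \omega = \diff \beta_t$ with $\beta_t := j_t^* \iota_{\widetilde{A}} \omega \in \Omega^1(\Psi^{-1}(p))$, smooth in $t$ and $K$-invariant, and integrating over $[0,1]$,
\[
  \pi^*(\omega_q - \omega_p) = \diff \int_0^1 \beta_t \, \diff t.
\]

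The one-form $\beta_t$ is not $K$-basic, and correcting this is where $c_1(P)$ enters. Using the defining equation $\iota_{\xi_M} \omega = \diff \Psi^\xi$ together with $\Psi^\xi(r,x) = \langle r, \xi \rangle$, one finds $\iota_{\xi_M} \beta_t = -\langle q - p, \xi \rangle$ for all $\xi \in \kk$ --- a constant. So I would fix a $K$-invariant principal connection $\theta \in \Omega^1(\Psi^{-1}(p)) \otimes \kk$ on $P$ and observe that $\beta_t' := \beta_t + \langle q - p, \theta \rangle$ satisfies $\iota_{\xi_M} \beta_t' = 0$, hence is $K$-basic and descends to $M_p$, while $\langle q - p, \theta \rangle$ is independent of $t$. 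Pushing the displayed identity down to $M_p$ then yields
\[
  \omega_q - \omega_p = \diff \bar{\gamma} - \langle q - p, \bar{\Theta} \rangle,
\]
where $\bar{\gamma}$ is the descent of $\int_0^1 \beta_t' \, \diff t$ and $\bar{\Theta}$ the descent of the curvature $\diff \theta$ (there is no bracket term, $K$ being abelian). Both sides are closed --- the right-hand side because $\bar{\Theta}$ is closed by the Bianchi identity --- so passing to de Rham classes annihilates $\diff \bar{\gamma}$ and leaves $[\omega_p - \omega_q] = \langle q - p, [\bar{\Theta}] \rangle = -\langle c_1(P), p - q \rangle$ once $[\bar{\Theta}]$ is taken in the normalization in which it represents $c_1(P)$.

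The hard part is not computational but conceptual: spotting that the single use of $\diff \omega = 0$ in the symplectic proof is precisely the vanishing of $j_t^*(\iota_{\widetilde{A}} \diff \omega)$, and that the ``good trivialization'' condition was tailored to preserve exactly that vanishing. A minor point worth checking is that $\omega_p - \omega_q$ is genuinely closed --- which the displayed identity shows, via Bianchi for $\bar{\Theta}$ --- so that the conclusion is meaningful even though $\omega_p$ and $\omega_q$ need not individually be closed.
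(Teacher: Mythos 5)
Your proposal is correct and follows essentially the same route as the paper: identify $\Psi^{-1}(\Delta)$ with $\Delta\times\Psi^{-1}(p)$, differentiate the pulled-back form in the base direction, use Cartan's formula together with the goodness hypothesis (a horizontal form restricts to zero on the fibres) to reduce to $\diff(j_t^*\iota_{\widetilde{A}}\omega)$, and recognize the result as minus the curvature of a connection on $P$. The one difference is cosmetic: the paper observes that $\lambda\mapsto -i_\xi^*(\iota_{\widetilde{\lambda}}\omega)$ is \emph{itself} a connection one-form, yielding the pointwise identity $\partial_\lambda\omega_\xi=-\langle\Omega_\xi,\lambda\rangle$, whereas you correct $\beta_t$ by an auxiliary connection and so carry an extra exact term that only disappears at the level of de Rham classes --- both are fine, and your explicit check that $\omega_p-\omega_q$ is closed is a point the paper leaves implicit.
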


\begin{proof}
  We follow Duistermaat--Heckman's original arguments,~\cite{Duistermaat_1982th}.
  Identify $\Psi^{-1}(\Delta)$ with $\Delta \times \Psi^{-1}(p)$ using $\Psi \times \eta$.
  Let $\pi: \Psi^{-1}(p) \rightarrow M_p$ be the projection.
For any $\lambda \in \kk^*$, let $\partial_{\lambda}$ be the directional derivative along the direction $\lambda$ on $\Delta$ and let $\widetilde{\lambda} = (\lambda, 0)$ be the vector field on $\Delta \times \Psi^{-1}(p)$ induced by $\lambda$.

  Note that due to the isomorphism $\Psi \times \eta$, by varying $\xi$ in $\Delta$, the reduced forms $\omega_{\xi}$ (resp.\ $\pi^*\omega_{\xi}$) can be viewed as a map from $\Delta$ to $\Omega^2(M_p)$ (resp.\ $\Omega^2(\Psi^{-1}(p))$).
  Let $i_{\xi}$ be the inclusion $\Psi^{-1}(p) \hookrightarrow \{\xi\} \times \Psi^{-1}(p)  \subseteq \Delta \times \Psi^{-1}(p)$.
  Then, (\ref{eq:red-sf}) implies that $\pi^* \omega_{\xi} = i^*_{\xi} \omega$.
  We have
  \begin{equation}
    \label{eq:diff-wxi}
    \begin{aligned}
      \pi^* (\partial_\lambda \omega_\xi) & = \partial_\lambda (\pi^* \omega_\xi) = \partial_\lambda (i^*_{\xi} \omega)\\
                    & = i^*_{\xi} (\mathcal{L}_{\widetilde{\lambda}} \omega) = i^*_{\xi} (\diff (\iota_{\widetilde{\lambda}} \omega)) + i^*_{\xi} ( \iota_{\widetilde{\lambda}} \diff \omega) \\
                    & = i^*_{\xi} (\diff (\iota_{\widetilde{\lambda}} \omega)) = \diff ( i^*_{\xi}(\iota_{\widetilde{\lambda}} \omega)),
    \end{aligned}
  \end{equation}
  where for the third equality is due to the definition of $\widetilde{\lambda}$ and for the fifth equality holds because $\Psi \times \eta$ is good for $\omega$.

  For each $\xi\in \Delta$, the map
  \begin{equation*}
    \theta_\xi : \lambda \to -i^*_\xi (\iota_{\widetilde{\lambda}} \omega)
  \end{equation*}
  defines a ${\kk}$-valued one-form on $\Psi^{-1}(p)$.
  For any $X \in {\frak k}$ and $m\in \Psi^{-1}(p)$,
  \begin{multline*}
    \langle \iota_{X_{\Psi^{-1}(p)}} \theta_{\xi}, \lambda \rangle_m = -(\iota_{X_{\Psi^{-1}(p)}} i^*_\xi (\iota_{\widetilde{\lambda}} \omega) )_m = -(\iota_{X_{M}} (\iota_{\widetilde{\lambda}} \omega) )_m\\
    = (\iota_{\widetilde{\lambda}}(\iota_{X_{M}} \omega) )_m = \langle(\diff \Psi)_m (\widetilde{\lambda}), X\rangle = \langle\lambda, X \rangle,
  \end{multline*}
  where the last equality is due to the definition of $\widetilde{\lambda}$.
One sees that $\theta_\xi$ is a connection one-form of the principal $K$-bundle $P:\Psi^{-1}(p) \rightarrow M_p$.
  Since $K$ is an abelian group, there is a $\kk$-valued curvature two-form $\Omega_\xi$ on $M_p$ such that
  \begin{equation*}
    \diff \theta_\xi = \pi^* \Omega_\xi.
  \end{equation*}
  Conbining the above equality and (\ref{eq:diff-wxi}), we have
  \begin{equation*}
    \pi^* (\partial_\lambda \omega_\xi) = - \pi^* \langle\Omega_{\xi}, \lambda \rangle.
  \end{equation*}
  Since $\pi$ is a submersion, the above equality implies that
  \begin{equation*}
    \partial_\lambda \omega_\xi = - \langle\Omega_{\xi}, \lambda \rangle.
  \end{equation*}
Therefore, at level of the de Rham cohomology, we have
  \begin{equation*}
    [\partial_\lambda \omega_\xi] = -\langle c_1(P), \lambda \rangle
  \end{equation*}
  where $c_1(P)$ is the first Chern class of the bundle $P$.
  This proves the formula in the theorem.
\end{proof}

\begin{rem}
When $\omega$ is symplectic, it follows from the Darboux--Weinstein theorem that the two-form $\omega$ is unique near $\Psi^{-1}(p)$.
  With Theorem~\ref{thm:darboux}, it is reasonable to expect a similar result remains valid in a realm outside the symplectic territory.
  However, even if $\Psi \times \eta$ is good $\omega$, we suspect that this is not true in general except the case that $\omega_p$ is closed.
  The reason is that $\omega$ may contain terms like $\gamma \wedge \diff \Psi$, where $\gamma$ comes from $M_p$.
  If $\omega_p$ is not closed, such terms can't be absorbed by diffeomorphisms in general.
\end{rem}

\section{Kirwan--Ness stratification}\label{sec:kn}

Let $(M,J,g)$ be an almost Hermitian $K$-manifold carrying a Hamiltonian $K$-action, whose generalized moment map is $\Psi$ and Hermitian form is $\omega$.
In this section, we also assume that $M$ is compact.
Besides, we will fix a $K$-invariant inner product on $\kk$ and identify $\kk$ with $\kk^*$ via such an inner product when necessary.
We will discuss a stratification of $M$ naturally associated with $\norm{\Psi}^2$ in this section.

\subsection{Almost Hermitian case}
Recall that, by (\ref{eq:grad-psixi}),  with respect to $g$, the gradient of $\Psi^{\xi}$ is $J\xi_M$.
Likewise, the gradient of the norm-square of the generalized moment map $\norm{\Psi}^2: M \rightarrow \mathbb{R}$ also has a clean expression:
\begin{equation}
  \label{eq:grad-m2}
  (\grad \norm{\Psi}^2)_m = 2 J (\Psi (m))_{M,m}\text{ for any }m\in M.
\end{equation}
To check this, let $\{ \xi_1, \ldots, \xi_N \}$ be an orthonormal basis for ${\kk} \cong {\kk}^*$.
Then
\begin{equation*}
  \Psi = \sum_{i=1}^N \Psi^{\xi_i} \xi_i\quad\text{and}\quad\norm{\Psi}^2 = \sum_{i=1}^N |\Psi^{\xi_i}|^2.
\end{equation*}
Therefore,
\begin{align*}
  (\grad \norm{\Psi}^2)_m &= \sum_{i=1}^N 2 \Psi^{\xi_i}(m) (\grad \Psi^{\xi_i})_m\\
                       &= \sum_{i=1}^N 2 \Psi^{\xi_i} (m) J (\xi_i)_{M,m} = 2 J (\Psi (m))_{M,m}.
\end{align*}

For the simplicity, in this section, we denote $\norm{\Psi}^2/2$ by $f$.
Unlike $\Psi^{\xi}$, $f$ is not a Morse--Bott function.
But $f$ does hehave like a Morse--Bott function in many ways.
Especially, the gradient flow of $f$ also gives a stratification of $M$ consisting of smooth submanifolds.

As before, choose $T\subseteq K$ to be a maximal torus and $\kt^*_+$ to be a closed positive Weyl chamber.
\begin{lem}
  \label{lm:finite-cv}
  Let $\mathrm{crit}(f)$ be the critical points set of $f$.
  Then $\Psi(\mathrm{crit}(f)) \cap \kt^*_+$ is a finite set.
  Especially, the $f$ has only finitely many critical values.
\end{lem}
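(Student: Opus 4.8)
The plan is to combine the gradient formula (\ref{eq:grad-m2}) with the classical finiteness of orbit types for compact group actions. Throughout I identify $\kk$ with $\kk^*$ (and hence $\kt_+^*$ with a subset $\kt_+\subseteq\kt\subseteq\kk$) via the fixed $K$-invariant inner product. First I would use (\ref{eq:grad-m2}), which says $\grad f = J\,(\Psi(\cdot))_M$, to identify the critical set: $m\in\mathrm{crit}(f)$ if and only if the fundamental vector field $(\Psi(m))_M$ vanishes at $m$, i.e.\ $\Psi(m)\in\kk_m$. Since $f=\tfrac12\norm{\Psi}^2$ is $K$-invariant, $\mathrm{crit}(f)$ is $K$-invariant; moreover $f(m)=\tfrac12\norm{q(\Psi(m))}^2$ with $q(\Psi(m))\in\Psi(\mathrm{crit}(f))\cap\kt_+^*$, and every element of $\Psi(\mathrm{crit}(f))\cap\kt_+^*$ arises as some $\Psi(m)$, $m\in\mathrm{crit}(f)$. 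Hence it suffices to prove $\Psi(\mathrm{crit}(f))\cap\kt_+^*$ is finite; the finiteness of the set of critical values of $f$ then follows because $\beta\mapsto\tfrac12\norm{\beta}^2$ has finite image on a finite set.

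So fix $m\in\mathrm{crit}(f)$ with $\beta:=\Psi(m)\in\kt_+^*$, viewed now as an element of $\kt_+\subseteq\kt$. From $\beta\in\kk_m$ and $\beta\in\kt$ we get $\beta\in\kt_m$, the isotropy subalgebra at $m$ for the $T$-action. The key external input is that a torus acting smoothly on a compact manifold has only finitely many orbit types, hence (as $T$ is abelian) only finitely many distinct isotropy subgroups, and in particular only finitely many distinct isotropy subalgebras $\kt^{(1)},\dots,\kt^{(r)}\subseteq\kt$. For each $j$ let $S_j=\overline{\exp(\kt^{(j)})}\subseteq T$ be the subtorus with Lie algebra $\kt^{(j)}$; then $M^{S_j}$ is a closed submanifold of the compact manifold $M$, hence has finitely many connected components $C_{j,1},\dots,C_{j,n_j}$.

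Next I would observe that along each $C_{j,l}$ the map $\Psi$ is constant in the $(\kt^{(j)})^*$-directions. Indeed, for $\eta\in\kt^{(j)}$ the vector field $\eta_M$ vanishes identically on $M^{S_j}$, so by (\ref{eq:def-mp}) we have $\diff\Psi^\eta=\iota_{\eta_M}\omega\equiv0$ along $M^{S_j}$; since $C_{j,l}$ is connected, $\Psi^\eta$ is constant on it. Writing $\mathrm{pr}_j\colon\kk^*\to(\kt^{(j)})^*$ for the restriction map dual to $\kt^{(j)}\hookrightarrow\kk$, this says $\mathrm{pr}_j\circ\Psi$ equals a constant $\gamma_{j,l}\in(\kt^{(j)})^*$ on $C_{j,l}$. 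Now return to our critical point $m$: since $\kt_m=\kt^{(j)}$ for some $j$, the identity component of the $T$-stabilizer of $m$ is the subtorus with Lie algebra $\kt^{(j)}$, namely $S_j$, so $m\in M^{S_j}$, say $m\in C_{j,l}$; and since $\beta\in\kt^{(j)}$ as a subspace of $\kt\cong\kt^*$, the projection $\mathrm{pr}_j$ fixes $\beta$. Hence $\beta=\mathrm{pr}_j(\Psi(m))=\gamma_{j,l}$. Therefore $\Psi(\mathrm{crit}(f))\cap\kt_+^*\subseteq\{\gamma_{j,l}:1\le j\le r,\ 1\le l\le n_j\}$, which is finite, completing the argument.

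I expect the only genuine subtlety to be the bookkeeping with the identifications $\kk\cong\kk^*$, $\kt\cong\kt^*$ and the subgroup/subalgebra correspondence for subtori of $T$ — all of which is made clean by fixing the invariant inner product — together with correctly invoking the finiteness of orbit types. I would emphasize that closedness of $\omega$ is not used anywhere: only the defining relation $\diff\Psi^\xi=\iota_{\xi_M}\omega$ and, through (\ref{eq:grad-m2}), the compatibility $g=\omega(\cdot,J\cdot)$ enter the proof.
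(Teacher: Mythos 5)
Your proof is correct and follows essentially the same route as the paper's: identify $\mathrm{crit}(f)$ via the gradient formula as $\{m:\Psi(m)\in\kk_m\}$, reduce to the maximal torus using $\Psi(m)\in\kk_m\cap\kt=\kt_m$, and then exploit finiteness of the isotropy data together with local constancy of the relevant component of $\Psi$ to pin down $\Psi(m)$ on each piece. The only (cosmetic) difference is that you stratify by fixed-point sets $M^{S_j}$ of the finitely many isotropy subtori and reprove the constancy of $\mathrm{pr}_j\circ\Psi$ directly, whereas the paper decomposes by orbit type $M_H$ and quotes Proposition~\ref{isotropy}; both yield the same conclusion.
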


\begin{proof}
  We will identify $\kk$ and $\kk^*$ using the $K$-invariant inner product on $\kk$.
  We first assume that $K$ is abelian.
  As in Proposition~\ref{isotropy}, for a subgroup $H\subseteq K$, let $M_H = \{m \in M | K_m =H\}$ and take $M_H^i$ to be a connected component of $M_H$.
  Then, by the slice theorem of the $K$-action, $M^i_H$ is a smooth submanifold of $M$ and we have a finite decomposition
  \begin{equation}
    \label{eq:orb-decomp}
    M = \bigcup_{H\subseteq K, i} M_H^i.
  \end{equation}

  Choose $m\in M_H^i \cap \mathrm{crit}(f)$.
  By Proposition~\ref{isotropy}, $m\in M_H^i$ implies that
  \begin{equation*}
    \Psi(m) \in p + \kh^{\perp}
  \end{equation*}
  for some $p\in \kk$ independent of $m$.
  On the other hand, by (\ref{eq:grad-m2}), $m\in \mathrm{crit}(f)$ implies that
  \begin{equation*}
    \Psi(m) \in \kk_m = \kh.
  \end{equation*}
  The above two equations imply that for any $m\in M_H^i \cap \mathrm{crit}(f)$, $\Psi(m)$ takes the same value (the orthogonal projection of $p$ on $\kh$).
  Now, since the decomposition (\ref{eq:orb-decomp}) is finite, $\Psi(\mathrm{crit}(f)) = \cup_{H,i} \Psi(M_H^i \cap \mathrm{crit}(f))$ is also finite.

  For the general compact group $K$, let $\Psi_T$ be the induced generalized moment map of the $T$-action and $f_T = \norm{\Psi_T}^2/2$.
  Take $m \in \mathrm{crit}(f)$ with $\Psi(m)\in \kt^* \simeq \kt$.
  By (\ref{eq:grad-m2}), $\Psi(m) \in \kk_m$.
  Moreover, since $\Psi(m)\in \kt$, we further know that
  \begin{equation*}
    \Psi(m)\in \kk_m\cap \kt = \kt_m.
  \end{equation*}
  Meanwhile, $\Psi(m)\in \kt$ also implies that $\Psi_T(m) = \Psi(m)$.
  Hence, by (\ref{eq:grad-m2}) again, $m$ is also a critical point of $f_T$.
  In other words,
  \begin{equation*}
    \Psi(\mathrm{crit}(f)) \cap \kt^*_+ \subseteq \Psi_T(\mathrm{crit}(f_T))
  \end{equation*}
  Therefore, the finiteness of $\Psi(\mathrm{crit}(f)) \cap \kt^*_+$ follows from the finiteness of the abelian case.
\end{proof}

For each $\lambda\in \Psi(\mathrm{crit}(f)) \cap \kt^*_+$, define $C_{\lambda}$ to be $\Psi^{-1}(K\cdot \lambda) \cap \mathrm{crit}(f)$.
Then, $\cup_{\lambda} C_{\lambda}$ is a decomposition of the critical points set of $f$.

\begin{lem}
  \label{lm:conv-mp}
  Let $\varphi_t: M \rightarrow M$, $t \ge 0$, be the gradient flow associated with $-f$.
  Then, for any $m\in M$, the limit $\lim_{t\rightarrow +\infty} \varphi_t(m)$ exists.
\end{lem}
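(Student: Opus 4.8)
The plan is to prove that every integral curve $t\mapsto\varphi_t(m)$ of the negative gradient flow of $f$ has finite length; since $M$ is compact, a finite-length curve is Cauchy, so convergence follows. The argument is the one familiar from the symplectic case (see Kirwan~\cite{Kirwan84}); the only new ingredient is a {\L}ojasiewicz-type gradient inequality, which we obtain by feeding the local normal form of Theorem~\ref{thm:nf} into the classical {\L}ojasiewicz inequality for real-analytic functions.

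First I would collect the soft facts. Along the flow $\odv{}{t}f(\varphi_t(m))=-\norm{(\grad f)_{\varphi_t(m)}}^2\le 0$, and $f\ge 0$, so $f(\varphi_t(m))$ decreases to some $c^\ast\ge 0$; by compactness of $M$ the trajectory is relatively compact. By (\ref{eq:grad-m2}), $\mathrm{crit}(f)=\{m:(\Psi(m))_{M,m}=0\}=\bigsqcup_\lambda C_\lambda$ is a finite disjoint union of the closed sets $C_\lambda$, and $f\equiv\tfrac12\norm{\lambda}^2$ on $C_\lambda$ (using that the inner product on $\kk$ is $K$-invariant, together with Lemma~\ref{lm:finite-cv}). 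A routine argument — were the trajectory to return infinitely often to a fixed ball around a point with $\grad f\ne 0$, then $f$ would decrease without bound — shows $\mathrm{dist}(\varphi_t(m),\mathrm{crit}(f))\to 0$ as $t\to+\infty$; since $f$ is continuous and takes the single value $c^\ast$ on the closed set $Z^\ast:=\bigcup\{C_\lambda:\tfrac12\norm{\lambda}^2=c^\ast\}$, it follows that $\varphi_t(m)$ eventually enters any prescribed neighbourhood of $Z^\ast$. (In fact the whole trajectory stays inside the $G$-orbit $G\cdot m$, because $J\xi_M=(i\xi)_M$ gives $-\grad f=-(i\Psi)_M$; this will not be used.)

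The main step is the gradient inequality: I claim there are constants $C>0$, $\theta\in(0,\tfrac12]$ and a neighbourhood $N$ of $Z^\ast$ on which $\norm{\grad f}\ge C\,|f-c^\ast|^{1-\theta}$. Fix $m_0\in Z^\ast$ and apply Theorem~\ref{thm:nf}: there is a $K$-equivariant diffeomorphism $\varphi:U_0\to U$ onto a $K$-invariant neighbourhood of $K\cdot m_0$ under which $\Psi\circ\varphi$ is the explicit expression (\ref{eq:two-mp}); writing $K$ locally through $\exp$, that expression is real-analytic, so $\widetilde f:=f\circ\varphi=\tfrac12\norm{\Psi\circ\varphi}^2$ is a real-analytic function on $U_0$. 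The {\L}ojasiewicz inequality, in the form uniform along the compact real-analytic subset $\varphi^{-1}(Z^\ast\cap\overline U)$ of $\mathrm{crit}(\widetilde f)$, then provides constants $C_0>0$, $\theta_0\in(0,\tfrac12]$ with $\norm{\grad\widetilde f}\ge C_0\,|\widetilde f-c^\ast|^{1-\theta_0}$ on a neighbourhood of that subset, the gradient computed in any fixed reference metric. Since $\varphi$ is a diffeomorphism onto a relatively compact region, and on such a region any two Riemannian metrics are mutually comparable, this transports to the same inequality for $f$ near $Z^\ast\cap U$ with adjusted constants. Covering the compact set $Z^\ast$ by finitely many such $U$ and taking the smallest exponent yields $N$, $C$, $\theta$.

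Finally, once $\varphi_t(m)\in N$ for all $t\ge T$ — which the standard trapping argument provides, after first driving $\varphi_t(m)$ into a neighbourhood of $Z^\ast$ small enough that the length estimate below forbids the trajectory from ever leaving $N$ — one computes, writing $h(t):=f(\varphi_t(m))-c^\ast\ge 0$,
\begin{equation*}
  -\odv{}{t}\,h(t)^{\theta}=\theta\,h(t)^{\theta-1}\norm{\grad f}^2\ge\theta C\,\norm{\grad f}=\theta C\,\norm{\odv{}{t}\varphi_t(m)},
\end{equation*}
whence $\int_T^{\infty}\norm{\odv{}{t}\varphi_t(m)}\,\diff t\le(\theta C)^{-1}h(T)^{\theta}<\infty$. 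Thus the trajectory has finite length and $\lim_{t\to+\infty}\varphi_t(m)$ exists. The delicate point, the one requiring the new theory, is the gradient inequality: $f$ need not be real-analytic to begin with, and what saves the argument is exactly that Theorem~\ref{thm:nf} concentrates the analytic content into the model moment map, the smooth normalizing diffeomorphism contributing only bounded distortion on compacta.
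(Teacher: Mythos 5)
Your proof is correct and follows essentially the same route as the paper: both use the local normal form of Theorem~\ref{thm:nf} to see that $f$ is real-analytic in suitable coordinates and then invoke the {\L}ojasiewicz gradient inequality (the paper cites \cite[Theorem~3.3]{Georgoulas_2021mo} for the finite-length/Cauchy argument you write out). The only difference is that you spell out the details the paper outsources to that reference, including the harmless point that the inequality survives a smooth change of coordinates on compacta.
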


\begin{proof}
  For any $m\in M$, by Theorem~\ref{thm:nf}, there exists a coordinate neighborhood $U$ of $m$ such that $f|_U$ is a real analytic function with respect to this coordinates.\footnote{But this does not means that $f$ is analytic with respect to the analytic structure on $M$ provided by Whitney's theorem.}
  Then one can apply the {\L}ojasiewicz gradient inequality for $f$ as in~\cite[Theorem~3.3]{Georgoulas_2021mo} to conclude the existence of the limit.
\end{proof}

Due to Lemma~\ref{lm:conv-mp}, for any $C_{\lambda}$, we can define
\begin{equation*}
  W^s_\lambda = \{m \in M | \lim_{t\rightarrow +\infty} \varphi_t(m) \in C_{\lambda}\}.
\end{equation*}
Since the limit in Lemma~\ref{lm:conv-mp} must be a critical point of $f$, similar to the Morse--Bott function, $M$ has a stratification associated with $f$,
\begin{equation}
  \label{eq:kn-decomp}
  M = \bigcup_{\lambda} W^s_\lambda.
\end{equation}

\begin{rem}
  Let $g'$ be another $K$-invariant metric on $M$ (not necessarily compatible with $\omega$)
and $\varphi'_t$ be the gradient flow associated with $-f$ defined by $g'$.
  Note that in the proof of Lemma~\ref{lm:conv-mp}, we only use the local analyticity of $f$, which is independent of the choice of metric.
  Therefore, the result of Lemma~\ref{lm:conv-mp} also holds for $\varphi_t'$.
  Consequently, for each $C_{\lambda}$, we can define a subset $W^s_{\lambda}(g')$ similar to $W^s_{\lambda}$ by using $g'$.
\end{rem}

In the symplectic case, Kirwan~\cite{Kirwan84} shows that when the metric is suitably chosen, the strata in above stratification are smooth, which remains true in our settings.

\begin{thm}[{\cite[Theorem~4.16 \& 5.4]{Kirwan84}}]
  \label{thm:st}
  There exists a $K$-invariant Riemannian metric $g_0$ on $M$, such that for each $\lambda\in \Psi(\mathrm{crit}(f)) \cap \kt^*_+$, $W^s_\lambda(g_0)$ is a smooth submanifold of $M$.
  The spectral sequence for the equivariant stratification $M = \cup_{\lambda} W^s_{\lambda}(g_0)$ collapses at the second page.
\end{thm}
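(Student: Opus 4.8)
The plan is to follow the strategy advertised in the introduction: isolate the \emph{local} content, which the normal form of Theorem~\ref{thm:nf} makes indistinguishable from the symplectic case, and then invoke Kirwan's global arguments~\cite[\S4--\S5]{Kirwan84}, which use nothing about $\omega$ beyond the structure of the stratification of $\mathrm{crit}(f)$. First we record the local model. Fix $m\in\mathrm{crit}(f)$ and put $\lambda=\Psi(m)$; by~\eqref{eq:grad-m2} the condition $m\in\mathrm{crit}(f)$ means $\lambda_{M,m}=0$, so $m$ is fixed by the torus $T_\lambda=\overline{\exp(\bR\lambda)}$ and $\mathrm{crit}(f)=\bigcup_\lambda C_\lambda$ with $C_\lambda=\Psi^{-1}(K\cdot\lambda)\cap\mathrm{crit}(f)\subseteq M^{T_\lambda}$. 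By Theorem~\ref{thm:nf} a $K$-invariant neighbourhood of $K\cdot m$ is $K$-equivariantly diffeomorphic, compatibly with $\Psi$, to a neighbourhood of the zero section in an associated bundle carrying a \emph{genuine} symplectic form together with its symplectic moment map; hence near $K\cdot m$ the function $f=\norm{\Psi}^2/2$, its critical set, the $T_\lambda$-action, and the transverse Hessian of $f$ along $C_\lambda$ all coincide with their symplectic counterparts. In particular $C_\lambda$ is locally modelled on $K\times_{K_\lambda}Z^{\min}_\lambda$, where $K_\lambda=\operatorname{Stab}_K(\lambda)$ and $Z^{\min}_\lambda$ is the relevant union of components of $M^{T_\lambda}$; Kirwan's computation of the negative normal bundle $\nu_\lambda$ of $C_\lambda$ applies verbatim, yielding an \emph{even} integer $d(\lambda)=\operatorname{rk}\nu_\lambda$; and $T_\lambda$ acts on the fibres of $\nu_\lambda$ with strictly positive weights along $\lambda$, so $\nu_\lambda\ne0$ whenever $\lambda\ne0$ (this is Proposition~\ref{prop:ind-f}). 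None of this uses $\diff\omega=0$.

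Next we produce the metric and deduce that the strata are smooth. Using Lemma~\ref{lm:finite-cv}, cover $\mathrm{crit}(f)$ by finitely many of the model neighbourhoods above and take $g_0$ to be a $K$-invariant metric, compatible with $\omega$, that near $\mathrm{crit}(f)$ is patched (via a $K$-invariant partition of unity) to the compatible metrics pulled back from the symplectic models, so that near each $C_\lambda$ it is of product type with respect to the normal-form splitting. Then, exactly as in the symplectic case: $f$ is real analytic in the model coordinates (as was already used in Lemma~\ref{lm:conv-mp}), the flow $\varphi_t$ of $-\grad_{g_0}f$ near $C_\lambda$ linearises and respects the splitting, and the local stable manifold $W^s_{\lambda,\mathrm{loc}}$ of $C_\lambda$ in a small forward-invariant neighbourhood is smooth of the expected dimension. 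Consequently
\[
  W^s_\lambda(g_0)=\bigcup_{t\ge0}\varphi_{-t}\bigl(W^s_{\lambda,\mathrm{loc}}\bigr)
\]
is an increasing union of embedded $K$-invariant submanifolds, hence (by the standard Morse--Bott argument as in the symplectic case) a locally closed smooth $K$-invariant submanifold of codimension $d(\lambda)$ that deformation retracts onto $C_\lambda$ along the flow, with $\overline{W^s_\lambda(g_0)}\setminus W^s_\lambda(g_0)\subseteq\bigcup_{\norm{\mu}>\norm{\lambda}}W^s_\mu(g_0)$ because $f$ attains its minimum $\norm{\lambda}^2/2$ on $W^s_\lambda(g_0)$ along $C_\lambda$ and strictly exceeds it on any point flowing to another stratum. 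This is the analogue of~\cite[Theorem~4.16]{Kirwan84}, and it is here that the genuine work lies: one must check that the patched product-type metric makes the $-\grad_{g_0}f$ flow tame enough for the \emph{global} (not just local) smoothness of $W^s_\lambda(g_0)$, and that the local linearisation supplied by Theorem~\ref{thm:nf} really suffices to push Kirwan's argument through unchanged.

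For the collapse of the spectral sequence we would run the Atiyah--Bott/Kirwan argument~\cite[\S5]{Kirwan84}, which is purely topological and needs no modification. Ordering the finitely many critical values $c_\lambda=\norm{\lambda}^2/2$ increasingly gives a decreasing filtration of $M$ by the closed $K$-invariant sets $M_{\ge c}=\bigcup_{c_\lambda\ge c}W^s_\lambda(g_0)$, whose long exact sequences in $\mathrm{H}^*_K(-;\bQ)$ assemble into the spectral sequence of the statement, with $E_1$-term $\bigoplus_\lambda\mathrm{H}^{*-d(\lambda)}_K(W^s_\lambda(g_0))$. By the equivariant Thom isomorphism it collapses at $E_2$ as soon as, for every $\lambda$, cup product with the equivariant Euler class $e_K(\nu_\lambda)\in\mathrm{H}^{d(\lambda)}_K(C_\lambda;\bQ)$ is injective, i.e.\ $e_K(\nu_\lambda)$ is a non-zero-divisor. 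Since, by the first step, $T_\lambda$ acts trivially on $C_\lambda$ and with strictly positive weights on the fibres of $\nu_\lambda$, this non-zero-divisor property holds by the standard weight argument of~\cite[\S5]{Kirwan84} (equivalently the Atiyah--Bott lemma); hence the spectral sequence collapses at $E_2$, which is the asserted analogue of~\cite[Theorem~5.4]{Kirwan84}.
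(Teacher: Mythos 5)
Your high-level strategy (reduce the local analysis to the symplectic case via Theorem~\ref{thm:nf}, then run Kirwan's global machinery) is consistent with the philosophy of the paper, and your treatment of the spectral sequence is essentially Kirwan's \S5 argument and is fine. But there is a genuine gap at the step where you produce the smooth strata: you assert that near $C_\lambda$ the flow of $-\grad_{g_0}f$ ``linearises'' and that the local stable manifold of $C_\lambda$ is smooth of the expected dimension ``by the standard Morse--Bott argument as in the symplectic case.'' This is precisely the step that does \emph{not} work, even in the symplectic case: $f=\norm{\Psi}^2/2$ is not a Morse--Bott function --- its Hessian along $C_\lambda$ is in general degenerate in directions transverse to $C_\lambda$; it is only \emph{minimally degenerate} in Kirwan's sense --- so the stable manifold theorem does not apply to $C_\lambda$ directly, and no choice of ``product-type'' metric makes the flow linearise. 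The whole content of \cite[Theorem~4.16]{Kirwan84} (and of the paper's proof, which summarizes it) is the workaround: one uses the genuinely Morse--Bott functions $\Psi^\lambda$ (Lemma~\ref{morse-bott}) to build the candidate stratum $\Sigma_\lambda = KY_\lambda$ directly, where $Y_\lambda$ is the stable manifold of the fixed-point set $Z_\lambda$ for the gradient flow of $-\Psi^\lambda$, verifies that $\Sigma_\lambda$ is smooth of codimension equal to the index of $\Hess_m f$, and only then (\cite[Lemma~10.5]{Kirwan84}) constructs a special $K$-invariant metric $g_0$ for which $W^s_\lambda(g_0)$ coincides with $\Sigma_\lambda$ near $C_\lambda$. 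Note also that this $g_0$ is in general \emph{not} compatible with $\omega$, contrary to what you stipulate; insisting on compatibility is unnecessary and possibly unachievable alongside your product-type requirement.

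Two smaller points. First, your appeal to Proposition~\ref{prop:ind-f} inside this proof is circular: that proposition is stated for the stratification $\{W^s_\lambda(g_0)\}$ and is proved in the paper \emph{after}, and using, Theorem~\ref{thm:st}; the fact you actually need (the $T_\lambda$-weights on the negative normal directions are nonzero, so $d(\lambda)>0$ for $\lambda\ne 0$) should be extracted directly from Lemma~\ref{morse-bott}, as Kirwan does. Second, the detour through Theorem~\ref{thm:nf} is not needed here: the paper's (and Kirwan's) argument uses only the non-degeneracy of $\omega$, entering through Lemma~\ref{morse-bott}, and applies verbatim without passing to a symplectic local model.
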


\begin{proof}
  The proof given by Kirwan for the symplectic case also works here without any changes.
  In fact, Kirwan's proof uses solely the non-degeneracy of the form $\omega$ but not the closedness.
  See~\cite[\S\S~4 \& 5]{Kirwan84} for the details.
For the readers' convenience, we summarize the gists of the proof briefly.

  Compared to the Morse--Bott function, the main difficulty about $f$ is that the Morse Lemma no longer holds near the critical point of $f$.
  As a result, for the Riemannian metric $g$ we have chosen, the smoothness of $W^s_{\lambda}$ (defined by $g$) near $C_{\lambda}$ does not follows from the ODE theory directly.
  Instead, Kirwan constructs a smooth submanifold near the $C_{\lambda}$ directly, denoted by $\Sigma_{\lambda}$, to replace $W^s_{\lambda}$.
  
  Let $m\in \mathrm{crit}{(f)} \cap \Psi^{-1}(\lambda)$.
  Due to the equivariance of $\Psi$, we only need to construct $\Sigma_{\lambda}$ near $m$.
  Let $Z_{\lambda} = \cup_{i} Z_{\lambda,i}$ be the union of connected components of fixed points set of the subgroup generated by $\lambda$ satisfying $Z_{\lambda,i} \cap C_{\lambda} \neq \emptyset$.
  Clearly, $m\in Z_{\lambda}$.
  And let $Y_{\lambda}$ be the stable submanifold of $Z_{\lambda}$ with respect to the gradient flow of $-\Psi^{\lambda}$.
  Now, in a sufficiently small open neighborhood $V$ of $m$, $\Sigma_{\lambda}$ is defined to be the subset $KY_{\lambda} \cap V$.
  It turns out that $\Sigma_{\lambda}$ is a smooth submanifold of $M$, which is, in fact, diffeomorphic to an open subset of $K \times_{K_{\lambda}} Y_{\lambda}$,~\cite[Corollary~4.11]{Kirwan84}.

  The property of $\Sigma_{\lambda}$ is similar to the stable submanifold of a Morse--Bott function,~\cite[Proposition~4.15]{Kirwan84}.
  For example, the codimension of $\Sigma_{\lambda}$ is equal to the index of $\Hess_m{f}$ and $\Hess_m{f}$ is semi-positive when restricting on $\T_m \Sigma_{\lambda}$.
  Moreover, $f|_{\Sigma_{\lambda}}$ takes the minimum value at $m$.
  
  Then, based on $f$ and $\Sigma_{\lambda}$, Kirwan~\cite[Lemma~10.5]{Kirwan84} shows that there exists another $K$-invariant Riemannian metric $g_0$ on $M$ such that for each $\lambda$, $W^s_{\lambda}(g_0)$ coincides with $\Sigma_{\lambda}$ near $C_{\lambda}$, which implies the smoothness of $W^s_{\lambda}(g_0)$ for all $\lambda$.
  Note that $g_0$ is not compatible with $\omega$ in general.
\end{proof}

\begin{prop}
  \label{prop:ind-f}
  About the stratification $M = \cup_{\lambda} W^s_{\lambda}(g_0)$, the following properties hold.
  \begin{enumerate}[label=\normalfont(\arabic*)]
  \item For each $\lambda\in \Psi(\mathrm{crit}(f)) \cap \kt^*_+$, $\dim W^s_{\lambda}(g_0)$ is even.
  \item There is a unique open stratum, which is dense and connected.
  \item There is a unique $\lambda_0 \in \Psi(\mathrm{crit}(f)) \cap \kt^*_+$ such that $\norm{\lambda_0}^2/2$ is the minimum of $f$.
    Moreover, $W^s_{\lambda_0}(g_0)$ is the unique open stratum.
  \item For any critical point of $f$ lying outside the open stratum, the index of $f$ at the point is a strictly positive even number.
    Hence, any local minimum point of $f$ is a global minimum point of $f$.
  \end{enumerate}
\end{prop}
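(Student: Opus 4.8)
The plan is to read off all four statements from the structure of the stratification produced in Theorem~\ref{thm:st}, using three facts recorded in its proof: near $C_\lambda$ the stratum $W^s_\lambda(g_0)$ coincides with the submanifold $\Sigma_\lambda$; $\Sigma_\lambda$ is diffeomorphic to an open subset of $K\times_{K_\lambda}Y_\lambda$, where $Y_\lambda$ is the stable manifold of $Z_\lambda$ for the gradient flow of $-\Psi^\lambda$; and $\codim_M W^s_\lambda(g_0)$ equals the index of $\Hess_m f$ at any $m\in C_\lambda$. Besides these I only use that $M$ is even-dimensional (being almost complex) and connected (implicit in the statement), and that the $-f$-gradient flow converges (Lemma~\ref{lm:conv-mp}).

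For (1) I would check that $\Sigma_\lambda$ is even-dimensional factor by factor. The coadjoint orbit $K/K_\lambda\cong K\cdot\lambda$ is even-dimensional. For $Y_\lambda$: near a point $m$ of a component $Z_{\lambda,i}$ of $Z_\lambda$ one has $\dim_m Y_\lambda=\dim Z_{\lambda,i}+n^+$, where $n^+$ is the number of positive eigenvalues of $\Hess_m\Psi^\lambda$. Here $Z_{\lambda,i}$ is a component of the critical set of $\Psi^\lambda$, hence almost complex (Remark after Lemma~\ref{morse-bott}, or \cite{Kobayashi_1958aa}), so $\dim Z_{\lambda,i}$ is even; and $n^+$ is the Morse--Bott index of $\Psi^{-\lambda}$ at $m$, which is even by Lemma~\ref{morse-bott} (equivalently, that positive eigenspace is the sum of the $J$-invariant weight spaces $V_j$ with $\lambda_j<0$, in the notation of that proof). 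Hence $\dim W^s_\lambda(g_0)=\dim(K/K_\lambda)+\dim Y_\lambda$ is even, and, $\dim M$ being even, so is $\codim_M W^s_\lambda(g_0)$; since the latter equals the index of $\Hess_m f$, this also gives the evenness assertion in (4).

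To produce the open stratum I would start from a global minimum: by compactness $f$ attains its minimum at some $m_0$, so $m_0\in\mathrm{crit}(f)$, and with $\lambda_0:=q(\Psi(m_0))$ we get $m_0\in C_{\lambda_0}$. Since $m_0$ is a global minimum, $\Hess_{m_0}f$ is positive semidefinite, so its index --- and hence $\codim_M W^s_{\lambda_0}(g_0)$ --- is $0$, i.e.\ $W^s_{\lambda_0}(g_0)$ is open. Every positive-codimension stratum is a submanifold of dimension $<\dim M$, hence of measure zero, and there are finitely many strata (Lemma~\ref{lm:finite-cv}); therefore the union of the open strata has full measure, is open and dense, and, $M$ being connected, there is at most one open stratum. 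So $W^s_{\lambda_0}(g_0)$ is \emph{the} open stratum, and it is dense. It is connected because for every $m$ the trajectory $\{\varphi_t(m):t\ge0\}$ of the $-f$-gradient flow, together with its limit (Lemma~\ref{lm:conv-mp}), is a connected subset of $W^s_{\lambda_0}(g_0)$ meeting $C_{\lambda_0}$, and $C_{\lambda_0}=\Psi^{-1}(K\cdot\lambda_0)=K\cdot\Psi^{-1}(\lambda_0)$ --- every point of $\Psi^{-1}(K\cdot\lambda_0)$ has $\|\Psi\|=\|\lambda_0\|$, hence realizes $\min f$ and is critical --- is connected since $K$ is connected and $\Psi^{-1}(\lambda_0)$ is connected by Theorem~\ref{thm:convex}. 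This proves (2). Distinct indices give distinct strata (their nonempty critical sets $C_\lambda$ are disjoint), so uniqueness of the open stratum forces uniqueness of the minimizing $\lambda_0\in\Psi(\mathrm{crit}(f))\cap\kt^*_+$; this is (3). (Alternatively, by Theorem~\ref{thm:convex} and $\operatorname{Ad}^*$-invariance of the inner product, $\min f=\tfrac12\|\lambda_0\|^2$ with $\lambda_0$ the unique point of $\Delta(M)$ nearest the origin.)

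For the positivity in (4): a critical point $m$ outside the open stratum lies in some $C_\lambda$ with $\lambda\neq\lambda_0$ (a critical point contained in a stratum $W^s_\mu(g_0)$ lies in $C_\mu$, being a rest point of the flow, and $q(\Psi(m))\neq\lambda_0$ precisely because $m\notin W^s_{\lambda_0}(g_0)\supseteq C_{\lambda_0}$); if the index of $\Hess_m f$ were $0$ then $W^s_\lambda(g_0)$ would be open, forcing $\lambda=\lambda_0$, a contradiction, so the index is $\ge 1$ and hence $\ge 2$ by evenness. Finally, a local minimum $m$ of $f$ has $\Hess_m f\ge 0$, so its index is $0$, so by the line just proved $m$ lies in the open stratum and hence in $C_{\lambda_0}$, whence $f(m)=\tfrac12\|\lambda_0\|^2=\min_M f$; this is the last assertion. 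I expect the only genuinely delicate point --- and the reason for carrying out the steps in this order --- to be avoiding circularity: ``local minimum $\Rightarrow$ global minimum'' must be \emph{deduced} from the existence and uniqueness of the open stratum (coming, respectively, from a global minimum forcing index $0$, and from point-set topology together with connectedness of $M$), not the other way round; the remaining work is routine bookkeeping with even dimensions and elementary topology on top of Theorem~\ref{thm:st}, Lemma~\ref{morse-bott}, Lemma~\ref{lm:conv-mp} and Theorem~\ref{thm:convex}.
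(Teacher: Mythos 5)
Your route is essentially the paper's: everything is read off from the local model $\Sigma_\lambda\cong$ (open subset of) $K\times_{K_\lambda}Y_\lambda$, the identity $\codim_M W^s_\lambda(g_0)=\operatorname{ind}\Hess_m f$, and the evenness supplied by Lemma~\ref{morse-bott}. Parts (1), (3) and (4) are correct and in the same order of deduction as the paper (in particular you are right that ``local minimum $\Rightarrow$ global minimum'' must come \emph{after} existence and uniqueness of the open stratum), and your extra argument for connectedness of $W^s_{\lambda_0}(g_0)$ via the flow trajectories and the connectedness of $C_{\lambda_0}=K\cdot\Psi^{-1}(\lambda_0)$ is a valid supplement to what the paper leaves implicit.

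The one step that fails as written is the uniqueness of the open stratum in (2). You infer ``the union of the open strata is open, dense and of full measure; $M$ is connected; hence there is at most one open stratum.'' That implication is false: a connected manifold can be the disjoint union of two nonempty open sets together with a closed, measure-zero, nowhere dense set (remove the equator from $S^2$: the two open hemispheres are disjoint and open, and their union is dense). Density of the union of open strata does not preclude there being two of them. What is actually needed — and what the paper uses, and the reason (1) is proved first — is that every non-open stratum has \emph{even}, hence $\ge 2$, codimension, so that $M\setminus\bigcup_{\dim W^s_\lambda(g_0)<\dim M}W^s_\lambda(g_0)$ remains \emph{connected}; a connected set cannot be a disjoint union of two nonempty open subsets, so there is exactly one open stratum. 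Since you have already established the evenness in (1), the repair is one line, but the measure/density argument you substituted for it is not a proof.
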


\begin{proof}
  We use the same notation as in the proof of Theorem~\ref{thm:st}.

  \begin{inparaenum}
  \item Take $m\in \mathrm{crit}(f) \cap \Psi^{-1}(\kt^*_+) \cap W^s_{\lambda}(g_0)$.
    By definition, near $m$, $W^s_{\lambda}(g_0)$ coincides with $\Sigma_{\lambda}$.
    As a result, $\dim W^s_{\lambda}(g_0) = \dim \Sigma_{\lambda}$.
    Moreover, since $\Sigma_{\lambda}$ is diffeomorphic to an open subset of $K \times_{K_{\lambda}} Y_{\lambda}$, we have
    \begin{equation*}
      \dim W^s_{\lambda}(g_0) = \dim K - \dim K_{\lambda} + \dim Y_{\lambda}.
    \end{equation*}
    Recall that $Y_{\lambda}$ is the stable manifold of $Z_{\lambda}$ for the gradient flow of $-\Psi^{\lambda}$.
    As a result, the codimension of $Y_{\lambda}$ is equal to the index of $\Psi^{\lambda}$ at $Z_{\lambda}$.
    By Lemma~\ref{morse-bott}, the dimension of $Y_{\lambda}$ is even.
    Note that $\dim K - \dim K_{\lambda}$ is always an even number.
    Then $\dim W^s_{\lambda}(g_0)$ is also even.

  \item Now, all the strata are of even dimensional.
    As a consequence,
    \begin{equation*}
      M - \bigcup_{\dim W^s_{\lambda}(g_0) < \dim M} W^s_{\lambda}(g_0)
    \end{equation*}
    is a connected set.
    In particular, there is only one open stratum, which is dense and connected.

  \item Let $\lambda_0 \in \Psi(\mathrm{crit}(f)) \cap \kt^*_+$ such that $\norm{\lambda_0}^2/2$ is the minimum of $f$.
    Take $m\in \mathrm{crit}(f) \cap \Psi^{-1}(\lambda_0)$.
    Since $f(m) = \norm{\lambda_0}^2/2$ is the minimum of $f$, the index of $\Hess_m f$ is zero.
    Recall that the codimension of $W^s_{\lambda_0}(g_0)$ is equal to the index of $\Hess_m f$, we know that $\dim W^s_{\lambda_0}(g_0) = \dim M$, that is, $W^s_{\lambda_0}(g_0)$ is an open stratum.
    If there is another $\lambda'_0\in \Psi(\mathrm{crit}(f)) \cap \kt^*_+$ also satisfying $\norm{\lambda'_0}^2/2$ is the minimum of $f$, $W^s_{\lambda'_0}(g_0)$ must be also an open stratum.
    By the uniqueness of the open stratum, $\lambda_0$ and $\lambda'_0$ must be the same.

  \item Let $m\in \mathrm{crit}(f) \cap W^s_{\lambda}(g_0)$.
    Again, we have that the index of $f$ at $m$ is equal to $\dim M - \dim W^s_{\lambda}(g_0)$.
Therefore, since $\dim W^s_{\lambda}(g_0)$ is even, the index of $f$ at $m$ is even.
    Besides, if $W^s_{\lambda}(g_0)$ is not open, the index of $f$ at $m$ is also strictly positive.
  \end{inparaenum}
\end{proof}

\subsection{Complex case}
As in~\cite[Theorem~6.18]{Kirwan84}, if $M$ is a complex manifold, the result of Theorem~\ref{thm:st} can be strengthened as follows.\begin{thm}
  \label{thm:cpx-kn}
  Assume that $g$ is a $K$-invariant Hermitian metric on $M$ with the generalized moment map $\Psi$.
  Then,
  \begin{enumerate}[label=\normalfont(\arabic*)]
  \item each stratum in the stratification (\ref{eq:kn-decomp}) associated with $g$ is a locally closed submanifold;
  \item and each stratum is complex and $G$-invariant.
  \end{enumerate}
\end{thm}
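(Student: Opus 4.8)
The plan is to reduce everything to the known symplectic case, namely \cite[Theorem~6.18]{Kirwan84}, by verifying that every input Kirwan uses in the complex (Kähler) setting survives the weakening from closed to momentumly closed. First I would recall from the proof of Theorem~\ref{thm:st} the description of the stratum $W^s_\lambda$ near a critical set: near a point $m\in\mathrm{crit}(f)\cap\Psi^{-1}(\lambda)$ it coincides with $KY_\lambda$, where $Y_\lambda$ is the stable manifold of $Z_\lambda$ (the union of fixed-point components of the torus generated by $\lambda$ meeting $C_\lambda$) under the gradient flow of $-\Psi^\lambda$. The key structural fact I need is that, when $M$ is complex and $g$ is Hermitian, the gradient flow of $-\Psi^\lambda$ is exactly the flow $t\mapsto \exp(-it\lambda)\cdot(-)$, by \eqref{eq:grad-psixi}; this is precisely what makes $Y_\lambda$ a \emph{complex} submanifold (it is a Bialynicki-Birula-type stratum for the holomorphic $\bC^*$-action generated by $\lambda$) and $G$-invariance plausible.

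The key steps, in order, are: (1) For (2) of the statement, observe that $Y_\lambda$ is the set of points flowing into $Z_\lambda$ under $\exp(-it\lambda)$; since this is a holomorphic one-parameter flow on the complex manifold $M$ and $Z_\lambda$ is a complex submanifold (it is a fixed-point set of a holomorphic torus action, by Lemma~\ref{morse-bott} and the remark following it, $\omega|_Z$ nondegenerate and $J$-invariant), $Y_\lambda$ is complex — this is the Bialynicki-Birula argument, which only uses holomorphicity of the flow, not closedness of $\omega$. Then $KY_\lambda$, and hence the stratum, is complex. For $G$-invariance one argues as in Kirwan: the parabolic subgroup $P_\lambda\subseteq G$ generated by $K$ and $\exp(-it\lambda)$, $t\ge 0$, preserves $Y_\lambda$ (again using only holomorphicity), so $KY_\lambda$ is $P_\lambda$-invariant, and a local-to-global/connectedness argument upgrades this to $G$-invariance of the whole stratum. (2) For (1), local closedness: the stratification \eqref{eq:kn-decomp} is the gradient-flow decomposition, and by Theorem~\ref{thm:st} (applied with the metric $g_0$, or directly via the {\L}ojasiewicz argument of Lemma~\ref{lm:conv-mp}) each $W^s_\lambda$ near $C_\lambda$ is the smooth submanifold $\Sigma_\lambda\cong$ open subset of $K\times_{K_\lambda}Y_\lambda$; being a single stratum of a locally finite stratification with closure contained in the union of strata of smaller or equal index, it is locally closed. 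Crucially, for the \emph{complex} (as opposed to real almost Hermitian) case one shows the original metric $g$ already works — no change of metric to $g_0$ is needed — because the complex structure rigidifies the flow; this is exactly the improvement \cite[Theorem~6.18]{Kirwan84} records over \cite[Theorem~4.16]{Kirwan84}, and I would cite it with the single remark that Kirwan's argument there uses only that $\omega$ is a $J$-invariant nondegenerate form together with \eqref{eq:grad-psixi}, both available here.

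I expect the main obstacle to be making precise the claim that ``Kirwan's complex argument uses only the nondegeneracy of $\omega$.'' In the Kähler case one routinely invokes that $\Psi^\lambda$ restricted to $Z_\lambda$ is \emph{constant} and that the Hessian analysis along $Y_\lambda$ matches the holomorphic weight decomposition \eqref{eq:xi-act}; both of these were already reestablished in Lemma~\ref{morse-bott} and its proof without closedness, so the potential gap is really only bookkeeping. The one genuinely delicate point is the passage from $P_\lambda$-invariance to full $G$-invariance of the stratum: in Kirwan this uses that the strata are indexed by $K$-orbits in $\kt^*_+$ and a limiting argument showing that if $g\cdot x$ lies in a stratum for $g$ in a dense subset of $G$ then it does for all $g$; this limiting argument needs the strata to be locally closed and the indexing to be by finitely many values (Lemma~\ref{lm:finite-cv}), which we have. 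So the proof will be short: assemble \eqref{eq:grad-psixi}, Lemma~\ref{morse-bott}, Theorem~\ref{thm:st}, Lemma~\ref{lm:finite-cv}, observe the flow is holomorphic, and quote \cite[\S 6]{Kirwan84} for the Bialynicki-Birula/parabolic-invariance machinery verbatim.
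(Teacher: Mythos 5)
Your overall strategy for part (1) matches the paper's: use the submanifolds $Z_\lambda$, $Y_\lambda$ and the parabolic $P_\lambda$ from Kirwan's \S 6, and observe that because $\grad f$ lies in $\kg\cdot x$ by (\ref{eq:grad-m2}), the original Hermitian metric $g$ already makes the stratum coincide with $\Sigma_\lambda$ near $C_\lambda$, so no auxiliary metric $g_0$ is needed. That part of your plan is sound.

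For part (2), however, your sketch has a genuine gap, and you have also misidentified where the difficulty lies. First, the stratum is not $KY_\lambda$ but $KY_\lambda^{\rss}$, where $Y_\lambda^{\rss}=\pi^{-1}(Z_\lambda^{\rss})$ and $Z_\lambda^{\rss}\subseteq Z_\lambda$ is the locus flowing (under the gradient flow of $-f$, which on $Z_\lambda$ agrees with that of $-\norm{\Psi_{K_\lambda}}^2/2$) into $\Psi^{-1}(\lambda)\cap Z_\lambda$. To define the $P_\lambda$-action on $Y_\lambda^{\rss}$ at all, you need $Z_\lambda^{\rss}$ to be open and $K_\lambda^{\mathbb C}$-invariant; this is Lemma~\ref{lm:s0} applied to the shifted moment map $\Psi_{K_\lambda}-\lambda$ on $Z_\lambda$, and the paper stresses that this invariance is \emph{not} automatic from nondegeneracy — it is proved later via the Kempf--Ness function (Proposition~\ref{prop:ws0}). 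Your proposal silently uses $Y_\lambda$ in place of $Y_\lambda^{\rss}$ and never raises this point. Second, complexity and $G$-invariance of $W^s_\lambda$ follow only once you prove the \emph{equality} $W^s_\lambda=KY_\lambda^{\rss}$, and the reverse inclusion $KY_\lambda^{\rss}\subseteq W^s_\lambda$ hinges on the pairwise disjointness $KY_\lambda^{\rss}\cap KY_{\lambda'}^{\rss}=\emptyset$ for $\lambda\ne\lambda'$ (equation (\ref{eq:no-int})): only then does the partition (\ref{eq:kn-decomp}) force the two families to coincide. This disjointness is the real crux; Kirwan derives it from the convexity of $\Psi(\overline{G\cdot x})\cap\kt_+$, while the paper instead proves it via the Hesselink-weight characterization of $Y_\lambda^{\rss}$ (Theorem~\ref{thm:yss}). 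Your proposal does not mention this step. Conversely, the step you single out as "genuinely delicate" — upgrading $P_\lambda$-invariance to $G$-invariance — is immediate from $G=KP_\lambda$, which gives $KY_\lambda^{\rss}=KP_\lambda Y_\lambda^{\rss}=GY_\lambda^{\rss}$; no limiting or density argument over strata is needed there.
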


Note that compared to Theorem~\ref{thm:st}, there is no need to choose another metric here.
In the K\"ahler case, the stratification (\ref{eq:kn-decomp}) is called the Kirwan--Ness stratification sometimes,~\cite[\S~7]{Woodward_2010aa}.

We will prove two parts of Theorem~\ref{thm:cpx-kn} separately.
Each parts need some auxiliary results that we will prove in \S~\ref{sec:kn-fct}.

For the proof of the first part of Theorem~\ref{thm:cpx-kn}, we begin with a lemma.

\begin{lem}
  \label{lm:s0}
  Assume that $\Psi^{-1}(0) \neq \emptyset$.
  Then stratum $W^{s}_0$ in (\ref{eq:kn-decomp}) is open and $G$-invariant.
  In fact,
  \begin{equation}
    \label{eq:w0-gcl}
    W^s_0 = \{m\in M | \overline{G\cdot m} \cap \Psi^{-1}(0) \ne \emptyset\}.
  \end{equation}
\end{lem}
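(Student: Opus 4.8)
The plan is to reduce the statement to two ingredients that are already available: the negative gradient flow of $f=\norm{\Psi}^2/2$ preserves $G$‑orbits, and the Kempf--Ness function is geodesically convex. First I would record the elementary reductions. By \eqref{eq:grad-m2} every point of $\Psi^{-1}(0)$ is critical for $f$, so $C_0=\Psi^{-1}(K\cdot 0)\cap\mathrm{crit}(f)=\Psi^{-1}(0)$ and hence
\begin{equation*}
  W^s_0=\{m\in M : \varphi_\infty(m)\in\Psi^{-1}(0)\}=\{m\in M : f(\varphi_\infty(m))=0\},
\end{equation*}
where $\varphi_\infty(m):=\lim_{t\to+\infty}\varphi_t(m)$ exists by Lemma~\ref{lm:conv-mp}. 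Granting the formula \eqref{eq:w0-gcl}, the $G$‑invariance of $W^s_0$ is then automatic because the right‑hand side of \eqref{eq:w0-gcl} depends only on the $G$‑orbit of $m$, and ``openness of $W^s_0$'' becomes exactly ``openness of the semistable locus''. So the substance lies in \eqref{eq:w0-gcl} together with openness.

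Next I would establish the key structural input. Since the $G$‑action is holomorphic, $J\xi_M=(i\xi)_M$ for $\xi\in\kk$, so by \eqref{eq:grad-m2}
\begin{equation*}
  (\grad f)_m=(J\Psi(m))_{M,m}=(i\,\Psi(m))_{M,m}\in\T_m(G\cdot m),
\end{equation*}
i.e. $\grad f$ is everywhere tangent to the $G$‑orbits. Hence every flow line $t\mapsto\varphi_t(m)$ stays in $G\cdot m$ and $\varphi_\infty(m)\in\overline{G\cdot m}$. The inclusion ``$\subseteq$'' in \eqref{eq:w0-gcl} is then immediate: if $m\in W^s_0$ then $\varphi_\infty(m)\in\Psi^{-1}(0)\cap\overline{G\cdot m}$.

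For the reverse inclusion I would bring in the lifted Kempf--Ness function. Writing $\varphi_t(m)=h_t^{-1}\cdot m$ with $h_0=e$ (possible because $\grad f$ is $G$‑tangent), a short computation from \eqref{eq:def-kn} should show that $t\mapsto h_t$ is the negative gradient flow of $\phi_m$ — equivalently of $\kn_m$ on $G/K$ with its symmetric‑space metric — and that $\odv{}{t}\phi_m(h_t)=-2f(\varphi_t(m))\le 0$. Because $G/K$ is a Hadamard manifold and $\kn_m$ is geodesically convex (Lemma~\ref{lm:kn}(1) together with the cocycle \eqref{eq:equi-kn}), the negative gradient flow is value‑minimizing, $\phi_m(h_t)\searrow\inf_G\phi_m$, so $2\int_0^\infty f(\varphi_t(m))\,\diff t=\phi_m(e)-\inf_G\phi_m=-\inf_G\phi_m$. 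Thus $f(\varphi_\infty(m))=0$ as soon as $\kn_m$ is bounded below. It then remains to invoke the classical Kempf--Ness fact that $\overline{G\cdot m}\cap\Psi^{-1}(0)\ne\emptyset$ forces $\kn_m$ to be bounded below; its proof (see the Kähler treatments in~\cite{Georgoulas_2021mo, Woodward_2010aa} and Kirwan~\cite{Kirwan84}) uses only the convexity of $\kn_m$ and the Morse--Bott property of the $\Psi^\xi$ (Lemma~\ref{morse-bott}), never the closedness of $\omega$. This yields \eqref{eq:w0-gcl}.

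Finally I would treat openness. Since $W^s_0$ is the semistable locus, its openness follows from Kirwan's analysis of the stratification of $\norm{\Psi}^2$~\cite[\S~6]{Kirwan84}: one writes $W^s_0=\{m : \mu(\xi,m)\ge 0\text{ for all }\xi\in\kk\}$ with the Mumford weights $\mu(\xi,m)=-\lim_{t\to+\infty}\langle\Psi(\exp(-it\xi)\cdot m),\xi\rangle$ and checks their semicontinuity in $m$ — again only using Lemma~\ref{morse-bott} and the convex Kempf--Ness function. (Once Theorem~\ref{thm:cpx-kn}(1) is available the minimal stratum, having index zero at its critical set, is open for free; but as Lemma~\ref{lm:s0} feeds into that theorem one must argue directly.) The hard part, and the only place real work is hidden, is precisely this Kempf--Ness dictionary and the openness statement; both are classical over Kähler manifolds, and the point of the lemma is the observation that their proofs see $\omega$ only through its non‑degeneracy (via Lemma~\ref{morse-bott}) and through the existence and convexity of the Kempf--Ness function — both of which were secured earlier without assuming $\diff\omega=0$.
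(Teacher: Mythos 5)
Your proposal is correct in outline and reaches the statement, but the reverse inclusion of \eqref{eq:w0-gcl} is routed differently from the paper, and the one step you black--box is exactly where the content lies. The paper argues: $\overline{G\cdot m}\cap\Psi^{-1}(0)\ne\emptyset$ gives $\inf_{q}\norm{(\grad\kn_m)_q}=0$ via \eqref{eq:grad-kn}; then Theorem~\ref{thm:kw}(4), i.e.\ the identity \eqref{eq:lm-kw3} that the infimum of the gradient norm is attained asymptotically along the gradient trajectory, together with \eqref{eq:lim-grad-norm}, forces $\lim_{t\to+\infty}\Psi(\varphi_t(m))=0$. You instead pass through ``semistable $\Rightarrow$ $\kn_m$ bounded below'' and then a monotonicity/finite--integral argument ($\int_0^\infty 2f(\varphi_t(m))\,\diff t=\phi_m(e)-\lim_t\phi_m(g_t)<\infty$, whence $f(\varphi_\infty(m))=0$ since the limit exists by Lemma~\ref{lm:conv-mp}). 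That second half is fine and arguably more elementary than the paper's (it does not even need the flow to be value--minimizing, only boundedness below). The forward inclusion via \eqref{eq:t-o} and the openness argument are both fine, though the paper's openness argument (a neighborhood of the minimum level $\Psi^{-1}(0)$ lies in $W^s_0$, then continuous dependence of the flow) is lighter than semicontinuity of the Mumford weights.

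The caveat: your justification that ``semistable $\Rightarrow$ $\kn_m$ bounded below'' transfers because its proof ``uses only the convexity of $\kn_m$'' is too quick. Convexity alone does not yield the implication $\inf\norm{\grad}=0\Rightarrow$ bounded below --- the convex function $x\mapsto-\log(-x)$ on a ray has gradient norm tending to $0$ yet is unbounded below --- so one genuinely needs the asymptotic structure of $\kn_m$ at $\partial(G/K)$ (the slope function $\mu_m$ and the moment--weight machinery). That input is precisely what the paper has already packaged as Theorem~\ref{thm:kw}, established without any closedness of $\omega$; the clean way to close your argument is therefore to cite Theorem~\ref{thm:kw}(2), (4) or (5) rather than an external K\"ahler reference, at which point your route and the paper's essentially coincide in what they rely on.
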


From our perspective, a priori, the $G$-invariance of $W^s_0$ is not trivial, which, however, is necessary for the proof of Theorem~\ref{thm:cpx-kn}.
Therefore, we decide to incorporate a proof of this fact, which is left to \S~\ref{sec:kn-fct} (after Proposition~\ref{prop:ws0}).

\begin{proof}[Proof of {Theorem~\ref{thm:cpx-kn} (1)}]
  Let $\lambda\in \Psi(\mathrm{crit}(f)) \cap \kt^*_+$ and $m\in \mathrm{crit}{(f)} \cap \Psi^{-1}(\lambda)$.
  Recall that in the proof of Theorem~\ref{thm:st}, we use the submanifolds $Z_{\lambda}$ and $Y_{\lambda}$ associated with the Morse--Bott function $\Psi^{\lambda}$ and $m$.
  Since the group action is holomorphic, $Z_{\lambda}$ and $Y_{\lambda}$ are also holomorphic.
  Note that $Z_{\lambda}$ is $K_{\lambda}$-invariant, which means that we can define $\Psi_{K_{\lambda}}$, the generalized moment map of the $K_{\lambda}$-action, on $Z_{\lambda}$.
  Meanwhile, for $x \in Z_{\lambda}$ and identifying $\kk$ with $\kk^*$,
  \begin{equation*}
    [\lambda, \Psi(x)] = (\diff \Psi)_x (\lambda_{M,x}) = 0.
  \end{equation*}
  Therefore, $\Psi(x) \in \kk_{\lambda}$, which implies
  \begin{equation}
    \label{eq:psi-l-psi}
    \Psi_{K_{\lambda}}(x) = \Psi(x),\text{ for }x\in Z_{\lambda}.
  \end{equation}
Moreover, since $\Psi^{\lambda}|_{Z_{\lambda}} = \Psi^{\lambda}(m) = \norm{\lambda}^2$, for $x\in Z_{\lambda}$
  \begin{multline}
    \label{eq:min-zl}
    \norm{\Psi_{K_{\lambda}}(x)}^2 = \norm{\Psi(x)}^2 = \norm{\Psi(x) - \lambda}^2 + 2 \langle \Psi(x) - \lambda, \lambda \rangle + \norm{\lambda}^2\\
    = \norm{\Psi(x) - \lambda}^2 + 2 \Psi^{\lambda}(x) - \norm{\lambda}^2 \ge \norm{\lambda}^2.
  \end{multline}
  Define a subset of $Z_{\lambda}$ as follows,
  \begin{equation}
    \label{eq:def-zss}
    Z_{\lambda}^{\rss} = \{x \in Z_{\lambda} | \lim_{t\rightarrow +\infty} \varphi_t(x) \in \Psi_{K_{\lambda}}^{-1}(\lambda) = \Psi^{-1}(\lambda)\cap Z_{\lambda}\}.
  \end{equation}
  By (\ref{eq:grad-m2}) and (\ref{eq:psi-l-psi}), the gradient flow of $-\norm{\Psi_{K_{\lambda}}}^2/2$ and $f$ coincide on $Z_{\lambda}$.
  And (\ref{eq:min-zl}) implies that $Z_{\lambda}^{\rss}$ is the stratum corresponding to the minimum of $\norm{\Psi_{K_{\lambda}}}^2/2$ with respect to such a flow.
  Moreover, since $\lambda$ lies in the center of $\kk_{\lambda}$, $\Psi_{K_{\lambda}} - \lambda$ is also the generalized moment map of the $K_{\lambda}$ on $Z_{\lambda}$ and $Z^{\rss}_{\lambda}$ coincides with the $W^{s}_{0}$ stratum defined by $\Psi_{K_{\lambda}} - \lambda$.
  Hence, by Lemma~\ref{lm:s0}, $Z_{\lambda}^{\rss}$ is an open and $K^{\mathbb{C}}_{\lambda}$-invariant subset of $Z_{\lambda}$.
  Let $\pi: Y_{\lambda} \rightarrow Z_{\lambda}$ be the canonical map induced from the gradient flow of $-\Psi^{\lambda}$.
  We define $Y_{\lambda}^{\rss}$ to be $\pi^{-1}(Z_{\lambda}^{\rss})$.
  Figure~\ref{fig:yss} is an illustration of $Y_{\lambda}^{\rss}$ when $Z_{\lambda}$ has only one component.
  \begin{figure}[htbp]
    \centering
    \includegraphics[height=250pt]{./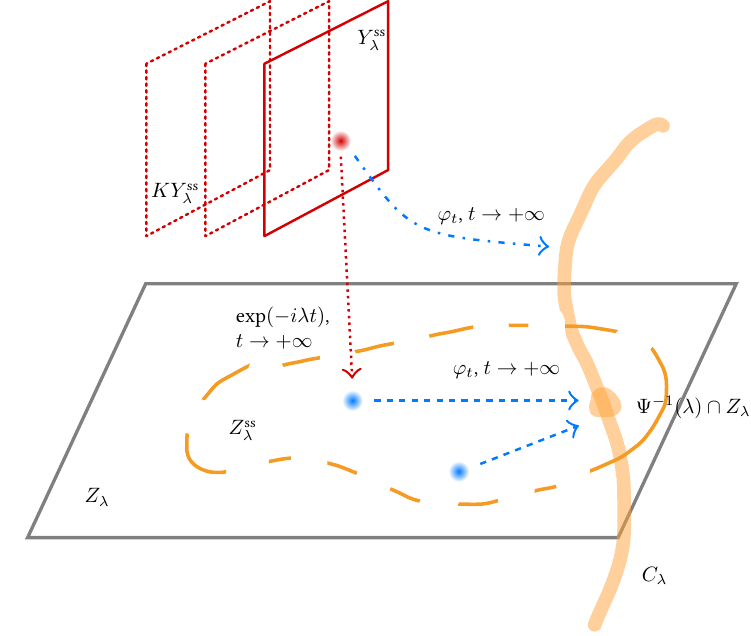}
    \caption[Yss]{An illustration of $Y_{\lambda}^{\rss}$.}
    \label{fig:yss}
  \end{figure}

For any $\xi\in \kt_+$, let
  \begin{equation*}
    P_{\xi} = \{g\in G| \lim_{t\rightarrow -\infty}\exp(i t \xi) g \exp(-it\xi) \text{ exists}\}.
  \end{equation*}
  to be the parabolic subgroup associated with $\xi$.
  Since $Z_{\lambda}^{\rss}$ is $K^{\mathbb{C}}_{\lambda}$-invariant, $Y_{\lambda}^{\rss}$ is invariant under the $P_{\lambda}$-action,~\cite[Lemma~6.10]{Kirwan84}.
  Note that $KY_{\lambda}^{\rss}$ is an open subset of $KY_{\lambda}$ containing $C_{\lambda}$.
  Therefore, as in the proof of Theorem~\ref{thm:st}, a small neighborhood of $C_{\lambda}$ in $KY_{\lambda}^{\rss}$, denoted also by $\Sigma_{\lambda}$, is also a smooth submanifold.
  On the other hand, the parabolic group $P_{\lambda}$ satisfies $KP_{\lambda} = G$, which implies that $KY_{\lambda}^{\rss} = KP_{\lambda}Y_{\lambda}^{\rss} = GY_{\lambda}^{\rss}$, that is, $KY_{\lambda}^{\rss}$ is $G$-invariant.
  Then, by (\ref{eq:grad-m2}), for any $x\in \Sigma_{\lambda}$,
  \begin{equation*}
    (\grad f)_x \in \kg \cdot x \subseteq \T_x \Sigma_{\lambda}.
  \end{equation*}
  As a result, unlike Theorem~\ref{thm:st}, it is not necessary to introduce another metric on $M$.
  Instead, the stratum $W^s_{\lambda}$ associated with the Hermitian metric $g$ coincides with $\Sigma_{\lambda}$ in a neighborhood of $C_{\lambda}$ automatically.
  Then, $\{W^s_{\lambda}\}$ forms a smooth stratification by the properties of minimally degenerate Morse function,~\cite[Theorem~10.4]{Kirwan84}.
\end{proof}

For the proof of the second part of Theorem~\ref{thm:cpx-kn}, the key is to show that
\begin{equation}
  \label{eq:no-int}
  KY_{\lambda}^{\rss} \cap KY_{\lambda'}^{\rss} = \emptyset, \text{ if }\lambda \neq \lambda'.
\end{equation}
In~\cite{Kirwan84}, Kirwan deduces (\ref{eq:no-int}) from the fact: if $x\in KY_{\lambda}^{\rss}$, then $\lambda$ is the unique point closest to $0$ of $\Psi(\overline{G\cdot x}) \cap \kt_+$,~\cite[Corollary~6.12]{Kirwan84}.
As before, the same proof also works in our Hermitian setting.
However, rather than repeating Kirwan's argument, we decide to deduce this result from another characterization of $Y^{\rss}_{\lambda}$, which itself is another application of the Kempf-Ness function.
Such a characterization will be proved in \S~\ref{sec:kn-fct}.
Here, we summarize the properties of $Y^{\rss}_{\lambda}$ needed for the proof of Theorem~\ref{thm:cpx-kn} (2).
\begin{lem}[{=Thereom~\ref{thm:yss}}]
  \label{lm:yss}
  Suppose $\lambda\in \Psi(\mathrm{crit}(f)) \cap \kt^*_+$, $\lambda\neq 0$.
  Then,
  \begin{inparaenum}[(i)]\item $Y^{\rss}_{\lambda} \subseteq \cup_{\rho\neq 0} W^s_{\rho}$;
  \item there exists a $K$-equivariant function $\wh: \cup_{\lambda\neq 0} W^s_{\lambda} \rightarrow \kk$ such that $m\in Y^{\rss}_{\lambda}$ if and only if $\wh(m) = \lambda$.
  \end{inparaenum}
\end{lem}

\begin{proof}[Proof of {Theorem~\ref{thm:cpx-kn} (2)}]    
  To show that $W^s_{\lambda}$ is complex and $G$-invariant, we need to prove that
  \begin{align}
    W^s_{\lambda} &= KY_{\lambda}^{\rss} (= GY_{\lambda}^{\rss}),\label{eq:w-ky}
  \end{align}
  
  To begin with, we note that due to (\ref{eq:grad-m2}), a trajectory of the gradient flow $\varphi_t$ associated with $-f$ satisfies
  \begin{equation}
    \label{eq:t-o}
    \varphi_t(x) \subseteq G \cdot x.
  \end{equation}
  Since $\Sigma_{\lambda} \subseteq W^s_{\lambda}\cap KY_{\lambda}^{\rss}$, the $G$-invariance of $KY_{\lambda}^{\rss}$ and (\ref{eq:t-o}) imply that
  \begin{equation}
    \label{eq:w-i-ky}
    W^s_{\lambda} \subseteq KY_{\lambda}^{\rss}.
  \end{equation}
  For the reverse inclusion, we should prove $Y^{\rss} \subseteq W^{s}_{\lambda}$, that is, the convergence relation displayed as dash dot line in Figure~\ref{fig:yss}.
  Such an inclusion follows from (\ref{eq:no-int}).
In fact, once (\ref{eq:no-int}) is proved, then (\ref{eq:kn-decomp}) and (\ref{eq:w-i-ky}) force $KY_{\lambda}^{\rss} \subseteq W^s_{\lambda}$ to be true.

  To show (\ref{eq:no-int}), firstly, we note that by definition, when $\lambda = 0$, $Y^{\rss}_{0} = W^s_{0}$.
  At the same time, if $\lambda \ne 0$, by Lemma~\ref{lm:yss}, $KY^{\rss}_{\lambda} \subseteq \cup_{\rho \ne 0} W^s_{\rho}$.
  Therefore,
  \begin{equation*}
    KY^{\rss}_{0} \cap KY^{\rss}_{\lambda} = \emptyset,\text{ if } \lambda \neq 0.
  \end{equation*}

  For both $\lambda$ and $\lambda'$ being nonzero and $\lambda \neq \lambda'$, by Lemma~\ref{lm:yss}, if $m\in KY^{\rss}_{\lambda}$ and $m' \in KY^{\rss}_{\lambda'}$, then
  \begin{equation*}
    \wh(m) = \Ad{k}{\lambda} \neq \Ad{k'}{\lambda'} = \wh(m'),
  \end{equation*}
  where $k, k'\in K$ and the inequality is due to $\lambda, \lambda' \in \kt^*_+$ and $\lambda \neq \lambda'$.
  As a result,
  \begin{equation*}
    KY^{\rss}_{\lambda} \cap KY^{\rss}_{\lambda'} = \emptyset
  \end{equation*}
  still holds and the proof of (\ref{eq:no-int}) completes.
\end{proof}

\begin{rem}
By our proof of Theorem~\ref{thm:cpx-kn}, we know that
    \begin{equation}
      \label{eq:ws-ky}
      W^s_{\lambda} = K Y^{\rss}_{\lambda} = G Y^{\rss}_{\lambda},
    \end{equation}
    which can be further refined to
    \begin{equation}
      \label{eq:ws-gy}
      W^s_{\lambda} = G \times_{P_{\lambda}} Y^{\rss}_{\lambda}.
    \end{equation}
    To see this, we need to show that for $g \in G$ and $m\in Y^{\rss}_{\lambda}$, $g\cdot m \in Y^{\rss}_{\lambda}$ implies that $g\in P_{\lambda}$.
    Since $G = K P_{\lambda}$, in fact, we can assume that $g \in K$.
    Then, due to Lemma~\ref{lm:yss},
    \begin{equation*}
      \lambda = \wh(g \cdot m) = \Ad{g}{\wh(m)} = \Ad{g}{\lambda}.
    \end{equation*}
    That is, $g\in K_{\lambda} \subseteq P_{\lambda}$.

    Note that in~\cite[Theorem~6.18]{Kirwan84}, Kirwan's proof of the existence of the stratification, i.e.\ Theorem~\ref{thm:cpx-kn} here, depends on (\ref{eq:ws-gy}), which is different from our arguments.
    Although the proof of Lemma~\ref{lm:yss} (i.e.\ Theorem~\ref{thm:yss}) itself is a little involved, once it is proved, we hope that the proof for (\ref{eq:ws-gy}) given here may be easier to understand compared to Kirwan's original proof.
\end{rem}

\begin{rem}
Perhaps quite surprisingly, it was never observed before that few of the proofs
in Atiyah \cite{Atiyah} and Kirwan \cite{Kirwan84} truly uses the
closedness of the form $\omega$. They are more Morse-theoretic than symplectic.
The sole role played by
the symplectic form is to generate a moment map and set off
a journey --- on the way to the destinations, oftentimes,
only non-degeneracy becomes relevant.
\end{rem}

From here, one sees that Kirwan's inductive cohomological formulas for the
Betti numbers of orbifold $\Psi^{-1}(0)/K$ also remain valid in the new setting (\cite{Kirwan84}).

\bigskip

\section{Properties and applications of Kempf--Ness function}\label{sec:kn-fct}
In this section, we use the same notation and assumption as in Section~\ref{sec:kn}.
And we always assume that $M$ is a compact complex manifold.

In the proof of Theorem~\ref{thm:cpx-kn}, we mention that there is another description, perhaps a more intrinsic one, of the submanifolds $Z^{\rss}_{\lambda}$ and $Y^{\rss}_{\lambda}$ following~\cite{Hesselink_1978un} and~\cite{Woodward_2010aa}.
To formulate such a description, we need to discuss more properties of the Kempf--Ness function $\kn_m$, see Definition~\ref{def:kn-fct}.

\subsection{Behaviors of $\kn_m$ near $\partial(G/K)$}
As we have done implicitly in the definition of the (lifted) Kempf--Ness function, we choose a metric on $\kg$ via the splitting $\kg = \kk \oplus i\kk$ and the metric on $\kk$.
And then the metric on $G$ is defined using the left translation action.

With respect to this metric on $G$, the induced metric on $G/K$ is complete and nonpositive curved, i.e.\ a $\cat$ space.
Denote the distance function on $G/K$ by $d(-,-)$.
Let $\pi: G \rightarrow G/K$.
The geodesic on $G/K$ is of the form $\pi(g\exp(it \xi))$, $g\in G$, $\xi\in \kk$,~\cite[Appendix~C]{Georgoulas_2021mo}.

\begin{lem}
  \label{lm:kn-lc}
  The Kempf--Ness function $\kn_m$ is a Lipschitz and convex function on $G/K$.
\end{lem}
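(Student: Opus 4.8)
The plan is to deduce both properties from the one-dimensional facts already recorded in Lemma~\ref{lm:kn}, using the cocycle identity (\ref{eq:equi-kn}) to reduce an arbitrary geodesic of $G/K$ to one through the base coset. Recall from the discussion preceding the lemma that $G/K$ is a $\cat$ space and that its geodesics are precisely the curves $t\mapsto\pi(g\exp(it\xi))$ with $g\in G$, $\xi\in\kk$; after rescaling we may take $|\xi|=1$, in which case the curve is parametrized by arclength (this is the one bookkeeping point: the metric on $\kg=\kk\oplus i\kk$ is chosen so that $\xi\mapsto i\xi$ is an isometry of $\kk$ onto $i\kk$, hence $|\xi|=1$ corresponds to unit speed on $G/K$). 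Since $\kn_m$ is the descent of the smooth function $\phi_m$, it suffices to understand $t\mapsto\phi_m(g\exp(it\xi))$.

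For convexity, I would use (\ref{eq:equi-kn}) in the form
\begin{equation*}
  \phi_m(g\exp(it\xi)) = \phi_m(g) + \phi_{g^{-1}\cdot m}(\exp(it\xi)).
\end{equation*}
The first summand does not depend on $t$, and by Lemma~\ref{lm:kn}(1), applied with base point $g^{-1}\cdot m$ in place of $m$, the second summand has nonnegative second derivative in $t$. Hence the restriction of $\kn_m$ to every geodesic of $G/K$ is a convex function of the arclength parameter, which is exactly the statement that $\kn_m$ is geodesically convex.

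For the Lipschitz bound I would differentiate once: as computed in the proof of Lemma~\ref{lm:kn} from the defining relation (\ref{eq:def-kn}),
\begin{equation*}
  \odv{}{t}\phi_{g^{-1}\cdot m}(\exp(it\xi)) = -\langle\Psi(\exp(-it\xi)\cdot g^{-1}\cdot m),\,\xi\rangle .
\end{equation*}
Combining this with the decomposition above and taking $|\xi|=1$ yields
\begin{equation*}
  \Big|\odv{}{t}\kn_m\big(\pi(g\exp(it\xi))\big)\Big| \le \norm{\Psi\big(\exp(-it\xi)\cdot g^{-1}\cdot m\big)} \le C,
\end{equation*}
where $C:=\sup_{x\in M}\norm{\Psi(x)}$ is finite because $M$ is compact and $\Psi$ continuous; note that $C$ depends only on $M$ and $\Psi$, not on $m$. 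Thus the restriction of $\kn_m$ to every unit-speed geodesic is $C$-Lipschitz, and since a $\cat$ space is (uniquely) geodesic, any two of its points are joined by such a geodesic, so $\kn_m$ is $C$-Lipschitz on $G/K$. I do not expect any genuine obstacle here: all the analytic content sits in Lemma~\ref{lm:kn}, and the remaining points are the normalization check mentioned above together with the elementary observation that a function on a geodesic metric space is convex (respectively $C$-Lipschitz) as soon as its restriction to each geodesic is.
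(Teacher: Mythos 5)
Your proof is correct and follows essentially the same route as the paper's: the convexity is reduced via the cocycle identity (\ref{eq:equi-kn}) to the second-derivative positivity in Lemma~\ref{lm:kn}(1), and the Lipschitz constant comes from $\norm{\Psi}\le C$ on the compact $M$ (the paper phrases this as a bound on $\norm{\grad\kn_m}$ rather than on the derivative along geodesics, but the estimate is the same). No gaps.
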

\begin{proof}
  Since $M$ is compact, by the definition of Kempf--Ness function $\kn_m$, at any $\pi(g)\in G/K$, we have
  \begin{equation*}
    \norm{(\grad \kn_m)_{\pi(g)}} = \norm{\Psi(g^{-1}\cdot m)} \le C,
  \end{equation*}
  where $C$ is a constant independent of $g$ and $m$.
  Therefore, for any $q_1,q_2\in G/K$,
  \begin{equation*}
    |\kn_m(q_1) - \kn_m(q_2)| \le (\sup_{q\in G/K} \norm{(\grad \kn_m)_{q}})\cdot d(q_1,q_2)
    \le C d(q_1,q_2),
  \end{equation*}
  which implies that $\kn_m$ is Lipschitz on $G/K$.

  By the definition of convexity of functions on a metric space, we only need to show that $\kn_m$ is convex along any geodesic.
  Take $\xi\in \kk$ and $g\in G$.
  By Lemma~\ref{lm:kn},
  \begin{equation*}
    \odv[order={2}]{}{t}\kn_m(\pi(g\exp(it\xi))) = \odv[order={2}]{}{t}\phi_m(g\exp(it\xi)) \ge \odv[order={2}]{}{t}\phi_{g^{-1}\cdot m}(\exp(it\xi)) \ge 0.
  \end{equation*}
  As a result, $\kn_m$ is convex along the geodesic $\pi(g\exp(it\xi))$.
\end{proof}

We need some asymptotic properties of $\kn_m$.
As a preparation, we give a quick review about the geometry of $G/K$ near the infinity.

As a $\cat$ space, we can define the boundary at infinity $\partial(G/K)$ for $G/K$.
Let $c_i(t): [0,+\infty)\rightarrow G/K$, $i=1,2$, be two unit-speed geodesic rays on $G/K$.
$c_1(t)$ and $c_2(t)$ are said to be equivalent if and only if there exists a constant $C$ such that
\begin{equation*}
  d(c_1(t),c_2(t)) \le C,\quad t\ge 0.
\end{equation*}
$\partial(G/K)$ is defined to be \emph{the collection of equivalence classes of unit-speed geodesic rays on $G/K$}.
For any $\bfa\in \partial(G/K)$, if a unit-speed geodesic ray $c(t)$ is a representative element of $\bfa$, we say that $c(t)$ is asymptotic to $\bfa$.

\begin{rem}
  \label{rk:equi-geod}
  In general, for two different elements $g_1,g_2 \in G$ and $\xi\in \kk$ with the unit norm, the unit-speed geodesics $\pi(g_1 \exp(it \xi))$ and $\pi(g_2 \exp(it \xi))$ are not equivalent in general.
  In fact, such two unit-speed geodesics are equivalent if and only if $g_1^{-1}g_2 \in P_{\xi}$ (the parabolic subgroup of $G$ associated with $\xi$).
To see this, by the left-invariance of the metric on $G/K$,
  \begin{equation*}
    d(\pi(g_1 \exp(it \xi)), \pi(g_2 \exp(it \xi))) = d(\pi(e), \pi (\exp(-it \xi)g_1^{-1}g_2 \exp(it \xi))).
  \end{equation*}
  Then, note that the right hand side of the above equality is bounded for $t \ge 0$ if and only if $\exp(-it \xi)g_1^{-1}g_2 \exp(it \xi)$ is bounded in $G$ for $t \ge 0$, which, in turn, is equivalent to $g_1^{-1}g_2 \in P_{\xi}$.
\end{rem}

Note that for any $q\in G/K$ and $\bfa\in \partial(G/K)$, there is one and only one unit-speed geodesic ray $c(t)$ starting from $q$ such that $c(t)$ is asymptotic to $\bfa$,~\cite[p.~261,~Proposition~8.2]{Bridson_1999aa}.
As a result, there is a natural bijection between $\{\xi\in \kk| \norm{\xi} = 1\}$ and $\partial(G/K)$, which is even a homeomorphism if $\partial(G/K)$ carries the so-called cone topology.
As another consequence of this observation, for $\bfa_1, \bfa_2\in \partial(G/K)$ and $q\in G/K$, one can define an angle as follows.
\begin{equation*}
  \angle_{q}(\bfa_1, \bfa_2) = \angle_q(c_1,c_2),
\end{equation*}
where $c_i$ is the unit-speed geodesic ray starting at $q$ and asymptotic to $\bfa_i$.\footnote{In general, $\angle_q(c_1,c_2)$ is defined by the Alexandrov angle.
  But since $G/K$ is a Riemannian manifold, we can simply define $\angle_q(c_1,c_2)$ to be angle in $\T_q(G/K)$ between the velocity vectors of $c_i$ at $q$.
  For $p_1, p_2, q\in G/K$, we also use the notation $\angle_q (p_1, p_2)$, which is just $\angle_q (c_1, c_2)$ such that $q,p_i$ lie on the geodesic $c_i$.
}

$\partial(G/K)$ has a natural metric called the Tits metric\footnote{Here, we follow~\cite{Kapovich_2009aa} to define the Tits metric.
In~\cite{Bridson_1999aa}, the Tits metric defined here is called the angular metric and the Tits metric in this book is defined to be the length metric of the angular metric.} $\tits(-,-)$.
For $\bfa_1, \bfa_2 \in \partial(G/K)$,
\begin{equation*}
  \tits(\bfa_1, \bfa_2) = \sup_{q\in G/K} \angle_{q}(\bfa_1, \bfa_2).
\end{equation*}
The Tits metric is a complete metric,~\cite[p.~281,~Proposition~9.7]{Bridson_1999aa}.
The topology defined by the Tits metric on $\partial(G/K)$ is strictly finer than the cone topology in general, which can be discrete in some cases.
As a result, Tits metric in general is not a Riemannian metric.
However, $(\partial(G/K),\tits)$ is still a $\mathsf{CAT}(1)$ space~\cite[Theorem~9.13]{Bridson_1999aa}.
Especially, it means that for $(\partial(G/K),\tits)$, the geodesic is also well defined in the metric space sense and convex subsets, as well as convex functions, can be defined in the following way.
A subset $A$ of $\partial(G/K)$ is called convex if and only if for any $\bfa, \bfb\in A$ with $\tits(\bfa,\bfb) < \pi$, $\overline{\bfa\bfb} \subseteq A$ holds, where $\overline{\bfa\bfb}$ is the shortest geodesic segment between $\bfa$ and $\bfb$.
A function $h$ on $\partial(G/K)$ is called convex if and only if $h$ is convex along any geodesic of length at most $\pi$.

The Kempf--Ness function $\kn_m$ defines a natural function $\mu_m$ on $\partial(G/K)$, called the slope of $\kn_m$ at infinity.
\begin{equation*}
  \begin{aligned}
    \mu_m: \partial(G/K)\rightarrow & \quad \mathbb{R},\\
    \bfa \mapsto & \lim_{t\rightarrow +\infty} \frac{\kn_m(c(t))}{t},
  \end{aligned}
\end{equation*}
where $c(t)$ a unit-speed geodesic ray asymptotic to $\bfa$.
By Lemma~\ref{lm:kn-lc}, the limit in the definition of $\mu_m$ always exists and is independent of choice of $c(t)$.
Moreover, by~\cite[Lemma~3.2]{Kapovich_2009aa}, $\mu_m$ is a Lipschitz function on $\partial(G/K)$ with respect to the Tits metric.

In~\cite[Lemma~3.2]{Kapovich_2009aa} and~\cite[Theorem~5.4.2]{Woodward_2010aa}, the authors prove the following property of $\mu_m$, which is crucial for our application of the Kempf--Ness function.\footnote{Although we are only interested in $\kn_m$, this theorem is actually valid for any convex Lipschitz function on $\partial(G/K)$.}
\begin{thm}
  \label{thm:kw} Let $\gamma(t)$ be any gradient trajectory of $-\kn_m$ on $G/K$.
  \begin{enumerate}[label=\normalfont(\arabic*)]
  \item $\{\mu_m \le 0\}$ is a convex subset of $\partial(G/K)$.
    $\mu_m$ is a convex function on $\{\mu_m \le 0\}$ and strictly convex function on $\{\mu_m < 0\}$.
  \item $\kn_m$ is proper and bounded below if and only if $\mu_m > 0$ everywhere on $\partial(G/K)$.
  \item If $\{\mu_m < 0\} \ne \emptyset$,
    \begin{enumerate}[label=\normalfont(\arabic{enumi}\alph*)]
    \item $\mu_m$ has a unique minimum point $\minp \in \partial(G/K)$;
    \item $\minp$ is the asymptotic direction of $\gamma(t)$, that is, there exists $\xi \in \kk$ such that $\pi(\exp(it \xi/\norm{\xi}))$ is asymptotic to $\minp$ and
      \begin{equation}
        \lim_{t\rightarrow +\infty} d(\gamma(t),\pi(\exp(it\xi)))/t = 0;\label{eq:lm-kw1}\\
      \end{equation}
      moreover,
      \begin{equation}
        \lim_{t\rightarrow +\infty} {\kn_m(\gamma(t))}/{(\norm{\xi}t)} = \mu_m(\minp); \label{eq:grad-sl} \\
      \end{equation}
     \item the slope of $\kn_m$ at $\minp$ also satisfies,
      \begin{gather}
        \mu_m(\minp)= -\inf_{q\in G/K}\norm{(\grad \kn_m)_q}.\label{eq:lm-kw2}
\end{gather}
    \end{enumerate}
  \item The gradient of $\kn_m$ satisfies
    \begin{equation}
      \label{eq:lm-kw3}
      \inf_{q\in G/K}\norm{(\grad \kn_m)_q} = \lim_{t\rightarrow +\infty} \norm{(\grad \kn_m)_{\gamma(t)}}.
    \end{equation}
   \item The following three properties are all equivalent to each other:
    \begin{enumerate}[label=\normalfont(\arabic{enumi}\alph*)]
    \item $\{\mu_m < 0\} \ne \emptyset$,
    \item $\inf_{q\in G/K} \norm{\grad \kn_m}_q > 0$,
    \item $\lim_{t\rightarrow +\infty} \norm{(\grad \kn_m)_{\gamma(t)}}> 0$.
    \end{enumerate}
  \end{enumerate}
\end{thm}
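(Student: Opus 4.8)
The plan is to derive all five assertions from the general asymptotic theory of Lipschitz convex functions on Hadamard manifolds, following \cite[Lemma~3.2]{Kapovich_2009aa} and \cite[Theorem~5.4.2]{Woodward_2010aa}; the only input specific to our setting is that $\kn_m$ is a smooth, Lipschitz, convex function on the $\cat$ space $G/K$ with $\|(\grad\kn_m)_{\pi(g)}\| = \|\Psi(g^{-1}\cdot m)\|$ bounded uniformly in $g$, which is exactly Lemmas~\ref{lm:kn} and~\ref{lm:kn-lc}; in particular nothing below uses closedness of $\omega$. First I would record that $\mu_m$ is well defined: along a unit-speed geodesic ray $c$, convexity of $\kn_m$ makes $t\mapsto t^{-1}(\kn_m(c(t))-\kn_m(c(0)))$ nondecreasing, while the Lipschitz bound keeps it bounded above, so the limit exists; since two rays asymptotic to the same point of $\partial(G/K)$ stay within bounded distance, the Lipschitz property shows the limit is independent of the base point. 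This yields the elementary inequality $\kn_m(c(t)) \le \kn_m(c(0)) + t\,\mu_m(\bfa)$ for all $t\ge 0$.

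For (1) the plan is to quote \cite[Lemma~3.2]{Kapovich_2009aa}: given $\bfa_0,\bfa_1\in\{\mu_m\le 0\}$ with $\tits(\bfa_0,\bfa_1)<\pi$, interpolate by geodesic rays $c_s$ asymptotic to the point $\bfa_s$ on the Tits geodesic from $\bfa_0$ to $\bfa_1$, apply the $\cat$ comparison inequality to the configuration of $c_0$, $c_1$, $c_s$ to bound $\kn_m$ along $c_s$ by the corresponding convex combination of the bounds along $c_0$ and $c_1$, divide by $t$ and let $t\to\infty$; this gives convexity of $\mu_m$ on $\{\mu_m\le 0\}$ (hence convexity of that set), and the strict version of the comparison gives strict convexity on $\{\mu_m<0\}$. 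Part (2) then follows quickly: if $\mu_m(\bfa)\le 0$ the inequality above shows $\kn_m$ is bounded above along a geodesic ray, so it is not proper; conversely, if $\mu_m>0$ everywhere, the argument of \cite[Lemma~3.2]{Kapovich_2009aa} (using compactness of the sphere of directions and lower semicontinuity of $\mu_m$ in the cone topology) produces a uniform affine lower bound $\kn_m(q)\ge\delta\,d(q,q_0)-C$ with $\delta>0$, which is properness together with boundedness below.

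For (3), $\{\mu_m<0\}\ne\emptyset$ forces $\inf_{\partial(G/K)}\mu_m<0$, and strict convexity from (1) makes the minimizer unique, giving (3a). For (3b), (3c) I would study the negative gradient flow $\gamma(t)$ of $\kn_m$ (defined for all $t\ge 0$ since $\kn_m$ is smooth and $G/K$ complete): one has $\odv{}{t}\kn_m(\gamma(t)) = -\|\grad\kn_m\|^2_{\gamma(t)}$, convexity makes $t\mapsto\|\grad\kn_m\|_{\gamma(t)}$ nonincreasing, and comparing $\gamma$ with the geodesic ray $\pi(\exp(it\xi))$ asymptotic to $\minp$, together with the $\cat$ estimates of \cite[Theorem~5.4.2]{Woodward_2010aa}, yields the asymptotic relation $(\ref{eq:lm-kw1})$ and the slope identities $(\ref{eq:grad-sl})$ and $(\ref{eq:lm-kw2})$. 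Part (4), namely $(\ref{eq:lm-kw3})$, then says that the nonincreasing quantity $\|\grad\kn_m\|_{\gamma(t)}$ has limit $\inf_{G/K}\|\grad\kn_m\|$: ``$\ge$'' is monotonicity and ``$\le$'' is $(\ref{eq:grad-sl})$--$(\ref{eq:lm-kw2})$. Finally (5) combines (2), (3c), (4) with the identity $\inf_{G/K}\|\grad\kn_m\| = \max\bigl(0,-\inf_{\partial(G/K)}\mu_m\bigr)$ of \cite[Lemma~3.2]{Kapovich_2009aa}: (5a) $\Leftrightarrow$ $\inf_{\partial(G/K)}\mu_m<0$ $\Leftrightarrow$ (5b), and (5b) $\Leftrightarrow$ (5c) by (4).

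The hard part is not really present here: everything substantial---the convexity statement in (1) and the asymptotic analysis of the gradient flow in (3)---rests only on the $\cat$ geometry of $G/K$ and on $(\partial(G/K),\tits)$ being $\mathsf{CAT}(1)$ (\cite[Theorem~9.13]{Bridson_1999aa}), plus $\kn_m$ being Lipschitz and convex; these are precisely the contents of \cite[Lemma~3.2]{Kapovich_2009aa} and \cite[Theorem~5.4.2]{Woodward_2010aa}, stated there for an arbitrary Lipschitz convex function on a Hadamard manifold. Thus the only thing one must verify in our situation is that $\kn_m$ qualifies, which is Lemma~\ref{lm:kn-lc}; the remaining effort is organizing the citations rather than proving anything new.
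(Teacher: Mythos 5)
Your proposal follows essentially the same route as the paper: both proofs are reductions to the asymptotic theory of Lipschitz convex functions on Hadamard spaces, with (1)--(3a) delegated to \cite[Lemma~3.2]{Kapovich_2009aa} and (3b)--(3c) to \cite[Theorem~5.4.2]{Woodward_2010aa}, the only new input being Lemmas~\ref{lm:kn} and~\ref{lm:kn-lc}. The one place where your plan is thinner than it should be is part (4). Deducing (\ref{eq:lm-kw3}) from ``monotonicity plus (\ref{eq:grad-sl})--(\ref{eq:lm-kw2})'' only makes sense under the hypothesis $\{\mu_m<0\}\neq\emptyset$ of part (3), so the case $\mu_m\ge 0$ needs a separate argument (there your identity $\inf\norm{\grad\kn_m}=\max(0,-\inf\mu_m)$ gives $\inf=0$, but you still must show the gradient norm along the trajectory tends to $0$, which does not follow from monotonicity alone); and even when $\minp$ exists, extracting $\lim_t\norm{(\grad\kn_m)_{\gamma(t)}}=-\mu_m(\minp)$ from (\ref{eq:grad-sl}) and (\ref{eq:lm-kw2}) requires relating $\norm{\xi}$, the asymptotic speed of $\gamma$, and the infimum of the gradient norm --- a short but nontrivial computation you assert rather than perform. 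This is exactly why the paper supplements Woodward with the gradient-descent convergence result of \cite[Theorem~3.1]{Hirai_2024gr} for part (4); also beware that in the literature the reverse inequality in (\ref{eq:lm-kw2}) is usually itself proved via this convergence, so deriving (4) from (3c) risks circularity unless you take (3c) as an independent citation.
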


\begin{proof}
  \begin{inparaenum}
  \item and \item These two assertions are proved in \cite[Lemma~3.2]{Kapovich_2009aa}.

  \item The property (3a) is also proved in~\cite[Lemma~3.2]{Kapovich_2009aa}.
    The proof of (3b) appears in~\cite[Theorem~5.4.2]{Woodward_2010aa}.
    In~\cite[Theorem~3.1]{Hirai_2024gr}, the authors provide detailed proof of the fact that $\gamma(t)$ converges to $\minp$ in the cone topology as $t\rightarrow +\infty$.
    Since $\kn_m$ is a convex function on $G/K$, $\norm{\grad \kn_m}_{\gamma(t)}$ is a monotonically decreasing function.
    Therefore,
    \begin{equation*}
      d(\gamma(t), \gamma(t+1)) \le \int_{t}^{t+1} \norm{\grad \kn_m}_{\gamma(\tau)} \diff \tau \le \norm{\grad \kn_m}_{\gamma(0)}.
    \end{equation*}
    Then, by~\cite[Theorem~2.1]{Kaimanovich_1987ly}, (\ref{eq:lm-kw1}) also follows from~\cite[Theorem~3.1]{Hirai_2024gr} immediately.

    Since $\kn_m$ is a Lipschitz function, (\ref{eq:grad-sl}) is a consequence of (\ref{eq:lm-kw1}).
    
    For the proof of (\ref{eq:lm-kw2}), taking any $q\in G/K$ and $\bfa\in \partial{(G/K)}$, let $c_q(t)$ be a unit-speed geodesic ray asymptotic to $\bfa$.
    By the convexity of $\kn_m$ and L'H\^{o}pital's rule, we have
    \begin{equation*}
      \odv{}{t}\Big|_{t = 0} \kn_m(c_q(t)) \le \lim_{t\rightarrow +\infty} \odv{}{t} \kn_m(c_q(t)) = \lim_{t\rightarrow +\infty} \kn_m(c_q(t))/t = \mu_m(\bfa).
    \end{equation*}
    Therefore,
    \begin{equation*}
      \norm{(\grad \kn_m)_q} \ge -((\grad \kn_m)_q, c'_q(0)) = -\odv{}{t}\Big|_{t = 0} \kn_m(c_q(t))
      \ge -\mu_{m}(\bfa),
    \end{equation*}
    which gives an inequality needed for the proof of (\ref{eq:lm-kw2}),
    \begin{equation}
      \label{eq:ineq-mw}
      \inf_{q\in G/K} \norm{(\grad \kn_m)_q} \ge -\mu_m(\minp).
    \end{equation}
    The above inequality is called the moment-weight inequality in~\cite{Georgoulas_2021mo} or the weak duality in~\cite{Hirai_2024gr}.
    One can find the proof for the inequality in the reverse direction and finish the proof of (\ref{eq:lm-kw2}) in~\cite[Theorem~6.4]{Georgoulas_2021mo} or~\cite[Theorem~3.1]{Hirai_2024gr}.

  \item Since
    \begin{equation*}
      \inf_{q\in G/K}\norm{(\grad \kn_m)_q} \le \lim_{t\rightarrow +\infty} \norm{(\grad \kn_m)_{\gamma(t)}},
    \end{equation*}
    when $\lim_{t\rightarrow +\infty} \norm{(\grad \kn_m)_{\gamma(t)}} = 0$, (\ref{eq:lm-kw3}) holds automatically.
    Meanwhile, if $\lim_{t\rightarrow +\infty} \norm{(\grad \kn_m)_{\gamma(t)}} > 0$,~\cite[Theorem~3.1]{Hirai_2024gr} shows that (\ref{eq:lm-kw3}) also holds.\footnote{More precisely, in~\cite[Theorem~3.1]{Hirai_2024gr}, the authors prove (\ref{eq:lm-kw3}) with the assumption $\inf_{q\in G/K} \norm{\grad \kn_m}_q > 0$. However, the same proof also works with the weaker condition used here.}

  \item The equivalence between (5a) and (5b) is due to (\ref{eq:ineq-mw}) and~\cite[Lemma~3.4]{Kapovich_2009aa}.
    And the equivalence between (5b) and (5c) follows from (\ref{eq:lm-kw3}).
\end{inparaenum}

\end{proof}

For the application of $\kn_m$, the following relation between $\kn_m$ and $\Psi$ is very convenient.
\begin{lem}
  Let $\gamma(t)$ be the gradient trajectory of $-\kn_m$ on $G/K$ starting at $\pi(e)$, where $e\in G$ is the unit element of $G$.
  Then, for any $t \ge 0$,
  \begin{equation*}
    \norm{(\grad \kn_m)_{\gamma(t))}} = \norm{\Psi(\varphi_t(m))}.
  \end{equation*}
  Especially,
  \begin{equation}
    \label{eq:lim-grad-norm}
    \lim_{t\rightarrow +\infty} \norm{(\grad \kn_m)_{\gamma(t)}} = \lim_{t\rightarrow +\infty} \norm{\Psi(\varphi_t(m))}.
  \end{equation}
\end{lem}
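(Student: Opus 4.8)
The plan is to lift the gradient trajectory $\gamma$ from $G/K$ to a curve $g(t)$ in $G$ and to recognize $g(t)^{-1}\cdot m$ as the negative $f$--gradient trajectory $\varphi_t(m)$; this is the ``Kempf--Ness flow equals moment-map flow'' mechanism familiar from the K\"ahler case, cf.\ \cite{Georgoulas_2021mo}. First I would make precise the computation already used in the proof of Lemma~\ref{lm:kn-lc}. From (\ref{eq:dc-phi}), the identity $\iota_{g\cdot\zeta}\diffc\phi_m=-(\diff\phi_m)_g(g\cdot(i\zeta))$, and the fact that $\pi\colon G\to G/K$ is a Riemannian submersion whose horizontal space at $g$ is $g\cdot(i\kk)$, one obtains not merely the norm but the gradient vector itself:
\begin{equation*}
  (\grad\kn_m)_{\pi(g)}=(\diff\pi)\bigl(-g\cdot(i\,\Psi(g^{-1}\cdot m))\bigr),
\end{equation*}
where $\Psi(g^{-1}\cdot m)\in\kk^{*}$ is identified with an element of $\kk$ via the fixed invariant inner product. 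Hence it suffices to produce a curve $g(t)$ in $G$, horizontal over $\gamma$, with $g(0)=e$ and $g(t)^{-1}\cdot m=\varphi_t(m)$.

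Accordingly I would introduce $g(t)$ as the solution of the ODE $\dot g(t)=g(t)\cdot\bigl(i\,\Psi(g(t)^{-1}\cdot m)\bigr)$, $g(0)=e$, which exists for all $t\ge 0$ since $\norm{\Psi}$ is bounded on the compact $M$, so the right-hand side has bounded length in the (complete) left-invariant metric on $G$. Two checks remain. (a) Because $i\,\Psi(g(t)^{-1}\cdot m)\in i\kk$, the velocity $\dot g(t)$ is horizontal, and by the displayed formula $(\diff\pi)(\dot g(t))=-(\grad\kn_m)_{\pi(g(t))}$; since $\pi(g(0))=\pi(e)$, uniqueness of gradient trajectories gives $\gamma(t)=\pi(g(t))$. (b) Put $m(t):=g(t)^{-1}\cdot m$ and differentiate, using $\tfrac{\diff}{\diff t}g(t)^{-1}=-g(t)^{-1}\dot g(t)\,g(t)^{-1}$ and the equivariance identity $(\diff F_{h})_p(\zeta_{M,p})=(\Ad{h}{\zeta})_{M,\,h\cdot p}$ for $h\in G$, $\zeta\in\kg$, where $F_h$ denotes the diffeomorphism $p\mapsto h\cdot p$ of $M$; the two adjoint factors cancel and one finds $\dot m(t)=-\bigl(i\,\Psi(m(t))\bigr)_{M,m(t)}=-J\,(\Psi(m(t)))_{M,m(t)}=-(\grad f)_{m(t)}$ by (\ref{eq:grad-m2}). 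As $m(0)=m$, uniqueness of the flow forces $m(t)=\varphi_t(m)$.

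Combining (a) and (b), and using once more that left translation carries the inner product on $\kk$ (with $i$ an isometry) to the metric on $G$,
\begin{equation*}
  \norm{(\grad\kn_m)_{\gamma(t)}}=\norm{g(t)\cdot\bigl(i\,\Psi(g(t)^{-1}\cdot m)\bigr)}=\norm{\Psi(g(t)^{-1}\cdot m)}=\norm{\Psi(\varphi_t(m))},
\end{equation*}
which is the asserted identity; letting $t\to+\infty$ yields (\ref{eq:lim-grad-norm}), the left-hand limit existing because $\norm{\grad\kn_m}$ is nonincreasing along a gradient trajectory of the convex function $\kn_m$ (cf.\ Theorem~\ref{thm:kw}) and the right-hand limit existing by Lemma~\ref{lm:conv-mp} and continuity of $\Psi$. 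I expect the only genuinely delicate point to be part (a): pinning down $(\grad\kn_m)_{\pi(g)}$ as a specific horizontal vector --- not merely its norm --- with all sign and identification conventions kept consistent, together with the $\Ad$--bookkeeping in (b); the global existence of $g(t)$ and the passage to the limit are routine.
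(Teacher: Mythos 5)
Your proposal is correct and follows essentially the same route as the paper: the explicit formula $(\grad\phi_m)_g=-g\cdot(i\,\Psi(g^{-1}\cdot m))$, the identification of the lifted negative gradient flow $g(t)$ with a horizontal lift of $\gamma$, and the verification that $g(t)^{-1}\cdot m$ solves the negative gradient flow of $f$ so that it equals $\varphi_t(m)$, are exactly the paper's equations (\ref{eq:grad-kn}), (\ref{eq:two-grad}) and (\ref{eq:two-flow}). The extra care you take with the horizontality of the gradient and the $\Ad$-bookkeeping is sound but does not change the argument.
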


\begin{proof}
  By the definition of the lifted Kempf--Ness function (\ref{eq:def-kn}), we have
  \begin{equation}
    \label{eq:grad-kn}
    (\grad \phi_m)_g =  - g \cdot (i \Psi(g^{-1}\cdot m)).
  \end{equation}
  (\ref{eq:grad-kn}) and (\ref{eq:grad-m2}) implies that
  \begin{equation}
    \label{eq:two-grad}
    (\diff \Lambda)_g((\grad \phi_m)_g) = (\grad f)_{\Lambda(g)},\quad g\in G,
  \end{equation}
  where $\Lambda(g) = g^{-1}\cdot m: G \rightarrow M$ is defined in (\ref{eq:def-lg}).
Let $g_t$ be the gradient trajectory of $-\phi_m$ starting at $e$.
  Then, (\ref{eq:two-grad}) gives a relation between the negative gradient flow on $G$ and $M$, which refines (\ref{eq:t-o}).
  \begin{equation}
    \label{eq:two-flow}
    \varphi_t(m) = g_t^{-1}\cdot m.
  \end{equation}

  Note that $\pi(g_t) = \gamma(t)$.
  By (\ref{eq:grad-kn}) and (\ref{eq:two-flow}), we know that
  \begin{equation*}
    \norm{(\grad \kn_m)_{\gamma(t)}} = \norm{(\grad \phi_m)_{g_t}} = \norm{\Psi(\varphi_t(m))},
  \end{equation*}
  from which the lemma follows.
\end{proof}

Theorem~\ref{thm:kw} gives a useful characterization of $W^s_0$ in terms of the slope function.
\begin{prop}
  \label{prop:ws0}
  Let $m\in M$.
  Then, there exists $\lambda \neq 0$ such that $m \in W^s_{\lambda}$ if and only if the slope function satisfies $\{\mu_m < 0\} \ne \emptyset$.
  Equivalently, $m\in W^s_0$ if and only if $\mu_m \ge 0$.
\end{prop}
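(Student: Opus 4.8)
The plan is to identify the asymptotic quantity $\lim_{t\to+\infty}\norm{\Psi(\varphi_t(m))}$ --- the norm of $\Psi$ at the limit of the negative gradient trajectory of $f=\norm{\Psi}^2/2$ through $m$ --- with $\inf_{q\in G/K}\norm{(\grad\kn_m)_q}$, and then read off the dichotomy from Theorem~\ref{thm:kw}. Note first that, since $M=\bigcup_\lambda W^s_\lambda$ is a partition, ``$m\in W^s_0$'' is exactly the negation of ``$m\in W^s_\lambda$ for some $\lambda\ne 0$'', and ``$\mu_m\ge 0$'' is exactly the negation of ``$\{\mu_m<0\}\ne\emptyset$''; hence the two displayed equivalences are logically the same statement, and it suffices to establish one of them.

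First I would set up the dictionary between the strata $W^s_\lambda$ and the value of $\Psi$ at the limit point. By Lemma~\ref{lm:conv-mp} the limit $m_\infty:=\lim_{t\to+\infty}\varphi_t(m)$ exists, and being the limit of a negative gradient trajectory it is a critical point of $f$. Writing $\lambda:=q(\Psi(m_\infty))\in\kt^*_+$ --- which lies in $\Psi(\mathrm{crit}(f))\cap\kt^*_+$ because $f$, hence $\mathrm{crit}(f)$, is $K$-invariant and $\Psi$ is equivariant --- we have $m_\infty\in\Psi^{-1}(K\cdot\lambda)\cap\mathrm{crit}(f)=C_\lambda$, so $m\in W^s_\lambda$; and $m\in W^s_0$ precisely when $\Psi(m_\infty)=0$, i.e.\ when $\norm{\Psi(m_\infty)}=0$, while $m\in W^s_\lambda$ for some $\lambda\ne0$ precisely when $\norm{\Psi(m_\infty)}>0$. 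Thus the proposition reduces to the single assertion: $\norm{\Psi(m_\infty)}>0$ if and only if $\{\mu_m<0\}\ne\emptyset$.

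Then I would chain the identities already in place. Let $\gamma$ be the gradient trajectory of $-\kn_m$ on $G/K$ starting at $\pi(e)$. By continuity of $\Psi$ together with the lemma preceding this proposition (equation~(\ref{eq:lim-grad-norm})),
\begin{equation*}
  \norm{\Psi(m_\infty)} \;=\; \lim_{t\to+\infty}\norm{\Psi(\varphi_t(m))} \;=\; \lim_{t\to+\infty}\norm{(\grad\kn_m)_{\gamma(t)}},
\end{equation*}
and by Theorem~\ref{thm:kw}(4) the last limit equals $\inf_{q\in G/K}\norm{(\grad\kn_m)_q}$. Now Theorem~\ref{thm:kw}(5) states that this infimum is strictly positive precisely when $\{\mu_m<0\}\ne\emptyset$. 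Combining this with the reduction of the previous paragraph yields both halves of the proposition.

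The argument is short because the analytic substance has been front-loaded into Theorem~\ref{thm:kw} and the $\kn_m$--$\Psi$ comparison lemma, so I do not anticipate a genuine obstacle; the only points requiring minor care are that the equality in Theorem~\ref{thm:kw}(4) applies to the particular trajectory $\gamma$ starting at $\pi(e)$ (it is stated for an arbitrary trajectory, and is in any case trajectory-independent), and the degenerate case $\Psi^{-1}(0)=\emptyset$, in which there is no stratum $W^s_0$, every critical value of $f$ is nonzero, and so $\norm{\Psi(m_\infty)}>0$ and $\{\mu_m<0\}\ne\emptyset$ hold for all $m$, consistently with the convention $W^s_0=\emptyset$.
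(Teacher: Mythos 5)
Your proof is correct and follows essentially the same route as the paper, whose entire argument is ``Combining (\ref{eq:lim-grad-norm}) and Theorem~\ref{thm:kw}~(5), the result follows''; you have merely made explicit the dictionary between the stratum label of $m$ and the value $\lim_{t\to+\infty}\norm{\Psi(\varphi_t(m))}$, which the paper leaves implicit. The detour through Theorem~\ref{thm:kw}(4) is harmless but unnecessary, since part~(5) already gives the equivalence of $\{\mu_m<0\}\ne\emptyset$ with $\lim_{t\to+\infty}\norm{(\grad\kn_m)_{\gamma(t)}}>0$ directly.
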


\begin{proof}
  Combining (\ref{eq:lim-grad-norm}) and Theorem~\ref{thm:kw} (5), the result follows.
\end{proof}

As another application of Theorem~\ref{thm:kw}, we prove Lemma~\ref{lm:s0}, which gives another characterization of $W^s_0$.

\begin{proof}[Proof of {Lemma~\ref{lm:s0}}]
  Since $0$ is the minimal value of $f = \norm{\Psi}^2/2$, $W^s_0$ will contain an open neighborhood of $\Psi^{-1}(0)$.
  Then openness of $W^s_0$ itself follows from the general properties of ODE.

  Since the $G$-invariance of $W^s_0$ follows from (\ref{eq:w0-gcl}), we only need to show (\ref{eq:w0-gcl}) holds.
  Recall that by (\ref{eq:t-o}), $\varphi_t(m) \subseteq G \cdot m$.
  As a result, if $\lim_{t\rightarrow +\infty} \varphi_t(m) \in \Psi^{-1}(0)$, then $\overline{G\cdot m} \cap \Psi^{-1}(0) \neq \emptyset$.
  That is,
  \begin{equation*}
    W^s_0 \subseteq \{m\in M | \overline{G\cdot m} \cap \Psi^{-1}(0) \ne \emptyset\}.
  \end{equation*}
  On the other hand, let $m \in \overline{G\cdot m} \cap \Psi^{-1}(0) \ne \emptyset$.
  By (\ref{eq:grad-kn}), for $g\in G$,
  \begin{equation*}
    \norm{(\grad \kn_m)_{\pi(g)}} =  \norm{\Psi(g^{-1}\cdot m)}.
  \end{equation*}
  Therefore, $m \in \overline{G\cdot m} \cap \Psi^{-1}(0) \ne \emptyset$ and (\ref{eq:lm-kw3}) implies that
  \begin{equation*}
    \lim_{t\rightarrow +\infty} \norm{(\grad \kn_m)_{\gamma(t)}} = \inf_{q\in G/K}\norm{(\grad \kn_m)_q} = 0.
  \end{equation*}
  Then, by (\ref{eq:lim-grad-norm}), we know $\lim_{t\rightarrow +\infty} \Psi(\varphi_t(m)) = 0$, i.e.\ $m\in W^s_0$.
  
\end{proof}

\subsection{A characterization of $Y^{\rss}_{\lambda}$} With the slope function $\mu_m$, we can define a quantity which is first defined by Hesselink in the algebraic case,~\cite{Hesselink_1978un}.
\begin{defn}
  \label{def:hesselink}
  Let $m\in M$ such that there exists $\lambda \neq 0$ satisfying $m \in W^s_{\lambda}$.
  The minimal weight of $m$, $\wm(m)$, is defined to be the unique unit-length element $\wm(m) \in \kk$ such that $\exp(i t \wm(m))$ is asymptotic to the unique minimum point of $\mu_m$.
  The Hesselink weight of $m$, $\wh(m)$, is defined to be $-(\inf_{\partial(G/K)} \mu_m) \wm(m)$.
\end{defn}

\begin{rem}
  By Theorem~\ref{thm:kw}, we see that $\wh(m)$ is just the $\xi\in \kk$ appearing in (\ref{eq:lm-kw1}).
  As a consequence, $\wh: \cup_{\lambda\neq 0} W^s_{\lambda} \rightarrow \kk$ is a $K$-equivariant map.
  To see this, by properties of the lifted Kempf--Ness function (\ref{eq:equi-kn}), for any $k\in K$, one notices that $kg_tk^{-1}$ is a gradient trajectory of $-\phi_{k\cdot m}$ starting at $e$.
  Therefore, since $\pi(g_t)$ and $\pi(\exp(it \wh(m)))$ satisfy the asymptotic relation (\ref{eq:lm-kw1}), so do $\pi(kg_tk^{-1})$ and $\pi(\exp(it \Ad{k}{\wh(m)}))$, which implies that
  \begin{equation*}
    \wh(k\cdot m)  = \Ad{k}{\wh(m)}.
  \end{equation*}
  Moreover, a similar argument also shows that $\inf_{\partial(G/K)} \mu_m$ is a $K$-invariant function with respect to $m$.
  Hence, the minimal weight $\wm$ is also $K$-equivariant.

  Due to the equivariance of weight functions, in the following, we only consider $m\in M$ such that $\wm(m)$ or $\wh(m)$ lies in $\kt_+$.
\end{rem}

With the above definition, points in $Y^{\rss}_{\lambda}$ can be characterized as follows.
\begin{thm}[{=Lemma~\ref{lm:yss}}]
  \label{thm:yss}
  Suppose $\lambda\in \Psi(\mathrm{crit}(f)) \cap \kt^*_+$, $\lambda\neq 0$.
  Then,
  \begin{inparaenum}[(i)]\item $Y^{\rss}_{\lambda} \subseteq \cup_{\rho\neq 0} W^s_{\rho}$;
  \item $m\in Y^{\rss}_{\lambda}$ if and only if $\wh(m) = \lambda$.
  \end{inparaenum}
\end{thm}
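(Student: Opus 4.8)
The plan is to reduce Theorem~\ref{thm:yss} to one computation with the Kempf--Ness function: for $m\in Y^{\rss}_\lambda$ one has $\inf_{q\in G/K}\norm{(\grad\kn_m)_q}=\norm{\lambda}$ with optimal direction $\lambda/\norm{\lambda}$. Part (i) and the ``only if'' half of (ii) fall out of this, while the ``if'' half follows formally from the ``only if'' half together with the $K$-equivariance of $\wh$. For the setup, let $m\in Y_\lambda\supseteq Y^{\rss}_\lambda$ and put $z:=\lim_{t\to+\infty}\exp(-it\lambda)\cdot m$, which exists and lies in $Z_\lambda$ by definition of $Y_\lambda$; recall from the proof of Theorem~\ref{thm:cpx-kn} that $\Psi^\lambda\equiv\norm{\lambda}^2$ on $Z_\lambda$. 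By Lemma~\ref{lm:kn}, $\odv{}{t}\phi_m(\exp(it\lambda))=-\langle\Psi(\exp(-it\lambda)\cdot m),\lambda\rangle\to-\norm{\lambda}^2$ as $t\to+\infty$, so $\phi_m(\exp(it\lambda))/t\to-\norm{\lambda}^2$; passing to the unit-speed geodesic ray $c_\lambda(t):=\pi(\exp(it\lambda/\norm{\lambda}))$ and using Lemma~\ref{lm:kn-lc} gives $\mu_m([c_\lambda])=-\norm{\lambda}<0$. Hence $\{\mu_m<0\}\neq\emptyset$, and Proposition~\ref{prop:ws0} puts $m$ in some $W^s_\rho$ with $\rho\neq 0$; this proves (i).

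For the ``only if'' direction of (ii), take $m\in Y^{\rss}_\lambda$ and show that $-\norm{\lambda}$ is in fact the minimum of $\mu_m$. Since $z\in Z^{\rss}_\lambda$, and (again from the proof of Theorem~\ref{thm:cpx-kn}) $Z^{\rss}_\lambda$ is the ``$W^s_0$'' stratum of the shifted moment map $\Psi_{K_\lambda}-\lambda$ on $Z_\lambda$, Lemma~\ref{lm:s0} yields $w\in\overline{K^{\mathbb{C}}_\lambda\cdot z}\cap\Psi^{-1}(\lambda)\cap Z_\lambda$. Choosing $h_n\in K^{\mathbb{C}}_\lambda$ with $h_n\cdot z\to w$ and then $t_n$ large with $d(h_n\exp(-it_n\lambda)\cdot m,\,h_n\cdot z)<1/n$, the elements $g_n:=\exp(it_n\lambda)h_n^{-1}$ give $g_n^{-1}\cdot m=h_n\exp(-it_n\lambda)\cdot m\to w$, so $\norm{(\grad\kn_m)_{\pi(g_n)}}=\norm{\Psi(g_n^{-1}\cdot m)}\to\norm{\lambda}$ by $(\ref{eq:grad-kn})$, i.e. $\inf_q\norm{(\grad\kn_m)_q}\le\norm{\lambda}$. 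Combined with $(\ref{eq:lm-kw2})$ and $\mu_m(\minp)\le\mu_m([c_\lambda])=-\norm{\lambda}$, this forces $\inf_q\norm{(\grad\kn_m)_q}=-\mu_m(\minp)=\norm{\lambda}$ and $\mu_m(\minp)=\mu_m([c_\lambda])$. By uniqueness of the minimum point of $\mu_m$ (Theorem~\ref{thm:kw}\,(3a)), $\minp=[c_\lambda]$, so $c_\lambda$ is the unique geodesic ray from $\pi(e)$ asymptotic to $\minp$; hence $\wm(m)=\lambda/\norm{\lambda}$ and $\wh(m)=-\mu_m(\minp)\,\wm(m)=\lambda$.

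For the ``if'' direction, assume $\wh(m)=\lambda$, so $\mu_m(\minp)=-\norm{\wh(m)}=-\norm{\lambda}<0$ and (as in paragraph~1) $m\in W^s_\rho$ for a unique $\rho\in\Psi(\mathrm{crit}(f))\cap\kt^*_+$ with $\rho\neq 0$. By $(\ref{eq:w-i-ky})$ there is $k\in K$ with $k^{-1}\cdot m\in Y^{\rss}_\rho$; by (i) the quantity $\wh(k^{-1}\cdot m)$ is defined and the ``only if'' direction gives $\wh(k^{-1}\cdot m)=\rho$, while $K$-equivariance of $\wh$ gives $\wh(k^{-1}\cdot m)=\Ads{k^{-1}}{\lambda}$; thus $\rho\in K\cdot\lambda$, and since $\rho,\lambda\in\kt^*_+$ we conclude $\rho=\lambda$ and $k\in K_\lambda$. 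As $K_\lambda$ is connected and centralises $\lambda$, it preserves each component of the fixed-point set of $\overline{\exp(\mathbb{R}\lambda)}$, hence $C_\lambda$, $Z_\lambda$, $Z^{\rss}_\lambda$ and $Y^{\rss}_\lambda$; therefore $m=k\cdot(k^{-1}\cdot m)\in Y^{\rss}_\lambda$.

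The step I expect to be the main obstacle is the bound $\inf_q\norm{(\grad\kn_m)_q}\le\norm{\lambda}$ in paragraph~2: one must compose two limiting processes — the $-\Psi^\lambda$-flow carrying $m$ to $z\in Z_\lambda$, then elements of $\overline{K^{\mathbb{C}}_\lambda\cdot z}$ pushing $z$ into $\Psi^{-1}(\lambda)$ — into a single sequence in $G$, which is where the identification of $Z^{\rss}_\lambda$ with a ``$W^s_0$'' stratum and Lemma~\ref{lm:s0} enter; everything else is bookkeeping with Theorem~\ref{thm:kw} and the properties of $\phi_m$. As a byproduct this settles $(\ref{eq:no-int})$ from the previous section: if $KY^{\rss}_\lambda\cap KY^{\rss}_{\lambda'}\neq\emptyset$, then $K$-equivariance of $\wh$ and part (ii) place $\lambda$ and $\lambda'$ in a common coadjoint orbit, forcing $\lambda=\lambda'$, which completes the proof of Theorem~\ref{thm:cpx-kn}\,(2).
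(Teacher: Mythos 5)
Your proof is correct, but it takes a genuinely different route from the paper's on both halves of (ii). For the ``only if'' direction, the paper first treats $m\in Z^{\rss}_{\lambda}$ via $\lim_{t\to+\infty}\norm{\Psi(\varphi_t(m))}=\norm{\lambda}$ and Theorem~\ref{thm:kw}\,(4), and then extends to general $m\in Y^{\rss}_{\lambda}$ by a contradiction argument using the closure relation $\overline{W^s_{\lambda'}}\subseteq\bigcup_{\norm{\nu}\ge\norm{\lambda'}}W^s_{\nu}$ from~\cite[Lemma~10.7]{Kirwan84}; you instead produce an explicit minimizing sequence $g_n=\exp(it_n\lambda)h_n^{-1}$ with $\Psi(g_n^{-1}\cdot m)\to\lambda$, which gives the matching upper bound $\inf_q\norm{(\grad\kn_m)_q}\le\norm{\lambda}$ and pins down $\minp=\bfa_{\lambda}$ by uniqueness --- this bypasses the gradient flow of $-f$ and the stratum-closure relations entirely, at the cost of invoking the characterization $(\ref{eq:w0-gcl})$ of Lemma~\ref{lm:s0} inside $Z_{\lambda}$ (a use the paper itself already makes when proving $Z^{\rss}_{\lambda}$ is $K^{\bC}_{\lambda}$-invariant, so this is legitimate). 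For the ``if'' direction, the paper runs a substantially longer argument through Theorem~\ref{thm:orbit} applied to auxiliary tori $T_{\zeta}$ generated by $\zeta$ and $\lambda$, establishing $w_{\min,K_{\lambda}}(x)=\lambda/\norm{\lambda}$ via the convexity of $\Psi_{T_{\zeta}}(\overline{Y})$; your bootstrap through $W^s_{\rho}\subseteq KY^{\rss}_{\rho}$ (i.e.\ $(\ref{eq:w-i-ky})$, which is established in the proof of Theorem~\ref{thm:cpx-kn} before the deferral, so there is no circularity), the already-proved ``only if'' half, the $K$-equivariance of $\wh$, and the $K_{\lambda}$-invariance of $Y^{\rss}_{\lambda}$ is considerably more economical and still yields $(\ref{eq:no-int})$ as a corollary, exactly as needed to finish Theorem~\ref{thm:cpx-kn}\,(2). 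The trade-off is that the paper's ``if'' argument is self-contained within the Kempf--Ness/convexity machinery and gives the intermediate fact $x\in Z^{\rss}_{\lambda}$ directly, whereas yours leans on the partially completed stratification result; both are sound.
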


To prove this theorem, we need a simple expression of the slope of $\kn_m$ at infinity in terms of the generalized moment map.
\begin{lem}
  \label{lm:sl-at-inf}
  Let $\xi\in \kk$ with $\norm{\xi} = 1$.
  Assuming that $\pi(\exp(it \xi))$ is asymptotic to $\bfa_{\xi} \in \partial(G/K)$, then the following statements hold.
  \begin{enumerate}[label=\normalfont(\arabic*)]
  \item
    \begin{equation*}
      \mu_m(\bfa_{\xi}) =  \lim_{t\rightarrow +\infty} -\langle \Psi(\exp(-it \xi)\cdot m), \xi\rangle.
    \end{equation*}
    As a consequence, $\wm(m) = \eta$ if and only if
    \begin{equation*}
      \lim_{t\rightarrow +\infty} \langle \Psi(\exp(-it \eta)\cdot m), \eta\rangle = \sup_{\xi\in \kk, \norm{\xi} =1}\lim_{t\rightarrow +\infty} \langle \Psi(\exp(-it \xi)\cdot m), \xi\rangle> 0.
    \end{equation*}
  \item If $K$ is an abelian group, then
    \begin{equation*}
      \mu_m(\bfa_{\xi}) = \sup_{g\in G} -\langle \Psi(g\cdot m), \xi\rangle.
    \end{equation*}
  \end{enumerate}
\end{lem}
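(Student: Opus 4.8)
The plan is to obtain everything from the explicit derivative of the lifted Kempf--Ness function along a one-parameter subgroup, combined with convexity. For part~(1), since $\norm{\xi}=1$ the curve $c(t)=\pi(\exp(it\xi))$ is a unit-speed geodesic ray on $G/K$, asymptotic to $\bfa_{\xi}$ by hypothesis, and $\kn_m(c(t))=\phi_m(\exp(it\xi))$. From the computation in the proof of Lemma~\ref{lm:kn}(1),
\[
  \odv{}{t}\phi_m(\exp(it\xi)) = -\langle\Psi(\exp(-it\xi)\cdot m),\xi\rangle ,
\]
and this function of $t$ is monotonically non-decreasing (again by Lemma~\ref{lm:kn}(1)) and bounded, because $M$ is compact so $\norm{\Psi}$ is bounded on $M$. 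Hence its limit as $t\to+\infty$ exists, and by L'H\^{o}pital's rule it equals $\lim_{t\to+\infty}\phi_m(\exp(it\xi))/t=\mu_m(\bfa_{\xi})$; this is the first displayed formula.

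For the consequence about $\wm(m)$, I would assume (as is implicit in its definition) that $\{\mu_m<0\}\ne\emptyset$, so by Theorem~\ref{thm:kw}(3a) $\mu_m$ has a unique minimum point $\minp$ with $\mu_m(\minp)<0$. Under the bijection $\eta\mapsto\bfa_{\eta}$ between unit vectors of $\kk$ and $\partial(G/K)$, the definition of $\wm(m)$ says $\wm(m)=\eta$ iff $\mu_m(\bfa_{\eta})=\inf_{\partial(G/K)}\mu_m$. Rewriting both sides with part~(1) turns this into $\lim_{t\to+\infty}\langle\Psi(\exp(-it\eta)\cdot m),\eta\rangle=\sup_{\norm{\xi}=1}\lim_{t\to+\infty}\langle\Psi(\exp(-it\xi)\cdot m),\xi\rangle$, and since $\inf_{\partial(G/K)}\mu_m<0$ this common value is positive.

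For part~(2), with $K$ a torus, I would first reduce the supremum over $G$ to one over $\exp(i\kk)$: using the Cartan decomposition $G=K\exp(i\kk)$ with the two factors commuting, $g\cdot m=k\cdot(\exp(i\zeta)\cdot m)$, and since $K$ is a torus its coadjoint action on $\kk^*$ is trivial, so $\Psi(g\cdot m)=\Psi(\exp(i\zeta)\cdot m)$; hence $\sup_{g\in G}-\langle\Psi(g\cdot m),\xi\rangle=\sup_{\zeta\in\kk}-\langle\Psi(\exp(i\zeta)\cdot m),\xi\rangle$. Next, writing $\widetilde{\phi}_m(\zeta):=\phi_m(\exp(i\zeta))$, which is a convex function on the Euclidean space $\kk$ because $G/K\cong\exp(i\kk)$, I would use (\ref{eq:def-kn}) and commutativity of $G$ to identify $-\langle\Psi(\exp(i\zeta)\cdot m),\xi\rangle$ with the directional derivative $D_\xi\widetilde{\phi}_m(-\zeta)$. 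It then remains to show $\sup_{\zeta\in\kk}D_\xi\widetilde{\phi}_m(\zeta)=\mu_m(\bfa_{\xi})$: the inequality $\le$ holds because convexity gives $D_\xi\widetilde{\phi}_m(\zeta)\le\bigl(\widetilde{\phi}_m(\zeta+t\xi)-\widetilde{\phi}_m(\zeta)\bigr)/t$ for all $t>0$, whose limit as $t\to+\infty$ is $\mu_m(\bfa_{\xi})$ independently of $\zeta$; and $\ge$ holds by restricting to $\zeta=t\xi$ and invoking the monotone-derivative argument of part~(1).

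I expect the step needing the most care is the abelian reduction in part~(2): keeping the bookkeeping of signs and the order of factors straight when combining $\exp(i\zeta)$ with $\exp(is\xi)$ and when invoking triviality of the coadjoint action, together with the standard but worth-noting fact that the asymptotic slope $\lim_{t\to+\infty}\widetilde{\phi}_m(\zeta+t\xi)/t$ is independent of the base point $\zeta$, which is precisely the well-definedness of $\mu_m(\bfa_{\xi})$ guaranteed by Lemma~\ref{lm:kn-lc}.
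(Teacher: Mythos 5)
Your proof is correct and follows essentially the same route as the paper's: part (1) is the identical derivative computation combined with L'H\^opital (you justify the existence of the limit via monotonicity of the derivative more explicitly, which is a welcome touch), and the statement about $\wm(m)$ follows from the definition together with the uniqueness of the minimum point in Theorem~\ref{thm:kw}(3a). In part (2) you transport the convexity argument to the Euclidean space $\kk$ via the abelian identification $G/K\cong\exp(i\kk)$ and the triviality of the coadjoint action, whereas the paper works directly on $G/K$ and invokes Remark~\ref{rk:equi-geod} to see that every geodesic $\pi(g\exp(it\xi))$ is asymptotic to $\bfa_{\xi}$; these are the same two inequalities (the bound of the derivative at a point by the asymptotic slope, and the realization of that slope along the ray $\zeta=t\xi$) in slightly different clothing.
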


\begin{proof}
  \begin{inparaenum}
  \item By the direct calculation, we have
    \begin{equation*}
      \begin{aligned}
        \mu_m(\bfa_{\xi})
        &= \lim_{t \rightarrow +\infty} \kn_m(\pi(\exp(it \xi)))/t \\
        &= \lim_{t\rightarrow +\infty}\odv{}{t} \kn_m(\pi(\exp(it \xi)))\\
        &= \lim_{t\rightarrow +\infty}\odv{}{t} \phi_m(\exp(it \xi))\\
        &= \lim_{t\rightarrow +\infty} -\langle \Psi(\exp(-it \xi)\cdot m), \xi\rangle,
      \end{aligned}
    \end{equation*}
    where for the fourth equality we use (\ref{eq:grad-kn}).

    The result about $\wm(m)$ follows from the definition of minimal weight and Theorem~\ref{thm:kw} directly.

  \item Due to the result of (1), we know that
    \begin{equation*}
      \mu_m(\bfa_{\xi}) \le \sup_{g\in G} -\langle \Psi(g\cdot m), \xi\rangle.
    \end{equation*}
    
    By Remark~\ref{rk:equi-geod}, since $K$ is abelian, for any $g\in G$, $\pi(g\exp(it\xi))$ is asymptotic to $\bfa_{\xi}$.
    Using the convexity of $\kn_m$,
    \begin{multline*}
      \odv{}{t}\Big|_{t = 0} \kn_m(\pi(g\exp(it\xi))) \le \lim_{t\rightarrow +\infty} \odv{}{t} \kn_m(\pi(g\exp(it\xi)))\\
      = \lim_{t\rightarrow +\infty} \kn_m(\pi(g\exp(it\xi)))/t = \mu_m(\bfa_{\xi}).
    \end{multline*}
    And by (\ref{eq:grad-kn}),
    \begin{equation*}
      \odv{}{t}\Big|_{t = 0} \kn_m(\pi(g\exp(it\xi))) = \odv{}{t}\Big|_{t = 0} \phi_m(g\exp(it\xi)) = - \langle\Psi(g^{-1}\cdot m),\xi\rangle.
    \end{equation*}
    The above two inequalities give the desired inequality.
    \begin{equation*}
      \sup_{g\in G} - \langle\Psi(g\cdot m),\xi\rangle \le \mu_m(\bfa_{\xi}).
    \end{equation*}
  \end{inparaenum}
\end{proof}

Now, we can prove Theorem~\ref{thm:yss}.

\begin{proof}[Proof of {Theorem~\ref{thm:yss}}]
  We first show that $Y^{\rss}_{\lambda} \subseteq \cup_{\rho\neq 0} W^s_{\rho}$.
  Let $m\in Y^{\rss}_{\lambda}$.
  By the definition of $Y^{\rss}_{\lambda}$, we have
  \begin{equation*}
    x = \lim_{t\rightarrow +\infty} \exp(-it\lambda)\cdot m \in Z^{\rss}_{\lambda}\subseteq Z_{\lambda}.
  \end{equation*}
  Assuming that $\pi(\exp(it \lambda/\norm{\lambda}))$ is asymptotic to $\bfa_{\lambda}$, by Lemma~\ref{lm:sl-at-inf} and the definition of $Z_{\lambda}$, we have
\begin{multline}
    \label{eq:sl-al2}
    \mu_m(\bfa_{\lambda}) = \lim_{t\rightarrow +\infty} -\langle \Psi(\exp(-it \lambda/\norm{\lambda})\cdot m), \lambda/\norm{\lambda} \rangle \\
    = -\langle \Psi(x), \lambda/\norm{\lambda}\rangle = -\norm{\lambda} < 0.
  \end{multline}
  That is, $\{\mu_m < 0\} \ne \emptyset$.
  Hence, by Proposition~\ref{prop:ws0}, $m \in \cup_{\rho \ne 0} W^s_{\rho}$.

  \vspace{\baselineskip}
  ``$m\in Y^{\rss}_{\lambda}$ $\Longrightarrow$ $\wh(m) = \lambda$'' part.
  
  As $Z^{\rss}_{\lambda} \subseteq Y^{\rss}_{\lambda}$, we first show that if $m \in Z^{\rss}_{\lambda}$ then $\wh(m) = \lambda$.
  By (\ref{eq:min-zl}) and (\ref{eq:def-zss}),  if $m\in Z^{\rss}_{\lambda}$,
  \begin{equation*}
    \lim_{t\rightarrow +\infty} \norm{\Psi(\varphi_t(m))} = \norm{\lambda} > 0.
  \end{equation*}
  Therefore, by Theorem~\ref{thm:kw},
  \begin{equation*}
    -\mu_m(\minp) = \norm{\lambda},
  \end{equation*}
  where $\pi(\exp(it \wm(m)))$ is asymptotic to $\minp$.
  On the other hand, by (\ref{eq:sl-al2}), we also have
  \begin{equation*}
    \mu_m(\bfa_{\lambda})
= -\norm{\lambda}.
  \end{equation*}
Then, by the uniqueness of $\minp$ in Theorem~\ref{thm:kw}, $\minp$ and $\bfa_{\lambda}$ are the same.
  Therefore,
  \begin{equation*}
    \wm(m) = \lambda/\norm{\lambda}\text{ and }\wh(m) = \lambda.
  \end{equation*}

  Now, we turn to the general case.
  Let $m\in Y^{\rss}_{\lambda}$ and define $\bfa_{\lambda}$ and $\minp$ as before.
Due to (\ref{eq:sl-al2}), to show that $\wh(m) = \lambda$, in fact, we only need to show that $\wm(m) = \lambda/\norm{\lambda}$.
  We argue by contradiction, that is, we assume that $\wm(m) \neq \lambda/\norm{\lambda}$ or equivalently
  \begin{equation}
    \label{eq:mineql}
    \minp \neq \bfa_{\lambda}.
  \end{equation}
  
Assume that $m\in W^s_{\lambda'}$.
  Note that at the moment, we don't know whether $\lambda'$ and $\lambda$ are the same or not.
Anyway, using the definition of $Z^{\rss}_{\lambda}$, (\ref{eq:def-zss}), we have
  \begin{equation}
    \label{eq:wll}
    x\in Z^{\rss}_{\lambda} \cap \overline{W^s_{\lambda'}} \subseteq W^s_{\lambda} \cap \overline{W^s_{\lambda'}}.
  \end{equation}

Meanwhile, by Theorem~\ref{thm:kw}, (\ref{eq:lim-grad-norm}) and (\ref{eq:mineql}), the following inequalities hold.
  \begin{equation*}
    \norm{\lambda'} = \lim_{t \rightarrow +\infty} \norm{\Psi(\varphi_t(m))} = -\mu_m(\minp) > -\mu_m(\bfa_{\lambda}) = \norm{\lambda}.
  \end{equation*}
  Then, since the stratification $\{W^s_{\lambda}\}$ is obtained from the negative gradient flow,~\cite[Lemma~10.7]{Kirwan84} gives
  \begin{equation*}
    \overline{W^s_{\lambda'}} \subseteq \bigcup_{\|\nu\| > \|\lambda'\|} W^s_{\nu}.
  \end{equation*}
  The above two equations imply that
  \begin{equation*}
    \overline{W^s_{\lambda'}} \cap W^s_{\lambda} = \emptyset. 
  \end{equation*}
  But this contradicts with (\ref{eq:wll}).
  Therefore, $\wm(m) = \lambda/\norm{\lambda}$ must hold.

  \vspace{\baselineskip}
  ``$m\in Y^{\rss}_{\lambda}$ $\Longleftarrow$ $\wh(m) = \lambda$'' part.
  
  Let $Z$ be a connected component of the critical points set of $\Psi^{\lambda}$ satisfying $x\in Z$.
  Since $\wh(m) = \lambda$, by Lemma~\ref{lm:sl-at-inf} and Theorem~\ref{thm:kw},
  \begin{equation*}
    \Psi^{\lambda/\norm{\lambda}}(Z) = \Psi^{\lambda/\norm{\lambda}}(x) = \lim_{t\rightarrow +\infty} \langle \Psi(\exp(-it \lambda)\cdot m), \lambda/\norm{\lambda}\rangle = \norm{\lambda}.
  \end{equation*}
  
  As we have used in the proof of Theorem~\ref{thm:cpx-kn} (1), two generalized moment maps $\Psi$ and $\Psi_{K_{\lambda}}$ coincide on $Z$ and so do the negative gradient flows $\varphi_t$ and $\varphi_{K_{\lambda},t}$ generated by these two generalized moment maps.
  
  For $p \in Z$, by Lemma~\ref{lm:sl-at-inf}, we have
  \begin{align*}
    \inf_{\partial(G/K)} \mu_p
    &< \lim_{t\rightarrow +\infty} -\langle \Psi(\exp(-it \lambda/\norm{\lambda})\cdot p), \lambda/\norm{\lambda}\rangle \\
    &=  -\langle \Psi(p), \lambda/\norm{\lambda}\rangle =  -\langle \Psi(x), \lambda/\norm{\lambda}\rangle \\
&= - \norm{\lambda} < 0.
\end{align*}
Therefore, it is meaningful to define $\wm(p)$ (or $\wh(p)$). 

  At the same time, by Theorem~\ref{thm:kw} and (\ref{eq:psi-l-psi}), the above inequalities imply that
  \begin{equation}
    \label{eq:phi-phik}
    \lim_{t\rightarrow +\infty} \norm{\Psi_{K_{\lambda}}(\varphi_{K_{\lambda},t}(p))} = \lim_{t\rightarrow +\infty} \norm{\Psi(\varphi_t(p))} = - \inf_{\partial{(G/K)}} \mu_p> 0.
  \end{equation}
  Therefore, the minimal weight of $p$ with respect to the $K_{\lambda}$-action can also be defined.
  We denote it by $w_{\min,K_{\lambda}}(p)$.
  
  Nevertheless, by Theorem~\ref{thm:kw} again, together with Lemma~\ref{lm:sl-at-inf} and (\ref{eq:lim-grad-norm}), we further have
  \begin{multline}
    \label{eq:phi-phik2}
\lim_{t\rightarrow +\infty} \norm{\Psi_{K_{\lambda}}(\varphi_{K_{\lambda},t}(p))} 
    = \lim_{t\rightarrow +\infty} \langle \Psi_{K_{\lambda}}(\exp(-it w_{\min,K_{\lambda}}(p) )\cdot p), w_{\min,K_{\lambda}}(p)\rangle\\
    = \lim_{t\rightarrow +\infty} \langle \Psi(\exp(-it w_{\min,K_{\lambda}}(p) )\cdot p), w_{\min,K_{\lambda}}(p)\rangle,
  \end{multline}
  where the last equality is due to (\ref{eq:psi-l-psi}).
  
  Let $\minp$ be the minimum point of $\mu_p$.
  By the uniqueness of $\minp$, Lemma~\ref{lm:sl-at-inf}, (\ref{eq:phi-phik}) and (\ref{eq:phi-phik2}) imply that $\pi(\exp(-it w_{\min,K_{\lambda}}(p) )\cdot p)$ is asymptotic to $\minp$.
  In other words,
  \begin{equation}
    \label{eq:eq-wt}
    \wm(p) = w_{\min,K_{\lambda}}(p) \in \kk_{\lambda}.
  \end{equation}
  Therefore, by (\ref{eq:eq-wt}), to show $\wm(x) = \lambda$, we only need to show $w_{\min,K_{\lambda}}(x) = \lambda/\norm{\lambda}$.
  Let $\zeta\in \kk_{\lambda}$ with $\norm{\zeta} = 1$.
  We are going to prove that
  \begin{equation}
    \label{eq:z-l}
    \lim_{t\rightarrow +\infty} \langle \Psi(\exp(-it \zeta)\cdot x), \zeta\rangle \le \langle \Psi(x), \lambda/\norm{\lambda}\rangle = \norm{\lambda}.
  \end{equation}

  Let $T_{\zeta}\subseteq K$ be the torus generated by $\zeta$ and $\lambda$ and $T^{\mathbb{C}}_{\zeta} \subseteq G$ be its complexification.
  Then the generalized moment map associated with $T_{\zeta}$-action, $\Psi_{T_{\zeta}}$, is the orthogonal projection of $\Psi$ onto $\kt_{\zeta}$.
  Let $Y = T^{\mathbb{C}}_{\zeta} \cdot m$ be the $T^{\mathbb{C}}_{\zeta}$-orbit of $m$.
  By Theorem~\ref{thm:orbit}, $\Psi_{T_{\zeta}}(\overline{Y})$ is a closed convex polytope in $\kt_{\zeta}$.
  Since $\wh(m) = \lambda$, for any $\xi\in \kt_{\zeta}$ and $\norm{\xi} = 1$, by Lemma~\ref{lm:sl-at-inf}, we have
  \begin{multline*}
    \norm{\lambda} = \lim_{t\rightarrow +\infty} \langle \Psi_{T_{\zeta}}(\exp(-it \lambda/\norm{\lambda})\cdot m), \lambda/\norm{\lambda}\rangle \\
    \ge \lim_{t\rightarrow +\infty} \langle \Psi_{T_{\zeta}}(\exp(-it \xi)\cdot m), \xi\rangle.
  \end{multline*}
  Due to Lemma~\ref{lm:sl-at-inf} (2), the above inequality is equivalent to
  \begin{equation}
    \label{eq:conv-proj}
    \norm{\lambda} = \inf_{p\in \overline{Y}} \langle \Psi_{T_{\zeta}}(p), \lambda/\norm{\lambda} \rangle \ge \inf_{p\in \overline{Y}} \langle \Psi_{T_{\zeta}}(p), \xi\rangle.
  \end{equation}

  Because $\Psi_{T_{\zeta}}(\overline{Y})$ is a closed convex polytope, (\ref{eq:conv-proj}) implies that $\lambda\in \Psi_{T_{\zeta}}(\overline{Y})$ and $\lambda$ is the unique point in $\Psi_{T_{\zeta}}(\overline{Y})$ nearest to the origin.
  Choose $z\in \overline{Y}$ such that $\Psi_{T_{\zeta}}(z) = \lambda$.
  Then
  \begin{equation*}
    \langle \Psi_{T_{\zeta}}(z), \lambda/\norm{\lambda}\rangle = \norm{\lambda} = \langle \Psi_{T_{\zeta}}(x), \lambda/\norm{\lambda} \rangle.
  \end{equation*}
  By Theorem~\ref{thm:orbit} (4),
  \begin{equation}
    \label{eq:z-x-cl}
    z\in \overline{T_{\zeta}^{\mathbb{C}} \cdot x}.
  \end{equation}

  On the other hand, let $V = T^{\mathbb{C}}_{\zeta}\cdot x$ be the orbit of $x$.
  $\Psi_{T_{\zeta}}(\overline{V})$ is also a closed convex subset contained in $\Psi_{T_{\zeta}}(\overline{Y})$.
  By (\ref{eq:z-x-cl}), we have $\lambda \in \Psi_{T_{\zeta}}(\overline{V})$.
  Therefore, $\lambda$ is also the unique point in $\Psi_{T_{\zeta}}(\overline{V})$ nearest to the origin.
  As a result,
  \begin{equation*}
    \norm{\lambda} \ge \langle \Psi_{T_{\zeta}}(z), \xi \rangle \ge \inf_{p\in \overline{V}} \langle \Psi_{T_{\zeta}}(p), \xi\rangle.
  \end{equation*}
  Using Lemma~\ref{lm:sl-at-inf} and taking $\xi$ to be $\zeta$, especially, the above inequality implies that
  \begin{equation*}
    \norm{\lambda} \ge \lim_{t\rightarrow +\infty} \langle \Psi_{T_{\zeta}}(\exp(-it \zeta)\cdot x), \zeta\rangle = \lim_{t\rightarrow +\infty} \langle \Psi(\exp(-it \zeta)\cdot x), \zeta\rangle,
  \end{equation*}
  from which (\ref{eq:z-l}) follows.

  Finally, since (\ref{eq:z-l}) holds for any $\zeta\in \kk_{\lambda}$ with $\norm{\zeta} = 1$, by Lemma~\ref{lm:sl-at-inf}, we have
  \begin{equation*}
    \wm(x) = w_{\min,K_{\lambda}}(x) = \lambda/\norm{\lambda},
  \end{equation*}
  and $\wh(x) = \lambda$.

  Then, by Theorem~\ref{thm:kw} and (\ref{eq:lim-grad-norm}),
  \begin{equation*}
    \| \Psi(\lim_{t\rightarrow +\infty} \varphi_t(x)) \| = \lim_{t\rightarrow +\infty} \norm{\Psi(\varphi_t(x))} = \langle \Psi(x), \lambda/\norm{\lambda}\rangle = \norm{\lambda}.
  \end{equation*}
  Due to $\lim_{t\rightarrow +\infty} \varphi_t(x) = \lim_{t\rightarrow +\infty} \varphi_{K_{\lambda},t}(x) \in Z$,
  \begin{equation*}
    \langle \Psi(\lim_{t\rightarrow +\infty} \varphi_t(x)), \lambda/\norm{\lambda} \rangle = \norm{\lambda}.
  \end{equation*}
  Combining the above two equalities, we get
  \begin{equation*}
    \Psi(\lim_{t\rightarrow +\infty} \varphi_t(x)) = \lambda,
  \end{equation*}
  that is, $x\in Z^{\rss}_{\lambda}$.
  And $m\in Y^{\rss}_{\lambda}$ consequently.
\end{proof}

\begin{rem}
  We would like to add some comments about Theorem~\ref{thm:yss}.

  \begin{inparaenum}

  \item Combining Theorem~\ref{thm:yss} and (\ref{eq:ws-ky}), we obtain another characterization of $W^s_{\lambda}$, $\lambda\ne 0$: $m \in W^s_{\lambda}$ if and only if $\wh(m)$ is conjugate to $\lambda$ in $\kk$, which generalizes a result by~\cite{Kirwan84} and~\cite{Ness_1984aa} in the projective case.

  \item As we have seen, the proof of Theorem~\ref{thm:yss} is based on the existence of the Kempf--Ness function ultimately, whose existence only depends on the existence of the generalized moment map.
    Therefore, whether the manifold $M$ is K\"ahler or not makes no difference for this theorem.
    If $M$ is a projective manifold, Theorem~\ref{thm:yss} is a well-established result in the literature, for example~\cite[Lemma~12.24]{Kirwan84}.
If $M$ is only a K\"ahler manifold, Theorem~\ref{thm:yss} still seems to be well known by experts.
    However, when preparing this notes, we can only find a proof for this case in~\cite{Woodward_2010aa}.\footnote{When this note is nearly finished, we notice a recent paper by Paradan and Ressayre, which also contains a proof of Theorem~\ref{thm:yss} for the K\"ahler case,~\cite[Theorem~4.11]{Paradan_2025hk}.
      Nevertheless, our method is different from theirs.}
    More precisely, the author proves the sufficient part of Theorem~\ref{thm:yss} in~\cite[Theorem~7.2.2]{Woodward_2010aa} and the necessary part is contained the proof of~\cite[Theorem~7.1.7]{Woodward_2010aa}.
    Unfortunately, we find the proof in~\cite{Woodward_2010aa} is scratchy and hard to follow.
    \footnote{The main difficulty we encounter when checking the proof of~\cite{Woodward_2010aa}, is that the issue concerning the convergence of paths is not carefully handled in many places, especially in~\cite[Theorem~5.4.2 and Theorem~7.2.2]{Woodward_2010aa}.}
In view of this situation, we provide a detailed proof of Theorem~\ref{thm:cpx-kn} using the Kempf--Ness function in \S~\ref{sec:kn} and \S~\ref{sec:kn-fct} and hope such a proof may be useful for even the K\"ahler case.
  \end{inparaenum}
\end{rem}

\section{Reduction spaces and complex quotients}

The notations in this section are the same with Section~\ref{sec:kn}.
And we assume that $M$ is a compact manifold.

To simplify the exposition, we only concentrate on the case when $0$ is a regular value of $\Psi$.
Then, $M^{\rs}(\Psi)$, the set of stable points with respect to $\Psi$, is defined to be the unique open stratum of the Kirwan--Ness stratification of $\norm{\Psi}^2$, i.e.\ $W^s_0$.
By Lemma~\ref{lm:s0}, $M^{\rs}(\Psi)$ has two equivalent descriptions.
\begin{align*}
  M^{\rs}(\Psi) &= \{m\in M| \lim_{t\rightarrow +\infty} \varphi_t(m) \in \Psi^{-1}(0)\} \\
  &= \{m\in M | \overline{G\cdot m} \cap \Psi^{-1}(0) \ne \emptyset\}.
\end{align*}

\begin{prop}
  \label{prop:ms}
  $M^{\rs}(\Psi)$ is $G$-invariant, open, and its complement is a union of {subanalytic} subsets.
Furthermore, $M^{\rs}(\Psi) = G \Psi^{-1}(0)$,
and the map
  \begin{align*}
    \varphi_{\infty}: M^{\rs}(\Psi) &\rightarrow \Psi^{-1}(0)\\
    m &\mapsto \lim_{t\rightarrow +\infty} \varphi_t(m)
  \end{align*}
  is a continuous retraction from $M^{s}(\Psi)$ onto $\Psi^{-1}(0)$.
\end{prop}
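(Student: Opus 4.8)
The plan is to handle the four assertions in order, most of them by assembling results already established. Since $M^{\rs}(\Psi)=W^s_0$ by definition, the $G$-invariance and openness are immediate from Lemma~\ref{lm:s0}. For the complement, one writes $M\setminus M^{\rs}(\Psi)=\bigcup_{\lambda\neq 0}W^s_\lambda$, a finite union by Lemma~\ref{lm:finite-cv}, so it suffices that each $W^s_\lambda$ be subanalytic; using $W^s_\lambda=GY^{\rss}_\lambda$ from Theorem~\ref{thm:cpx-kn} and~(\ref{eq:ws-ky}), this piece is built out of the complex (hence real-analytic) submanifolds $Z_\lambda\supseteq Z^{\rss}_\lambda$ and $Y_\lambda$, where $Z^{\rss}_\lambda$ is itself a stratum of type $W^s_0$ for the moment map $\Psi_{K_\lambda}-\lambda$ on the lower-dimensional $Z_\lambda$; an induction on $\dim M$, together with the real-analyticity of $f=\norm{\Psi}^2/2$ in the Marle--Guillemin--Sternberg coordinates of Theorem~\ref{thm:nf}, then finishes this point.

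Next I would prove $M^{\rs}(\Psi)=G\Psi^{-1}(0)$. The inclusion $\supseteq$ is trivial: if $m=g\cdot m_0$ with $m_0\in\Psi^{-1}(0)$, then $m_0\in\overline{G\cdot m}\cap\Psi^{-1}(0)$, so $m\in W^s_0$ by Lemma~\ref{lm:s0}. For $\subseteq$, take $m\in M^{\rs}(\Psi)=W^s_0$ and put $m'=\varphi_\infty(m)=\lim_{t\to+\infty}\varphi_t(m)$; this limit exists by Lemma~\ref{lm:conv-mp} and lies in $C_0=\mathrm{crit}(f)\cap\Psi^{-1}(0)=\Psi^{-1}(0)$, the last equality because~(\ref{eq:grad-m2}) gives $\Psi^{-1}(0)\subseteq\mathrm{crit}(f)$. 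By~(\ref{eq:t-o}) we have $\varphi_t(m)\in G\cdot m$, hence $m'\in\overline{G\cdot m}$; the crux is to upgrade this to $m'\in G\cdot m$. To do so I would lift the flow: let $g_t\in G$ be the gradient trajectory of $-\phi_m$ with $g_0=e$, which is defined for all $t\ge 0$ since $\norm{(\grad\phi_m)_g}=\norm{\Psi(g^{-1}\cdot m)}\le\max_M\norm{\Psi}<\infty$ and $G$ is complete, and which satisfies $\varphi_t(m)=g_t^{-1}\cdot m$ by~(\ref{eq:two-flow}) and $\norm{(\grad\phi_m)_{g_t}}=\norm{\Psi(\varphi_t(m))}$ by~(\ref{eq:grad-kn}). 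Since $K$ acts freely on the compact set $\Psi^{-1}(0)$, the map $\xi\mapsto\xi_{M,x}$ is injective there, hence uniformly bounded below, $\norm{\xi_{M,x}}\ge c\norm{\xi}$, on a neighbourhood $N$ of $\Psi^{-1}(0)$; combined with $\norm{(\grad f)_y}=\norm{(\Psi(y))_{M,y}}$ (from~(\ref{eq:grad-m2})) this yields $\norm{(\grad\phi_m)_{g_t}}\le c^{-1}\norm{(\grad f)_{\varphi_t(m)}}$ for all $t$ with $\varphi_t(m)\in N$, i.e.\ for $t$ large. The {\L}ojasiewicz gradient inequality for $f$ near $m'$, exactly as used in the proof of Lemma~\ref{lm:conv-mp} (cf.~\cite{Georgoulas_2021mo}), makes $\int^{+\infty}\norm{(\grad f)_{\varphi_t(m)}}\,\diff t$ finite; therefore $\int_0^{+\infty}\norm{(\grad\phi_m)_{g_t}}\,\diff t<\infty$, so $t\mapsto g_t$ has finite length in the complete manifold $G$ and converges to some $g_\infty\in G$. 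Then $m'=\lim_t g_t^{-1}\cdot m=g_\infty^{-1}\cdot m\in G\cdot m$, so $m\in G\cdot m'\subseteq G\Psi^{-1}(0)$.

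Finally I would verify that $\varphi_\infty\colon M^{\rs}(\Psi)\to\Psi^{-1}(0)$ is a continuous retraction. That $\varphi_\infty$ restricts to the identity on $\Psi^{-1}(0)$ is clear, since $\grad f=J(\Psi(\cdot))_M$ vanishes there by~(\ref{eq:grad-m2}), so the flow is stationary. For continuity, observe that on a small neighbourhood $N'\subseteq N$ of $\Psi^{-1}(0)$ the function $f$ is Morse--Bott with critical set exactly $\Psi^{-1}(0)$: indeed $\mathrm{crit}(f)\cap N=\Psi^{-1}(0)$ because $\xi\mapsto\xi_{M,x}$ is injective on $N$, and the normal Hessian of $f=\norm{\Psi}^2/2$ along $\Psi^{-1}(0)$ is positive definite because $0$ is a regular value; thus $\Psi^{-1}(0)$ is a nondegenerate local minimum, $N'\subseteq W^s_0$, and the induced retraction $\varphi_\infty|_{N'}$ is continuous by the classical Morse--Bott/ODE theory. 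For an arbitrary $m_0\in W^s_0$, choose $T$ with $\varphi_T(m_0)\in N'$; by continuity of $\varphi_T$ there is an open $U\ni m_0$ with $\varphi_T(U)\subseteq N'$, and since $\varphi_\infty=\varphi_\infty\circ\varphi_T$ on $W^s_0$ we get $\varphi_\infty|_U=(\varphi_\infty|_{N'})\circ(\varphi_T|_U)$, a composition of continuous maps; hence $\varphi_\infty$ is continuous at $m_0$, and so on all of $M^{\rs}(\Psi)$. The main obstacle is precisely the inclusion $M^{\rs}(\Psi)\subseteq G\Psi^{-1}(0)$: the gradient flow only delivers $\varphi_\infty(m)\in\overline{G\cdot m}\cap\Psi^{-1}(0)$, and promoting orbit-closure membership to genuine orbit membership is where the freeness of the $K$-action on $\Psi^{-1}(0)$ enters, through the finite-length estimate for the lifted trajectory $g_t$; continuity of $\varphi_\infty$ is the other delicate point, but near $\Psi^{-1}(0)$ the flow is genuinely Morse--Bott, so the standard argument applies and then propagates along the flow.
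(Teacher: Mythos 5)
Your proof is correct, and for the first, second and fourth assertions it follows essentially the same path as the paper: openness and $G$-invariance from Lemma~\ref{lm:s0}, the complement written as the finite union $\bigcup_{\lambda\neq 0}W^s_\lambda$ of complex submanifolds, and continuity of $\varphi_\infty$ from the Morse--Bott behaviour of $f$ near the regular level $\Psi^{-1}(0)$, propagated along the flow via $\varphi_\infty=\varphi_\infty\circ\varphi_T$.

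Where you genuinely diverge is the inclusion $M^{\rs}(\Psi)\subseteq G\Psi^{-1}(0)$. The paper's argument is soft: since $0$ is a regular value, $(i\kk)\cdot x$ complements $\T_x\Psi^{-1}(0)$ in $\T_x M$, so $G\Psi^{-1}(0)$ contains an open neighbourhood of $\Psi^{-1}(0)$; because $\varphi_t(m)\in G\cdot m$ for every finite $t$ by (\ref{eq:t-o}) and $\varphi_t(m)$ eventually enters that neighbourhood, $m\in G\cdot\varphi_T(m)\subseteq G\Psi^{-1}(0)$ --- no claim about the limit point itself is required. You instead prove the stronger fact that $\varphi_\infty(m)\in G\cdot m$, by lifting the flow to $G$ through the Kempf--Ness function and showing, via the {\L}ojasiewicz finite-length estimate combined with the uniform lower bound $\norm{\xi_{M,x}}\ge c\norm{\xi}$ near $\Psi^{-1}(0)$, that the lifted trajectory $g_t$ has finite length and hence converges in the complete left-invariant metric on $G$. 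This is heavier but buys more: it is exactly assertion (2) of the unnumbered Proposition following Proposition~\ref{prop:ms}, which the paper establishes separately by a compactness argument in the local product chart $\kk_\varepsilon\times\Psi^{-1}(0)$; your finite-length argument could serve as an alternative proof of that statement as well. Two minor remarks: the section's standing hypothesis is only that $0$ is a regular value, i.e.\ $\kk_x=0$ on $\Psi^{-1}(0)$, which is all your lower bound $\norm{\xi_{M,x}}\ge c\norm{\xi}$ actually needs, so you need not assume the $K$-action is free there; and your inductive treatment of subanalyticity of the strata is more elaborate than the paper's one-line ``complex submanifold, hence subanalytic,'' though both sit at the same level of rigor for the purposes of this proposition.
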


\begin{proof}
  By Theorem~\ref{thm:cpx-kn}, we know that $M^{s}(\Psi)$ is $G$-invariant and open.
  And its complement is the union of $W^{s}_{\lambda}$, $\lambda\neq 0$, each of which is a complex submanifold, thus a subanalytic subset.

  To show that $M^{s}(\Psi) = G \Psi^{-1}(0)$, we note that the $G$-invariance of $M^{s}(\Psi)$ implies that $G \Psi^{-1}(0) \subseteq M^{s}(\Psi)$.
  On the other hand, let $m\in \Psi^{-1}(0)$.
  Since $0$ is a regular value of $\Psi$, by Proposition~\ref{isotropy}, $\kk_m = 0$.
  Moreover, for any $\xi\in \kk$ and $v \in \T_m \Psi^{-1}(0)$,
  \begin{equation*}
    g(J\xi_{M,m}, v) = \omega(\xi_{M,m}, v) = \langle \diff \Psi_m(v), \xi \rangle = 0.
  \end{equation*}
  Therefore, $J(\kk\cdot m) = (i\kk) \cdot m$ is the orthogonal complement of $\T_m \Psi^{-1}(0)$ in $\T_m M$.
  As a result, $G\Psi^{-1}(0)$ contains a neighborhood of $\Psi^{-1}(0)$ in $M$.
  Then, (\ref{eq:t-o}) implies that $M^{s}(\Psi) \subseteq G \Psi^{-1}(0)$.

  At last, since $0$ is a regular value of $\Psi$, in a suitable local coordinate system, one can check that $\norm{\Psi}^2$ is a Morse--Bott function near $\Psi^{-1}(0)$.
  Therefore, the map $\varphi_{\infty}$ gives a continuous retraction from $M^{s}(\Psi)$ onto $\Psi^{-1}(0)$.
\end{proof}

To show that $M^{s}(\Psi)$ defines a categorical quotient, we need the following result about the orbits in $\Psi^{-1}(0)$.

\begin{prop}
\begin{enumerate}[label=\normalfont(\arabic*)]
\item If $m \in \Psi^{-1}(0)$, then $G \cdot m \cap \Psi^{-1}(0) = K \cdot m$.
\item If $z\in M^s(\Psi)$ and $m = \lim_{t\rightarrow +\infty} \varphi_t(z) \in \Psi^{-1}(0)$, then $m\in G \cdot z$.
\item Suppose that $x$ and $y$ lie in $\Psi^{-1}(0)$ and $K \cdot x \ne K \cdot y$.
  Then there exist disjoint $G$-invariant open neighborhoods of $x$ and $y$ in $M^s(\Psi)$.
\end{enumerate}
\end{prop}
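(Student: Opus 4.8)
The plan is to prove the three parts in turn, each feeding into the next.

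\emph{Part (1).} The inclusion $K\cdot m\subseteq G\cdot m\cap\Psi^{-1}(0)$ is immediate from the $K$-equivariance of $\Psi$. For the reverse inclusion I would use the Cartan decomposition $G=K\exp(i\kk)$ together with the convexity of the Kempf--Ness function. Suppose $g\cdot m\in\Psi^{-1}(0)$ and write $g=k\exp(i\xi)$ with $k\in K$, $\xi\in\kk$; equivariance gives $\exp(i\xi)\cdot m=k^{-1}\cdot(g\cdot m)\in\Psi^{-1}(0)$ as well. The function $h(t)=\phi_m(\exp(-it\xi))$ on $\bR$ is, by the right $K$-invariance of $\phi_m$, the restriction of $\kn_m$ to the geodesic $\pi(\exp(-it\xi))$ of $G/K$; by Lemma~\ref{lm:kn} it is convex, and strictly convex unless $\xi\in\kk_m$. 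Differentiating the defining relation \ref{eq:def-kn} gives $h'(t)=\langle\Psi(\exp(it\xi)\cdot m),\xi\rangle$, so $h'(0)=\langle\Psi(m),\xi\rangle=0$ and $h'(1)=\langle\Psi(\exp(i\xi)\cdot m),\xi\rangle=0$. A strictly convex function cannot have two distinct critical points, hence $\xi\in\kk_m$; since $i\xi$ then also annihilates $m$, we get $\exp(i\xi)\cdot m=m$ and $g\cdot m=k\cdot m\in K\cdot m$.

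\emph{Part (2).} The idea is to recognize $M^s(\Psi)$ as an honest tube around $\Psi^{-1}(0)$. Consider the $G$-equivariant map $\Theta\colon G\times_K\Psi^{-1}(0)\to M$, $[g,w]\mapsto g\cdot w$. Its image is $G\cdot\Psi^{-1}(0)=M^s(\Psi)$ by Proposition~\ref{prop:ms}; it is a local diffeomorphism by the transversality $\T_wM=(i\kk)\cdot w\oplus\T_w\Psi^{-1}(0)$ for $w\in\Psi^{-1}(0)$ established in the proof of Proposition~\ref{prop:ms}; and it is injective by Part (1) together with the freeness of the $K$-action on $\Psi^{-1}(0)$ (which, via the Part (1) argument, also gives $G_w=\{e\}$ for $w\in\Psi^{-1}(0)$). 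Hence $\Theta$ is a $G$-equivariant diffeomorphism. Writing $z=[g_0,w_0]$, its $G$-orbit is $\{[g,w_0]:g\in G\}$, so by \ref{eq:t-o} the whole negative gradient trajectory takes the form $\varphi_t(z)=[g(t),w_0]$; since the projection $G\times_K\Psi^{-1}(0)\to\Psi^{-1}(0)/K$ is continuous, the limit $m$ has $\Psi^{-1}(0)/K$-coordinate $K\cdot w_0$, i.e.\ $m\in K\cdot w_0\subseteq G\cdot z$. A variant avoiding the tube: the {\L}ojasiewicz inequality behind Lemma~\ref{lm:conv-mp} makes $t\mapsto\varphi_t(z)$ of finite length; near $\Psi^{-1}(0)$ the freeness of $K$ bounds $\|\Psi(\varphi_t(z))\|$ below by a constant times the orbit speed, so via \ref{eq:grad-kn} the associated trajectory $g_t$ of $-\phi_z$ in $G$ also has finite length, hence converges to some $g_\infty$, and $m=\lim g_t^{-1}\cdot z=g_\infty^{-1}\cdot z\in G\cdot z$.

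\emph{Part (3).} I would derive this from Hausdorffness of the quotient. Let $\pi_K\colon\Psi^{-1}(0)\to\Psi^{-1}(0)/K$ be the quotient map and set $q=\pi_K\circ\varphi_\infty\colon M^s(\Psi)\to\Psi^{-1}(0)/K$; it is continuous since $\varphi_\infty$ is (Proposition~\ref{prop:ms}), and it is $G$-invariant because, for $z\in M^s(\Psi)$ and $g\in G$, both $\varphi_\infty(z)$ and $\varphi_\infty(gz)$ lie in $G\cdot z\cap\Psi^{-1}(0)$ by Part (2), and this set equals $K\cdot\varphi_\infty(z)$ by Part (1). Since $K$ is compact, $\Psi^{-1}(0)/K$ is Hausdorff, so the distinct points $q(x)=K\cdot x$ and $q(y)=K\cdot y$ have disjoint open neighborhoods $\bar U$ and $\bar V$; then $q^{-1}(\bar U)\ni x$ and $q^{-1}(\bar V)\ni y$ are disjoint, $G$-invariant, open subsets of $M^s(\Psi)$, as required.

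I expect Part (2) to be the crux: the substance is to upgrade $m\in\overline{G\cdot z}$ to $m\in G\cdot z$, i.e.\ to show the flow converges within a single orbit rather than merely onto the orbit's boundary. This is exactly where the standing hypotheses — $0$ a regular value and $K$ acting freely on $\Psi^{-1}(0)$ — are used; they turn $M^s(\Psi)$ into the genuine tube $G\times_K\Psi^{-1}(0)$ and exclude the non-closed-orbit phenomena that occur at non-regular values. Parts (1) and (3), by contrast, are routine once the Kempf--Ness machinery of Section~\ref{sec:kn-fct} and the retraction of Proposition~\ref{prop:ms} are in hand.
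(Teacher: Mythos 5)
Your proposal is correct, and parts (1) and (3) coincide in substance with the paper's proof. For (1), your Kempf--Ness computation $h'(t)=\langle\Psi(\exp(it\xi)\cdot m),\xi\rangle$, $h''(t)=\norm{\xi_{M}}^2\ge 0$ is exactly the paper's monotonicity argument, which differentiates $\Psi^{\xi}$ along $t\mapsto\exp(it\xi)\cdot m$ directly rather than through $\phi_m$; note that what you actually need is not global strict convexity but only that $h'$ is nondecreasing and vanishes at $t=0$ and $t=1$, hence $h'\equiv 0$ on $[0,1]$, hence $h''(0)=0$ and Lemma~\ref{lm:kn}(1) gives $\xi\in\kk_m$. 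For (3), your map $q=\pi_K\circ\varphi_\infty$ into the Hausdorff space $\Psi^{-1}(0)/K$ is a repackaging of the paper's pullback of disjoint $K$-invariant neighborhoods under $\varphi_\infty$, with the same use of (1) and (2) to get $G$-invariance. The genuine divergence is in (2). The paper works with a \emph{local} tube: it trivializes a neighborhood of $\Psi^{-1}(0)$ as $\exp(i\kk_{\varepsilon})\Psi^{-1}(0)\cong\kk_{\varepsilon}\times\Psi^{-1}(0)$, writes $\varphi_t(z)=\exp(i\xi_t)k_t y$ for large $t$ using part (1), and extracts a convergent subsequence $(\xi_{t_i},k_{t_i})\to(\xi_0,k_0)$ by compactness to conclude $m=\exp(i\xi_0)k_0y\in G\cdot y=G\cdot z$. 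You instead build the \emph{global} tube $\Theta\colon G\times_K\Psi^{-1}(0)\to M^{s}(\Psi)$ and prove it is a $G$-equivariant diffeomorphism (injectivity from (1) together with $G_w=\{e\}$, local diffeomorphism from the transversality established in Proposition~\ref{prop:ms}), then read off the conclusion from continuity of the bundle projection to $\Psi^{-1}(0)/K$. Your route costs a little more up front but yields more: the identification $M^{s}(\Psi)\cong G\times_K\Psi^{-1}(0)$ immediately delivers the Hausdorffness and the homeomorphism $\Psi^{-1}(0)/K\cong M^{s}(\Psi)/G$ of the subsequent theorem, whereas the paper's local argument is shorter and avoids verifying global injectivity of $\Theta$. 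Your second variant of (2) (finite length of $\varphi_t(z)$ via {\L}ojasiewicz, transferred to the trajectory $g_t$ in $G$ through the lower bound $\norm{\xi_{M,x}}\ge c\norm{\xi}$ near $\Psi^{-1}(0)$ and (\ref{eq:grad-kn})) is also sound, though it leans on the finite-length form of the {\L}ojasiewicz inequality, which the paper invokes only for convergence in Lemma~\ref{lm:conv-mp}.
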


\begin{proof}
  \begin{inparaenum}
  \item As usual, the proof for the K\"ahler case~\cite[Lemma~7.2]{Kirwan84} also works here.
    In fact, one only needs to show that $\exp{i \kk} \cdot m \cap \Psi^{-1}(0) = K \cdot m$.
    Let $\xi\in \kk$ and $\Psi(\exp(i\xi)\cdot m) =0$.
    By (\ref{eq:def-mp}),
    \begin{equation*}
      \odv{}{t} \langle \Psi(\exp(it\xi)\cdot m), \xi\rangle = \omega(\xi_{M,\exp(it\xi)\cdot m}, J \xi_{M,\exp(it\xi)\cdot m)}) \ge 0.
    \end{equation*}
    Then $\Psi(m) = \Psi(\exp(i\xi)\cdot m) =0$ forces $\xi_{M,\exp(it\xi)\cdot m} = 0$ holds for any $t\in [0,1]$.
    Especially, $\xi_{M,m} = 0$ which means that $\exp(it\xi)\cdot m = m$.

  \item As in the proof of Proposition~\ref{prop:ms}, since $0$ is a regular value of $\Psi$, there is an $\varepsilon > 0$ such that the following map is a diffeomorphism,
    \begin{equation*}
      \begin{aligned}
        \kk_{\varepsilon} \times \Psi^{-1}(0) & \rightarrow \exp(i\kk_{\varepsilon}) \Psi^{-1}(0) \subseteq M^{s}(\Psi)\\
        (\xi, x) & \mapsto \exp(i\xi)x
      \end{aligned},
    \end{equation*}
    where $\kk_{\varepsilon} = \{\xi\in \kk| \norm{\xi} \le \xi\}$ and $\exp(i\kk_{\varepsilon}) \Psi^{-1}(0)$ is an open neighborhood of $\Psi^{-1}(0)$ in $M^{s}(\Psi)$.

    Since $\lim_{t\rightarrow +\infty} \varphi_t(z) \in \Psi^{-1}(0)$, there exists $C_0> 0$, when $t> C_0$, we have $\varphi_t(z) \in \exp(i\kk_{\varepsilon/2}) \Psi^{-1}(0)$.
    Namely, there exists $\xi_t\in \kk_{\varepsilon/2}$ and $x_t\in \Psi^{-1}(0)$ such that $\varphi_t(z) = \exp(i\xi_t) x_t$.
    Then, the fact that $\varphi_t(z) \subseteq G \cdot z$ and (1) imply that there exists $y \in \Psi^{-1}(0)$ and $k_t \in K$ such that $x_t = k_t y$ when $t > C_0$.
    Now, we can choose a sequence $\{t_i\}$ such that $\lim_{i\rightarrow+\infty} \xi_{t_i} = \xi_0 \in \kk_{\varepsilon}$ and $\lim_{i\rightarrow+\infty} k_{t_i} = k_0 \in K$.
    As a result,
    \begin{equation*}
      m = \lim_{i\rightarrow+\infty} \varphi_{t_i}(z) = \lim_{i\rightarrow+\infty} \exp(i\xi_{t_i}) k_{t_i} y = \exp(i\xi_0)k_0y.
    \end{equation*}
    Hence, by the definition of $y$, by choosing $t> C_0$, we know that $m\in G\cdot y = G \cdot x_t = G \cdot \varphi_t(z) = G \cdot z$.

  \item Let $V_x$ and $V_y$ be disjoint $K$-invariant neighborhoods of $x$ and $y$ in $\Psi^{-1}(0)$, respectively.
    Then, by Proposition~\ref{prop:ms}, $\varphi^{-1}_\infty (V_x)$ and $\varphi^{-1}_\infty (V_y)$ are disjoint open neighborhoods of $x$ and $y$ in $M$.
    Moreover, for any $z\in M^{s}(\Psi)$, (2) implies that $\varphi_{\infty}(z) \in G\cdot z$.
    As a result, for any $g\in G$,
    \begin{equation*}
      \varphi_{\infty}(g z) \in G \cdot (gz) = G\cdot z = G\cdot \varphi_{\infty}(z).
    \end{equation*}
    By (1), we have $\varphi_{\infty}(gz) \in K \cdot \varphi_{\infty}(z)$.
    Hence, if we further assume that $z\in \varphi^{-1}_\infty (V_x)$, $gz$ must also lie in $\varphi^{-1}_{\infty}(V_x)$ due the $K$-invariance of $V_x$.
    Therefore, $\varphi^{-1}_\infty (V_x)$ and $\varphi^{-1}_\infty (V_y)$ are both $G$-invariant.
  \end{inparaenum}
\end{proof}

One then easily deduces the following two results.

\begin{thm}
  The orbit space $M^{s}(\Psi)/G$ is Hausdorff.
  The inclusion $\Psi^{-1} (0) \hookrightarrow M^{s}(\Psi)$ induces a natural map $\Psi^{-1} (0)/K \rightarrow M^{s}(\Psi)/G$ which is the homeomorphic inverse of the map $M^{s}(\Psi)/G \rightarrow \Psi^{-1} (0)/K$ induced by the retraction $M^{s}(\Psi) \rightarrow \Psi^{-1} (0)$.
\end{thm}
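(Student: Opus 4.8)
The plan is to obtain both maps in the statement as descents of continuous maps that are already available, to check that they are mutually inverse using parts (1)--(3) of the preceding proposition, and only afterwards to read off the Hausdorff property. First I would record that the retraction $\varphi_\infty: M^{\rs}(\Psi)\to\Psi^{-1}(0)$ of Proposition~\ref{prop:ms} is continuous and that, composed with the orbit map $\Psi^{-1}(0)\to\Psi^{-1}(0)/K$, it is constant on $G$-orbits: for $z\in M^{\rs}(\Psi)$ and $g\in G$, part (2) of the preceding proposition gives $\varphi_\infty(gz)\in G\cdot\varphi_\infty(z)$, and since both $\varphi_\infty(gz)$ and $\varphi_\infty(z)$ lie in $\Psi^{-1}(0)$, part (1) upgrades this to $\varphi_\infty(gz)\in K\cdot\varphi_\infty(z)$. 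Because the orbit map $q_G: M^{\rs}(\Psi)\to M^{\rs}(\Psi)/G$ is an open quotient map, $\varphi_\infty$ therefore descends to a continuous map $\overline{\varphi}_\infty: M^{\rs}(\Psi)/G\to\Psi^{-1}(0)/K$. Dually, the inclusion $\Psi^{-1}(0)\hookrightarrow M^{\rs}(\Psi)$ followed by $q_G$ is $K$-invariant (as $K\subseteq G$), hence descends to a continuous map $\overline{\iota}:\Psi^{-1}(0)/K\to M^{\rs}(\Psi)/G$.

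Next I would check that $\overline{\varphi}_\infty$ and $\overline{\iota}$ are mutually inverse, which is immediate from what has been proved. On one side, every $m\in\Psi^{-1}(0)$ satisfies $(\grad\norm{\Psi}^2)_m=2J(\Psi(m))_{M,m}=0$ by (\ref{eq:grad-m2}), so $m$ is a rest point of the negative gradient flow and $\varphi_\infty(m)=m$; hence $\overline{\varphi}_\infty\circ\overline{\iota}=\id$. On the other side, part (2) of the preceding proposition says $\varphi_\infty(z)\in G\cdot z$ for all $z\in M^{\rs}(\Psi)$, so $\overline{\iota}\circ\overline{\varphi}_\infty=\id$. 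Two mutually inverse continuous maps are homeomorphisms, which gives the second assertion of the theorem.

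Finally I would deduce the Hausdorff property. The quickest route is via the homeomorphism just established: since $M$ is compact, $\Psi^{-1}(0)$ is a compact manifold and $K$ is a compact Lie group, so the orbit equivalence relation on $\Psi^{-1}(0)$ is closed and $\Psi^{-1}(0)/K$ is Hausdorff; transporting along $\overline{\iota}$ makes $M^{\rs}(\Psi)/G$ Hausdorff as well. Alternatively one can argue directly: given $G\cdot z_1\neq G\cdot z_2$, parts (1)--(2) reduce this to $K\cdot\varphi_\infty(z_1)\neq K\cdot\varphi_\infty(z_2)$ in $\Psi^{-1}(0)$, part (3) supplies disjoint $G$-invariant open neighborhoods $U_i$ of $\varphi_\infty(z_i)$, and openness of $q_G$ together with the $G$-invariance of the $U_i$ makes $q_G(U_1)$ and $q_G(U_2)$ disjoint open separating neighborhoods. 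I do not expect a genuine obstacle here: the real content has been packed into Proposition~\ref{prop:ms} and parts (1)--(3) of the preceding proposition, and the only points requiring a little care are the extraction of the $K$-orbit relation $\varphi_\infty(gz)\in K\cdot\varphi_\infty(z)$ (already visible inside the proof of part (3)) and the routine facts that $q_G$ is open and that a quotient map detects continuity of the induced map.
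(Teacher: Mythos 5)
Your proposal is correct and is exactly the deduction the paper intends: the paper states this theorem with no written proof ("one then easily deduces"), relying on Proposition~\ref{prop:ms} and parts (1)--(3) of the preceding proposition, which is precisely what you use. The details you supply (descent of $\varphi_\infty$ via $\varphi_\infty(gz)\in K\cdot\varphi_\infty(z)$, mutual inversion from $\varphi_\infty(z)\in G\cdot z$ and $\varphi_\infty|_{\Psi^{-1}(0)}=\id$, and Hausdorffness transported from $\Psi^{-1}(0)/K$) are all sound.
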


\begin{cor}
  \label{cor:two-cpx-str}
  The quotient $M^{s}(\Psi)/G$ has a separated complex analytic structure induced from that of $M^{s}(\Psi)$.
  If $K$ acts on $\Psi^{-1}(0)$ freely, the natural map $\Psi^{-1} (0)/K \rightarrow M^{s}(\Psi)/G$ is a biholomorphic map, where the complex structure on $\Psi^{-1} (0)/K$ is defined by the reduction construction.
\end{cor}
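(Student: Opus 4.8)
The plan is to put complex-analytic charts on $M^{s}(\Psi)/G$ coming from holomorphic slices at the points of $\Psi^{-1}(0)$, and then, when $K$ acts freely on $\Psi^{-1}(0)$, to recognise the homeomorphism produced in the theorem above as a biholomorphism by checking that its differential is complex linear. Two preliminary facts are needed. First, every $G$-orbit in $M^{s}(\Psi)$ is closed: by Proposition~\ref{prop:ms} such an orbit meets $\Psi^{-1}(0)$, and the proposition above---together with continuity of the retraction $\varphi_{\infty}$ and compactness of $K$---forces a limit of translates $g_{n}\cdot z$ back into $G\cdot z$. Second, for $m\in\Psi^{-1}(0)$ the stabilizer $G_{m}$ equals $K_{m}$, and this group is finite: writing $g=k\exp(i\xi)$ with $k\in K$, $\xi\in\kk$, the monotonicity computation of part~(1) of the proposition above applied to $t\mapsto\langle\Psi(\exp(it\xi)\cdot m),\xi\rangle$ forces $\xi\in\kk_{m}$, which is $\{0\}$ because $0$ is a regular value (Lemma~\ref{differimage}); hence $g=k\in K_{m}$.

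With these in hand I would invoke the holomorphic slice theorem at each $m\in\Psi^{-1}(0)$. The subspace $H_{m}$ in the orthogonal splitting $\T_{m}\Psi^{-1}(0)=\T_{m}(K\cdot m)\oplus H_{m}$ of Proposition~\ref{prop:red} is $J$-invariant and $K_{m}$-invariant, so one may choose a small $K_{m}$-invariant complex submanifold $S\ni m$ with $\T_{m}S=H_{m}$ and $S\subseteq M^{s}(\Psi)$. Since $\kg_{m}=0$ and $\T_{m}M=\T_{m}(G\cdot m)\oplus H_{m}$, the natural map $G\times_{K_{m}}S\to M^{s}(\Psi)$ is a local biholomorphism at $[e,m]$, hence---after shrinking $S$ and using closedness of $G\cdot m$---a biholomorphism onto a $G$-invariant open neighbourhood $U\supseteq G\cdot m$. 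Then $U/G\cong S/K_{m}$ is a normal complex analytic space; these charts are $G$-equivariantly compatible, so they glue to a complex-analytic structure on $M^{s}(\Psi)/G$ for which the projection from $M^{s}(\Psi)$ is holomorphic, and separatedness is exactly the Hausdorffness established in the theorem above.

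Now assume $K$ acts freely on $\Psi^{-1}(0)$. Then all the $K_{m}$, hence all $G_{z}$ with $z\in M^{s}(\Psi)=G\Psi^{-1}(0)$, are trivial, and part~(1) of the proposition above makes $[g,m]\mapsto g\cdot m$ a $G$-equivariant diffeomorphism $G\times_{K}\Psi^{-1}(0)\cong M^{s}(\Psi)$; hence $G$ acts freely and properly (using compactness of $M$) and $\pi:M^{s}(\Psi)\to M^{s}(\Psi)/G$ is a holomorphic principal $G$-bundle over a complex manifold. Because $(i\kk)\cdot m=J(\kk\cdot m)$ is orthogonal to $\T_{m}\Psi^{-1}(0)$ by (\ref{eq:grad-psixi}), one has $\T_{m}M=\T_{m}(G\cdot m)\oplus H_{m}$ with $H_{m}\subseteq\T_{m}\Psi^{-1}(0)$, so $\pi|_{\Psi^{-1}(0)}$ is a surjective submersion whose fibres are exactly the $K$-orbits; it therefore descends to a smooth local diffeomorphism $\bar\jmath:\Psi^{-1}(0)/K\to M^{s}(\Psi)/G$, which is the homeomorphism of the theorem above and so a diffeomorphism. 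Finally, under the canonical identifications $\T_{[m]}(\Psi^{-1}(0)/K)\cong H_{m}\cong\T_{[m]}(M^{s}(\Psi)/G)$ the differential $d\bar\jmath_{[m]}$ is the identity of $H_{m}$; the source carries the complex structure $J|_{H_{m}}$ by the reduction construction (Proposition~\ref{prop:red}) and the target carries $d\pi_{m}(J|_{H_{m}})=J|_{H_{m}}$, the same structure. Thus $d\bar\jmath$ is $\bC$-linear at every point, and a diffeomorphism of complex manifolds with everywhere $\bC$-linear differential is biholomorphic.

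The step I expect to demand the most care is the holomorphic slice theorem used above---specifically, that $G\times_{K_{m}}S\to M^{s}(\Psi)$ becomes injective after shrinking $S$, which is where properness of the $G$-action near the closed orbit $G\cdot m$ enters. Because $0$ is a regular value, all stabilizers along $\Psi^{-1}(0)$ are finite, so this reduces to the proper free case (classical, e.g.\ via Holmann's theorem) combined with the elementary fact that a complex manifold modulo a finite holomorphic action is a normal complex space; the needed properness itself follows from the identification $M^{s}(\Psi)\cong G\times_{K}\Psi^{-1}(0)$ together with compactness of $M$. Everything else is routine bookkeeping.
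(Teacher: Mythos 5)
Your argument is correct in substance, but for the first assertion it takes a genuinely different route from the paper. The paper disposes of the existence of the complex analytic structure on $M^{s}(\Psi)/G$ in one line by citing Holmann's theorem \cite{Hol} on quotients of complex spaces by complex transformation groups, and gets separatedness from the Hausdorffness already proved; you instead build the structure by hand from holomorphic slices $G\times_{K_m}S\to M^{s}(\Psi)$ at points of $\Psi^{-1}(0)$, after first checking that orbits in $M^{s}(\Psi)$ are closed and that stabilizers there are finite (both of your preliminary arguments are sound and use exactly the monotonicity and retraction facts from the preceding propositions). Your route is more self-contained and makes the local model $U/G\cong S/K_m$ explicit, at the cost of having to supply the slice theorem; the one place your plan is slightly off is the justification of properness of the $G$-action in the \emph{non-free} case, since the identification $M^{s}(\Psi)\cong G\times_K\Psi^{-1}(0)$ you appeal to is only available when $K$ acts freely on $\Psi^{-1}(0)$ --- in general you would instead argue properness directly from $G\cdot m\cap\Psi^{-1}(0)=K\cdot m$ and the exhaustion of $M^{s}(\Psi)$ by the sets $\exp(i\kk_\varepsilon)K\Psi^{-1}(0)$, or simply fall back on Holmann as the paper does. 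For the second assertion (the biholomorphism in the free case) your argument coincides with the paper's: both identify $\T_{\pi(m)}(\Psi^{-1}(0)/K)$ and $\T_{\pi'(m)}(M^{s}(\Psi)/G)$ with the $J$-invariant horizontal space $H_m$ and observe that the differential of the natural map is the identity there, hence $\bC$-linear; your added remark that a diffeomorphism with everywhere $\bC$-linear differential is a biholomorphism is the (correct) step the paper leaves implicit.
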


\begin{proof}[Proof of {Corollary~\ref{cor:two-cpx-str}}]
That the quotient space $M^{s}(\Psi)/G$ has a complex analytic structure follows from a theorem of \cite{Hol}.
  
Since we already know that $M^{s}/G \cong \Psi^{-1} (0)/K$ is Hausdorff, the resulting analytic space is separated.

  When $K$ acts on $\Psi^{-1}(0)$ freely, as in the proof of Proposition~\ref{prop:red}, we have an orthogonal decomposition for $m\in \Psi^{-1}(0)$,
  \begin{equation*}
    \T_m M = H_m \oplus \kk \cdot m \oplus (i\kk) \cdot m,
  \end{equation*}
  where $\T_m \Psi^{-1}(0) = H_m \oplus \kk \cdot m$ and $H_m$ is $J$-invariant.
  Therefore, denoting the projection $\Psi^{-1}(0) \rightarrow \Psi^{-1} (0)/K$ by $\pi$ and the projection $M^{s}(\Psi) \rightarrow M^{s}(\Psi)/G$ by $\pi'$, one has
  \begin{equation*}
    \T_{\pi(m)} (\Psi^{-1} (0)/K) \simeq H_m \simeq \T_{\pi'(m)} (M^{s}(\Psi)/G).
  \end{equation*}
  And the above isomorphisms preserve the almost complex structures on these vector spaces.
  Therefore, $\Psi^{-1} (0)/K \rightarrow M^{s}(\Psi)/G$ is a biholomorphic map.
\end{proof}

Based on the results in Section~\ref{sec:kn-fct}, one can use the Kempf--Ness function to discuss the properties of the stable points set as in the K\"ahler case~\cite{Georgoulas_2021mo,Wang_2021th,Woodward_2010aa}.
The details are left to readers.

\section{Complex quotients are reduction spaces}
\label{dynastable}

Mumford proved
that all projective quotients of a projective variety come from linearized ample line bundles~\cite{GIT}.
In the same vein, we will prove an analogous result in the realm of complex manifolds.
In this section, manifolds are always assumed to be complex and we let $G$ be a complex reductive group.

\begin{defn}
  \label{def:ana-loc-gbd}
  A $G$-equivariant map of $G$-manifolds $f: X \rightarrow Y$ is called a principal fiber bundle over $Y$ with group $G$ if the $G$-action on $Y$ is trivial and
  \begin{equation*}
  \begin{aligned}
    G \times X &\rightarrow X \times_Y X \\
    (g, x) &\mapsto (x, gx)
  \end{aligned}
  \end{equation*}
  is an isomorphism.
  $X$ is called an analytic locally trivial principal $G$-bundle if there is an analytic open cover $\cup_i Y_i = Y$, $G$-isomorphisms $g_i: X_i = f^{-1}(Y_i) \cong G \times Y_i$ and transition morphisms $\phi_{ji}: Y_i\cap Y_j \rightarrow G$ such that the following diagrams are commutative
\begin{equation*}
  \begin{tikzcd}
    f^{-1}(Y_i \cap Y_j) \arrow[r,equal] \arrow[d,"g_i"] & f^{-1}(Y_i \cap Y_j) \arrow[d, "g_j"]\\
    G \times (Y_i \cap Y_j) \arrow[r, "l(\phi_{ji})"]& G \times (Y_i \cap Y_j)
  \end{tikzcd}
\end{equation*}
where $l(\phi_{ji})(g,y) = (\phi_{ji}(y)g, y)$ for $(g,y) \in G \times (Y_i \cap Y_j)$.
If $X,Y$ are algebraic manifolds, the Zariski locally trivial principal $G$-bundle can be defined in a similar way by using the Zariski open cover.
\end{defn}

\begin{thm}
  \label{thm:quotient}
  Let $G$ be the complexification group of a compact torus $K$.
  Assume that $G$ acts on $X$ properly such that the orbit space $X/G$ exists as a complex manifold and that $X \rightarrow X/G$ is an analytic locally trivial $G$-bundle.
  Then there exists a $K$-invariant open subset $V$ of $X$,
  a non-degenerate momentumly closed two-form $\omega$ and a generalized moment map $\Psi: X \rightarrow {\kk}^*$
  defined on $V$
  such that the reduction space of $\Psi$ at $0$ is biholomorphic to $X/G$.
  Moreover, if $X/G$ is a K\"ahler manifold, then $\omega$ can also be chosen to be a K\"ahler form.
\end{thm}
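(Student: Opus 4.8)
The plan is to build the desired data by the minimal coupling construction of Proposition~\ref{prop:min-cp}. Since $X \to X/G$ is an analytic locally trivial principal $G$-bundle and $G$ is the complexification of the torus $K$, the restriction $X|_K := \{x \in X : $ the $K$-orbit of $x$ meets a fixed $K$-invariant slice$\}$ is not the right object; instead I would first produce a $K$-invariant \emph{reduction} of the structure group from $G$ to $K$. Concretely, equip $X/G$ with a Hermitian metric; using a partition of unity subordinate to the trivializing cover $\{Y_i\}$ and the Cartan decomposition $G = K \cdot \exp(i\kk)$ (valid since $K$ is a torus, so $G \cong K \times \exp(i\kk)$ is a product as a manifold), one averages the local trivializations to obtain a $K$-subbundle $P \subseteq X$ which is a principal $K$-bundle over $B := X/G$, together with a $K$-equivariant diffeomorphism $X \cong P \times_K \exp(i\kk) \cong P \times (i\kk) \cong P \times \kk^*$ onto a neighborhood $V$ of $P$ (after identifying $i\kk \cong \kk \cong \kk^*$ via a $K$-invariant inner product). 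This $V$ is the $K$-invariant open subset named in the statement, and $P \cong P \times \{0\}$ sits inside it.

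Next I would apply Proposition~\ref{prop:min-cp} directly. Choose any non-degenerate two-form $\sigma$ on $B = X/G$ (for instance the fundamental form of the chosen Hermitian metric), pick a connection one-form $\theta$ on the principal $K$-bundle $P \to B$, and set
\begin{equation*}
  \omega = (\pi_B \circ \pr_1)^* \sigma - \diff \langle \pr_2, \theta\rangle
\end{equation*}
on $P \times \kk^* \cong V$, where $\pi_B : P \to B$. By Proposition~\ref{prop:min-cp}, $\omega$ is momentumly closed, non-degenerate near $P \times \{0\}$, the $K$-action is Hamiltonian, and $\Psi := -\pr_2 : V \to \kk^*$ is a generalized moment map. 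Shrinking $V$ so that $\omega$ stays non-degenerate, we have $0$ a regular value of $\Psi$ and $\Psi^{-1}(0) = P \times \{0\} \cong P$, with $K$ acting on it as the principal bundle action, hence freely. The reduction space is therefore $\Psi^{-1}(0)/K = P/K = B = X/G$ as a real manifold. It remains to check that the induced almost complex structure $J_p$ from Proposition~\ref{prop:red} agrees with the given complex structure on $X/G$: this is where the Cartan-decomposition product structure is used again, since along $P$ the tangent space splits $K$-equivariantly as $\T P|_{P} = \T^V \oplus \T B$ with $\T^V = \kk \cdot p \oplus (i\kk)\cdot p$ matching the $\T_m M = H_m \oplus \kk\cdot m \oplus (i\kk)\cdot m$ decomposition of Proposition~\ref{prop:red} and Corollary~\ref{cor:two-cpx-str}; the horizontal complement $H_m$ maps isomorphically and $J$-linearly onto $\T_{\pi(m)}(X/G)$, so $J_0$ is the given one. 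When $X/G$ is K\"ahler one simply takes $\sigma$ to be a K\"ahler form on $B$; then $\diff \omega = (\pi_B\circ\pr_1)^*(\diff\sigma) - \diff\diff\langle\pr_2,\theta\rangle = 0$, so $\omega$ is symplectic, and since $\omega$ is $J$-invariant by construction it is a K\"ahler form, giving the last assertion.

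The main obstacle is the first step: producing the $K$-reduction $P$ of the structure group and the associated $K$-equivariant trivialization $X \cong P \times \kk^*$ near $P$. One must be careful that the averaging of local trivializations over $K$ using the partition of unity on $B$ is compatible with the transition morphisms $\phi_{ji}: Y_i \cap Y_j \to G$; the cleanest route is to use the deformation retraction $G \simeq K$ (which, for $K$ a torus, is honestly the projection $K \times \exp(i\kk) \to K$) to push the $G$-cocycle $\{\phi_{ji}\}$ to a $K$-valued cocycle, producing $P$, and then to note that the map $P \times_K G \to X$ is a $G$-equivariant biholomorphism onto $X$ (not just onto a neighborhood), since $X$ is already the total space of the $G$-bundle with that cocycle. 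Restricting the fiber $G \cong K \times \exp(i\kk)$ to the $K$-saturation of a neighborhood of the identity in $\exp(i\kk) \cong \kk^*$ then yields $V$. The second, milder, subtlety is verifying that the complex structure matches, which I expect to be routine given the explicit product description, parallel to the argument in Corollary~\ref{cor:two-cpx-str}.
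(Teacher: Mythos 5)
Your overall architecture matches the paper's: use the Cartan decomposition $G=\exp(i\kk)K$ and the fact that for abelian $G$ the retraction $G\to K$ is a group homomorphism to push the transition cocycle to $K$, obtain a principal $K$-bundle $X'$ with $X\cong G\times_K X'\cong \kk\times X'\cong\kk^*\times X'$, and then feed this into the minimal coupling construction of Proposition~\ref{prop:min-cp}. The step you flag as the ``main obstacle'' (producing the $K$-reduction) is in fact the easy part, and your cocycle-pushforward argument for it is exactly the paper's.

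The genuine gap is in the step you call routine. You take an \emph{arbitrary} connection one-form $\theta$ on $X'\to X/G$ and an \emph{arbitrary} non-degenerate $\sigma$ on the base, and then assert that the resulting $\omega=(\pi\circ\pr_1)^*\sigma-\diff\langle\pr_2,\theta\rangle$ is ``$J$-invariant by construction'' and that the reduced almost complex structure agrees with the given one. Neither holds for a generic $\theta$: Proposition~\ref{prop:min-cp} gives momentum-closedness and non-degeneracy, but says nothing about compatibility with the complex structure $\sJ$ on $\kk\times X'$ induced from $X$, and without $\sJ$-invariance and positivity of $\omega(-,\sJ-)$ you cannot invoke Proposition~\ref{prop:red} or Corollary~\ref{cor:two-cpx-str} to descend the complex structure, nor conclude anything about the reduced $J$. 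The paper's proof is almost entirely devoted to closing exactly this gap: one must take $\theta$ to be the \emph{specific} connection whose horizontal spaces are $H_x=\T_xX'\cap\sJ(\T_xX')$ (which first requires showing, using freeness of the action and a dimension count, that this intersection is a $K$-invariant horizontal distribution of the correct rank), take $\sigma$ to be a Hermitian form on $X/G$, and then verify $\sJ$-invariance of $\omega$ case by case --- the key case using the vanishing of the Nijenhuis tensor of $\sJ$ --- together with positivity of $\omega(-,\sJ-)$ near $\{0\}\times X'$. Your Kähler addendum inherits the same defect: closedness of $\omega$ follows as you say, but ``$J$-invariant by construction'' again depends on the unjustified choice of $\theta$.
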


\begin{proof}
  Without loss of generality, we assume that $G$ acts on $X$ freely. By Cartan's decomposition, $G= \exp (i{\kk}) K$ and $G$ real-analytically retracts to $K$.
  Since $G$ is abelian, the map from $G$ to $K$ is a group homomorphism.
  Hence the transition functions in Definition~\ref{def:ana-loc-gbd} can be reduced to be analytic functions
\begin{equation*}
\phi_{ji}: Y_i \cap Y_i \rightarrow K \subseteq G.
  \end{equation*}
As a result, there is a (real analytic) principal $K$-bundle $X'$ over $X/G$ such that
  \begin{equation*}
X \cong G \times_K X',
  \end{equation*}
  where $K$ acts on $G$ on the right.

  Identify $X'$ as a (real) submanifold of $X$, $X$ splits in the following way.
  \begin{equation*}
    \begin{aligned}
     F: X =  (\exp (i\kk)K) \times_K X'  &\cong \kk \times X' \\
      (\exp(ia)k, x) & \mapsto (a, kx),
    \end{aligned}
  \end{equation*}
  where $k\in K$ and $a\in \kk$.
  For $k\in K$ and $(a, x) \in \kk \times X'$, we define the $K$-action on $\kk \times X'$ as $k\cdot(a, x) = (\Ad{k}{a}, kx)$.
  Then, $F$ is also $K$-equivariant.
Via the isomorphism $F$, $X$ induces a complex structure $J$ on $\kk \times X'$ and the $K$-action on $\kk \times X'$ is holomorphic with respect to $J$.
  Note that with respect to $J$, the $K$-action on $\kk \times X'$ induces a holomorphic $G$-action on it in a unique way.
  In the meanwhile, there exists another holomorphic $G$-action on $\kk \times X'$ defined by the $G$-action on $X$ and $F$, which also extends the $K$-action on $\kk \times X'$.
  The uniqueness implies that the above two $G$-actions on $\kk \times X'$ must coincide.
  Therefore, we will not distinguish them in the following.

By choosing a $K$-invariant inner product on $\kk$, $X$ is further $K$-isomorphic to $\kk^* \times X'$.
  By Proposition~\ref{prop:min-cp}, on $\kk \times X' \cong \kk^* \times X'$, there exists a momentumly closed two-form $\omega$ and a generalized moment map $\Psi$ such that $\omega$ is non-degenerate on a $K$-invariant neighborhood of $\{0\} \times X'$, which gives the desired open subset $V$ on $X$.

  To show that $X/G = X'/K$ is biholomorphic to a reduction space, we need to check that we can choose $\omega$ such that $\omega$ is compatible with $J$.
  That is, $\omega$ is $J$-invariant and $\omega(-,J -)$ is a Riemmanian metric.
  
  Let $(p, x) \in \kk \times X'$
  Since $K$ and $G$ are abelian, for $a\in \kk$, $i a$ generates the following tangent vector in $\T_{{p,x}}(\kk \times X') \simeq \kk \oplus \T_x X'$,
  \begin{equation}
    \label{eq:vf-ia}
    (i a)_{\kk^* \times X', (p,x)} = \odv{}{ t}\Big|_{t=0} \exp(i a t)(p, x) = \odv{}{t}\Big|_{t=0} (at + p, x) = (a, 0),
  \end{equation}
  where the second equality is due to the isomorphism $F$.
  As a result, we have the equality,
  \begin{equation}
    \label{eq:kgpx}
\kg \cdot (p,x) = \kk \oplus \kk \cdot x,\quad J(\kk \cdot x) = \kk,
\end{equation}
  which means that $\kk \oplus \kk \cdot x$ is a $J$-invariant subspace of $\kk \oplus \T_x X'$.
  Let $H_{p,x} = \T_x X' \cap J_{(p,x)}(\T_x X')$.
  Note that $(\diff(\exp(ip)\cdot))_{0,x}$ gives an $J$-preserving isomorphism between $\T_{{0,x}}(\kk \times X')$ and $\T_{{p,x}}(\kk \times X')$.
  And since $K$ is abelian, the composition
\begin{equation*}
    \kk \times \T_x X' \simeq \T_{{0,x}}(\kk \times X') \xrightarrow{(\diff(\exp(ip)\cdot))_{0,x}} \T_{{p,x}}(\kk \times X') \simeq \kk \times \T_x X'.
  \end{equation*}
is just the identity map.
  Hence, $H_{p,x}$ is independent of $p$ actually and we denote it by $H_{x}$ in the following.

  By the definition of $W_x$ and (\ref{eq:kgpx}),
  \begin{equation}
    \label{eq:kkx-hx}
    \kk \cdot x \cap H_x  = \emptyset.
  \end{equation}
  And the definition of $W_x$ also implies that
  \begin{multline*}
    \dim H_x \ge \dim \T_x X' + \dim J_{(p,x)}(\T_x X') - \dim X \\
    = 2 \dim \T_x X' - (\dim T_x X' + \dim \kk) = \dim T_x X' - \dim \kk
  \end{multline*}
  Since we assume that $K$-action is free, the above two formulas implies that
  \begin{equation*}
    \T_x X' = \kk \cdot x \oplus H_x.
  \end{equation*}
  Note that $H_x$ is $J$-invariant by definition.
  Consequently, $H = \{H_x| x\in X'\}$ is a horizontal distribution for the fiber bundle structure on $X'$.
  Moreover, the $K$-invariance of $J$ implies that $H$ is a $K$-invariant horizontal distribution.
  Therefore, there exists a $\kk$-valued connection one-form $\theta$ on $X'$ such that $\ker \theta = H$.
  
  Let $\sigma$ be a Hermitian form on $X/G = X'/K$.
  With $\sigma$ and $\theta$, recall that the two-form $\omega$ on $\kk \times X'$ is defined as follows,
  \begin{equation*}
    \omega = (\pi\circ \pr_1)^* \sigma - \diff \langle \pr_2, \theta \rangle,
  \end{equation*}
  where $\pr_1: \kk \times X' \rightarrow X'$ and $\pr_2: \kk \times X' \rightarrow \kk$ are projection maps and $\langle-,-\rangle$ denotes the inner product on $\kk$.
  We first check that $\omega$ is $J$-invariant.
  Let $V_1, V_2$ be two smooth vector fields on $\kk\times X'$.
  To calculate the second term in $\omega$, we use the global formula of the exterior differential again, that is,
  \begin{multline}
    \label{eq:dp-v}
    (\diff \langle \pr_2, \theta \rangle)(V_1, V_2) = (V_1) \langle \pr_2, \theta(V_2) \rangle\\ - (V_2) \langle \pr_2, \theta(V_1) \rangle 
    - \langle \pr_2, \theta[V_1,V_2] \rangle.
  \end{multline}

  \paragraph{Case 1.}
  Let $Z_1,Z_2$ be two $K$-invariant smooth sections of $H$.
  As we have explained, $H_{(p,x)}$ is naturally isomorphic to $H_{x}$.
  We can and will view $Z_i$ as smooth sections defined over $\kk \times X'$.
  Therefore,
  \begin{equation*}
    \omega(\sJ Z_1, \sJ Z_2) = \pi^*\sigma (\sJ Z_1, \sJ Z_2) - (\diff \langle \pr_2, \theta \rangle)(\sJ Z_1, \sJ Z_2)
  \end{equation*}
  By (\ref{eq:kgpx}) and (\ref{eq:kkx-hx}), $H_x$ is isomorphic to $\T_{\pi(x)}(X'/K)$ under $\pi_*$.
  Combined with the fact that $\sigma$ is compatible with the complex structure on $X/G = X'/K$, we have
  \begin{equation}
    \label{eq:omega-jz1}
    \pi^*\sigma (J Z_1, J Z_2) = \pi^*\sigma(Z_1, Z_2).
  \end{equation}
  Meanwhile, since $(J Z_i)_{(p,x)} \in H_x$, by (\ref{eq:dp-v}),
  \begin{equation}
    \label{eq:dj-z}
    (\diff \langle \pr_2, \theta \rangle)(J Z_1, J Z_2)
= - \langle \pr_2, \theta[J Z_1,J Z_2] \rangle.
  \end{equation}
However, since $J$ is integrable on $\kk \times X'$, the vanishing of Nijenhuis tensor implies that
  \begin{equation*}
    [J Z_1, J Z_2] - [Z_1, Z_2] = J([J Z_1, Z_2] + [Z_1, J Z_2]).
  \end{equation*}
  Note that $\pr_1^*(\T X')$ is an integrable subbundle of $\T (\kk \times X')$ and $Z_i,J Z_i$ are smooth sections of $\pr_1^*(\T X')$.
  As a result, the above equality implies that
  \begin{equation*}
    [J Z_1, J Z_2]_{(p,x)} - [Z_1, Z_2]_{(p,x)} \in \T_x X' \cap J_{(p,x)}(\T_x X') = H_x.
  \end{equation*}
  Based on this fact and using (\ref{eq:dj-z}), we have
  \begin{multline}
    \label{eq:omega-jz2}
    (\diff \langle \pr_2, \theta \rangle)(J Z_1, J Z_2) = - \langle \pr_2, \theta[J Z_1, J Z_2] \rangle \\
    = - \langle \pr_2, \theta[Z_1,Z_2] \rangle = (\diff \langle \pr_2, \theta \rangle)(Z_1, Z_2).
  \end{multline}
  Combining (\ref{eq:omega-jz1}) and (\ref{eq:omega-jz2}), we have
  \begin{equation*}
    \omega(J Z_1, J Z_2) = \omega(Z_1, Z_2).
  \end{equation*}

  \paragraph{Case 2.} As in Case 1, let $Z_1$ be a $K$-invariant smooth section of $H$.
  Take $\xi_2 \in \kk$.
  Note that the vector field $(\xi_2)_{\kk \times X'}$ coincides with the pull-back of the vector field $(\xi_2)_{X'}$.
  By (\ref{eq:vf-ia}),
\begin{equation*}
    (\pi\circ \pr_1)^* \sigma (J Z_1, J  (\xi_2)_{\kk \times X'}) = (\pi\circ \pr_1)^* \sigma (J Z_1, \xi_2) = \pi^*\sigma(J Z_1, 0) = 0.
  \end{equation*}
  And
  \begin{equation*}
    (\pi\circ \pr_1)^* \sigma (Z_1, (\xi_2)_{\kk \times X'}) = \pi^*\sigma (J Z_1, (\xi_2)_{X'}) = 0.
  \end{equation*}
  The above two equalities give
  \begin{equation}
    \label{eq:wj2-0}
    (\pi\circ \pr_1)^* \sigma (J Z_1, J  (\xi_2)_{\kk \times X'}) = (\pi\circ \pr_1)^* \sigma (Z_1, (\xi_2)_{\kk \times X'}) = 0.
  \end{equation}
  In the meantime, by (\ref{eq:pr2-theta}) and (\ref{eq:vf-ia}),
  \begin{equation}
    \label{eq:wj2-1}
    \begin{gathered}
      (J Z_1) \langle \pr_2, \theta(J (\xi_2)_{\kk \times X'}) \rangle = (J Z_1) \langle \pr_2, \theta( \xi_2 ) \rangle = 0.\\
      (Z_1) \langle \pr_2, \theta((\xi_2)_{\kk \times X'}) \rangle = (Z_1) \langle \pr_2, \xi_2 \rangle = 0,
    \end{gathered}
  \end{equation}
  where $\langle\pr_2, \xi_2\rangle$ is the function defined on $\kk$ (therefore also on $\kk \times X'$) by $\xi_2$ and the inner product on $\kk$.
  And due to $Z_1$ is a section of $H$, we have
  \begin{equation}
    \label{eq:wj2-2}
    (J (\xi_2)_{\kk \times X'}) \langle \pr_2, \theta(J Z_1) \rangle = 0,\quad ((\xi_2)_{\kk \times X'}) \langle \pr_2, \theta(Z_1) \rangle = 0.
  \end{equation}
  Moreover, since $\kk \times \T_x X'$ is a product manifold, by (\ref{eq:kgpx}),
  \begin{equation}
    \label{eq:wj2-3}
    [J Z_1, J (\xi_2)_{\kk \times X'}] = [J Z_1, \xi_2] = 0.
  \end{equation}
  Also note that $Z_1$ is $K$-invariant, which means that
  \begin{equation}
    \label{eq:wj2-4}
    [Z_1, (\xi_2)_{\kk \times X'}] = - \mathcal{L}_{\xi_2} Z_1 = 0.
  \end{equation}
  Now, by (\ref{eq:dp-v}), (\ref{eq:wj2-0})--(\ref{eq:wj2-4}), we have
  \begin{equation}
    \label{eq:omega-zxi}
    \omega(J Z_1, J (\xi_2)_{\kk \times X'}) = \omega(Z_1, (\xi_2)_{\kk \times X'}) = 0.
  \end{equation}
  Moreover, by taking $Z_1$ to be $J Z_1$, we also have
  \begin{multline}
    \label{eq:omega_zjxi}
    \omega(J Z_1, J (i \xi_2)_{\kk \times X'}) = - \omega(J Z_1, (\xi_2)_{\kk \times X'}) \\
    = \omega(Z_1, J (\xi_2)_{\kk \times X'}) = \omega(Z_1, (i\xi_2)_{\kk \times X'}) = 0.
  \end{multline}

  \paragraph{Case 3.}
  Let $\xi_1, \xi_2 \in \kg = \kk \oplus i \kk$.
  It is simple to check that
  \begin{equation*}
    (\pi\circ \pr_1)^* \sigma (J (\xi_1)_{\kk \times X'}, J  (\xi_2)_{\kk \times X'}) = 0, (\pi\circ \pr_1)^* \sigma ((\xi_1)_{\kk \times X'}, (\xi_2)_{\kk \times X'}) = 0.
  \end{equation*}
  As (\ref{eq:wj2-1}), we also have
  \begin{equation*}
    (J (\xi_1)_{\kk \times X'}) \langle \pr_2, \theta(J (\xi_2)_{\kk \times X'}) \rangle = 0,\;
    ((\xi_1)_{\kk \times X'}) \langle \pr_2, \theta((\xi_2)_{\kk \times X'}) \rangle = 0.
  \end{equation*}
  Since $K$ is abelian, similar to (\ref{eq:wj2-3}) and (\ref{eq:wj2-4}), we have
  \begin{equation*}
    [J (\xi_1)_{\kk \times X'}, J (\xi_2)_{\kk \times X'}] = 0,\quad [(\xi_1)_{\kk \times X'}, (\xi_2)_{\kk \times X'}] = 0.
  \end{equation*}
  By the above three groups of formulas,
  \begin{equation}
    \label{eq:omega-xi12}
    \omega(J (\xi_1)_{\kk \times X'}, J (\xi_2)_{\kk \times X'}) = \omega((\xi_1)_{\kk \times X'}, (\xi_2)_{\kk \times X'}) = 0.
  \end{equation}

  Combining all three cases together, we have verified that $\omega$ is $J$-invariant.
  Next, we check that $\omega(-,J-)$ is a Riemannian metric near $\{0\} \times X'$.
  Clearly, it is sufficient to check this on $\{0\} \times X'$.
  Besides, by (\ref{eq:omega-zxi}), (\ref{eq:omega_zjxi}) and (\ref{eq:omega-xi12}), we know that $H_x$, $\kk\cdot x$ and $\kk$ are orthogonal to each other with respect to $\omega(-,J-)$.
  Hence, we only need to show that $\omega(-,J-)$ is nonnegative on $H_x$, $\kk\cdot x$ and $\kk$ respectively.

  Take an orthonormal basis for $\kk$.
  With respect to this basis, let $(p^1,\cdots,p^{\dim K})$ be coordinates for $\kk$ and let $(\theta^1,\cdots,\theta^{\dim K})$ be the components of $\theta$.
  As in (\ref{eq:pr2-theta}), we have
  \begin{equation*}
    \diff \langle\pr_2, \theta \rangle = \sum_{i=1}^{\dim K} (\diff p^i \wedge \pr_1^*\theta^i + p^i \wedge \pr_1^*\diff \theta^i).
  \end{equation*}
  Thus, at $(0,x) \in \{0\} \times X'$,
  \begin{equation*}
    \omega_{(0,x)} = (\pi\circ \pr_1)^* \sigma - \sum_{i=1}^{\dim K} \diff p^i \wedge \pr_1^*\theta^i.
  \end{equation*}
  For $Z\in H_x$, we then have\footnote{Since the curvature of $\theta$ does not vanish in general, this formula only holds near $\{0\} \times X'$.}
  \begin{equation*}
    \omega_{(0,x)}(H_x, J H_x) = \pi^*\sigma(H_x, J H_x) \ge 0,
  \end{equation*}
  because $\sigma$ is a Hermitian form on $X/G = X'/K$.
  For $\xi \in \kk$, by the definition of the connection one-form, we note that
  \begin{equation*}
    \theta^j((\xi)_{X'})= p^i(\xi).
  \end{equation*}
  As a result, together with (\ref{eq:vf-ia}), we have
  \begin{equation*}
    \omega_{(0,x)}((\xi)_{\kk \times X'}, J (\xi)_{\kk \times X'}) = \omega_{(0,x)}((\xi)_{\kk \times X'}, \xi) = \sum_{i=1}^{\dim K} (p^i(\xi))^2 \ge 0.
  \end{equation*}
  And by the $J$-invariance of $\omega$, the above inequality also holds for $\xi\in i\kk$.
  Hence, we have verified that $\omega(-,J-)$ is a Riemannian metric near $\{0\} \times X'$.

  Although $\omega$ is only compatible with $J$ near $\{0\} \times X'$, the proof of Corollary~\ref{cor:two-cpx-str} shows that we can still apply Corollary~\ref{cor:two-cpx-str} to this situation.
  Therefore, we conclude that the reduction space of $\Psi$ at $0$ is biholomorphic to $X/G$.
\end{proof}

\begin{rem} Some comments about Theorem~\ref{thm:quotient}.
  
  \begin{inparaenum}
  \item Since we assume that $K$ is an abelian group, in the Cartan decomposition of $K$, the part $\exp(i\kk)$ is a subgroup of $G$, which is isomorphic to $\kk$ (as an additive group).
    Therefore, $X'$ in the proof of the theorem is diffeomorphic to $X/ \exp(i\kk)$.

  \item One may notice that the Hermitian form $\omega$ in Theorem~\ref{thm:quotient} is not globally defined.
  The same phenomenon happens in the projective cases. See \cite[Converse~1.12]{GIT}.
  \end{inparaenum}
\end{rem}

\begin{exmp}
  Let $X_\Sigma$ be an arbitrary smooth toric variety defined by a fan $\Sigma$.
  Then $X_\Sigma$ is a geometric quotient $U/T$ of ${\bC}^{|\Sigma(1)|}$ by a subtorus $T$ of $({\bC}^*)^{|\Sigma(1)|}$, where $\Sigma(1)$ is the set of one dimensional subcones of $\Sigma$.
The Cartan decomposition of $T= A \times K$ simply arises from ${\bC}^* = {\bR}^{>0} \times S^1$ and the split is a product of groups.
  Our earlier discussions imply that $U \cong U/A \times A \cong U/A \times {\kk}$.
Now Theorem~\ref{thm:quotient} applies to yield that $X_\Sigma$ arises in our way as well.
  For a combinatorial approach to complex quotients of toric varieties, one can see~\cite{Hu98}.
\end{exmp}

It is natural to expect that the generalized moment maps have applications to moduli spaces of connections on vector bundles over arbitrary complex manifolds.
See Li--Yau \cite{LY} and Li--Yau--Zheng \cite{LYZ}.
For example, it is expected that the Hitchin--Kobayashi correspondence for general complex manifolds is an infinitely dimensional case of the correspondence between complex quotients and reduction spaces (see L\"ubke and Teleman's book \cite{LT}).
To get the generalized moment map for the gauge group action on the space of connections, one replaces a K\"ahler form by a non-degenerate $\partial \overline{\partial}$-closed form,~\cite{LY} and \cite{LYZ}.

Finally, it is a natural problem to compare different complex
quotients.

\section{An example about the non-compact group action}
Till now, we only discuss the compact group action in this note.
However, the definitions of momentumly closed forms and generalized moment maps also make sense for the non-compact group action.
In this section, we discuss a particular example about the non-compact group action.

The example we concerned is the famous Calabi--Eckmann manifolds~\cite{Calabi_1953cl}, which is a class of compact non-K\"ahler complex manifold, the second example of this type found in the history.

\subsection{The Calabi--Eckmann manifolds}
We first review some basic properties of the Calabi--Eckmann manifolds.
Fix a parameter $\tau = a + ib \in \bC - \bR$.
We define a $\bC$-action on $Y = (\bC^{n+1}- \{0\}) \times (\bC^{m+1} - \{0\})$ as follows.
\begin{equation}
  \label{eq:c-act}
  \bC \curvearrowright Y: (t, (x,y)) \mapsto (\exp(t) x, \exp(\tau t) y).
\end{equation}
It is straightforward to check that this action is free and proper.
The \emph{Calabi--Eckmann manifold} is defined to be the quotient manifold of $Y$ with respect to the $\bC$-action,\footnote{Usually, the definition of the Calabi--Eckmann manifold requires that $m,n\ge 1$, because when $m= 0$ or $n = 0$, the Calabi--Eckmann manifold is just the Hopf manifold.
  For our discussion in this section, we only need $m + n \ge 1$.
}
\begin{equation*}
  \CE = Y / \bC.
\end{equation*}
Denote the quotient map by $\mathrm{q}$.
Since the $\bC$-action is holomorphic, $\CE$ is a complex manifold.
$\CE$ is also compact.
To see this, let $S^{2n+1}$ and $S^{2m+1}$ be the standard spheres of dimension $2n+1$ and $2m+1$ respectively.
And denote inclusion map from $S^{2n+1} \times S^{2m+1}$ to $Y$ by $j$.
Then
\begin{equation*}
  \mathrm{q}\circ j : S^{2n+1} \times S^{2m+1} \rightarrow \CE
\end{equation*}
is a diffeomorphism.
In other words, the Calabi--Eckmann manifold $\CE$ can be identified with $S^{2n+1} \times S^{2m+1}$ together with a special complex structure, which is denoted by $J_{\tau}$.

In~\cite{Tsukada_1981ei}, Tsukada gives an explicit description of $J_\tau$, also see~\cite{Matsuo_2009as}.
Denote $g_1$ to be the standard metric on $S^{2n+1}$.
Recall that as a subset of $\bC^{n+1}$, $S^1$ can act on $S^{2n+1}$,
\begin{equation*}
  S^1 \curvearrowright S^{2n+1}: (\exp(i\theta), p) \mapsto \exp(i\theta)p.
\end{equation*}
Let $X_1$ be the vector field on $S^{2n+1}$ generated by this $S^1$-action.
And let $\eta_1 \in \Omega^1(S^{2n+1})$ such that $\eta_1(X_1) = 1$ and $\ker \eta_{1,p} \perp X_{1,p}$ for any $p\in S^{2n+1}$ with respect to $g_1$.
Moreover, there is a natural endomorphism $J_1$ on $\T S^{2n+1}$.
$J_1$ satisfies two properties:
\begin{inparaenum}[(i)]
\item $J_1(X_1) = 0$;
\item for any $p\in S^{2n+1}$ and $v\in \ker \eta_{1,p}$, $J_{1,p}v = iv$, where $v$ is identified with a vector in $\bC^{2n+1}$ via $\T_p S^{2n+1} \subseteq \T_p \bC^{n+1} = \bC^{n+1}$.
\end{inparaenum}
In short, $(g_1, X_1, \eta_1, J_1)$ is the standard Sasakian structure on $S^{2n+1}$.
We define $g_2$, $X_2$, $\eta_2$ and $J_2$ in the same way for $S^{2m+1}$.
Now, the complex structure $J_\tau$ has the following expression\footnote{Our sign conventions are different from~\cite{Matsuo_2009as}.}
\begin{equation*}
  J_{\tau} = J_1 + (\frac{a}{b}X_1 + \frac{a^2 + b^2}{b}X_2) \otimes \eta_1 + J_2 - (\frac{1}{b}X_1 + \frac{a}{b}X_2) \otimes \eta_2.
\end{equation*}

Tsukada also defines a Hermitian metric with respect to $J_{\tau}$.
\begin{equation}
  \label{eq:tsu-m}
  g = g_1 + g_2 - a (\eta_1 \otimes \eta_2 + \eta_2 \otimes \eta_1) + (a^2 + b^2 - 1) \eta_1 \otimes \eta_1
\end{equation}
The Hermitian form for $(g, J_{\tau})$ is
\begin{equation*}
  \omega = \Omega_1 + \Omega_2 + b\eta_1 \wedge \eta_2,
\end{equation*}
where $\Omega_i(-,-) = g_i(J_i-,-) = \diff \eta_i(-,-)$, $i=1,2$.
Therefore, the $(g, J_{\tau})$ is a non-K\"ahler metric.

\subsection{A reduction construction}
In this subsection, we will show that the Calabi--Eckmann manifold $\CE$, as well as the metric (\ref{eq:tsu-m}) on it, can be obtained via a reduction construction with respect to a generalized moment map on $Y$.
The group involved is $\bR$, i.e.\ a non-compact group.

Identify $Y$ with $S^{2n+1} \times \bR \times S^{2m+1} \times \bR$ using the diffeomorphism
\begin{equation}
  \label{eq:iso-sy}
  \begin{aligned}
    S^{2n+1} \times \bR \times& S^{2m+1} \times \bR &&\rightarrow Y \\
    (p, s_1,\quad &q, s_2) &&\mapsto (\exp(s_1)p, \exp(s_2)q)
  \end{aligned}.
\end{equation}
Note that under this identification, we will always view the objects defined on $S^{2n+1}$ or $S^{2m+1}$, e.g.\ $X_i$, as objects defined on $Y$.

Via the homomorphism $\bR \hookrightarrow \bC$, the $\bC$-action on $Y$ induces an $\bR$-action.
The vector field on $Y$ generalized by the $\bR$-action is
\begin{equation*}
  V = \pdv{}{s_1} + bX_2 + a \pdv{}{s_2}.
\end{equation*}
We define a two-tensor on $Y$ in the following way,
\begin{multline*}
  h = (b\eta_1 + \diff s_2 - a \diff s_1)^{\otimes 2} + (\diff s_2 - a\diff s_1)^{\otimes 2}\\
  + (\eta_2 - a \eta_1)^{\otimes 2} + (b\diff s_1 -  \eta_2 + a \eta_1)^{\otimes 2} + g'_1 + g'_2,
\end{multline*}
where $g_i' = g_i - \eta_i\otimes \eta_i = \Omega_i(-,J_i - )$.
\begin{prop}
  $h$ is an $\bR$-invariant Hermitian metric on $Y$.
  The generalized moment map of $h$ with respect to the $\bR$-action is
  \begin{equation*}
    \Psi(p,s_1,q,s_2) = {abs_1 - bs_2}.
  \end{equation*}
\end{prop}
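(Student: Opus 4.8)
The plan is to work throughout on the product model $Y\cong S^{2n+1}\times\bR\times S^{2m+1}\times\bR$ supplied by (\ref{eq:iso-sy}) and to transport to it the standard complex structure $\sJ$ of $Y\subset\bC^{n+1}\times\bC^{m+1}$. The one structural input I would establish first is the description of $\sJ$ in the coordinates $(p,s_1,q,s_2)$: under (\ref{eq:iso-sy}) the field $\partial_{s_i}$ is the Euler (radial) field of the $i$-th factor, and multiplication by $i$ carries a radial vector to the generator of the corresponding Hopf rotation, so that $\sJ\partial_{s_1}=X_1$, $\sJ\partial_{s_2}=X_2$, $\sJ X_1=-\partial_{s_1}$, $\sJ X_2=-\partial_{s_2}$, while on the contact distributions $\sJ$ restricts to $J_1\oplus J_2$. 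This gives a $\sJ$-invariant splitting $\T Y=W\oplus\ker\eta_1\oplus\ker\eta_2$ with $W=\operatorname{span}(X_1,\partial_{s_1},X_2,\partial_{s_2})$. Writing $\alpha_1=b\eta_1+\diff s_2-a\diff s_1$, $\alpha_2=\diff s_2-a\diff s_1$, $\alpha_3=\eta_2-a\eta_1$, $\alpha_4=b\diff s_1-\eta_2+a\eta_1$, the metric becomes $h=\sum_{i=1}^{4}\alpha_i\otimes\alpha_i+g_1'+g_2'$, and this is block diagonal for the splitting: each $\alpha_i$ is built from $\eta_1,\eta_2,\diff s_1,\diff s_2$ and hence is supported on $W$, whereas $g_i'=g_i-\eta_i\otimes\eta_i$ is supported on $\ker\eta_i$ and agrees with $g_i$ there.

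With this normal form, the first assertion reduces to three routine checks. For $\bR$-invariance, note $V=\partial_{s_1}+bX_2+a\partial_{s_2}$ and that every building block of $h$ is annihilated by $\partial_{s_1}$ and $\partial_{s_2}$ (translation invariance) and by $X_2$ (the Sasakian identities $\mathcal{L}_{X_2}\eta_2=\mathcal{L}_{X_2}g_2=0$), so $\mathcal{L}_V h=0$; the same computation gives $V(ab\,s_1-b\,s_2)=0$, so the candidate $\Psi$ is $\bR$-invariant, and no averaging is needed since $\bR$ is abelian. For positive definiteness, $h=g_i|_{\ker\eta_i}>0$ on $\ker\eta_i$, and on $W$ the Gram matrix of $(\alpha_1,\alpha_2,\alpha_3,\alpha_4)$ in the basis $(X_1,X_2,\partial_{s_1},\partial_{s_2})$ is $A^{\mathsf T}A$ with $\det A=b^{2}\neq 0$ — this is the one place the hypothesis $\tau\notin\bR$, i.e.\ $b\neq 0$, is used — so the $\alpha_i$ form a coframe on $W$ and $\sum_i\alpha_i\otimes\alpha_i>0$ there. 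For the Hermitian property, one computes $\sJ^*\eta_i=\diff s_i$ and $\sJ^*\diff s_i=-\eta_i$, whence $\sJ^*\alpha_1=\alpha_4$, $\sJ^*\alpha_4=-\alpha_1$, $\sJ^*\alpha_2=-\alpha_3$, $\sJ^*\alpha_3=\alpha_2$, so $\sJ^*\bigl(\sum_i\alpha_i\otimes\alpha_i\bigr)=\sum_i\alpha_i\otimes\alpha_i$ on $W$, while $g_i|_{\ker\eta_i}$ is $J_i$-invariant because $(g_i,X_i,\eta_i,J_i)$ is Sasakian; hence $h(\sJ-,\sJ-)=h$.

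For the moment map, let $\omega$ be the fundamental two-form of $(h,\sJ)$; with the sign convention of Proposition~\ref{prop:ac} (so $h=\omega(-,\sJ-)$) one has $\omega(u,v)=h(\sJ u,v)$, hence $\iota_V\omega=h(\sJ V,-)$. Now $\sJ V=X_1-b\partial_{s_2}+aX_2\in W$, so the $g_i'$ summands contribute nothing and $\iota_V\omega=\sum_{i=1}^{4}\alpha_i(\sJ V)\,\alpha_i$. A one‑line pairing computation gives $\alpha_1(\sJ V)=\alpha_3(\sJ V)=\alpha_4(\sJ V)=0$ and $\alpha_2(\sJ V)=-b$, so
\[
  \iota_V\omega=-b\,\alpha_2=ab\,\diff s_1-b\,\diff s_2=\diff\bigl(ab\,s_1-b\,s_2\bigr).
\]
Identifying $\Lie\bR\cong\bR$ with generator $1$, so that $1_Y=V$, this is precisely $\diff\Psi=\iota_V\omega$ for $\Psi=ab\,s_1-b\,s_2$ (and the case of a general $\xi\in\Lie\bR$ follows by scaling); that $\omega$ is momentumly closed is then automatic by the remark after Definition~\ref{def:gmp}, and non-degeneracy is clear since $\omega$ is the fundamental form of the metric $h$.

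The only step demanding genuine care is the first one: correctly identifying $\sJ$ in the $(p,s_1,q,s_2)$ coordinates — that is, recognizing $\partial_{s_i}$ as the $\sJ$-partner of the Reeb field $X_i$, so that Tsukada's building blocks $\eta_i$, $J_i$ genuinely reassemble into the ambient complex structure of $Y$ and the splitting $\T Y=W\oplus\ker\eta_1\oplus\ker\eta_2$ is $\sJ$-invariant with $\sJ$ acting on $W$ by $\partial_{s_i}\leftrightarrow X_i$. Once that is pinned down, everything else — $\bR$-invariance, positivity, $\sJ$-compatibility, and the moment-map identity — is bookkeeping with the four one-forms $\alpha_i$ together with the Sasakian identities for $(g_i,X_i,\eta_i,J_i)$.
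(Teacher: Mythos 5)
Your proof is correct and follows essentially the same route as the paper: the same splitting $\T Y=\operatorname{span}(X_1,\partial_{s_1},X_2,\partial_{s_2})\oplus\ker\eta_1\oplus\ker\eta_2$, the same key identities $\sJ\partial_{s_i}=X_i$ and $\sJ|_{\ker\eta_i}=J_i$, the Sasakian/translation invariances for $\mathcal{L}_V h=0$, and the same final contraction yielding $\iota_V\omega=ab\,\diff s_1-b\,\diff s_2$. The only differences are cosmetic — you compute $\iota_V\omega$ as $h(\sJ V,-)$ and verify positivity via $\det A=b^2$ rather than writing out $\omega_h$ explicitly and invoking the annihilator-basis claim — and your conclusion $\Psi=abs_1-bs_2$ is the consistent one (the paper's closing mention of $(as_1-s_2)/b$ is a slip).
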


\begin{proof}
  To see that $h$ is a metric, one notices that for any $(x,y)\in Y$,
  \begin{equation*}
    \T_{(x,y)} Y = \bigoplus_{i=1,2}(\ker \eta_{i} \oplus \mathrm{span}_{\bR}\{X_i, \pdv{}{s_i}\}).
  \end{equation*}
  By definition of $g'_i$ and $\eta_i$, $g'_i$ is a metric on $\ker \eta_i$.
  Besides, $b\eta_1 + \diff s_2 - a \diff s_1$, $\diff s_2 - a \diff s_1$, $\eta_2 - a \eta_1$ and $b\diff s_1 - \eta_2 + a \eta_1$ is a basis for the annihilator space of $\ker \eta_1 \oplus \ker \eta_2$.
  Therefore, $h$ is a metric on $Y$.

  Let $J$ be the almost complex structure on $Y$.
  By the definition of $J_i$, we have $J_i|_{\ker \eta_i} = J|_{\ker \eta_i}$, which implies that $g_i'$ is $J$-invariant.
  Via the isomorphism (\ref{eq:iso-sy}), we also have
  \begin{equation*}
    J \pdv{}{s_i} = X_i,\text{ equivalently, } J (\diff s_i) = - \eta_i.
  \end{equation*}
  Therefore, $h$ is $J$-invariant.

  Note that $g_i'$ is invariant under the $S^1$-action on $S^{2n+1}$ or $S^{2m+1}$.
  And by (\ref{eq:iso-sy}), the $\bR$-action on $Y$ induces an $S^1$-rotation on the $S^{2n+1}$ or $S^{2m+1}$ component.
  As a result, $g_i'$ is invariant under the $\bR$-action on $Y$.
  Meanwhile,
  \begin{equation*}
    \mathcal{L}_V (b\eta_1 + \diff s_2 - a \diff s_1)  = b \iota_V \diff \eta_1 + \diff \iota_V(b\eta_1 + \diff s_2 - a \diff s_1) = b \iota_V \Omega_1 = 0.
  \end{equation*}
  Similarly, one can check that the remaining terms in $h$ is also $\bR$-invariant.

  The Hermitian form for $h$ is
  \begin{multline}
    \label{eq:hf-h}
    \omega_h = (b\diff s_1 - \eta_2 + a \eta_1) \wedge (b\eta_1 + \diff s_2 - a \diff s_1) \\
    + (\diff s_2 - a \diff s_1) \wedge (\eta_2 - a \eta_1) + \Omega_1 + \Omega_2.
\end{multline}
  Then
  \begin{equation*}
    \iota_{V} \omega_h = \iota_{V} ((\diff s_2 - a \diff s_1) \wedge (\eta_2 - a \eta_1)) = \diff ({abs_1 - bs_2}).
\end{equation*}
  Note that $(as_1 - s_2)/b$ is also invariant under the $\bR$-action.
  Hence, $(as_1 - s_2)/b$ is a generalized moment map for the $\bR$-action on $Y$.
\end{proof}

\begin{rem}
  With the complex coordinates $\bfZ_1 = (z^1_1,\cdots,z^{n+1}_1)$ and $\bfZ_2 = (z^1_2,\cdots,z^{m+1}_2)$ on $\bC^{n+1}-\{0\}$ and $\bC^{m+1} - \{0\}$ respectively, the differential forms appeared $h$ have the following expressions.
  \begin{equation*}
    \begin{gathered}
      \eta_i = \frac{i}{2} \frac{\bfZ_i \cdot \diff \bar{\bfZ}_i - \bar{\bfZ}_i\cdot \diff \bfZ_i}{|\bfZ_i|^2},\quad
      \diff s_i = \frac{1}{2} \frac{\bfZ_i \cdot \diff \bar{\bfZ}_i + \bar{\bfZ}_i\cdot \diff \bfZ_i}{|\bfZ_i|^2},\\
      \begin{multlined}
\Omega_i = \frac{i}{2} \frac{\diff \bfZ_i \wedge \diff \bar{\bfZ}_i}{|\bfZ_i|^2} - \diff s_i \wedge \eta_i \\
        = \frac{i}{2} \bigl(\frac{\diff \bfZ_i \wedge \diff \bar{\bfZ}_i}{|\bfZ_i|^2} - \frac{(\bar{\bfZ}_i\cdot \diff \bfZ_i) \wedge  (\bfZ_i \cdot \diff \bar{\bfZ}_i)}{|\bfZ_i|^4} \bigr).
      \end{multlined}
    \end{gathered}
  \end{equation*}
  Note that $\Omega_i$ is just the pull-back of the Fubini--Study form.
\end{rem}

\begin{prop}
  \label{prop:red-ce}
  For any $c\in \bR$, the reduction of $(Y, J, h)$ at $c$ using the generalized moment map $\Psi$ coincides with $(\CE, J_\tau, g)$.
\end{prop}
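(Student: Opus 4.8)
The plan is to exhibit a single map that simultaneously realizes the reduction space as $\CE$, is a biholomorphism onto $(\CE,J_\tau)$, and is an isometry onto $(\CE,g)$; and first to reduce to the case $c=0$. Since $\tau=a+ib$ with $b\neq 0$, the one-form $\diff\Psi=ab\,\diff s_1-b\,\diff s_2$ is nowhere zero, so every $c\in\bR$ is a regular value of $\Psi$; the $\bR$-action is free and proper, being the restriction of the free proper $\bC$-action, so $\Psi^{-1}(c)$ is a manifold on which $\bR$ (which, being abelian, coincides with its coadjoint stabilizer $K_c$) acts freely, and $\Psi^{-1}(c)/\bR$ is a manifold. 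The construction of Proposition~\ref{prop:red} — which is entirely pointwise linear algebra at a regular point, together with the manifold structure of the quotient — then applies here verbatim, the non-compactness of $\bR$ causing no difficulty. Next I would observe that the biholomorphism $\Theta$ of $Y$ that, under the identification (\ref{eq:iso-sy}), is the translation $(p,s_1,q,s_2)\mapsto(p,s_1,q,s_2+c/b)$ (equivalently $(x,y)\mapsto(x,e^{c/b}y)$ on $Y\subseteq\bC^{n+1}\times\bC^{m+1}$) commutes with the $\bR$-action, preserves $J$, preserves $h$ (which involves $s_1,s_2$ only through $\diff s_1,\diff s_2$), and satisfies $\Theta^*\Psi=\Psi-c$, hence maps $\Psi^{-1}(c)$ onto $\Psi^{-1}(0)$. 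It therefore descends to a biholomorphic isometry from the reduction at $c$ onto the reduction at $0$, and it suffices to treat $c=0$.

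For $c=0$ one has $\Psi^{-1}(0)=\{(p,s_1,q,s_2):s_2=as_1\}$, which contains $j(S^{2n+1}\times S^{2m+1})=\{s_1=s_2=0\}$. Because the $\bR$-flow translates $s_1$, this submanifold is a global slice for the $\bR$-action on $\Psi^{-1}(0)$; it is also a global slice for the $\bC$-action on $Y$, since moving along the $i\bR$-direction changes $\Psi$ by a nonzero multiple of the imaginary time (concretely $\diff\Psi$ applied to the generator $JV$ of that direction equals $b^2$), so each $\bC$-orbit meets $\Psi^{-1}(0)$ in precisely one $\bR$-orbit. Consequently the $\bR$-invariant map $\mathrm{q}|_{\Psi^{-1}(0)}\colon\Psi^{-1}(0)\to\CE$ descends to a bijection $\overline{\mathrm{q}}\colon\Psi^{-1}(0)/\bR\to\CE$, whose differential at $[m]$ is the restriction of $\mathrm{q}_*$ to the $J$-invariant horizontal subspace $H_m\subseteq\T_m\Psi^{-1}(0)$ of Proposition~\ref{prop:red}; this restriction is an isomorphism onto $\T_{\mathrm{q}(m)}\CE$ because $(JV)_m\notin\T_m\Psi^{-1}(0)$, so $\overline{\mathrm{q}}$ is a diffeomorphism. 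Since $\mathrm{q}\colon Y\to Y/\bC$ is holomorphic — this is what defines $J_\tau$, transported to $S^{2n+1}\times S^{2m+1}$ via $\mathrm{q}\circ j$ — one has $\mathrm{q}_*\circ J=J_\tau\circ\mathrm{q}_*$, and hence the reduced almost complex structure $J_0=J|_{H_m}$ satisfies $\diff\overline{\mathrm{q}}\circ J_0=J_\tau\circ\diff\overline{\mathrm{q}}$: $\overline{\mathrm{q}}$ is a biholomorphism.

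Finally I would match the metrics. By Proposition~\ref{prop:red} (cf.\ (\ref{eq:red-sf})), the reduced two-form $\omega_0$ obeys $i^*\omega_h=\pi^*\omega_0$; restricting this identity to the slice $j(S^{2n+1}\times S^{2m+1})$, on which $\pi$ is a diffeomorphism onto $\Psi^{-1}(0)/\bR$ with $\overline{\mathrm{q}}\circ\pi=\mathrm{q}\circ j$, shows that $\overline{\mathrm{q}}^*$ of the Hermitian form of $(\CE,g)$ — which by definition is $\Omega_1+\Omega_2+b\,\eta_1\wedge\eta_2$ pulled over by $\mathrm{q}\circ j$ — equals $j^*\omega_h$. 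But $s_1,s_2$ are constant along $j(S^{2n+1}\times S^{2m+1})$, so $j^*\diff s_1=j^*\diff s_2=0$ and every term of (\ref{eq:hf-h}) containing a $\diff s_i$ vanishes, leaving $j^*\omega_h=(a\eta_1-\eta_2)\wedge(b\eta_1)+\Omega_1+\Omega_2=\Omega_1+\Omega_2+b\,\eta_1\wedge\eta_2$, as required. Thus $\overline{\mathrm{q}}$ pulls the Hermitian form of $(\CE,g)$ back to $\omega_0$, and combined with $\overline{\mathrm{q}}^*J_\tau=J_0$ it pulls $g=\omega(\cdot,J_\tau\,\cdot)$ back to $\omega_0(\cdot,J_0\,\cdot)$, the reduced metric; so $\overline{\mathrm{q}}$ identifies the reduction of $(Y,J,h)$ at $c$ with $(\CE,J_\tau,g)$. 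The only genuinely delicate points are the legitimacy of invoking Proposition~\ref{prop:red} for the non-compact group $\bR$ (harmless, as noted) and keeping track of which concrete slice identifies the reduction with $\CE$; the passage to $c=0$ makes the latter transparent, and I anticipate no analytic obstruction.
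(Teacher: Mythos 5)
Your proof is correct and follows essentially the same route as the paper's: identify $\Psi^{-1}(c)/\bR$ with $\CE$ using that each $\bC$-orbit meets the level set in exactly one $\bR$-orbit (via $\diff\Psi(JV)=b^2\neq 0$), read off $J_\tau$ from the $J$-invariant horizontal complement $H_m$, and obtain $g$ by restricting $\omega_h$ to a copy of $S^{2n+1}\times S^{2m+1}$ inside the level set, where the $\diff s_i$-terms vanish and $j^*\omega_h=\Omega_1+\Omega_2+b\,\eta_1\wedge\eta_2$. The only cosmetic difference is that you normalize to $c=0$ by the holomorphic, $h$-isometric, $\bR$-equivariant translation $(x,y)\mapsto(x,e^{c/b}y)$, whereas the paper keeps $c$ and works with the shifted slice $j_{u_0}$, $u_0=-c/b$, together with the rotation $U_0$ --- the same device in disguise.
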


\begin{proof}
  First, since $V$ never vanishes on $Y$, the critical value set of $\Psi$ is empty.
  Then, by (\ref{eq:grad-psixi}), for $(x,y)\in Y$, $(\exp(iu)x,\exp(iu\tau)y)$, $u\in \bR$, intersects $\Psi^{-1}(c)$ once and only once.
  As a result, $\CE = Y/\bC$ is diffeomorphic to $\Psi^{-1}(c)/ \bR$, the reduction of $Y$ at $c$.
  
  Let $m\in \Psi^{-1}(c)$ and $H_m \subseteq \T_m \Psi^{-1}(c)$ such that $H_m \perp \mathrm{span}_{\bR}\{V\}$ with respect to $h$.
  By (\ref{eq:grad-psixi}) again,
  \begin{equation*}
    \T_m Y = H_m \oplus \mathrm{span}_{\bR}\{V_m\} \oplus \mathrm{span}_{\bR}\{JV_m\},
  \end{equation*}
  is an orthogonal decomposition.
  Then, as in the proof of Corollary~\ref{cor:two-cpx-str}, we have
  \begin{equation*}
    \T_{\pi(m)} \CE \simeq \T_m Y/ \mathrm{span}_{\bR}\{V_m,JV_m\} \simeq H_m.
  \end{equation*}
  As complex vector spaces, $\T_m Y/ \mathrm{span}_{\bR}\{V,JV\}$ and $H_m$ are also isomorphic.
  Therefore, the complex structure $J_{\tau}$ on $\CE$ coincides with the complex structure obtained from the reduction construction.

  Note that under the isomorphism (\ref{eq:iso-sy}), the inclusion $j$ just identifies $S^{2n+1} \times S^{2m+1}$ with $S^{2n+1} \times \{0\} \times S^{2m+1} \times \{0\}$.
  Fix $u_0 \in \bR$, let $j_{u_0}$ be the canonical isomorphism between $S^{2n+1} \times S^{2m+1}$ and $S^{2n+1} \times \{0\} \times S^{2m+1} \times \{u_0\}$.
  We have the following commutative diagram.
  \begin{equation*}
    \begin{tikzcd}
      S^{2n+1} \times S^{2m+1} \arrow[d, "\simeq"', "j"] \arrow[rr, "U_0"] & & S^{2n+1} \times S^{2m+1} \arrow[d, "\simeq", "j_{u_0}"'] \\
      S^{2n+1} \times \{0\} \times S^{2m+1} \times \{0\} \arrow[rr, "(-iu_0/b)\cdot"] \arrow[rd, "\simeq"', "\mathrm{q}"] & & S^{2n+1} \times \{0\} \times S^{2m+1} \times \{u_0\} \arrow[ld, "\simeq", "\mathrm{q}"'] \\
      & \CE & 
    \end{tikzcd}.
  \end{equation*}
  where
  \begin{equation*}
    U_0(p,q) = (\exp(-iu_0/b)p, \exp(-iau_0/b)q),\quad (p,q) \in S^{2n+1} \times S^{2m+1}.
  \end{equation*}
Clearly, $U_0$ preserves the complex structure $J_{\tau}$ on $S^{2n+1} \times S^{2m+1}$.
  Then the above commutative diagram implies that $\CE$ induces the same complex structure on $S^{2n+1} \times S^{2m+1}$ via either $j$ or $j_{u_0}$.
  On the other hand, $U_0$ also preserves the metric $g$ on $S^{2n+1} \times S^{2m+1}$.
  Therefore, either $j$ or $j_{u_0}$ defines the same Hermitian metric on $\CE$, which is also denoted by $g$ as before.

  Now, we take $u_0 = -c/b$.
  Then, $j_{u_0} (S^{2n+1} \times S^{2m+1}) \subseteq \Psi^{-1}(c)$.
Note that the projection $\pi: \Psi^{-1}(c) \rightarrow \CE$ is just the restriction of $\mathrm{q}$ to $\Psi^{-1}(c)$.
  Let $\omega_c$ be the reduction Hermitian form on $\CE$ and $i_c: \Psi^{-1}(c) \hookrightarrow Y$ be the injection map.
Then,
  \begin{equation*}
(q \circ j_{u_0})^* \omega_c = j_{u_0}^* (q^* \omega_c) = j_{u_0}^*( \pi^* \omega_c) = j_{u_0}^* (i_{c}^* \omega_h) = j_{u_0}^* \omega_h = \omega,
  \end{equation*}
  where we use (\ref{eq:red-sf}) for the third equality and the last equality follows from the direction calculation.
  Therefore, the reduction Hermitian form on $\CE$ coincides with $\omega$, which implies the two metrics are also the same.
\end{proof}

\begin{rem}
  In view of Theorem~\ref{thm:quotient}, we can say that Proposition~\ref{prop:red-ce} provides an instance for this theorem, but with respect to a non-compact group action.
  Moreover, comparing to Theorem~\ref{thm:quotient}, the Hermitian form $\omega_h$ is defined on $Y$ everywhere.
\end{rem}

The Calabi--Eckmann manifold is a special example of a class of non-K\"ahler manifolds called LVMB manifolds,~\cite{Verjovsky_2019in}.
In fact, by reduction with respect to suitable generalized moment maps, one can construct many other examples of LVMB manifolds.
Moreover, we find the Inoue surface and the primary Kodaira surface, two examples of non-K\"ahler complex surfaces, can also be obtained by the reduction construction.
For all these examples, the group involved is non-compact.
In view of this, perhaps compared to problems associated with compact group action, the generalized moment maps perhaps value more for problems associated with the non-compact group action.

\bibliographystyle{amsplain}
\begin{bibdiv}
\begin{biblist}

\bib{Atiyah}{article}{
      author={Atiyah, M.~F.},
       title={Convexity and commuting {H}amiltonians},
        date={1982},
        ISSN={0024-6093},
     journal={Bull. London Math. Soc.},
      volume={14},
      number={1},
       pages={1\ndash 15},
         url={http://mathscinet.ams.org/mathscinet-getitem?mr=642416},
      review={\MR{642416}},
}

\bib{Atiyah_1983ya}{article}{
      author={Atiyah, M.~F.},
      author={Bott, R.},
       title={The {Y}ang-{M}ills equations over {R}iemann surfaces},
        date={1983},
        ISSN={0080-4614},
     journal={Philos. Trans. Roy. Soc. London Ser. A},
      volume={308},
      number={1505},
       pages={523\ndash 615},
         url={https://mathscinet.ams.org/mathscinet-getitem?mr=702806},
      review={\MR{702806}},
}

\bib{Biliotti_2018re}{article}{
      author={Biliotti, Leonardo},
      author={Ghigi, Alessandro},
       title={Remarks on the abelian convexity theorem},
        date={2018},
        ISSN={0002-9939},
     journal={Proc. Amer. Math. Soc.},
      volume={146},
      number={12},
       pages={5409\ndash 5419},
         url={https://mathscinet.ams.org/mathscinet-getitem?mr=3866878},
      review={\MR{3866878}},
}

\bib{Birtea_2009op}{article}{
      author={Birtea, Petre},
      author={Ortega, Juan-Pablo},
      author={Ratiu, Tudor~S.},
       title={Openness and convexity for momentum maps},
        date={2009},
        ISSN={0002-9947,1088-6850},
     journal={Trans. Amer. Math. Soc.},
      volume={361},
      number={2},
       pages={603\ndash 630},
         url={https://mathscinet.ams.org/mathscinet-getitem?mr=2452817},
      review={\MR{2452817}},
}

\bib{Bourbaki_2005li}{book}{
      author={Bourbaki, Nicolas},
       title={Lie groups and {L}ie algebras. {C}hapters 7--9},
      series={Elements of Mathematics (Berlin)},
   publisher={Springer-Verlag, Berlin},
        date={2005},
        ISBN={3-540-43405-4},
         url={https://mathscinet.ams.org/mathscinet-getitem?mr=2109105},
        note={Translated from the 1975 and 1982 French originals by Andrew
  Pressley},
      review={\MR{2109105}},
}

\bib{Bridson_1999aa}{book}{
      author={Bridson, Martin~R.},
      author={Haefliger, Andr{\'e}},
       title={Metric spaces of non-positive curvature},
      series={Grundlehren der Mathematischen Wissenschaften [Fundamental
  Principles of Mathematical Sciences]},
   publisher={Springer-Verlag, Berlin},
        date={1999},
      volume={319},
        ISBN={3-540-64324-9},
         url={http://mathscinet.ams.org/mathscinet-getitem?mr=1744486},
      review={\MR{1744486}},
}

\bib{Calabi_1953cl}{article}{
      author={Calabi, Eugenio},
      author={Eckmann, Beno},
       title={A class of compact, complex manifolds which are not algebraic},
        date={1953},
        ISSN={0003-486X},
     journal={Ann. of Math. (2)},
      volume={58},
       pages={494\ndash 500},
         url={https://mathscinet.ams.org/mathscinet-getitem?mr=57539},
      review={\MR{57539}},
}

\bib{Condevaux_1988ge}{incollection}{
      author={Condevaux, M.},
      author={Dazord, P.},
      author={Molino, P.},
       title={G\'eom\'etrie du moment},
        date={1988},
   booktitle={Travaux du {S}\'eminaire {S}ud-{R}hodanien de {G}\'eom\'etrie,
  {I}},
      series={Publ. D\'ep. Math. Nouvelle S\'er. B},
      volume={88-1},
   publisher={Univ. Claude-Bernard, Lyon},
       pages={131\ndash 160},
         url={https://mathscinet.ams.org/mathscinet-getitem?mr=1040871},
      review={\MR{1040871}},
}

\bib{DiTerlizzi_2007re}{article}{
      author={Di~Terlizzi, Luigia},
      author={Konderak, Jerzy~J.},
       title={Reduction theorems for manifolds with degenerate 2-form},
        date={2007},
        ISSN={0949-5932},
     journal={J. Lie Theory},
      volume={17},
      number={3},
       pages={563\ndash 581},
         url={https://mathscinet.ams.org/mathscinet-getitem?mr=2351999},
      review={\MR{2351999}},
}

\bib{Diez_2024sy}{article}{
      author={Diez, Tobias},
      author={Rudolph, Gerd},
       title={Symplectic reduction in infinite dimensions},
        date={2024},
      eprint={2409.05829v1},
         url={http://arxiv.org/abs/2409.05829v1},
}

\bib{Duistermaat_1982th}{article}{
      author={Duistermaat, J.~J.},
      author={Heckman, G.~J.},
       title={On the variation in the cohomology of the symplectic form of the
  reduced phase space},
        date={1982},
        ISSN={0020-9910},
     journal={Invent. Math.},
      volume={69},
      number={2},
       pages={259\ndash 268},
         url={https://mathscinet.ams.org/mathscinet-getitem?mr=674406},
      review={\MR{674406}},
}

\bib{Fre-Uh}{book}{
      editor={Freed, Daniel~S.},
      editor={Uhlenbeck, Karen~K.},
       title={Geometry and quantum field theory},
      series={IAS/Park City Mathematics Series},
   publisher={American Mathematical Society, Providence, RI; Institute for
  Advanced Study (IAS), Princeton, NJ},
        date={1995},
      volume={1},
        ISBN={0-8218-0400-6},
         url={https://mathscinet.ams.org/mathscinet-getitem?mr=1338390},
      review={\MR{1338390}},
}

\bib{Friedrich_2000di}{book}{
      author={Friedrich, Thomas},
       title={Dirac operators in {R}iemannian geometry},
      series={Graduate Studies in Mathematics},
   publisher={American Mathematical Society, Providence, RI},
        date={2000},
      volume={25},
        ISBN={0-8218-2055-9},
         url={https://mathscinet.ams.org/mathscinet-getitem?mr=1777332},
        note={Translated from the 1997 German original by Andreas Nestke},
      review={\MR{1777332}},
}

\bib{Georgoulas_2021mo}{book}{
      author={Georgoulas, Valentina},
      author={Robbin, Joel~W.},
      author={Salamon, Dietmar~Arno},
       title={The moment-weight inequality and the {H}ilbert-{M}umford
  criterion---{GIT} from the differential geometric viewpoint},
      series={Lecture Notes in Mathematics},
   publisher={Springer, Cham},
        date={2021},
      volume={2297},
        ISBN={978-3-030-89299-9; 978-3-030-89300-2},
         url={https://mathscinet.ams.org/mathscinet-getitem?mr=4387640},
      review={\MR{4387640}},
}

\bib{Guillemin_1982aa}{article}{
      author={Guillemin, V.},
      author={Sternberg, S.},
       title={Convexity properties of the moment mapping},
        date={1982},
        ISSN={0020-9910},
     journal={Invent. Math.},
      volume={67},
      number={3},
       pages={491\ndash 513},
         url={http://mathscinet.ams.org/mathscinet-getitem?mr=664117},
      review={\MR{664117}},
}

\bib{Guillemin_1994aa}{book}{
      author={Guillemin, Victor},
       title={Moment maps and combinatorial invariants of {H}amiltonian
  {$T^n$}-spaces},
      series={Progress in Mathematics},
   publisher={Birkh\H{a}user Boston, Inc., Boston, MA},
        date={1994},
      volume={122},
        ISBN={0-8176-3770-2},
         url={http://mathscinet.ams.org/mathscinet-getitem?mr=1301331},
      review={\MR{1301331}},
}

\bib{Guillemin_2005ab}{book}{
      author={Guillemin, Victor},
      author={Sjamaar, Reyer},
       title={Convexity properties of {H}amiltonian group actions},
      series={CRM Monograph Series},
   publisher={American Mathematical Society, Providence, RI},
        date={2005},
      volume={26},
        ISBN={0-8218-3918-7},
         url={http://mathscinet.ams.org/mathscinet-getitem?mr=2175783},
      review={\MR{2175783}},
}

\bib{Guillemin_1984no}{incollection}{
      author={Guillemin, Victor},
      author={Sternberg, Shlomo},
       title={A normal form for the moment map},
        date={1984},
   booktitle={Differential geometric methods in mathematical physics
  ({J}erusalem, 1982)},
      series={Math. Phys. Stud.},
      volume={6},
   publisher={Reidel, Dordrecht},
       pages={161\ndash 175},
      review={\MR{767835}},
}

\bib{Hesselink_1978un}{article}{
      author={Hesselink, Wim~H.},
       title={Uniform instability in reductive groups},
        date={1978},
        ISSN={0075-4102,1435-5345},
     journal={J. Reine Angew. Math.},
      volume={303/304},
       pages={74\ndash 96},
         url={https://mathscinet.ams.org/mathscinet-getitem?mr=514673},
      review={\MR{514673}},
}

\bib{Hilgert_1994aa}{article}{
      author={Hilgert, Joachim},
      author={Neeb, Karl-Hermann},
      author={Plank, Werner},
       title={Symplectic convexity theorems and coadjoint orbits},
        date={1994},
        ISSN={0010-437X},
     journal={Compositio Math.},
      volume={94},
      number={2},
       pages={129\ndash 180},
         url={https://mathscinet.ams.org/mathscinet-getitem?mr=1302314},
      review={\MR{1302314}},
}

\bib{Hirai_2024gr}{incollection}{
      author={Hirai, Hiroshi},
      author={Sakabe, Keiya},
       title={Gradient descent for unbounded convex functions on {H}adamard
  manifolds and its applications to scaling problems},
        date={2024},
   booktitle={2024 {IEEE} 65th {A}nnual {S}ymposium on {F}oundations of
  {C}omputer {S}cience---{FOCS} 2024},
   publisher={IEEE Computer Soc., Los Alamitos, CA},
       pages={2387\ndash 2402},
         url={https://mathscinet.ams.org/mathscinet-getitem?mr=4849336},
      review={\MR{4849336}},
}

\bib{Hirzebruch_1988el}{incollection}{
      author={Hirzebruch, Friedrich},
       title={Elliptic genera of level {$N$} for complex manifolds},
        date={1988},
   booktitle={Differential geometrical methods in theoretical physics ({C}omo,
  1987)},
      series={NATO Adv. Sci. Inst. Ser. C: Math. Phys. Sci.},
      volume={250},
   publisher={Kluwer Acad. Publ., Dordrecht},
       pages={37\ndash 63},
         url={https://mathscinet.ams.org/mathscinet-getitem?mr=981372},
      review={\MR{981372}},
}

\bib{Hol}{article}{
      author={Holmann, Harald},
       title={Komplexe {R}\"aume mit komplexen {T}ransformations-gruppen},
        date={1963},
        ISSN={0025-5831,1432-1807},
     journal={Math. Ann.},
      volume={150},
       pages={327\ndash 360},
         url={https://mathscinet.ams.org/mathscinet-getitem?mr=150789},
      review={\MR{150789}},
}

\bib{Hu98}{article}{
      author={Hu, Yi},
       title={Combinatorics and quotients of toric varieties},
        date={2002},
        ISSN={0179-5376,1432-0444},
     journal={Discrete Comput. Geom.},
      volume={28},
      number={2},
       pages={151\ndash 174},
         url={https://mathscinet.ams.org/mathscinet-getitem?mr=1920137},
      review={\MR{1920137}},
}

\bib{Kapovich_2009aa}{article}{
      author={Kapovich, Michael},
      author={Leeb, Bernhard},
      author={Millson, John},
       title={Convex functions on symmetric spaces, side lengths of polygons
  and the stability inequalities for weighted configurations at infinity},
        date={2009},
        ISSN={0022-040X},
     journal={J. Differential Geom.},
      volume={81},
      number={2},
       pages={297\ndash 354},
         url={https://mathscinet.ams.org/mathscinet-getitem?mr=2472176},
      review={\MR{2472176}},
}

\bib{Karshon_1993mo}{article}{
      author={Karshon, Yael},
      author={Tolman, Susan},
       title={The moment map and line bundles over presymplectic toric
  manifolds},
        date={1993},
        ISSN={0022-040X,1945-743X},
     journal={J. Differential Geom.},
      volume={38},
      number={3},
       pages={465\ndash 484},
         url={https://mathscinet.ams.org/mathscinet-getitem?mr=1243782},
      review={\MR{1243782}},
}

\bib{Kaimanovich_1987ly}{article}{
      author={Ka\u{\i}manovich, V.~A.},
       title={Lyapunov exponents, symmetric spaces and a multiplicative ergodic
  theorem for semisimple {L}ie groups},
        date={1987},
        ISSN={0373-2703},
     journal={Zap. Nauchn. Sem. Leningrad. Otdel. Mat. Inst. Steklov. (LOMI)},
      volume={164},
      number={Differentsial\cprime naya Geom. Gruppy Li i Mekh. IX},
       pages={29\ndash 46, 196\ndash 197},
         url={https://mathscinet.ams.org/mathscinet-getitem?mr=947327},
      review={\MR{947327}},
}

\bib{Kawasaki_1981in}{article}{
      author={Kawasaki, Tetsuro},
       title={The index of elliptic operators over {$V$}-manifolds},
        date={1981},
        ISSN={0027-7630,2152-6842},
     journal={Nagoya Math. J.},
      volume={84},
       pages={135\ndash 157},
         url={https://mathscinet.ams.org/mathscinet-getitem?mr=641150},
      review={\MR{641150}},
}

\bib{KM}{article}{
      author={Keel, Se{\'a}n},
      author={Mori, Shigefumi},
       title={Quotients by groupoids},
        date={1997},
        ISSN={0003-486X},
     journal={Ann. of Math. (2)},
      volume={145},
      number={1},
       pages={193\ndash 213},
         url={http://mathscinet.ams.org/mathscinet-getitem?mr=1432041},
      review={\MR{1432041}},
}

\bib{Kempf_1979aa}{incollection}{
      author={Kempf, George},
      author={Ness, Linda},
       title={The length of vectors in representation spaces},
        date={1979},
   booktitle={Algebraic geometry ({P}roc. {S}ummer {M}eeting, {U}niv.
  {C}openhagen, {C}openhagen, 1978)},
      series={Lecture Notes in Math.},
      volume={732},
   publisher={Springer, Berlin},
       pages={233\ndash 243},
         url={https://mathscinet.ams.org/mathscinet-getitem?mr=555701},
      review={\MR{555701}},
}

\bib{Kirwan84b}{article}{
      author={Kirwan, Frances},
       title={Convexity properties of the moment mapping. {III}},
        date={1984},
        ISSN={0020-9910},
     journal={Invent. Math.},
      volume={77},
      number={3},
       pages={547\ndash 552},
         url={http://mathscinet.ams.org/mathscinet-getitem?mr=759257},
      review={\MR{759257}},
}

\bib{Kirwan84}{book}{
      author={Kirwan, Frances~Clare},
       title={Cohomology of quotients in symplectic and algebraic geometry},
      series={Mathematical Notes},
   publisher={Princeton University Press, Princeton, NJ},
        date={1984},
      volume={31},
        ISBN={0-691-08370-3},
         url={https://mathscinet.ams.org/mathscinet-getitem?mr=766741},
      review={\MR{766741}},
}

\bib{Knop_2002co}{article}{
      author={Knop, Friedrich},
       title={Convexity of {H}amiltonian manifolds},
        date={2002},
        ISSN={0949-5932},
     journal={J. Lie Theory},
      volume={12},
      number={2},
       pages={571\ndash 582},
         url={https://mathscinet.ams.org/mathscinet-getitem?mr=1923787},
      review={\MR{1923787}},
}

\bib{Kobayashi_1958aa}{article}{
      author={Kobayashi, Shoshichi},
       title={Fixed points of isometries},
        date={1958},
        ISSN={0027-7630},
     journal={Nagoya Math. J.},
      volume={13},
       pages={63\ndash 68},
         url={http://mathscinet.ams.org/mathscinet-getitem?mr=0103508},
      review={\MR{0103508}},
}

\bib{kollar97}{article}{
      author={Koll{\'a}r, J{\'a}nos},
       title={Quotient spaces modulo algebraic groups},
        date={1997},
        ISSN={0003-486X},
     journal={Ann. of Math. (2)},
      volume={145},
      number={1},
       pages={33\ndash 79},
         url={https://mathscinet.ams.org/mathscinet-getitem?mr=1432036},
      review={\MR{1432036}},
}

\bib{Kollar_2006no}{article}{
      author={Koll\'ar, J\'anos},
       title={Non-quasi-projective moduli spaces},
        date={2006},
        ISSN={0003-486X,1939-8980},
     journal={Ann. of Math. (2)},
      volume={164},
      number={3},
       pages={1077\ndash 1096},
         url={https://mathscinet.ams.org/mathscinet-getitem?mr=2259254},
      review={\MR{2259254}},
}

\bib{Lerman_2005aa}{article}{
      author={Lerman, Eugene},
       title={Gradient flow of the norm squared of a moment map},
        date={2005},
        ISSN={0013-8584},
     journal={Enseign. Math. (2)},
      volume={51},
      number={1-2},
       pages={117\ndash 127},
         url={http://mathscinet.ams.org/mathscinet-getitem?mr=2154623},
      review={\MR{2154623}},
}

\bib{LYZ}{article}{
      author={Li, J.},
      author={Yau, S.-T.},
      author={Zheng, F.},
       title={A simple proof of {B}ogomolov's theorem on class {${\rm VII}_0$}
  surfaces with {$b_2=0$}},
        date={1990},
        ISSN={0019-2082,1945-6581},
     journal={Illinois J. Math.},
      volume={34},
      number={2},
       pages={217\ndash 220},
         url={https://mathscinet.ams.org/mathscinet-getitem?mr=1046563},
      review={\MR{1046563}},
}

\bib{LY}{incollection}{
      author={Li, Jun},
      author={Yau, Shing-Tung},
       title={Hermitian-{Y}ang-{M}ills connection on non-{K}\"ahler manifolds},
        date={1987},
   booktitle={Mathematical aspects of string theory ({S}an {D}iego, {C}alif.,
  1986)},
      series={Adv. Ser. Math. Phys.},
      volume={1},
   publisher={World Sci. Publishing, Singapore},
       pages={560\ndash 573},
      review={\MR{915839}},
}

\bib{LT}{book}{
      author={L{\"u}bke, Martin},
      author={Teleman, Andrei},
       title={The {K}obayashi-{H}itchin correspondence},
   publisher={World Scientific Publishing Co., Inc., River Edge, NJ},
        date={1995},
        ISBN={981-02-2168-1},
         url={https://mathscinet.ams.org/mathscinet-getitem?mr=1370660},
      review={\MR{1370660}},
}

\bib{Marle_1984le}{incollection}{
      author={Marle, C.-M.},
       title={Le voisinage d'une orbite d'une action hamiltonienne d'un groupe
  de {L}ie},
        date={1984},
   booktitle={South {R}hone seminar on geometry, {II} ({L}yon, 1983)},
      series={Travaux en Cours},
   publisher={Hermann, Paris},
       pages={19\ndash 35},
      review={\MR{753857}},
}

\bib{Matsuo_2009as}{article}{
      author={Matsuo, Koji},
       title={Astheno-{K}\"ahler structures on {C}alabi-{E}ckmann manifolds},
        date={2009},
        ISSN={0010-1354,1730-6302},
     journal={Colloq. Math.},
      volume={115},
      number={1},
       pages={33\ndash 39},
         url={https://mathscinet.ams.org/mathscinet-getitem?mr=2475838},
      review={\MR{2475838}},
}

\bib{Meinrenken_1999ab}{article}{
      author={Meinrenken, Eckhard},
      author={Sjamaar, Reyer},
       title={Singular reduction and quantization},
        date={1999},
        ISSN={0040-9383},
     journal={Topology},
      volume={38},
      number={4},
       pages={699\ndash 762},
         url={http://mathscinet.ams.org/mathscinet-getitem?mr=1679797},
      review={\MR{1679797}},
}

\bib{GIT}{book}{
      author={Mumford, D.},
      author={Fogarty, J.},
      author={Kirwan, F.},
       title={Geometric invariant theory},
     edition={Third},
      series={Ergebnisse der Mathematik und ihrer Grenzgebiete (2) [Results in
  Mathematics and Related Areas (2)]},
   publisher={Springer-Verlag, Berlin},
        date={1994},
      volume={34},
        ISBN={3-540-56963-4},
         url={http://mathscinet.ams.org/mathscinet-getitem?mr=1304906},
      review={\MR{1304906}},
}

\bib{Ness_1984aa}{article}{
      author={Ness, Linda},
       title={A stratification of the null cone via the moment map},
        date={1984},
        ISSN={0002-9327},
     journal={Amer. J. Math.},
      volume={106},
      number={6},
       pages={1281\ndash 1329},
         url={http://mathscinet.ams.org/mathscinet-getitem?mr=765581},
        note={With an appendix by David Mumford},
      review={\MR{765581}},
}

\bib{Ortega_2004aa}{book}{
      author={Ortega, Juan-Pablo},
      author={Ratiu, Tudor~S.},
       title={Momentum maps and {H}amiltonian reduction},
      series={Progress in Mathematics},
   publisher={Birkh\H{a}user Boston, Inc., Boston, MA},
        date={2004},
      volume={222},
        ISBN={0-8176-4307-9},
         url={http://mathscinet.ams.org/mathscinet-getitem?mr=2021152},
      review={\MR{2021152}},
}

\bib{Paradan_2025hk}{article}{
      author={Paradan, Paul-Emile},
      author={Ressayre, Nicolas},
       title={{HKKN}-stratifications in a non-compact framework},
        date={2025},
      eprint={2505.03259v3},
         url={http://arxiv.org/abs/2505.03259v3},
}

\bib{Sjamaar_1998aa}{article}{
      author={Sjamaar, Reyer},
       title={Convexity properties of the moment mapping re-examined},
        date={1998},
        ISSN={0001-8708},
     journal={Adv. Math.},
      volume={138},
      number={1},
       pages={46\ndash 91},
         url={http://mathscinet.ams.org/mathscinet-getitem?mr=1645052},
      review={\MR{1645052}},
}

\bib{SL}{article}{
      author={Sjamaar, Reyer},
      author={Lerman, Eugene},
       title={Stratified symplectic spaces and reduction},
        date={1991},
        ISSN={0003-486X},
     journal={Ann. of Math. (2)},
      volume={134},
      number={2},
       pages={375\ndash 422},
         url={http://mathscinet.ams.org/mathscinet-getitem?mr=1127479},
      review={\MR{1127479}},
}

\bib{Smale}{article}{
      author={Smale, Stephen},
       title={On gradient dynamical systems},
        date={1961},
        ISSN={0003-486X},
     journal={Ann. of Math. (2)},
      volume={74},
       pages={199\ndash 206},
         url={https://mathscinet.ams.org/mathscinet-getitem?mr=133139},
      review={\MR{133139}},
}

\bib{Sternberg_1977mi}{article}{
      author={Sternberg, Shlomo},
       title={Minimal coupling and the symplectic mechanics of a classical
  particle in the presence of a {Y}ang-{M}ills field},
        date={1977},
        ISSN={0027-8424},
     journal={Proc. Nat. Acad. Sci. U.S.A.},
      volume={74},
      number={12},
       pages={5253\ndash 5254},
         url={https://mathscinet.ams.org/mathscinet-getitem?mr=458486},
      review={\MR{458486}},
}

\bib{Tian_1998aa}{article}{
      author={Tian, Youliang},
      author={Zhang, Weiping},
       title={An analytic proof of the geometric quantization conjecture of
  {G}uillemin-{S}ternberg},
        date={1998},
        ISSN={0020-9910},
     journal={Invent. Math.},
      volume={132},
      number={2},
       pages={229\ndash 259},
         url={https://mathscinet.ams.org/mathscinet-getitem?mr=1621428},
      review={\MR{1621428}},
}

\bib{Tsukada_1981ei}{article}{
      author={Tsukada, Kazumi},
       title={Eigenvalues of the {L}aplacian on {C}alabi-{E}ckmann manifolds},
        date={1981},
        ISSN={0025-5645,1881-1167},
     journal={J. Math. Soc. Japan},
      volume={33},
      number={4},
       pages={673\ndash 691},
         url={https://mathscinet.ams.org/mathscinet-getitem?mr=630631},
      review={\MR{630631}},
}

\bib{Verjovsky_2019in}{incollection}{
      author={Verjovsky, Alberto},
       title={Intersection of quadrics in {$\Bbb C^n$}, moment-angle manifolds,
  complex manifolds and convex manifolds},
        date={2019},
   booktitle={Complex non-{K}\"ahler geometry},
      series={Lecture Notes in Math.},
      volume={2246},
   publisher={Springer, Cham},
       pages={163\ndash 240},
      review={\MR{3972002}},
}

\bib{Wang_2021th}{article}{
      author={Wang, Xiangsheng},
       title={On the complex structure of symplectic quotients},
        date={2021},
        ISSN={1674-7283},
     journal={Sci. China Math.},
      volume={64},
      number={12},
       pages={2719\ndash 2742},
         url={https://mathscinet.ams.org/mathscinet-getitem?mr=4343064},
      review={\MR{4343064}},
}

\bib{Weinstein_1978un}{article}{
      author={Weinstein, A.},
       title={A universal phase space for particles in {Y}ang-{M}ills fields},
        date={1978},
        ISSN={0377-9017},
     journal={Lett. Math. Phys.},
      volume={2},
      number={5},
       pages={417\ndash 420},
         url={https://mathscinet.ams.org/mathscinet-getitem?mr=507025},
      review={\MR{507025}},
}

\bib{Woodward_2010aa}{article}{
      author={Woodward, Chris},
       title={Moment maps and geometric invariant theory},
        date={2010},
     journal={Les cours du CIRM},
      volume={1},
      number={1},
       pages={55\ndash 98},
         url={http://eudml.org/doc/116365},
}

\end{biblist}
\end{bibdiv}

\vskip .4cm

\end{document}